\pgfplotsset{compat=1.15}
\definecolor{uuuuuu}{rgb}{0.26666666666666666,0.26666666666666666,0.26666666666666666}
\definecolor{wqwqwq}{rgb}{0.3764705882352941,0.3764705882352941,0.3764705882352941}
\definecolor{uququq}{rgb}{0.25098039215686274,0.25098039215686274,0.25098039215686274}
\definecolor{B1}{RGB}{49,62,72} 
\definecolor{C1}{RGB}{124,135,143}
\definecolor{D1}{RGB}{213,218,223}
\definecolor{A2}{RGB}{198,11,70}
\definecolor{B2}{RGB}{237,20,91}
\definecolor{C2}{RGB}{238,52,35}
\definecolor{D2}{RGB}{243,115,32}
\definecolor{A3}{RGB}{124,42,144}
\definecolor{B3}{RGB}{125,106,175}
\definecolor{C3}{RGB}{198,103,29} 
\definecolor{D3}{RGB}{254,188,24}
\definecolor{A4}{RGB}{0,78,125}
\definecolor{B4}{RGB}{14,135,201}
\definecolor{C4}{RGB}{0,148,181}
\definecolor{D4}{RGB}{70,195,210}
\definecolor{A5}{RGB}{0,128,122}
\definecolor{B5}{RGB}{64,183,105}
\definecolor{C5}{RGB}{140,198,62}
\definecolor{D5}{RGB}{213,223,61}
\theoremstyle{plain}
\newtheorem{thm}{Theorem}[section]
\newtheorem{lem}[thm]{Lemma}
\newtheorem{prop}[thm]{Proposition}
\theoremstyle{definition}
\newtheorem{defn}[thm]{Definition}
\newtheorem{rem}[thm]{Remark}
\numberwithin{equation}{section}
\newcommand{\res}{\mathop{\hbox{\vrule height 7pt width .5pt depth 0pt
\vrule height .5pt width 6pt depth 0pt}}\nolimits}
\newcommand{\ds}{\displaystyle}
\newcommand{\N}{\mathbb N} 
\newcommand{\R}{\mathbb R} 
\newcommand{\Sn}{\mathbb S}
\newcommand{\Ms}{{\mathbb M}^{N{\times}N}_{\rm sym}}
\newcommand{\Msd}{{\mathbb M}^{2{\times}2}_{\rm sym}}
\newcommand{\E}{{\mathcal E}}
\newcommand{\F}{{\mathcal F}}
\newcommand{\G}{{\mathcal G}}
\newcommand{\wto}{\rightharpoonup}
\newcommand{\e}{\varepsilon}
\newcommand{\LL}{{\mathcal L}}
\newcommand{\HH}{{\mathcal H}}
\newcommand{\KK}{ K}
\newcommand{\M}{{\mathcal M}}
\newcommand{\T}{{\mathcal T}}
\newcommand{\infconv}{ \, {\scriptstyle \Box}  \, }
\renewcommand{\bar}[1]{\overline{#1}}
\newcommand{\norme}[1]{\Vert #1 \Vert}
\let\O=\Omega
\DeclareSymbolFont{yhlargesymbols}{OMX}{yhex}{m}{n} 
\DeclareMathAccent{\yhwidehat}{\mathord}{yhlargesymbols}{"62}
\begin{document}
 
\title[Effects of oscillation scales in discrete brittle damage models]{Effects of oscillation scales in discrete brittle damage models}
\author[E. Bonhomme]{\'Elise Bonhomme}

\address[E. Bonhomme]{Universit\'e Libre de Bruxelles, Département de math\'ematique, 1050 Brussels, Belgium.}
\email{elise.bonhomme@ulb.be}

\begin{abstract} 
This paper is concerned with the asymptotic analysis of a sequence of variational models of brittle damage in the context of linearized elasticity in the two-dimensional discrete setting. 
We consider a discrete version of Francfort and Marigo's brittle damage model, where the total energy is restricted to continuous and piecewise affine vectorial displacements; within different regimes where the damaged regions concentrate on vanishingly small sets while the stiffness of the damaged material degenerates to $0$. 
In this setting, the convergence of the space discretization, the concentration of the damaged regions, and the decay of the elastic properties of the damaged phase all compete simultaneously in non-trivial ways according to the scaling law under consideration.
The mesh size turns out to be a crucial feature of the analysis, as it induces a
minimal scale of spatial oscillations for admissible displacements.
This study was motivated by the numerical investigations performed in \cite{AJVG} on the one hand, where Allaire-Jouve-Van Goethem have shown that forcing the stiffness decay on sets of
arbitrarily small measure seems to lead to concentrations phenomena such as in brittle fracture; and by the static analysis performed in \cite{BIR} on the other hand, where Babadjian-Iurlano-Rindler addressed the rigorous asymptotic analysis of this observation in terms of the $\Gamma$-convergence of the total energies, in the continuous setting in space. Surprisingly, they showed that fracture-type models were not obtained asymptotically, thus raising the question of the dependence of the effective models with respect to the scaling of a spatial discretization in Francfort and Marigo's model. This is the content of the present work, where we show that concentrations phenomena are only captured asymptotically in regimes where the mesh size and the concentration of damaged regions are of same order. 

\medskip 
\noindent
{\textbf{Keywords:}} Variational model, $\Gamma$-convergence, asymptotic analysis, brittle damage, Hencky plasticity, brittle fracture
\end{abstract}

\maketitle

\tableofcontents

\section{Introduction}
\subsection{The variational brittle damage model of Francfort and Marigo}
A process of damage is a dissipative phenomenon during which the rigidity tensor of the medium, a linearly elastic material whose reference configuration is a bounded Lipschitz open set $\O \subset \R^N$, is irreversibly weakening under the effect of the loading to which it is subjected during a time. General damage models are classically based on the introduction of a time-space dependent internal variable $\chi(t,x)$ describing the evolution of the rigidity tensor $\mathbf A(t,x) = \tilde{ \mathbf A}( \chi(t,x)) \in \mathcal L (\Ms;\Ms)$. For instance, a natural and simple choice consists in taking 
$$\mathbf A(t,x) := \mathbf A_0 \chi(t,x) + \mathbf A_1 \left(1 - \chi(t,x) \right),$$
where $\chi(t,x) \in [0,1]$ is non-decreasing in time, thus forcing the irreversibility of damage, and $\mathbf{A}_0$ and $\mathbf{A}_1$ are two symmetric and coercive fourth-order tensors representing the elastic properties of the damaged and healthy phases respectively, satisfying the ordering property
$$
\mathbf{A}_0 < \mathbf{A}_1
$$
as quadratic forms acting on real symmetric matrices $\Ms$.
Thus, $\chi = 1$ corresponds to fully damaged regions while $\chi = 0$ corresponds to sound regions. The particular case where $\chi(t,x) \in \{0,1\}$ only takes the values $0$ and $1$ corresponds to brittle damage, while the general case $\chi(t,x) \in [0,1]$ corresponds to progressive damage. The evolution of the material is then governed, on the one hand, by the Hooke Law which states that the Cauchy stress is linearly proportional to the strain 
$$
\sigma(t,x) = \mathbf A(t,x) Du(t,x),
$$
and by a Griffith's type maximal stress constraint on the other hand, which stipulates that elastic deformations will only yield to damage when the Cauchy stress exceeds a given threshold. In other words, the rigidity tensor of the material will drop from the value $\mathbf A_1$, the rigidity of the sound state, to the value $\mathbf A_0$, the rigidity of the damaged state, if the Cauchy stress exceeds a given threshold (see \cite{Griffith,Suquet,FM1}). Following the model introduced by Francfort and Marigo (\cite{FM1,FM}), the total energy associated to a displacement $u \in H^1(\O;\R^N)$ and a characteristic function of the damaged zones $\chi \in L^\infty(\O;\{0,1\})$ then puts in competition the elastic energy stored in the body and a dissipative term due to damage:
\begin{equation}\label{eq:E(u,chi)}
\E(u,\chi) = \frac12 \int_\O \left( \chi \mathbf{A}_0 + (1-\chi) \mathbf{A}_1 \right) e(u):e(u) \, dx + \kappa \int_\O \chi \, dx,
\end{equation}
where the symmetric gradient $e(u) = \frac{\nabla u + \nabla u^T}{2}$ is the linearized elastic strain and $\kappa > 0$ is the material's toughness of the damaged regions. In other words, the cost for damaging a part of the material is taken as proportional to the volume of the damaged zone. 
In \cite{FM1}, Francfort and Marigo introduced a variational approach to quasi-static brittle damage evolution ({\it i.e.} without inertia effects), based on the premise that at every instant, the material seeks to minimize a total energy of the form \eqref{eq:E(u,chi)} while accounting for the loading history and the irreversibility of damage.

More specifically, given a loading $f \in AC([0,T];L^2(\O))$, a Dirichlet boundary condition $w \in AC([0,T];H^1(\R^N))$, and a subdivision of the time interval $[0,T]$, $0 = t_0 < \dots < t_I = T$, Francfort and Marigo seek the pairs $(u_i,\chi_i) \in H^1(\O;\R^N) \times L^\infty(\O;\{0,1\})$ that minimize the total energy at time $t_i$
$$
\E_i(u,\chi) := \E(u,\chi) - \int_\O f(t_i) \cdot u \, dx
$$
among all pairs $(u,\chi) \in H^1(\O;\R^N) \times L^\infty(\O;\{0,1\})$ such that $u = w(t_i)$ on $\partial \O$ and $\chi \geq \chi_{i-1}$. Unfortunately, it is by now well known that this minimization problem is ill-posed due to the lack of convexity of its density, so that the energy must be relaxed. By doing so, because of the energetic interest that minimizing sequences have in forming microstructures converging to homogenized states (fine mixtures of the healthy and damaged phases), the brittle character of damage disappears in favor of progressive damage. The model finally proposed by Francfort and Marigo then consists of seeking the pairs $(u_i, \theta_i) \in H^1(\O;\R^N) \times L^\infty(\O;[0,1])$ that minimize
$$
\E^*_i(u,\theta) = \int_\O \underset{\mathbf{A}_* \in \G_\theta(\mathbf{A}_0,\mathbf{A}_1)}{\min} \left( \frac12 \mathbf{A}_* e(u):e(u) \right) \, dx + \kappa \int_\O \theta \, dx - \int_\O f(t_i) \cdot u \, dx
$$
among all pairs $(u,\theta) \in H^1(\O;\R^N) \times L^\infty(\O;[0,1])$ such that $u = w(t_i)$ on $\partial \O$ and $\theta \geq \theta_{i-1}$, where the class of admissible states is extended to the set of homogenized composite materials $\G_\theta(\mathbf{A}_0, \mathbf{A}_1)$ obtained by increasingly fine mixtures between the damaged and healthy phases in volume fractions $\theta \in [0,1]$ and $1 - \theta$, respectively (see \cite{FM1, FG, Allaire, AL}). This problem is solved in \cite{FG}, where Francfort and Garroni define and prove the well-posedness of this relaxed variational formulation in continuous time for the quasi-static case. A quasi-static evolution of brittle damage is then given by a triplet
\begin{equation*}
(u,\Theta, \mathbf{A}) : [0,T] \to H^1(\O;\R^N) \times L^\infty(\O;[0,1]) \times \G(\mathbf{A}_0,\mathbf{A}_1)
\end{equation*}
such that $\mathbf{A}(t) \in \G_{1 - \Theta(t)}(\mathbf{A}_0, \mathbf{A}_1)$ at all times $t \in [0,T]$; the stiffness $\mathbf{A}$ and the volume fraction of healthy material $\Theta$ decrease over time; $(u(t), \mathbf{A}(t), 0)$ satisfies a "unilateral" stability property (due to the irreversibility of damage) by minimizing at all times $t \in [0,T]$
$$
 \int_\O \frac12 \mathbf{A}_* e(v):e(v) \, dx + \kappa \int_\O \Theta(t) \theta \, dx - \int_\O f(t) \cdot v \, dx
$$
among all triplets $v \in w(t) + H^1_0(\O;\R^N)$, $\theta \in L^\infty(\O;[0,1])$, and $\mathbf{A}_* \in \G_{\theta}(\mathbf{A}_0, \mathbf{A}(t))$; and finally, the total energy
$$
\E := \int_\O \frac12 \mathbf{A} e(u):e(u) \, dx + \kappa \int_\O (1 - \Theta) \, dx - \int_\O f \cdot u \, dx
$$
satisfies the energy balance
$$
\E(t) = \E(0) + \int_0^t \int_\O \mathbf{A} e(u) : e( \dot{w}) \, dx \, ds - \int_0^t \int_\O \left( \dot{f} \cdot u + f \cdot \dot{w} \right) \, dx \, ds.
$$

\subsection{Motivation and results}

The present paper consists of a first step in the understanding of the asymptotic behavior of such evolutions under scaling laws, where the damaged zone concentrates on vanishingly small sets while damage intensifies until it becomes total. In other words, we consider brittle damage energies of the form:
\begin{equation}\label{eq:E_e(u,chi)}
\E_\e(u,\chi) =\frac12 \int_\O  \big( \eta_\e \chi \mathbf A_0 + (1-\chi) \mathbf A_1 \big) e(u):e(u) \, dx + \frac{\kappa}{\e} \int_\O \chi \, dx
\end{equation}
where $\eta_\varepsilon \searrow 0$ as $\varepsilon \searrow 0$. Thus, the stiffness $\eta_\varepsilon \mathbf{A}_0$ of the damaged state degenerates to $0$, while the diverging toughness term $\kappa / \varepsilon \to +\infty$ forces the damaged zone to concentrate on Lebesgue-negligible sets. 

In the numerical implementation \cite{AJVG}, Allaire-Jouve-Van Goethem proposed a method to simulate quasi-static crack growth in brittle materials ({\it i.e.} with constant toughness), based on the intuition that, at each time step, Francfort-Marigo's total energy \eqref{eq:E_e(u,chi)} $\Gamma$-converges to a Griffith-type fracture energy as $\varepsilon \searrow 0$. Yet, Babadjian-Iurlano-Rindler showed in \cite{BIR} that the limiting energy does not correspond to a fracture model, regardless of the convergence rate $\lim_\varepsilon \eta_\varepsilon / \varepsilon \in [0, +\infty]$. However, this does not completely invalidate Allaire-Jouve-Van Goethem's conjecture, as they actually minimize a discretized version of \eqref{eq:E_e(u,chi)} by restricting its domain to piecewise affine displacements $u$ and piecewise constant characteristic functions $\chi$ (adapted to a common triangulation of $\Omega$). Hence, a third infinitesimal parameter is introduced, the mesh size $0 < h_\varepsilon \ll 1$, which induces a minimal spatial oscillation scale for admissible displacements.

These two works hence raised the question of the way spatial discretization in the static Francfort-Marigo model \eqref{eq:E_e(u,chi)} will influence the nature of the effective models. This is the objective of the present paper, which extends the static analysis \cite{BIR} to the discrete setting, in dimension two.

\medskip

More specifically, let $\O$ be a bounded open set of $\R^2$ with Lipschitz boundary. As in \cite{CDM}, we introduce the following class of admissible meshes.
\begin{defn}\label{def:triangulation Static}
A triangulation of $\O$ is a finite family of closed triangles intersecting $\O$, whose union contains $\O$, and such that, given any two triangles of this family, their intersection, if not empty, is exactly a vertex or an edge common to both triangles. Given some angle $\theta_0>0$ and a function $h \mapsto \omega(h)$ with $\omega(h) \geq 6h$ for any $h>0$ and $\lim_{h \to 0^+} \omega(h)=0$, we define
$$\mathcal T_{h}(\O):=\mathcal T_{h}(\O,\omega,\theta_0)$$
as the set of all triangulations of $\O$ made of triangles whose edges have length between $h$ and $\omega(h)$, and whose angles are all greater than or equal to $\theta_0$. Then we consider the finite element space $X_{h}(\O)$ of all couples $(u,\chi) \in  C^0(\O;\R^2) \times L^\infty(\O;\{0,1\})$ for which there exists $\mathbf T \in \mathcal T_{h}(\O)$ such that $u$ is affine and $\chi$ is constant on each triangle $T \in \mathbf T$.
\end{defn}
\begin{rem}
Imposing $\theta_0 > 0$ and $\omega(h) \geq h$ corresponds to a non-flatness condition that ensures the existence of a radius $\varrho(\theta_0) > 0$ such that for all triangle $T \in \mathbf T $, one can find a point $x \in T$ such that
$$
\bar{ B_{\varrho h} (x)} \subset T.
$$ 
In particular, this enforces a minimal scale of spatial oscillations for admissible displacements, as their gradient is constant on each triangle.
\end{rem}

Given $\e >0$, $\eta_\e >0$ and $h_\e > 0$, we introduce the following converging rates
\begin{equation}\label{eq:converging rates}
\alpha=\lim_{\e \to 0}\frac{\eta_\e}{\e} \in [0,\infty], \quad \beta=\lim_{\e \to 0}\frac{h_\e}{\e} \in [0,\infty]
\end{equation} 
and consider the brittle damage functionals $\F_\e:L^1(\O;\R^2) \times L^1(\O) \to [0,\infty]$ defined by:
\begin{equation}\label{eq:F_eps}
\F_\e(u,\chi)=
\begin{cases}
{\displaystyle \frac12 \int_\O  \big( \eta_\e \chi \mathbf A_0 + (1-\chi) \mathbf A_1 \big) e(u):e(u) \, dx + \frac{\kappa}{\e} \int_\O \chi \, dx }& \text{ if } (u,\chi) \in X_{h_\e}(\O) ,\\
+\infty & \text{ otherwise},
\end{cases}
\end{equation}
where $\kappa \in (0,\infty)$ and $\mathbf A_0$, $\mathbf A_1 \in \mathcal L(\Msd;\Msd)$ are symmetric fourth order tensors satisfying
\begin{equation}\label{eq:rigidity tensors}
a_i\, {\rm Id} \leq \mathbf A_i \leq a'_i\, {\rm Id} \quad \text{ for } i\in \{0,1\}
\end{equation}
as quadratic forms over $\Msd$, for some constants $a_0,a_1,a'_0,a'_1 \in (0,\infty)$. To state precisely our main result, Theorem \ref{thm:1} below, we need to introduce some notation (we refer to section \ref{subsec:notation} regarding functional spaces). 
For all unit vector $k \in \mathbb S^1$, let $V_k \subset \Msd$ be the two-dimensional vectorial space defined by 
$$
V_k := \{ x \odot k \, : \, x \in \R^2 \}
$$
as well as the convex, continuous, non-negative and $2$-homogeneous function $g : \Msd \to [0,\infty)$ defined for $\xi \in \Msd$ by
\begin{equation*}
g(\xi) := \sup_{ k \in \mathbb S^1} \left\lvert \ds \Pi_k \left( \mathbf A_0^{\scriptstyle{-\frac12}} \xi \right) \right\rvert^2,
\end{equation*}
where $\Pi_k (\xi) $ is the orthogonal projection of $\xi \in \Msd$ onto $\mathbf A_0^{\scriptstyle{\frac12}} V_k$.
\begin{rem}
Although the function $g(\xi)$ has been extensively studied (see \cite{AK93,K91,KM87} and references therein), its computation remains a difficult task. Nonetheless, explicit formulas have been obtained in the particular case of an isotropic stiffness tensor (see \cite{AK93,AL,FM86,Allaire} for instance) where, given a bulk and shear moduli $\gamma >0$ and $\mu >0$ respectively,
$$
\mathbf A_0 \xi = \gamma \text{\rm Tr}(\xi) I_2 +  2 \mu \left(  \xi - \frac12 \text{\rm Tr}(\xi) I_2 \right) \, \text{ for all } \xi \in \Msd.
$$
As most elastic materials also satisfy $\lambda := \gamma - \mu >0$, we choose to express this law in terms of the Lam\'e coefficients $\mu,\lambda >0$ and say that the stiffness tensor $\mathbf A_0$ is isotropic if
\begin{equation}\label{eq:isotropic Hooke Law A_0}
\mathbf A_0 \xi = 2 \mu \xi + \lambda \text{\rm Tr}(\xi) I_2.
\end{equation}
In this case (see \cite[Proposition 7.4]{AK93} specified to the two dimensional setting), $g$ is explicitely given in terms of the ordered eigenvalues $\xi_1 \leq \xi_2$ of $\xi \in \Msd$ by
\begin{equation}\label{eq:g isotropic case}
g(\xi) = \begin{cases}
							 \vspace*{0.2cm} \frac{\xi_1^2}{\lambda + 2 \mu}  &  \text{if } \frac{\lambda + 2 \mu}{2(\lambda + \mu)} (\xi_1 + \xi_2) < \xi_1, \\
							\vspace*{0.2cm} \frac{\left( \xi_1 - \xi_2 \right)^2}{4 \mu} + \frac{\left( \xi_1 + \xi_2 \right)^2}{4( \lambda +  \mu)}  &  \text{if } \xi_1 \leq \frac{\lambda + 2 \mu}{2(\lambda + \mu)} (\xi_1 + \xi_2) \leq \xi_2,  \\
							 \frac{\xi_2^2}{\lambda_0 + 2 \mu} & \if \text{if } \xi_2 < \frac{\lambda + 2 \mu}{2(\lambda + \mu)} (\xi_1 + \xi_2).
						 \end{cases}
\end{equation}
\end{rem}
Let us now introduce the following densities, that will be encountered in the asymptotic analysis later on.
\begin{itemize}[leftmargin=*,label=-]
\item Provided $\alpha \in (0,\infty)$, let $\KK = \KK(\alpha) \subset \Msd$ be the closed convexe set, containging the origin, defined by
\begin{equation*}
\KK := \left\{ \xi \in \Msd \, : \, g(\xi) \leq 2  \alpha \kappa \right\}.
\end{equation*}
We then define the density $\bar W_\alpha : \Msd \to [0,\infty)$ given by
\begin{equation}\label{eq:W alpha}
\bar W_\alpha := \left( f^* + I_\KK \right)^* = f \infconv \sqrt{2 \alpha \kappa h}
\end{equation}
where, for all $\xi \in \Msd$, 
\begin{equation*}
f(\xi) = \frac12 \mathbf A_1 \xi:\xi, \quad I_\KK(\xi) = \begin{cases} 0 & \text{if } \xi \in \KK , \\ \infty & \text{otherwise,} \end{cases}  \quad \text{and} \quad h(\xi) = g^*(2 \xi)=\frac{\left( I_\KK^*(\xi) \right)^2}{2\alpha\kappa},
\end{equation*} 
and let $\bar W_\alpha^\infty : \Msd \to [0,\infty)$ be its recession function, which is well defined and satisfies
\begin{equation}\label{eq:recession}
\bar W_\alpha^\infty(\xi) := \lim_{t \to \infty} \frac{\bar W_\alpha(t\xi)}{t} = \sqrt{2\alpha \kappa h(\xi)} \quad \text{for all } \xi \in \Msd.
\end{equation}
The density $\bar W_\alpha$ turns out to be quadratic close to the origin and grows linearly at infinity, as typically encountered in models of Hencky plasticity (see \cite{M2016}). \vspace{1mm}

\item Provided $\alpha \in (0,\infty)$ and $\beta \in (0,\infty)$, we introduce the density $\phi_{\alpha,\beta} : \R \to [0,\infty)$ defined by 
\begin{equation}\label{eq:density intermediate}
\phi_{\alpha,\beta} : t \in \mathbb{R} \mapsto \begin{cases}
						\ds \frac{ \alpha}{2 \beta \sin \theta_0} t^2 + \beta \kappa \sin \theta_0 & \quad \text{if } \ds \left\lvert t \right\rvert \leq \beta \sin \theta_0 \sqrt{\frac{2 \kappa}{ \alpha}}, \\
						\sqrt{2 \kappa  \alpha} \left\lvert t \right\rvert & \quad \text{otherwise,}
						\end{cases}
\end{equation}
which has linear growth at infinity (as in Hencky plasticity) and behaves as a positive constante close to the origin (as in brittle fracture).
\end{itemize}

\begin{rem}
In the case where $\alpha \in (0,\infty)$ and $\mathbf A_0$ is isotropic and given by \eqref{eq:isotropic Hooke Law A_0}, the function $h(\xi)$ is explicitly known and given by 
\begin{equation}\label{eq:h isotropic case}
h(\xi) = \mathbf A_0 \xi:\xi + 4 \mu \left( \text{\rm det} \xi \right)_+ = \begin{cases} 
																				(\lambda + 2 \mu) \lvert \xi_1 + \xi_2 \rvert^2 & \text{if } \xi_1 \xi_2 \geq 0, \\
																				\mathbf A_0 \xi:\xi & \text{otherwise,}
																			\end{cases}
\end{equation}
where $\xi_1 \leq \xi_2$ are the ordered eigenvalues of $\xi \in \Msd$ (see \cite[Theorem 5.3]{AL}). Moreover, the convex set $\KK$ is bounded (see Figure \ref{fig:K isotropic}) and explicitly determined by the Lam\'e coefficients of $\mathbf A_0$.
\end{rem}
\begin{figure}[hbtp]
\centering
\begin{minipage}{0.35\textwidth}
        \includegraphics[width=\textwidth, trim=1cm 2cm 2cm 1.5cm, clip]{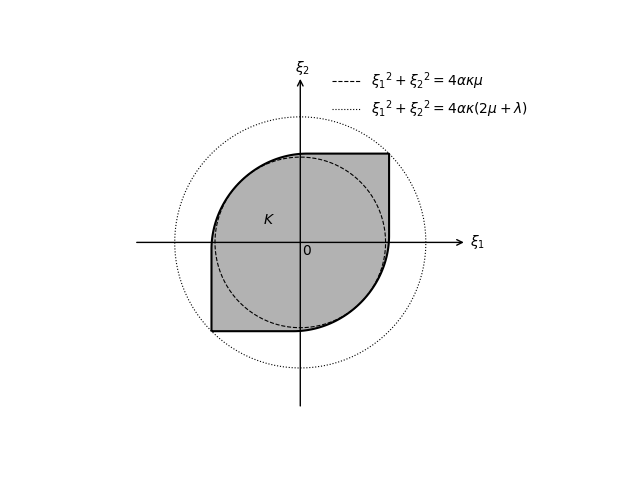}
\end{minipage}
\begin{minipage}{0.3\textwidth}
\includegraphics[width=\textwidth, trim=1cm 1.4cm 2cm 1cm, clip]{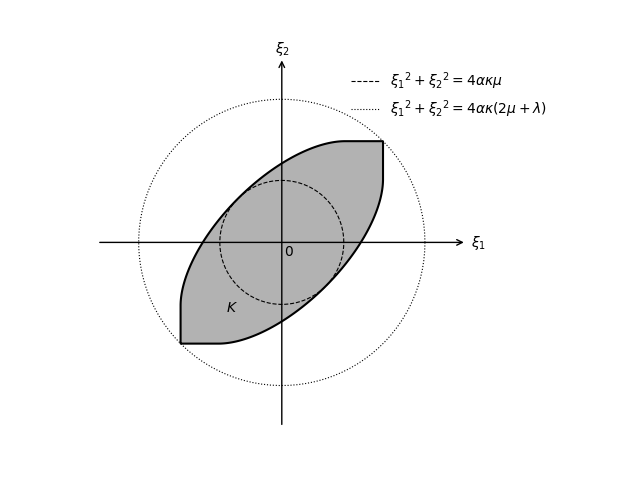}
\end{minipage}
\caption{Representation of the convex set $\KK$ in the isotropic case, in terms of the ordered eigenvalues $\xi_1 \leq \xi_2$ of $\xi \in \Msd$, for different values of Lam\'e coefficients.}
\label{fig:K isotropic}
\end{figure}

We now have everything at hand to sate our main result, that is the following $\Gamma$-convergence analysis of the discrete brittle damage energies $\F_\e$ given by \eqref{eq:F_eps}.

\begin{thm}\label{thm:1}
The functionals $\F_\e$ $\Gamma$-converge as $\e \to 0$ with respect to the strong $L^1(\O;\mathbb{R}^2) \times L^1(\O)$-topology to the functional $\F_{\alpha,\beta}: L^1(\O; \mathbb{R}^2) \times L^1(\O) \to [0, \infty]$ defined, according to the convergence rates $\alpha, \beta $, by:
\begin{itemize}[leftmargin=*,label=-]
\item {\it Linear elasticity:} if $\alpha = \infty$ or $\beta=\infty$, then
$$
\F_{\alpha,\beta}(u,\chi) = \begin{cases}
							 \frac12 \ds \int_\O  {\bf A}_1 e(u):e(u) \, dx  & \text{if } \chi=0 \text{ a.e.} \text{ and } u \in H^1(\O; \mathbb{R}^2) , \\
							+ \infty & \text{otherwise;}
							 \end{cases}
$$
	 							 
\item {\it Trivial regime:} if $\alpha=\beta=0$ and $\theta_0 < 45^\circ$, then
$$
\F_{\alpha,\beta}(u,\chi) = \begin{cases}
							 0 & \text{if } \chi=0 \text{ a.e.} \text{ and } u \in L^1(\O; \mathbb{R}^2) ,\\
							+ \infty & \text{otherwise;}
							 \end{cases}
$$

\item {\it Hencky plasticity:} if $\beta=0<\alpha<\infty$, $\theta_0< \arctan (1/4)$, and $\mathbf A_0$ is isotropic, then
$$
\F_{\alpha,\beta}(u,\chi) = \begin{cases}
							  \ds \int_\O  \bar W_\alpha(e(u))\, dx + \ds \int_{\bar{\O}} \bar W_\alpha^\infty \left( \frac{d E^s(u)}{d |E^s(u)|} \right) \, d |E^s(u)|  & \text{if }  \chi=0 \text{ a.e. and } u \in BD(\O) , \\
							 +\infty & \text{otherwise;}
							 \end{cases}
$$

\item {\it Brittle fracture:} if $\alpha=0<\beta<\infty$ and $\theta_0 < 45^\circ - \text{\rm arctan}(1/2)$, then
$$
\F_{\alpha,\beta}(u,\chi) = \begin{cases}
							 \frac12 \ds \int_\O  {\bf A}_1 e(u):e(u) \, dx +  \beta \kappa \sin \theta_0 \HH^1(J_u)  & \text{if }  \begin{cases} \chi=0 \text{ a.e.} ,\\  u \in GSBD^2(\O)\cap L^1(\O;\R^2), \end{cases} \\
							+ \infty & \text{otherwise;}
							 \end{cases}
$$

\item {\it Intermediate between brittle fracture and plasticity: } if $\alpha,\beta \in (0,\infty)$ and $\theta_0 < \arctan (1/4)$, then
$$
\F_{\alpha,\beta}(u,\chi) = \begin{cases}
							 \frac12 \ds \int_\O  {\bf A}_1 e(u):e(u) \, dx +  \ds \int_{J_u} \phi_{\alpha,\beta} \left( \left\lvert \sqrt{\mathbf A_0} (\left[ u \right] \odot \nu_u) \right\rvert \right)  & \text{if } \begin{cases} \chi=0 \text{ a.e.,} \\ u \in SBD^2(\O), \end{cases} \\
							 +\infty & \text{otherwise.}
							 \end{cases}
$$
\end{itemize}
\end{thm}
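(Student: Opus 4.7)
The strategy is to prove Theorem \ref{thm:1} case by case, separately establishing in each of the five regimes the three standard steps of a $\Gamma$-convergence proof: equi-coercivity to identify the ambient space of cluster points, the liminf inequality, and the construction of recovery sequences. The unifying tool is one-dimensional slicing: since on each triangle $T$ both $e(u_\e)$ and $\chi_\e$ are constant, restricting $u_\e$ to lines in a given direction $k \in \mathbb S^1$ reduces \eqref{eq:F_eps} to a family of scalar one-dimensional brittle damage energies, whose $\Gamma$-limits are accessible by classical arguments; taking a supremum over $k$ and using the definition of $g$ as a sup of projections onto the spaces $\mathbf A_0^{1/2} V_k$ then recovers the vectorial densities appearing in the statement.

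I would first establish compactness and identify the admissible limits in each regime. In the linear-elastic and trivial regimes the competition between $\kappa/\e$ and $\eta_\e$ forces $\chi_\e \to 0$ strongly in $L^1$ and yields, respectively, an $H^1$ bound on $u_\e$ or no control at all. In the Hencky regime, the bounds $\int (1-\chi_\e)\mathbf A_1 e(u_\e){:}e(u_\e)\,dx \leq C$ and $\int \chi_\e\,dx \leq C\e$, combined via Cauchy--Schwarz, provide an $L^1$ bound on $e(u_\e)$ and hence $BD$ compactness after subtracting infinitesimal rigid motions; in the fracture and intermediate regimes one instead retains $L^2$ control of $e(u_\e)$ on the sound part together with a small damaged area, and a Chambolle--Crismale type compactness theorem yields a limit in $GSBD^2$. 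The liminf inequality then proceeds by slicing in every direction $k$: for almost every line parallel to $k$, the induced scalar one-dimensional functional is a classical brittle damage energy whose $\Gamma$-limit I can compute explicitly as a quadratic-then-linear density (intermediate), a fracture-type density (for $\alpha=0$), or a Hencky-plastic density (for $\beta=0$); Fatou's lemma, a supremum over $k$, and a Goffman--Serrin / Fonseca--M\"uller integral representation then assemble these 1D bounds into the claimed bulk and jump densities through the explicit formulas for $g$ and $h$ of the remark.

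The construction of recovery sequences is geometric. For sufficiently regular target fields, I would place a nearly-regular triangulation of size $h_\e$ and insert a strip of a single damaged triangle along each connected component of the limit jump set, letting $u_\e$ transition affinely across the strip; the width being $O(h_\e)$ and the triangles having minimal angle $\theta_0$, optimizing the elastic cost of such a strip ($\sim \eta_\e |[u]|^2 / h_\e$) against its dissipation ($\sim \kappa h_\e/\e$) produces exactly the density $\phi_{\alpha,\beta}$ of \eqref{eq:density intermediate} in the intermediate regime, and its degenerations in the plastic and fracture cases. Extension to general $SBD^2$, $BD$ or $GSBD^2$ fields follows from standard density arguments on jump sets. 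The main obstacle I anticipate is the liminf inequality in the intermediate and fracture regimes: matching an arbitrary admissible triangulation to an arbitrary jump normal $\nu_u$ and proving the sharpness of the geometric factor $\sin \theta_0$ require a careful analysis of the possible strip configurations compatible with the minimal angle constraint, and this is precisely where the quantitative thresholds $\theta_0 < 45^\circ$, $< \arctan(1/4)$ and $< 45^\circ - \arctan(1/2)$ in the statement come into play.
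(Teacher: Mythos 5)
Your central device for the lower bound — slicing onto lines in every direction $k$ and then assembling one-dimensional $\Gamma$-limits — does not work here, and this is precisely the obstruction the paper has to circumvent. The one-dimensional argument only yields the densities $\phi_{\alpha,\beta}$ (and the $\sin\theta_0$-weighted surface term) because the mesh forces every damaged interval to have length at least $h_\e\sim\e$; but when you restrict a two-dimensional triangulation to a line, the chords cut out of the triangles can be arbitrarily short (near a vertex), so the sliced functionals lose exactly the minimal oscillation scale on which the 1D estimate $f_j^\e(l_j^\e)\geq\phi([w_\e])$ rests. Moreover the factor $\sin\theta_0$ in the surface density is an intrinsically two-dimensional geometric quantity (the minimal area of a chain of admissible triangles covering a unit length of crack): a supremum over slicing directions cannot produce it, nor can a Goffman--Serrin assembly of 1D densities reproduce the inf-convolution $\bar W_\alpha=f\infconv\sqrt{2\alpha\kappa h}$, whose structure comes from the Hashin--Shtrikman bound rather than from directional restrictions. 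The paper instead gets the Hencky lower bound for free from the continuum result of Babadjian--Iurlano--Rindler (since $X_{h_\e}(\O)\subset H^1\times L^\infty$), and obtains the jump lower bound in the fracture and intermediate regimes by a blow-up argument combined with the genuinely 2D estimate $\liminf_\e\int\chi_\e/\e\geq 2\beta\sin\theta_0$ from the Griffith-approximation paper, plus Cauchy--Schwarz and weak* lower semicontinuity; you would need some substitute for these inputs.

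Two further points. First, your recovery construction (a damaged strip along the limit jump set, with affine transition) addresses only jump-type limits; in the regime $\beta=0<\alpha<\infty$ the density $\bar W_\alpha(e(u))$ must be attained for purely diffuse strains with $e(u)$ outside the convex set $\KK$, and this requires distributed damage in the form of explicit laminated microstructures (one or two orthogonal directions of lamination, with layer widths tuned to $h_\e$, and a delicate gluing of the local triangulations; isotropy of $\mathbf A_0$ is used to identify the limit with $\sqrt{2\alpha\kappa h(\tau)}$). A single strip per jump component cannot produce this bulk relaxation. Also, in the intermediate regime the strip width cannot simply be $O(h_\e)$ with triangles of angle $\theta_0$: for large jump amplitudes the interpolation points must be moved to distance of order $\e\lvert\sqrt{\mathbf A_0}[u]\odot\nu\rvert$ from the jump, i.e.\ the mesh must be adapted to the amplitude of the effective jump, otherwise one only recovers the quadratic branch of $\phi_{\alpha,\beta}$ and overshoots the linear one. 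Second, you locate the role of the thresholds $\theta_0<45^\circ$, $45^\circ-\arctan(1/2)$, $\arctan(1/4)$ in the liminf inequality; in fact the lower bound holds for every $\theta_0>0$, and these restrictions are needed for the upper bound, because the explicitly constructed optimal (and glued) triangulations contain triangles whose angles equal these threshold values.
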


\begin{rem} 
\begin{itemize}[leftmargin=*,label=-]
\item In the last four regimes, one has to further assume that the minimal angle $\theta_0>0$ introduced in the definition of admissible meshes is actually smaller than an ad hoc angle $\Theta_0\in \{45^\circ, 45^\circ - \text{\rm arctan}(1/2), \arctan (1/4) \}$, as the optimal triangulations we (explicitly) construct make use of triangles with angles equal to $\Theta_0$.
\item Let us also point out that the choice of the $L^1(\O;\R^2) \times L^1(\O)$-topology might sometimes be improved (see the regime of brittle fracture \cite{BB}), in the sense that the natural topology should be the one for which sequences of displacements with uniformly bounded energies prove to be compact. Here, for simplicity, we confine ourselves to the strong $L^1(\O;\R^2) \times L^1(\O)$-topolgy which in particular provides compactness (see Proposition \ref{prop:domains}), whatever the regime under consideration.
\end{itemize}
\end{rem}

Let us briefly comment the results of Theorem \ref{thm:1}. First note that in all the regimes where $\beta = 0$ ({\it i.e.} $h_\e \ll \e$), we recover the same three effective models as \cite{BIR}, according to the respective regimes in $\alpha$: a model of linear elasticity when $\eta_\e \gg \e$, a trivial model when $\eta_\e \ll \e$, and a model of Hencky plasticity when $\eta_\e \sim \e$. Formally, when the size of the mesh is negligible compared to $\e$, the convergence of the space discretization towards the continuous model occurs infinitly more rapidly than the other phenomena involved, so that the discrete models behave essentially as the continuous ones. In order to motivate the interpretation of the limit model as one of Hencky plasticity when $\eta_\e \sim \e$, let us reformulate the expression of the limit energy $\E(u) := \F_{\alpha,\beta}(u,0)$ for a given displacement $u \in BD(\O)$. Let us write the Radon-Nikod\'ym decomposition of $E(u) \in \M(\O;\Msd)$ with respect
to the Lebesgue measure, $Eu = e(u) \LL^2 \res \O + E^s(u)$, and use the definition of the infimal convolution to split the absolutely continuous part of the strain as $e(u) = e + p^a$ with $e\in L^2(\O;\Msd)$ and $p^a \in L^1(\O;\Msd)$ such that 
$$
\bar W_\alpha (e(u)) = \frac12 \mathbf A_1 e:e + I_\KK^*(p^a) \quad \LL^2 \text{-a.e. in } \O.
$$ 
Then, defining $p = p^a \LL^2 \res \O + E^s(u) \in \M(\O;\Msd)$, we get that $E(u) = e \LL^2 \res \O + p$ and
$$
\E(u) = \frac12 \int_\O \mathbf A_1 e:e \, dx + \int_\O I_\KK^* \left( \frac{d p }{d \lvert p \rvert} \right) \, d \lvert p \rvert.
$$
In other words, the strain $E(u)$ is additively decomposed in an elastic strain $e$ accounting for the reversible deformations and a plastic strain $p$ accounting for the permanent ones, which respectively contributes in the elastic and dissipative terms of the energy, where the support function of $\KK$
$$I_\KK^* (\xi) = \sup_{\tau \in \KK} \xi : \tau$$
stands for the plastic dissipation potential (see \cite{Suquet}). Moreover, this decomposition is optimal in the sense that it minimizes the balance of the total energy between the elastic energy stored in the medium and the cost due to plastic deformations. Therefore, this effective model can indeed be interpreted as one of Hencky plasticity, and we refer to \cite{M2016,Suquet} for more details on this subject.

\medskip
On the other hand, when the size of the mesh is of order $\e$, we obtain two additional effective models where concentration phenoma occur. In other words, the limiting energies put in competition the elastic energy stored in the material and a dissipative fracture-type term, which is supported on the discontinuity set of the displacement. Then, depending on the scaling of $\eta_\e$ with respect to $\e$, we observe two distinct behaviors in the surface densities of these dissipative terms.
\begin{itemize}[leftmargin=*,label=-]
\item When $\alpha = 0$, we obtain a constant surface density $\beta \kappa \sin \theta_0$. Thus, the surface energy is proportional to the length of the fracture, given by its one-dimensional Hausdorff measure, which corresponds to a model of brittle fracture as conjectured by Allaire-Jouve-Van Goethem in \cite{AJVG}.
\item Finally, when $\alpha \in (0, +\infty)$, the convergence of the space discretization, the concentration of damaged zones, and the degeneration of the elastic properties of the damaged phase all compete simultaneously in a non-trivial way. Heuristically, we are halfway in between the Hencky plasticity regime ($\eta_\e \sim \e$) and the fracture regime ($h_\e \sim \e$). We obtain a surface density $\phi_{\alpha,\beta} \left( \lvert \sqrt{\mathbf A_0} \left[ u \right] \odot \nu_u \rvert \right)$ of the form $1 + \left\lvert u^+ - u^- \right\rvert$. This variational model is qualitatively similar to those introduced in \cite{DMI,I} and \cite{ALRC}, where the total energy is given by
$$
\int_\O \frac12 \left\lvert \nabla u \right\rvert^2 \, dx + \gamma \HH^1(J_u) + \sigma_0 \int_{J_u} \left\lvert u^+ - u^- \right\rvert \, d \HH^1
$$
for a scalar displacement $u: \O \to \mathbb{R}$. The cost of anelastic deformations differs depending on the magnitude of the jump. For small amplitude of jumps, the cost to pay is given by the constant surface density equal to the toughness $\gamma > 0$ ({\it i.e.}, the energy per unit area to locally fracture the material), as in brittle fracture. While for large jumps, the surface density spent grows linearly with the amplitude of the jump: $\sigma_0 \left\lvert u^+ - u^- \right\rvert$, as in plasticity, where $\sigma_0 > 0$ represents the threshold of stresses above which elastic deformations give way to plastic deformations. Here, the surface density exhibits the same behavior: for small jumps, the dominant dissipative term corresponds to the fracture energy ($\gamma = \beta \kappa \sin \theta_0$), while it becomes negligible compared to the cost of plastic deformation ($\sigma_0 = \sqrt{2 \kappa \alpha}$) for large jumps. Said differently, the anelastic energy can be interpreted as intermediate between brittle fracture and plasticity, as it can be decomposed as the sum of the length of the crack weighted by the effective material's toughness and a surface energy accounting for plastic deformations,
$$
\beta \kappa \sin \theta_0 \HH^1(J_u)
+
 \int_{J_u}  \psi_{\alpha,\beta} \left(\left\lvert \sqrt{\mathbf A_0} \left[ u \right] \odot \nu_u \right\rvert \right) \, d \HH^1,
$$ 
whose density $
\psi_{\alpha,\beta}(t) := \phi_{\alpha,\beta}(t) - \beta \kappa \sin \theta_0 \geq 0$
has quadratic growth near the origin and linear growth at infinity in terms of the amplitude of the jump, with a slope given by the threshold of stresses $\sqrt{2 \kappa \alpha}$ above which elastic deformations yield to plastic ones (see Figure \ref{fig:plastic surface energy density}).
\end{itemize}

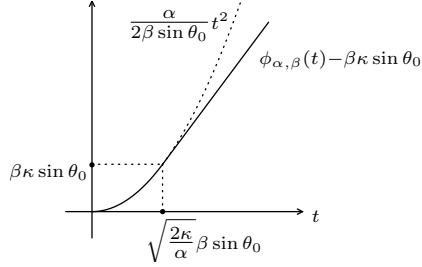
\begin{figure}[hbtp]
\begin{tikzpicture}[line cap=round,line join=round,>=triangle 45,x=0.35cm,y=0.35cm]
\clip(-7.5,-1.9) rectangle (13,8.);
\draw[line width=.5pt,smooth,samples=100,domain=0:2*sqrt(1.8)] plot(\x,{(\x)^(2)/4});
\draw[line width=.5pt,smooth,samples=100,dotted,domain=2*sqrt(1.8):4.3*sqrt(1.8)] plot(\x,{(\x)^(2)/4});

\draw[line width=.5pt,smooth,samples=100,domain=2*sqrt(1.8):5*sqrt(1.8)] plot(\x,{(sqrt(1.8)*(\x)-1.8)});
\draw [line width=.5pt] (0.,8.)-- (0.,-1.);
\draw [line width=.5pt] (0.,8.)-- (-0.1,7.8);
\draw [line width=.5pt] (0.,8.)-- (0.1,7.8);
\draw [line width=.5pt] (-1.,0.)-- (8.,0.);
\draw [line width=.5pt] (8.,0.)-- (7.8,0.1);
\draw [line width=.5pt] (8.,0.)-- (7.8,-0.1);
\draw [line width=0.5pt,dotted] (2.68,0.)--(2.68,1.8);
\draw [line width=0.5pt,dotted] (0.,1.8)--(2.68,1.8);

\draw (6,6.5) node[anchor=north west] {$\scriptstyle{\phi_{\alpha,\beta}(t) - \beta \kappa \sin \theta_0}$};
\draw (8.,0.4) node[anchor=north west] {$\scriptstyle{t}$};
\draw (1.6889277940965626,0.1) node[anchor=north west] {$\scriptstyle{\sqrt{\tfrac{2 \kappa}{\alpha}} \beta \sin \theta_0}$};
\draw (-3.5,2.152688762251922) node[anchor=north west] {$\scriptstyle{\beta \kappa \sin \theta_0}$};
\draw (1.,8) node[anchor=north west] {$\scriptstyle{\tfrac{\alpha}{2\beta \sin \theta_0}t^2}$};

\fill (2.6832815634816676,0.) circle (1.0pt);
\fill (0.,1.7772368605626672) circle (1.pt);
\end{tikzpicture}
\caption{Plastic surface energy density.}
\label{fig:plastic surface energy density}
\end{figure}

In conclusion, the present analysis seems to fully unify the works of \cite{BIR} and \cite{AJVG} under a common reading, stipulating that fracture-type models are only asymptotically captured by Francfort-Marigo's model \eqref{eq:E_e(u,chi)} in regimes where the minimal scale of spatial oscillations $h_\varepsilon$ is of order $ \varepsilon$.

\subsection{Strategy of proof}
Let $\F'_{\alpha,\beta}$ and $\F''_{\alpha,\beta}:L^1(\O;\R^2) \times L^1(\O) \to [0,+\infty]$ be the $\Gamma$-lower and $\Gamma$-upper limits respectively, {\it i.e.},  for all $(u,\chi) \in L^1(\O;\R^2) \times L^1(\O)$,
$$\F_{\alpha,\beta}'(u,\chi):=\inf\left\{\liminf_{\e \to 0}\F_\e(u_\e,\chi_\e) : (u_\e,\chi_\e) \to (u,\chi) \text{ in }L^1(\O;\R^2) \times L^1(\O)\right\},$$
and
$$\F_{\alpha,\beta}''(u,\chi):=\inf\left\{\limsup_{\e \to 0}\F_\e(u_\e,\chi_\e) : (u_\e,\chi_\e) \to (u,\chi) \text{ in }L^1(\O;\R^2) \times L^1(\O)\right\}.$$
As usual in $\Gamma$-convergence, the proof is achieved by combining a compactness result to identify the domain of finiteness of the $\Gamma$-limit, a lower bound $(a)$ and an upper bound $(b)$ inequality, {\it i.e.}
$$
\F_{\alpha,\beta} \underset{(a)}{\leq } \F'_{\alpha,\beta} \leq \F''_{\alpha,\beta} \underset{(b)}{\leq} \F_{\alpha,\beta}. 
$$

\medskip
Our compactness result, Proposition \ref{prop:domains}, rests on both compactness results in $BD(\O)$ if $\alpha\in (0,\infty]$ and in $GSBD$ (see \cite{DM2}) if $\beta \in (0,\infty]$. Given a sequence $\{ (u_\e,\chi_\e)  \}_{\e>0}$ with uniformly bounded energy such that $u_\e \to u$ strongly in $L^1(\O;\R^2)$, one can directly infer that
$$
\int_\O \chi_\e \lesssim \e \F_\e(u_\e,\chi_\e) \to 0 \quad \text{as } \e \searrow 0
$$
and, provided $\alpha >0$, $\norme{ e(u_\e) }_{L^1(\O;\Msd)} \lesssim_\alpha \F_\e(u_\e,\chi_\e)$. As when $\beta >0$, one can apply \cite[Theorem 11.3]{DM2} to the modified function $v_\e :=(1 - \chi_\e) u_\e$ which formally consists of putting the value zero inside each triangle $T$ where the symmetric gradient of $u_\e$ is "large". Although this might create a jump on the boundary of $T$, its perimeter remains controled by the energy as it can be estimated by $\LL^2(T)/\e$. It leads to compactness (in measure) for the modified displacements $v_\e$ and to the fact that $u \in L^1(\O;\R^2) \cap GSBD^2(\O)$.

\medskip
In order to describe our arguments of proof for $(a)$ and $(b)$, let us assume for simplicity that $\kappa=1$, and $\O = (0,1)^2$.
It turns out that the most relevant regimes are $\eta_\e \ll \e \sim h_\e$ and $\eta_\e \sim \e \sim h_\e$. Indeed, if $\e \ll h_\e$ or $\e \ll \eta_\e$, formally the
damaged set is so small that the limit model happens to be of pure elasticity type (see Proposition \ref{prop:linear elasticity}). This can be easily seen in the one-dimensional case, when $\e \ll h_\e$, as 
$$h_\e^{-1} \norme{\chi_\e}_{L^1(\O)} = \# \{ T \, : \, \chi_\e = 1 \text{ in } T \} \leq \e h_\e^{-1} \F_\e(u_\e,\chi_\e) \to 0,
$$
so that $\chi_\e \equiv 0$ for $\e>0$ sufficiently small.  
On the other hand, if $\eta_\e \ll \e$ and $h_\e \ll \e$, the elastic energy associated with
the damaged material is so negligible that one can approximate a jump with zero energy and we obtain a trivial $\Gamma$-limit (see Proposition \ref{prop:trivial regime}), as illustrated in Figure \ref{fig:jump zero energy} in dimension one. While the proof of the regime of brittle fracture $\eta_\e \ll h_\e \sim \e$ actually consists of a trivial generalization of the approximation of the Griffith functional performed in \cite{BB} (see Proposition \ref{prop:brittle fracture}).
\begin{figure}[hbtp]
\begin{tikzpicture}[line cap=round,line join=round,>=triangle 45,x=3.cm,y=3.cm]
\clip(-1.1,-0.25) rectangle (3,0.75);
\draw [line width=.5pt] (0.,0.)-- (2.,0.);
\draw [line width=.5pt] (0.,0.2)-- (1.,0.2);
\draw [line width=.5pt] (1.,0.6)-- (2.,0.6);
\draw [line width=.5pt,dotted] (1.,0.6)-- (1.,0.);
\draw [line width=.5pt,dotted,color=red] (0.8,0.2)-- (1.2,0.6);
\draw [line width=.5pt,color=red] (1.2,0.6)-- (2.,0.6);
\draw [line width=.5pt,color=red] (0.8,0.2)-- (0.,0.2);
\draw [line width=.5pt] (0.9,0.3)-- (0.8980290203197886,0.3606616523045977);
\draw [line width=.5pt] (0.8980290203197886,0.3606616523045977)-- (0.9601692632349021,0.36016926323490217);
\draw [line width=.5pt,dotted] (0.8,0.2)-- (0.8,0.);
\draw [line width=.5pt,dotted] (1.2,0.6)-- (1.2,0.);
\draw (1.,0.8) node[anchor=north west] {$u = u^- \mathds{1}_{(0,1/2)} + u^+ \mathds{1}_{(1/2,1)}$};
\draw [color=red](0.31215110601187396,0.35) node[anchor=north west] {$u_\varepsilon$};
\draw [color=red](1.3549951147700228,0.32274734793937315) node[anchor=north west] {$\chi_\varepsilon = \mathds{1}_{\big(\tfrac{1-\delta_\varepsilon}{2}, \tfrac{1+\delta_\varepsilon}{2} \big)}$};
\draw [line width=.5pt] (0.8,-0.05)-- (1.2,-0.05);
\draw [line width=.5pt] (0.8,-0.05)-- (0.84,-0.03);
\draw [line width=.5pt] (0.8,-0.05)-- (0.84,-0.07);
\draw [line width=.5pt] (1.2,-0.05)-- (1.16,-0.03);
\draw [line width=.5pt] (1.2,-0.05)-- (1.16,-0.07);
\draw (0.2,-0.07) node[anchor=north west] {$\eta_\varepsilon + h_\varepsilon \ll \delta_\varepsilon = \sqrt{\varepsilon(h_\varepsilon + \eta_\varepsilon)} \ll \varepsilon$};
\draw (0.49,0.4638509770313844) node[anchor=north west] {$\tfrac{u^+ - u^-}{\delta_\varepsilon}$};
\draw (-0.9,0.7) node[anchor=north west] {${\mathcal F}_\varepsilon (u_\varepsilon,\chi_\varepsilon) = \delta_\varepsilon \left(  \tfrac{ \eta_\varepsilon \lvert \left[ u \right] \rvert^2}{2\delta_\varepsilon^2} + \tfrac{1}{\varepsilon} \right) \to 0$};
\end{tikzpicture}
\caption{Approximation of a step function by finite elements with zero energy}
\label{fig:jump zero energy}
\end{figure}
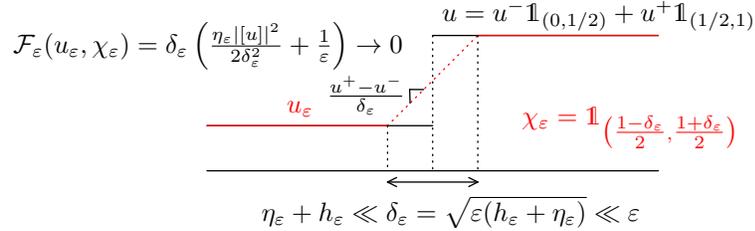

\medskip

Let us now pass to the case $\beta=0<\alpha$. Let us assume for simplicity that $\eta_\e = \e \gg h_\e$, and consider the case where $u $ is affine with $e(u) = \xi \in \Msd$. In this regime, the construction of a recovery sequence is strongly inspired by the connection between the undergoing homogenization process and the reachability of the Hashin-Shtrikman bound by laminated composite materials. More precisely, introducing the density
$$
W_\e(\xi) = \min \left( \frac{1}{\e} + \frac{\e}{2} \mathbf A_0 \xi : \xi \, ; \, \frac12 \mathbf A_1 \xi:\xi \right) = \min_{\chi \in \{0,1\}} \left\{ \chi \left( \frac{1}{\e} + \frac{\e}{2} \mathbf A_0 \xi : \xi  \right) + (1 - \chi)  \frac12 \mathbf A_1 \xi:\xi \right\}
$$
and its symetric quasiconvex envelope
$$
SQW_\e(\xi) = \inf_{\phi \in C^\infty_c ((0,1)^2;\R^2)} \int_{(0,1)^2} W_\e(\xi + e(\phi) ) \, dx,
$$
arguing as in the proof of \cite[Proposition 3.3]{BIR} (which can be easily adapted to the non-isotropic case), one can check that $$
SQW_\e(\xi) \to \bar W_\alpha(\xi) \quad \text{as } \e \searrow 0.
$$
As shown in \cite[Propositions 2.1 and 3.2]{AK93}, the computation of $SQW_\e(\xi)$ is in direct connection with the so-called Hashin-Shtrikman bound 
\begin{multline*}
\mathscr H_\e(\theta,\xi) := \inf_{\mathbf A_\e^* \in \G_\theta (\e \mathbf A_0;\mathbf A_1)} \mathbf A_\e^* \xi:\xi  \\
= \e \mathbf A_0 \xi:\xi + (1-\theta) \sup_{\tau \in \Msd} \left\{ 2 \xi:\tau - \left( \mathbf A_1 - \e \mathbf A_0 \right)^{-1} \tau:\tau - \frac{\theta}{\e} g(\tau) \right\}, 
\end{multline*}
where the $\G$-closure set $ \G_\theta (\e \mathbf A_0;\mathbf A_1)$ is the set of all composite materials resulting (in terms of $H$-convergence) from fine mixtures of weak and strong materials $\e \mathbf A_0$ and $\mathbf A_1$ in volume proportion $\theta$ and $1-\theta$ respectively. Indeed,
$$
SQW_\e(\xi) = \inf_{\theta \in [0,1]} \left\{ \frac{\theta}{\e} + \frac12 \mathscr H_\e (\theta,\xi) \right\}
$$
and, by strict convexity, the above minimization is actually reached at a unique $\theta_\e \in [0,1]$. Next using the reachability of the Hashin-Shtrikman bound, one can find a laminated material $\mathbf A_\e^* \in \G_{\theta_\e}(\e \mathbf A_0;\mathbf A_1)$ such that
$$
SQW_\e(\xi) = \frac{\theta_\e}{\e} + \frac12  \mathbf A_\e^* \xi;\xi.
$$
More precisely, one can find $p_\e \in \llbracket 1, 4 \rrbracket$ directions of lamination $b_{i,\e} \in \mathbb S^1$ and volume proportions $\theta_{i,\e} \in [0,1]$, such that $\theta_\e = 1 - \prod_{i=1}^{p_\e} (1 - \theta_{i,\e})$, so that the material $\mathbf A_\e^*$ is obtained by iteratively homogenizing the strong phase $\mathbf A_1$ with the previously homogenized phase $\mathbf A_{i,\e}$ in the direction $b_{i+1,\e}$, in proportions $1 - \theta_{i+1,\e}$ and $\theta_{i+1,\e}$ respectively, where $\mathbf A_{0,\e} = \e \mathbf A_0$. In other words, after an extraction procedure, one can find a sequence of characteristic functions $\chi_{k,\e} \in L^\infty(\O;\{ 0, 1\})$ such that $\{\chi_{k,\e} = 1 \} $ has a laminated structure, in the sense that it is the union of layers of width $\theta_{i,\e}/k$ periodically distributed along the direction $b_{i,\e}$,
$
\chi_{k,\e} \overset{*}{\rightharpoonup} \theta_\e $ weakly* in $ L^\infty(\O)$, and
$$
\mathbf A_{k,\e} := \e \chi_{k,\e}   \mathbf A_0 + (1 - \chi_{k,\e}  ) \mathbf A_1  \quad \text{$H$-converges to $\mathbf A_\e^* $ as $ k \to \infty$.}
$$
In particular, the solutions $u_{k,\e} \in u + H^1_0(\O;\R^2)$ of
$
 {\rm div} \left( \mathbf A_{k,\e} e(u_{k,\e}) \right) =  {\rm div } \left( \mathbf A_\e^* \xi \right)
$
converge weakly to $ u$ in $H^1(\O;\R^2)$ and 
$$
\E_{k,\e} := \int_\O \left(   \tfrac{1}{\e}\chi_{k,\e}  + \tfrac{\e}{2} \mathbf A_{k,\e} e(u_{k,\e}):e(u_{k,\e}) \right)  \, dx \to \LL^2(\O) SQW_\e (\xi) \quad \text{as } k \to \infty.
$$
A naive and tempting idea then consists of a diagonal extraction $k_\e \to \infty$ as $\e \searrow 0$, such that $u_\e := u_{k_\e,\e}$ and $\chi_\e := \chi_{k_\e,\e}$ satisfy $(u_\e,\chi_\e) \to (u,0)$ strongly in $L^1(\O;\R^2) \times L^1(\O)$ and 
$$
\E_{k_\e,\e} \to \F_{\alpha,\beta}(u,0)
$$
as $\e \searrow 0$. The drawbacks of this approach are twofold. First, although $\chi_\e$ is piecewise constant on a laminated structure, we have no control on the widths of the layers $\theta_\e /k_\e$ which could be too small with respect to $h_\e$. Second, the displacement $u_\e$ cannot be piecewise affine in dimension two (due to the boundary conditions and continuity accross the interfaces $\partial \{ \chi_\e = 1 \}$), so that $(u_\e,\chi_\e)$ is not admissible. In dimension one, the first issue is easy to by-pass, by directly injecting the right scales in the laminated structure. Indeed, in this case we have $\mathscr H_\e(\theta,\xi) = \left( {\theta}{(\e  a_0)^{-1}} + (1 - \theta) a_1^{-1} \right)^{-1} \xi^2$. Hence,
$$ SQW_\e(\xi) = \frac{ \theta_\e}{\e} + \frac12 \left( \frac{\theta_\e}{\e  a_0} + \frac{1 - \theta_\e}{ a_1} \right)^{-1} \xi^2 
$$
where $\theta_\e = 0$ if $a_1 \lvert \xi \rvert \leq \sqrt{2  a_0}$ and 
$$
\theta_\e =
\e \sqrt{  \frac{a_1 a_0}{2 (a_1 - \e a_0)}} \left( \lvert \xi \rvert - \sqrt{   \frac{2  a_0}{a_1 (a_1 - \e a_0)}} \right) \gg h_\e 
$$
otherwise. Setting $m_\e = \left\lfloor \theta_\e/ h_\e \right\rfloor $ and $ l_\e = m_\e^{-1} >0$, we define the characteristic function $\chi_\e=\varrho_\e({\cdot}/{l_\e})$, where $\varrho_\e \in L^\infty(\R)$ is the $1$-periodic function equal to $ 1$ in $( 0, \theta_\e)$ and $ 0$ in $(\theta_\e,1)$, as well as the laminated material $a_\e := \e a_0 \chi_\e + a_1 (1 - \chi_\e)$ and the displacement $u_\e \in u + H^1_0((0,1))$ solution of $ (a_\e u'_\e)' = 0$ in $(0,1)$ (see Figure \ref{fig:subdivision h_eps}). Then, one can check that:
$$
u_\e  = u + \frac{(a_1 - \e a_0) \xi}{\theta_\e a_1 + (1 - \theta_\e)\e a_0}  \sum_{i=0}^{m_\e - 1} \left(  (1 - \theta_\e) \left(  \cdot - {\scriptstyle{i  l_\e}} \right) \mathds{1}_{( 0,  \theta_\e    ) }(\tfrac{\cdot }{l_\e}- i )  
  + \theta_\e \left({\scriptstyle{ (i+1) l_\e}} - \cdot \right) \mathds{1}_{ (\theta_\e, 1 )  } (\tfrac{\cdot }{l_\e}- i ) \right)
$$
is piecewise affine, $\norme{\chi_\e}_{L^1((0,1))} = \theta_\e \to 0$, $\norme{u_\e - u}_{L^1((0,1))} \leq \lvert \xi \rvert l_\e \to 0$ and $(u_\e, \chi_\e) \in X_{h_\e}((0,1))$. 
Moreover, $a_\e u'_\e \equiv \left( {\theta_\e}{(\e a_0)^{-1}} + (1 - \theta_\e)a_1^{-1} \right)^{-1} \xi$ is constant, so that
$
\F_\e(u_\e,\chi_\e) =  SQW_\e(\xi )\to \F_{\alpha,\beta}(u,0)
$ when $\e \searrow 0$.

\begin{figure}[hbtp]
\begin{tikzpicture}[line cap=round,line join=round,>=triangle 45,x=0.6cm,y=0.6cm]
\clip(-2,-0.9) rectangle (10,3.8);
\draw [line width=.5pt] (0.,0.)-- (8.,0.);
\draw [line width=.5pt] (0.5,1.)-- (1.998370714816295,1.2055416207163243);
\draw [line width=.5pt] (0.,0.5)-- (0.5,1.);
\draw [line width=.5pt] (2.4951926242495035,1.6954179582001478)-- (1.998370714816295,1.2055416207163243);
\draw [line width=.5pt] (2.4951926242495035,1.6954179582001478)-- (3.99983648789484,1.9111042114121408);
\draw [line width=.5pt] (3.99983648789484,1.9111042114121408)-- (4.493128443476388,2.3992362581550726);
\draw [line width=.5pt] (4.493128443476388,2.3992362581550726)-- (5.992096353089828,2.5922186952394872);
\draw [line width=.5pt] (5.992096353089828,2.5922186952394872)-- (6.488144594342397,3.0982042347982035);
\draw [line width=.5pt] (6.488144594342397,3.0982042347982035)-- (7.988616812086046,3.318088238697907);
\draw [line width=.5pt,dashed,domain=-1:9] plot(\x,{(--3.982179958524007--2.8145335556187643*\x)/7.964359917048014});
\draw (-0.2,-0.1) node[anchor=north west] {$0$};
\draw (7.8,-0.1) node[anchor=north west] {$1$};
\draw [line width=.5pt, dotted, opacity=0.3] (2.4951926242495035,0) -- (2.4951926242495035,1.6954179582001478);
\draw [line width=.5pt, dotted, opacity = 0.3] (4.493128443476388,-0.) -- (4.493128443476388,0.15);
\draw [line width=.5pt, dotted, opacity = 0.3] (4.493128443476388,0.5) -- (4.493128443476388,2.3992362581550726);

\draw [line width=.5pt, dotted, opacity = 0.3] (6.488144594342397,-0.) -- (6.488144594342397,3.0982042347982035);
\draw [line width=.5pt, dotted, opacity = 0.3] (0.5,-0.) -- (0.5,1.);
\draw (8.4,3.6) node[anchor=north west] {$u$};
\draw (-1,3.) node[anchor=north west] {${{u_{\varepsilon} \in u + H^1_0((0,1))}}$};
\draw (1.6,0.85) node[anchor=north west] {$i l_\varepsilon$};
\draw (1.7,-0.15) node[anchor=north west] {$(i + \theta_\varepsilon) l_\varepsilon$};
\draw (3.2,0.85) node[anchor=north west] {$(i+1) l_\varepsilon$};
\begin{scriptsize}
\draw [fill] (2.,0.) circle (1.5pt);
\draw [fill] (4.,0.) circle (1.5pt);
\draw [fill] (6.,0.) circle (1.5pt);
\draw [fill] (8.,0.) circle (1.5pt);
\draw [fill] (0.,0.) circle (1.5pt);
\draw [fill, opacity=0.3] (2.4951926242495035,0) circle (1.5 pt);
\draw [fill, opacity=0.3] (4.493128443476388,0) circle (1.5 pt);
\draw [fill, opacity=0.3] (6.488144594342397,0) circle (1.5 pt);
\draw [fill, opacity=0.3] (0.5,0) circle (1.5 pt);

\end{scriptsize}
\end{tikzpicture}
\caption{Recovery sequence in the case of an affine displacement, in dimension one.}
\label{fig:subdivision h_eps}
\end{figure}
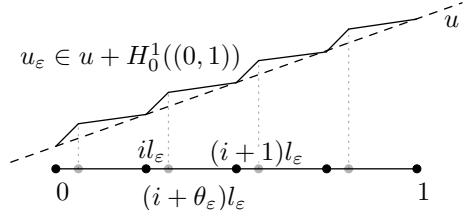
\noindent

While the issues mentionned above are trivially by-passed in dimension one, overcoming them in the vectorial setting is much more technical. Essentially, we cannot directly use the laminated structures and displacements inherited from the $H$-convergence described above. Instead, we will explicitly construct a laminated structure directly at the right scales, that mimics the properties of the optimal (but non admissible) structure $\mathbf A_{k_\e,\e}$ that are effectively needed to pass to the limit as $\e \searrow 0$. Unfortunatly, with this constructive proof, we are not able to deal with a general stiffness $\mathbf A_0$, and we concede to the case where $\mathbf A_0$ is isotropic (see Proposition \ref{prop:hencky plasticity}). 

\medskip

It finally remains the regime where $\alpha , \beta \in (0,+\infty)$. Let us assume that $h_\e = \e = \eta_\e$ for simplicity, $\mathbf A_0 = \mathbf A_1 = \text{\rm id}$, and fix $u \in SBD^2(\O)$. Here, the most difficult part lies in the derivation of the upper bound. In order to recover the surface energy, whose density is 
$$ \phi(\left[ u \right]) = \begin{cases} 
								\sin \theta_0 + \tfrac{1}{2 \sin \theta_0} \lvert \left[ u \right] \odot \nu_u \rvert^2 & \text{if } \lvert \left[u\right] \odot \nu_u \rvert \leq \sqrt{2} \sin \theta_0, \\
								\sqrt{2} \lvert \left[ u \right] \odot \nu_u \rvert & \text{otherwise, }
							\end{cases} $$
one has to take the amplitude of the effective jump into account, when it comes to the construction of the optimal triangulation. To illustrate this, let us consider the step function $u \in SBV^2(\O)$ defined by $u \equiv u^-$ in $(0,1/2)\times(0,1)$ and $u \equiv u^+ \neq u^- $ in $(1/2,1) \times (0,1)$, and let us consider the triangulation $\widetilde{\mathbf T}_\e$ introduced in \cite{CDM,BC} for the Mumford-Shah functional (also used here and in \cite{BB} for the regime of brittle fracture), as well as the Lagrange interpolation $\tilde u_\e$ of the values of $u$ at the vertices of $\widetilde{\mathbf T}_\e$ and $\tilde \chi_\e$ the characteristic function of the neigbhorhood of $J_u $ as illustrated in Figure \ref{fig:intro wrong recovery}. Then, one gets 
$$
\F_\e(\tilde u_\e, \tilde \chi_\e) = \int_\O \left(  \frac{\e \tilde \chi_\e }{2} \lvert e(\tilde u_\e ) \rvert^2 + \frac{\tilde \chi_\e}{\e} \right)\, dx =  \left(\sin \theta_0 + \frac{\lvert \left[ u \right] \odot \nu \rvert^2}{2 \sin \theta_0} \right)\HH^1(J_u) .
$$
If $\lvert \left[ u \right] \odot \nu \rvert \leq \sqrt{2} \sin \theta_0$, this construction is giving the right upper bound. Yet, for a large amplitude of jump $\lvert \left[ u \right] \odot \nu \rvert > \sqrt{2} \sin \theta_0$, this construction yields a too large upper bound. While if instead of $\widetilde{\mathbf T}_\e$, we had used $\widehat{\mathbf T}_\e$ together with the associated Lagrange interpolation $\hat u_\e$ and characteristic function $\hat \chi_\e$ as illustrated in Figure \ref{fig:intro wrong recovery}, then we would have get
$$
\F_\e(\hat u_\e, \hat \chi_\e) = \int_\O \left(   \frac{\e \hat \chi_\e }{2} \lvert e(\hat u_\e ) \rvert^2 + \frac{\hat \chi_\e}{\e} \right)\, dx =  \left(l_\e + \frac{\lvert \left[ u \right] \odot \nu \rvert^2}{2l_\e} \right)\HH^1(J_u) = \sqrt{2} \lvert \left[ u \right] \odot \nu \rvert
$$
as $l_\e = \e \sqrt{2}^{-1} \lvert \left[ u \right] \odot \nu \rvert$. In other words, in the general situation where $\left[ u \right]$ might not be constant along its jumpset, one has to adapt the construction of the triangulation according to the amplitude of the effective jump: greater the amplitude of the jump is, further away from the jump set must the interpolation points be (see Figure \ref{fig:recovery cohesive fracture}).

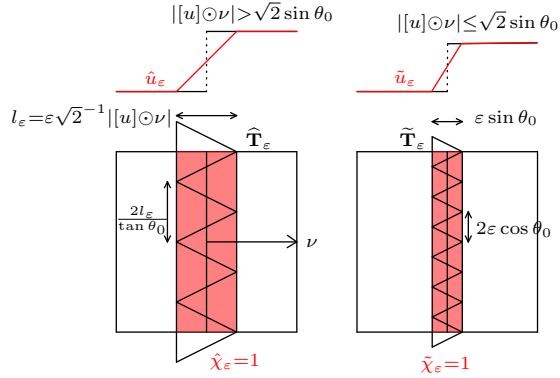
\begin{figure}[hbtp]
\centering
\begin{tikzpicture}[line cap=round,line join=round,>=triangle 45,x=.4cm,y=0.4cm]
\clip(-11.5996567352469,-1.8) rectangle (11.29908840217944,11.3);

\draw [line width=.5pt] (-7.,6.)-- (-1.,6.);
\draw [line width=.5pt] (-7.,0.)-- (-1.,0.);
\draw [line width=.5pt] (-7.,0.)-- (-7.,6.);
\draw [line width=.5pt] (-1.,6.)-- (-1.,0.);
\fill [color=red,opacity=0.5] (-5,0)--(-5,6)--(-3,6)--(-3,0)--(-5,0)--cycle;

\draw [line width=.5pt] (1.,6.)-- (7.,6.);
\draw [line width=.5pt] (1.,0.)-- (7.,0.);
\draw [line width=.5pt] (1.,0.)-- (1.,6.);
\draw [line width=.5pt] (7.,6.)-- (7.,0.);

\fill [color=red,opacity=0.5] (3.5,0)--(3.5,6)--(4.5,6)--(4.5,0)--(3.5,0)--cycle;

\draw [line width=.5pt] (-4.,6.)-- (-4.,0.);
\draw [line width=.5pt] (4.,6.)-- (4.,0.);

\draw [line width=0.5pt] (-5,-1)--(-5,7);
\draw [line width=0.5pt] (-3,0)--(-3,6);

\draw [line width=.5pt] (-5.,5.)-- (-3.,6.);
\draw [line width=.5pt] (-3.,6.)-- (-5.,7.);
\draw [line width=.5pt] (-5.,5.)-- (-3.,4.);
\draw [line width=.5pt] (-5.,3.)-- (-3.,4.);
\draw [line width=.5pt] (-3.,2.)-- (-5.,3.);
\draw [line width=.5pt] (-5.,1.)-- (-3.,2.);
\draw [line width=.5pt] (-3.,0.)-- (-5.,1.);
\draw [line width=.5pt] (-5.,-1.)-- (-3.,0.);

\draw [line width=0.5pt] (-5.3,5)--(-5.3,3);
\draw [line width=0.5pt] (-5.3,5)--(-5.4,4.8);
\draw [line width=0.5pt] (-5.3,5)--(-5.2,4.8);
\draw [line width=0.5pt] (-5.3,3)--(-5.4,3.2);
\draw [line width=0.5pt] (-5.3,3)--(-5.2,3.2);
\draw (-7.35,4.5) node[anchor=north west] {$ \scriptstyle{\frac{2 l_\e}{ \tan \theta_0}}$};

\draw [line width=0.5pt] (4.7,4)--(4.7,3);
\draw [line width=0.5pt] (4.7,4)--(4.8,3.8);
\draw [line width=0.5pt] (4.7,4)--(4.6,3.8);
\draw [line width=0.5pt] (4.7,3)--(4.8,3.2);
\draw [line width=0.5pt] (4.7,3)--(4.6,3.2);
\draw (4.65,4) node[anchor=north west] {$\scriptstyle{2 \e\cos \theta_0}$};

\draw [line width=.5pt] (3.5,6.5)-- (3.5,5.5);
\draw [line width=.5pt] (3.5,5.5)-- (4.5,6.);
\draw [line width=.5pt] (4.5,6.)-- (3.5,6.5);
\draw [line width=.5pt] (3.5,5.5)-- (3.5,-0.5);
\draw [line width=.5pt] (4.5,6.)-- (4.5,0);
\draw [line width=.5pt] (3.5,5.5)-- (4.5,5.);
\draw [line width=.5pt] (4.5,5.)-- (3.5,4.5);
\draw [line width=.5pt] (3.5,4.5)-- (4.5,4.);
\draw [line width=.5pt] (4.5,4.)-- (3.5,3.5);
\draw [line width=.5pt] (3.5,3.5)-- (4.5,3.);
\draw [line width=.5pt] (4.5,3.)-- (3.5,2.5);
\draw [line width=.5pt] (3.5,2.5)-- (4.5,2.);
\draw [line width=.5pt] (4.5,2.)-- (3.5,1.5);
\draw [line width=.5pt] (3.5,1.5)-- (4.5,1.);
\draw [line width=.5pt] (4.5,1.)-- (3.5,0.5);
\draw [line width=.5pt] (3.5,0.5)-- (4.5,0.);
\draw [line width=.5pt] (4.5,0.)-- (3.5,-0.5);
\draw [line width=.5pt] (-7.,8.)-- (-4.,8.);
\draw [line width=.5pt] (-4.,10.)-- (-1.,10.);
\draw [line width=.5pt,dotted] (-4.,10.)-- (-4.,8.);
\draw [line width=.5pt] (1.,8.)-- (4.,8.);
\draw [line width=.5pt,dotted] (4.,8.)-- (4.010409845348733,9.590284748736297);
\draw [line width=.5pt] (4.010409845348733,9.590284748736297)-- (6.988609138924466,9.6205202745594);
\draw [line width=.5pt,color=red] (-5.,8.)-- (-3.,10.);
\draw [line width=.5pt,color=red] (3.4812881434444147,8.)-- (4.470981151606304,9.594960599561247);
\draw [line width=.5pt,color=red] (-3.,10.)-- (-1.,10.);
\draw [line width=.5pt,color=red] (-7.,8.)-- (-5.,8.);
\draw [line width=.5pt,color=red] (1.,8.)-- (3.4812881434444147,8.);
\draw [line width=.5pt,color=red] (4.470981151606304,9.594960599561247)-- (6.988609138924466,9.6205202745594);
\draw [line width=.5pt] (-4.,3.)-- (-1.,3.);
\draw [line width=.5pt] (-1.,3.)-- (-1.3108818319024143,3.2173683837453746);
\draw [line width=.5pt] (-1.,3.)-- (-1.322947219178344,2.8071452163637702);
\draw (-1,3.4) node[anchor=north west] {$\scriptstyle{\nu}$};
\draw [color=red](-4.2,-0.4) node[anchor=north west] {$\scriptstyle{\hat \chi_\varepsilon = 1}$};
\draw [color=red](-6.364146877036259,9) node[anchor=north west] {$\scriptstyle{\hat u_\varepsilon}$};
\draw (-5.5,11.3) node[anchor=north west] {$\scriptstyle{\lvert \left[ u \right] \odot \nu \rvert > \sqrt2 \sin \theta_0}$};
\draw (-3.,7.1) node[anchor=north west] {$\scriptstyle{\widehat{\mathbf T}_\varepsilon}$};

\draw (-10.8,7.9) node[anchor=north west] {$\scriptstyle{l_\e = \varepsilon \sqrt{2}^{-1} \lvert \left[ u \right] \odot \nu \rvert}$};
\draw [line width=.5pt] (-5.,7.2)-- (-3.,7.2);
\draw [line width=.5pt] (-3.,7.2)-- (-3.2,7.1);
\draw [line width=.5pt] (-3.,7.2)-- (-3.2,7.3);
\draw [line width=.5pt] (-5.,7.2)-- (-4.8,7.1);
\draw [line width=.5pt] (-5.,7.2)-- (-4.8,7.3);
\draw (2.,11) node[anchor=north west] {$\scriptstyle{\lvert \left[ u \right] \odot \nu \rvert \leq \sqrt2 \sin \theta_0}$};
\draw [color=red](1.9210034036337795,9) node[anchor=north west] {$\scriptstyle{\tilde u_\varepsilon}$};
\draw (2.1,7.1) node[anchor=north west] {$\scriptstyle{\widetilde{ \mathbf T}_\varepsilon}$};
\draw [color=red](2.8,-0.5) node[anchor=north west] {$\scriptstyle{\tilde \chi_\varepsilon = 1}$};

\draw [line width=.5pt] (3.5,7.)-- (4.5,7.);
\draw [line width=.5pt] (4.5,7.)-- (4.3,7.1);
\draw [line width=.5pt] (4.5,7.)-- (4.3,6.9);
\draw [line width=.5pt] (3.5,7.)-- (3.7,7.1);
\draw [line width=.5pt] (3.5,7.)-- (3.7,6.9);
\draw (4.6,7.7) node[anchor=north west] {$\scriptstyle{\varepsilon \sin \theta_0}$};
\end{tikzpicture}
\caption{Adaptation of the mesh according to the amplitude of the effective jump}
\label{fig:intro wrong recovery}
\end{figure}

As for the lower bound, the key argument allowing to recover the dissipative part of the energy is that a minimum scale for spatial oscillations is imposed. As this is explicitly visible in the one dimensional case, we briefly sketch it. Given a sequence $(u_\e, \chi_\e) \in X_\e(\O)$ with bounded energy
$$
\sup_{\e } \F_\e(u_\e,\chi_\e)  =: M < \infty,
$$
converging to $ (u,0) $ in $ L^1(\O) \times L^1(\O) $, and adapted to a subdivision $0 = x_0^\e < ... < x_{n_\e}^\e = 1 $ in the sense that $u_\e$ is affine and $\chi_\e$ is constant on $(x_i^\e, x_{i+1}^\e)$, with $\e \leq x_{i+1}^\e - x_i^\e \leq \omega(\e)$, 
one can find $m_\e \in \N$ points $0 \leq b^\e_j < c^\e_j \leq b_{j+1}^\e \leq 1$ among the subdivision points $\{ x_i^\e\}_i$ such that 
$$D_\e := \left\{ \chi_\e = 1 \right\} = \bigcup_{j=1}^{m_\e} \big( b_j^\e, c_j^\e \big) \quad \text{and} \quad 
M \geq \frac{1}{\e} \int_0^1 \chi_\e \, dx  \geq  m_\e .
$$
In particular, up to a subsequence (not relabeled), we can assume that $m_\e \equiv m $ is constant. We directly turn to the case $m \geq 1$. The idea is to modify $ u_\e$ on the subintervals where its variation is large, as it will be energetically favorable to create a jump at their end points.
Without loss of generality, we can also assume that  
$
 \chi_\e = \mathds{1}_{   {\left\{   \lvert u_\e' \rvert^2 \geq   \e  \lvert u_\e' \rvert^2 + 2 \e^{-1} \right\} }}
$
so that $D_\e \subset \left\{ u_\e' \neq 0 \right\}$. Hence, the $ SBV^2(\O)$ functions
$$
w_\e := \begin{cases}
		u_\e & \quad \text{in } \O \setminus D_\e, \\
		u_\e(b_j^\e) & \quad \text{in } (b_j^\e, c_j^\e) \text{ for all } j \in \llbracket 1,m \rrbracket,
		\end{cases} 
$$
satisfy $ w_\e' = (1- \chi_\e) u_\e'$, $  J_{w_\e} = \{ c_j^\e \}_{1\leq j \leq m}$ and $\F_\e(u_\e,\chi_\e) = \ds \int_0^1 \tfrac12 |w_\e'|^2 \, dx + \sum_{j=1}^{m} f_j^\e(l_j^\e) $, where 
$$
l_j^\e := c_j^\e - b_j^\e \geq \e, \quad f_j^\e(l_j^\e) := \frac{\e \lvert \left[ w_\e\right](c_j^\e)\rvert^2}{2 l_j^\e} + \frac{l_j^\e}{\e} \quad \text{and} \quad \left[ w_\e \right](c_j^\e)  = u_\e(c_j^\e) - u_\e(b_j^\e) \neq 0.
$$
As mentionned above, the fact that $l_j^\e$ is larger than $\e$ is fundamental, as it ensures that 
$
f_j^\e(l_j^\e) \geq \phi \left( \left[w_\e \right](c_j^\e) \right) 
$
for all $j$. Indeed if $\sqrt{2} \geq  \lvert \left[ w_\varepsilon \right](c_j^\varepsilon) \rvert  $, then $f_j^\e(l_j^\e) \geq f_j^\e(\e) = \lvert \left[ w_\e\right](c_j^\e)\rvert^2/2 + 1$ as $f_j^\e$ is non-decreasing on $\big[ \varepsilon \lvert \left[ w_\varepsilon \right](c_j^\varepsilon) \rvert / \sqrt{2} ,\infty \big)$. Otherwise, Young's inequality yields $f_j^\e(l_j^\e) \geq \sqrt{2} \lvert \left[ w_\varepsilon \right](c_j^\varepsilon) \rvert  $.
Hence, $\F_\e(u_\e,\chi_\e) \geq   \F_{\alpha,\beta}(w_\e,0)$. 
Noticing that
$$
\int_0^1 (\lvert w'_\e \rvert^2 + \left\lvert w_\e \right\rvert) \, dx + \sum_{j=1}^m \lvert \left[ w_\e \right] \rvert (c_j^\e) + \HH^0(J_{w_\e}) \lesssim \F_\e(u_\e,\chi_\e) + m + \norme{u_\e}_{L^1(\O)}
$$
is uniformly bounded, Ambrosio's compactness Theorem \cite[Theorem 4.7]{AFP} entails that $w_\e \rightharpoonup u $ weakly-* in $ BV(\O)$ as $ \e \searrow 0$ and we conclude by lower semi-continuity of $\F_{\alpha,\beta}(\cdot, 0)$ for the weak-* convergence in $BV(\O)$ (see \cite[Theorem 5.4]{AFP}).

Althought very simple, this argument cannot be generalized to the vectorial setting by means of a slicing method, as the lengths of the triangles' sections can be arbitrarily small. Hopefully, most of the hardwork for the vectorial case has already been made in the derivation of the lower bound for the jump part of the Griffith functional, in the regime of fracture. Indeed, using first a blow-up method, we can reduce to the case where $u$ is a step function taking two distinct values $u^+,u^- \in \R^2$ on $(0,1/2)\times (0,1)$ and $(1/2,1)\times(0,1)$ respectively, $u_\e \wto u $ weakly* in $ BD(\O)$ and $ (1 - \chi_\e) e(u_\e) \to 0 $ strongly in $ L^2(\O;\Msd)$. Then, because the mesh-size is of order $\e$, \cite[Proposition 2.3]{BB} entails that
$$
\sin \theta_0 \leq \liminf_{\e \searrow 0} \int_\O \frac{\chi_\e}{\e}=: M.
$$
Hence, up to a subsequence, using respectively the lower-semicontinuity of the norm for the weak*-convergence in $\M(\O;\Msd)$ and Cauchy-Schwarz' inequality, we get
$$
 \lvert \left[ u \right] \odot \nu_u \rvert \leq \liminf_{\e} \int_\O \lvert e(u_\e) \rvert \, dx = \liminf_\e \int_\O \chi_\e \lvert e(u_\e) \rvert \, dx 
\leq \sqrt{M} \liminf_\e  \norme{ \chi_\e \sqrt\e \lvert e(u_\e)\rvert}_{L^2(\O)},
$$
so that, using the fact that $M \geq \sin \theta_0$ once more, 
$$
\liminf_\e \F_\e(u_\e,\chi_\e) = \liminf_\e \int_\O  \chi_\e \left(\frac1\e + \frac\e2  \lvert e(u_\e) \rvert^2 \right)  \, dx \geq M + \frac{  \lvert \left[u\right] \odot \nu_u \rvert^2  }{M} \geq   \F_{\alpha,\beta}(u,0).
$$

\subsection{Organization of the paper}
The proof of Theorem \ref{thm:1} will be divided into five steps, corresponding to the five possible regimes for $\alpha$ and $\beta$. In section 2, we collect useful notation and preliminary results that will be useful in the subsequent sections. Sections \ref{sec:linear elasticity}, \ref{sec:trivial regime} and \ref{sec:brittle fracture} are devoted to showing the regime of linear elasticity ($\alpha=\infty$ or $\beta=\infty$, Proposition \ref{prop:linear elasticity}), the trivial regime ($\alpha=\beta=0$, Proposition \ref{prop:trivial regime}) and the regime of brittle fracture ($\alpha=0$ and $\beta \in (0,\infty)$, Proposition \ref{prop:brittle fracture}) respectively. The proofs of these regimes cause no particular difficulty, as the first two consist of trivial adaptations of the results of \cite[Theorems 5.1 and 4.1]{BIR} to the discrete setting, while the regime of brittle fracture is a direct consequence of \cite[Theorem 1.3]{BB}. Section \ref{sec:hencky plasticity} is devoted to the proof of the regime of Hencky plasticity ($\alpha \in (0,\infty)$ and $\beta=0$, Proposition \ref{prop:hencky plasticity}). Eventually, in section \ref{sec:intermediate}, we prove the intermediate regime in between plasticity and fracture ($\alpha,\beta \in (0,\infty)$), Proposition \ref{prop:intermediate}.

\section{Notation and preliminaries}

\subsection{Notation}\label{subsec:notation}
\noindent \textbf{Matrices.} If $a$ and $b \in \R^N$, with $N \in \N \setminus \{ 0 \}$, we write $a \cdot b = \sum_{i=1}^N a_i b_i$ for the Euclidean scalar product and $\left\lvert a \right\rvert = \sqrt{a \cdot a}$ for the corresponding norm. We use the notation $\mathbb S^{N-1}$ for the unit sphere in $\R^N$, and denote by $e_i  \in \R^N$ the $i^{\text{\it th}}$ vector of the canonical basis of $\R^N$. The space of symmetric $N \times N$ matrices is denoted by $\Ms$ and is endowed with the Frobenius scalar product $\xi : \eta = {\rm Tr} (\xi \eta)$ and the corresponding norm $\left\lvert \xi \right\rvert = \sqrt{\xi : \xi}$. 

\medskip

\noindent \textbf{Measures.} The Lebesgue and $k$-dimensional Hausdorff measures in $\R^N$ are respectively denoted by $\LL^N$ and $\HH^k$. If $X$ is a borel subset of $\R^N$ and $Y$ is an Euclidean space, we denote by $\M(X;Y)$ the space of $Y$-valued bounded Radon measures in $X$ which, according to the Riesz Representation Theorem, can be identified to the dual of $C_0(X;Y)$ (the closure of $C_c(X;Y)$ for the sup-norm in $X$). The weak-* topology of $\M(X;Y)$ is defined using this duality. The indication of the space $Y$ is omitted when $Y = \R$. For $\mu \in \M(X;Y)$, its total variation is denoted by $|\mu|$ and we denote by $\mu = \mu^a + \mu^s$ the Radon-Nikod\'ym decomposition of $\mu $ with respect to Lebesgue, where $\mu^a$ is absolutely continuous and $\mu^s$ is singular with respect to the Lebesgue measure $\LL^N$.

\medskip

\noindent \textbf{Functional spaces.}  We use standard notation for Lebesgue and Sobolev spaces. 
If $U$ is a bounded open subset of $\R^N$, we denote by $L^0(U;\R^m)$ the set of all $\LL^N$-measurable functions from $U$ to $\R^m$. We recall some properties regarding functions with values in a Banach space and refer to \cite{FL,Br,DMDSM} for details and proofs on this matter. If $Y$ is a Banach space and $T > 0$, we denote by $AC([0,T];Y)$ the space of absolutely continuous functions $f : [0,T] \to Y$. If $Y$ is the dual of a separable Banach space $X$, then every function $f \in AC([0,T];Y)$ is such that the weak-* limit 
$$\dot f(t) = \text{{\it w*-}}\lim_{s \to t} \frac{f(t) - f(s)}{t-s} \in Y$$
exists for $\LL^1$-a.e. $t \in [0,T]$, $\dot f :[0,T] \to Y$ is weakly-* measurable and $t \mapsto \Vert \dot f(t) \Vert_Y \in L^1([0,T])$. If $f : [0,T] \times \R \to \R$ is a function of two variables, time and spatial derivatives will be respectively denoted by $\dot f$ and $f'$.

We recall that a sequence $\{g_k\}_{k \in \N}$ in $L^0(U;\R^m)$ converges in measure to $g \in L^0(U;\R^m)$ if for all $\e > 0$, 
$$\LL^N \left( \left\{ x \in U: \, \left\lvert g_k(x) - g(x) \right\rvert > \e \right\} \right)\to 0.$$

\medskip

\noindent\textbf{Functions of bounded variation.} We refer to \cite{AFP} for an exhaustive treatment on that subject and just recall few notation. Let $U \subset \R^N$ be a bounded open set. A function $u \in L^1(U;\R^m)$ is a {\it function of bounded variation} in $U$, and we write $u \in BV(U;\R^m)$, if its distributional derivative $Du$ belongs to $\M(U;\mathbb M^{m \times N})$. We use standard notation for that space, referring to \cite{AFP} for details. We just recall that a function $u$ belongs to $SBV^2(U;\R^m)$ if $u \in SBV(U;\R^m)$ (the distributional derivative $Du$ has no Cantor part), its approximate gradient $\nabla u$ belongs to $L^2(U;\mathbb M^{m\times N})$ and its jump set $J_u$ satisfies $\HH^{N-1}(J_u)<\infty$.

If $U$ has Lipschitz boundary, every function $u \in BV(U;\R^m)$ has an inner trace on $\partial U$ (still denoted by $u$ and $\HH^{N-1}$-integrable on $\partial U$) and there exists a constant $C > 0$ depending only on $U$ such that 
\begin{equation}\label{eq:norme BV}
\frac1C \norme{u}_{BV(U;\R^m)} \leq \lvert Du \rvert (U) + \int_{\partial U} \left\lvert u \right\rvert \, d \HH^{N-1} \leq C \norme{u}_{BV(U;\R^m)}
\end{equation}
according to \cite[Proposition 2.4, Remark 2.5 (ii)]{Temam}.

\medskip

\noindent\textbf{(Generalized) functions of bounded deformation.} A function $u \in L^1(U;\R^N)$ is a {\it function of bounded deformation}, and we write $u \in BD(U)$, if its distributional symmetric gradient $Eu:=(Du+Du^T)/2$ belongs to $\M(U;\Ms)$. We refer to \cite{Suquet,Temam,ST1,ACDM,BCDM} for the main properties and notation of that space. The space $SBD^2(U)$ is made of all functions $u \in SBD(U)$ ($Eu$ has no Cantor part) such that the approximate symmetric gradient $e(u)$ (the absolutely continuous part of $Eu$ with respect to $\LL^N$) belongs to $L^2(U;\mathbb M^{N \times N}_{\rm sym})$ and its jump set $J_u$ satisfies $\HH^{N-1}(J_u)<\infty$.

\medskip

We now recall the definition and the main properties of the space of {\it generalized functions of bounded deformation} introduced in \cite{DM2}. We first need to introduce some notation. Let $\xi \in \mathbb S^{N-1}$, we denote by $\Pi_\xi:=\{y \in \R^N : \; y \cdot \xi=0\}$ the orthogonal space to $\xi$ and by $p_\xi$ the orthogonal projection onto $\Pi_\xi$. For every set $B \subset \R^N$, we define  for $\xi \in \mathbb S^{N-1}$ and $y \in \R^N$, 
$$B_y^\xi:=\{t \in \R : \; y+t\xi \in B\}, \quad B^\xi:=p_\xi(B)$$
and, for every (vector-valued) function $u:B \to \R^N$ and (scalar-valued) function $f:B \to \R$,
$$u_y^\xi(t):=u(y+t\xi)\cdot\xi, \quad f_y^\xi(t)=f(y+t\xi)\quad \text{ for all } y \in \R^N \text{ and all } t \in B_y^\xi.$$

\begin{defn}
Let $U \subset \R^N$ be a bounded open set and $u \in L^0(U;\R^N)$. Then, $u \in GBD(U)$ if there exists a nonnegative measure $\lambda \in \M(U)$ such that one of the following equivalent conditions holds true for every $\xi \in \mathbb S^{N-1}$:
\begin{enumerate}
\item for every $\tau \in C^1(\R)$ with $-\frac12 \leq \tau \leq \frac12$ and $0 \leq \tau' \leq 1$, the partial derivative $D_\xi(\tau(u \cdot \xi))=D(\tau(u\cdot\xi))\cdot \xi$ belongs to $\M(U)$, and
$$|D_\xi(\tau(u\cdot\xi))|(B) \leq \lambda(B) \quad \text{ for every Borel set }B \subset U;$$

\item $u_y^\xi \in BV_{\rm loc}(U_y^\xi)$ for $\HH^{N-1}$-a.e. $y \in U^\xi$, and$$\int_{\Pi_\xi} \left(|Du_y^\xi|(B_y^\xi \setminus J_{u_y^\xi}^1) + \HH^0(B_y^\xi \cap J_{u_y^\xi}^1) \right)d\HH^{N-1}(y)\leq \lambda(B)\quad \text{ for every Borel set }B \subset U,$$
where $J_{u_y^\xi}^1:=\{t \in J_{u_y^\xi} : \; |[u_y^\xi](t)|\geq 1\}$.
\end{enumerate}
The function $u$ belongs to $GSBD(U)$ if $u \in GBD(U)$ and $u_y^\xi \in SBV_{\rm loc}(U_y^\xi)$ for every $\xi \in \mathbb S^{N-1}$ and for $\HH^{N-1}$-a.e. $y \in U^\xi$.
\end{defn}

Every $u \in GBD(U)$ has an approximate symmetric gradient $e(u) \in L^1(U;\Ms)$ such that for  every $\xi \in \mathbb S^{N-1}$ and for $\HH^{N-1}$-a.e. $y \in U^\xi$,
$$e(u)(y+t\xi)\xi \cdot \xi=(u_y^\xi)'(t) \quad \text{ for $\LL^1$-a.e. $t \in U_y^\xi$.}$$
Moreover, the jump set $J_u$ of $u \in GBD(U)$, defined as the set of all $x_0 \in U$  for which there exist $(u^+(x_0),u^-(x_0),\nu_u(x_0)) \in \R^N \times \R^N \times  \mathbb S^{N-1}$ with $u^+(x_0) \neq u^-(x_0)$ such that the function
$$y\in B_1  \mapsto u_{x_0,\varrho}:=u(x_0+\varrho y)$$
converges in measure in $B_1$ as $\varrho \searrow 0$ to
$$y \in B_1 \mapsto
\begin{cases}
u^+(x_0) & \text{ if }y \cdot \nu_u(x_0)>0,\\
u^-(x_0) & \text{ if }y \cdot \nu_u(x_0)\leq 0,
\end{cases}$$
is countably $(\HH^{N-1},N-1)$-rectifiable. Finally, the energy space $GSBD^2(U)$ is defined as
$$GSBD^2(U):=\{u \in GSBD(U): \; e(u) \in L^2(U;\Ms), \, \HH^{N-1}(J_u)<\infty\}.$$

\medskip

\noindent\textbf{Homogenization and $H$-convergence} We refer to \cite{Allaire} for an exhaustive presentation of these notions and only recall minimal results. We denote, for fixed $\alpha,\beta >0$, the subset of fourth-order symmetric tensors
$$
\mathcal A_{\alpha,\beta} = \left\{ A \in \R^{N^4} \, : \, A_{ijkl}=A_{klij}=A_{jikl}, \, \alpha \left\lvert \xi \right\rvert^2 \leq A \xi:\xi \leq \beta \left\lvert \xi \right\rvert^2 \text{ for all } \xi \in \Ms \right\}.
$$
Let $\O$ be a bounded open set of $\R^N$. We say that $A_n \in L^\infty(\O;\mathcal A_{\alpha,\beta} )$ $H$-converges to $A \in L^\infty (\O;\mathcal A_{\alpha,\beta})$ if, for every $f \in H^{-1}(\O;\R^N)$, the solutions $u_n \in H^1_0(\O;\R^N)$ of the equilibrium equations 
$$\begin{cases}
 - {\rm div} \big( A_n e(u_n) \big) = f \quad \text{in } \O \\
 u_n = 0 \quad \text{on } \partial \O
 \end{cases}
$$
are such that $u_n \rightharpoonup u $ weakly in $ H^1_0(\O;\R^N) $ and $A_n e(u_n) \rightharpoonup A e(u) $ weakly in $ L^2(\O;\Ms)$ as $n \nearrow + \infty$, where $u \in H^1_0(\O;\R^N)$ is the solution of 
$$ \begin{cases}
- {\rm div} \big( A e(u) \big) = f \quad \text{in } \O \\
u = 0 \quad \text{on } \partial \O.
\end{cases}
$$
Given a volume fraction $\theta \in L^\infty(\O;[0,1])$ and $A,B \in L^\infty(\O;\mathcal A_{\alpha,\beta})$ with $A \leq B$ as quadratic forms on $\Ms$, the $\G$-closure set 
$$\G_\theta (A,B)$$
is defined as the set of all possible $H$-limits of $\chi_n A + (1 - \chi_n)B$ where $\chi_n \in L^\infty(\O;\{ 0,1 \})$ weakly-* converges to $\theta$ in $L^\infty(\O;[0,1])$.

\medskip

\noindent \textbf{Convex analysis.} We recall some definition and standard results from convex analysis (see \cite{Rockafellar}).
Let $f : \R^N \to [0,+ \infty]$ be a proper function ({\it i.e.} not identically $+ \infty$). The convex conjugate of $f$ is defined as 
$$f^* (x) = \sup_{y \in \R^N} \, \left\{ x \cdot y - f(y) \right\} $$
which turns out to be convex and lower semicontinuous. If $f$ is convex and finite, we define its recession function as 
$$f^\infty(x) = \lim_{t \nearrow + \infty} \, \frac{f(tx)}{t} \in [0, + \infty] $$
which is convex and positively $1$-homogeneous. If $f,g : \R^N \to [0,+ \infty]$ are proper convex functions, then their infimal convolution is defined as 
$$f \infconv g (x) = \inf_{y \in \R^N} \, \left\{ f(x-y) + g(y) \right\} $$
which is convex as well. It can be shown that
$$
f \infconv g = (f^* + g^*)^*.
$$
The indicator function of a set $C \subset \R^N$ is defined as $I_C = 0$ in $ C$ and $ + \infty$ otherwise. The convex conjugate $I_C^*$ of $I_C$ is called the support function of $C$.

\subsection{Domain of the $\Gamma$-limit}
We begin our analysis by identifying the domain of finiteness of the $\Gamma$-limit, according to the values of $\alpha$ and $\beta$.

\begin{prop}\label{prop:domains}
Let $\{ \e_k \}_{k\in \N}$ satisfying $\e_k \searrow 0$, $u\in L^1(\O;\R^2)$ and $\{ (u_k,\chi_k) \}_{k\in \N} \subset L^1(\O;\R^2) \times L^1(\O)$ be such that $M:=\limsup_k \F_{\e_k}(u_k,\chi_k) < \infty$ and $u_k \to u$ strongly in $L^1(\O;\R^2)$. Then, 
$$\chi_k \to 0 \text{ strongly in } L^1(\O)$$
and:
\begin{itemize}[leftmargin=4.cm,label=-]
\item if $\alpha = \infty$ or $\beta=\infty$, then $u \in H^1(\O;\R^2)$;
\item if $\beta=0<\alpha<\infty$, then $u \in BD(\O)$;
\item if $\alpha=0<\beta<\infty$, then $u \in L^1(\O;\R^2) \cap GSBD^2(\O)$;
\item if $\alpha$ and $\beta \in (0,\infty)$, then $u \in SBD^2(\O)$.
\end{itemize}
\end{prop}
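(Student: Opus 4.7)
My plan is to extract three a priori bounds from the energy inequality $\limsup_k \F_{\e_k}(u_k,\chi_k)\leq M$ and then combine them according to the values of $\alpha$ and $\beta$. First, the dissipation term immediately gives $\int_\O\chi_k\,dx\leq M\e_k/\kappa\to 0$, whence $\chi_k\to 0$ strongly in $L^1(\O)$. Second, the coercivity assumptions on $\mathbf A_0,\mathbf A_1$ yield the elastic bounds
\[
\|(1-\chi_k)e(u_k)\|_{L^2(\O;\Msd)}^2\leq \frac{2M}{a_1}\quad\text{and}\quad \int_\O\chi_k|e(u_k)|^2\,dx\leq \frac{2M}{a_0\eta_k},
\]
which combined via Cauchy--Schwarz give $\int_\O\chi_k|e(u_k)|\,dx\leq C\sqrt{\e_k/\eta_k}$. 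Third, from the triangulation structure, each damaged triangle $T$ of the associated mesh $\mathbf T^k\in\mathcal T_{h_{\e_k}}(\O)$ satisfies $|T|\geq c(\theta_0)h_{\e_k}^2$ and $\HH^1(\partial T)\leq 3\omega(h_{\e_k})$, so the estimate $|\{\chi_k=1\}|\leq M\e_k/\kappa$ translates into a perimeter bound
\[
\HH^1\bigl(\partial\{\chi_k=1\}\bigr)\leq C\,\omega(h_{\e_k})\e_k/h_{\e_k}^2.
\]

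Combining these, if $\alpha\in(0,\infty]$ the estimate $\sqrt{\e_k/\eta_k}\leq C$ makes $e(u_k)$ bounded in $L^1(\O;\Msd)$; since each $u_k$ lies in $H^1(\O;\R^2)$ and $u_k\to u$ in $L^1$, closedness of $BD$ under weak-$\ast$ convergence of the distributional symmetric gradient yields $u\in BD(\O)$. If $\beta\in(0,\infty]$, the perimeter bound above is finite (and tends to zero when $\beta=\infty$, under the standard shape-regularity $\omega(h)=O(h)$), so I introduce the truncation $v_k:=(1-\chi_k)u_k$: it is piecewise affine on $\mathbf T^k$ with $e(v_k)=(1-\chi_k)e(u_k)$ bounded in $L^2$, and its jump set satisfies $J_{v_k}\subset\partial\{\chi_k=1\}$ of bounded $\HH^1$-measure. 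Since $\chi_k\to 0$ in $L^1$ together with $u_k\to u$ in $L^1$ imply $v_k\to u$ in measure, the $GSBD^2$-compactness theorem \cite[Theorem 11.3]{DM2} applied to $v_k$ yields $u\in GSBD^2(\O)\cap L^1(\O;\R^2)$. When $\alpha,\beta\in(0,\infty)$ both bounds apply, hence $u\in BD(\O)\cap GSBD^2(\O)=SBD^2(\O)$.

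It then remains to upgrade to $u\in H^1(\O;\R^2)$ in the linear elasticity regime, which amounts to showing that $Eu$ has no singular part. When $\alpha=\infty$, one has $\int\chi_k|e(u_k)|\to 0$, so $e(u_k)\LL^2$ weakly-$\ast$ converges in $\M(\O;\Msd)$ to the same (weak-$L^2$) limit as $(1-\chi_k)e(u_k)$; by distributional convergence ($u_k\to u$ in $L^1$ and $Eu_k=e(u_k)\LL^2$), this limit equals $Eu$, so $Eu\in L^2$ and $u\in H^1(\O;\R^2)$. When $\beta=\infty$, the perimeter bound improves to $\HH^1(J_{v_k})\to 0$, so by lower semicontinuity of the jump measure under $GSBD$-convergence, $\HH^1(J_u)=0$; combined with $e(u)\in L^2$ (from weak-$L^2$ convergence of $e(v_k)$) and the absence of a Cantor part in $GSBD$, this yields $Eu=e(u)\LL^2\in L^2$ and again $u\in H^1(\O;\R^2)$. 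The main technical obstacle is the application of \cite[Theorem 11.3]{DM2} to $v_k$: the crucial point is that the jump set of $v_k$ is geometrically controlled by the boundary of damaged triangles (whose total length is bounded by the dissipative part of the energy) rather than by any amplitude of $u_k$-jumps, which could a priori blow up in the degenerate regime $\alpha=0$.
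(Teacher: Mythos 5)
Your skeleton is the same as the paper's: $\chi_k\to0$ from the dissipation, an $L^1$ bound on $e(u_k)$ when $\alpha>0$ giving $u\in BD(\O)$, the truncation $v_k=(1-\chi_k)u_k$ with the $L^2$ bound on $e(v_k)$ and a length bound on $J_{v_k}\subset\partial\{\chi_k=1\}$ fed into the $GSBD^2$ compactness theorem of Dal Maso when $\beta>0$, and the combination/upgrades in the remaining regimes. However, your key quantitative step is not valid under the paper's hypotheses. You bound the perimeter of the damaged set by counting triangles, $\HH^1(\partial\{\chi_k=1\})\leq C\,\omega(h_{\e_k})\e_k/h_{\e_k}^2$, which is uniformly bounded only if $\omega(h)=O(h)$; but Definition \ref{def:triangulation Static} only requires $\omega(h)\geq 6h$ and $\omega(h)\to0$ (e.g. $\omega(h)=\sqrt h$ is admissible), so for $\beta\in(0,\infty)$ your estimate can blow up even though the conclusion is true. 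The paper avoids this by the per-triangle comparison $\HH^1(\partial T)\leq 6\,\LL^2(T)/(h_{\e_k}\sin\theta_0)$, valid for any triangle with edges $\geq h_{\e_k}$ and angles $\geq\theta_0$, which yields directly $\HH^1(J_{v_k}\cap U)\leq 6M\e_k/(\kappa h_{\e_k}\sin\theta_0)$ with no reference to $\omega$; you should replace your counting argument by this.

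A second gap is the boundary of $\O$: the triangles of an admissible mesh need only intersect $\O$, so a damaged triangle straddling $\partial\O$ may have arbitrarily small area inside $\O$ while contributing a length of order $h_{\e_k}$ (or more) to $\partial\{\chi_k=1\}\cap\O$; neither your counting nor the area--perimeter comparison can then be summed against $\int_\O\chi_k$ globally. The paper handles this by an exhaustion $U_m\subset\subset\O$ (for $k$ large, every triangle meeting $U_m$ lies in $\O$), applies \cite[Theorem 11.3]{DM2} on each $U_m$, and concludes $u\in GSBD^2(\O)$ by uniformity of the interior bounds and a diagonal argument as in \cite[Proposition 3.1]{BB}; some such localization is needed in your write-up as well. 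Finally, in the regime $\beta=\infty$ your sentence ``absence of a Cantor part in $GSBD$'' hides the real work: knowing $u\in GSBD^2(\O)\cap L^1(\O;\R^2)$ with $\HH^1(J_u)=0$ does not immediately give $Eu=e(u)\LL^2$, since a $GSBD$ function is only controlled through its one-dimensional slices; the paper shows the slices $u^\xi_y$ belong to $W^{1,1}$, deduces $u\in LD(\O)$ via \cite{ACDM}, and only then applies the Korn--Poincar\'e inequality of \cite{CCF} to land in $H^1(\O;\R^2)$ (your direct argument for $\alpha=\infty$, passing to the limit in $Eu_k=e(u_k)\LL^2$ and using $\int_\O\chi_k|e(u_k)|\,dx\to0$, is fine and in fact more elementary than the paper's citation of \cite{BIR}).
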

\begin{rem}
As one can check in the following proof, this compactness result holds for general stiffness tensors $\mathbf A_0$ and $ \mathbf A_1$ satisfying \eqref{eq:rigidity tensors}, without additionally requiring that $\mathbf A_0$ is isotropic.
\end{rem}

\begin{proof} 
Up to a subsequence (not relabeled), we can assume that 
$$
\sup_k \F_{\e_k}(u_k,\chi_k) <\infty \quad \text{and} \quad \lim_k \F_{\e_k}(u_k,\chi_k) = M < \infty.
$$
By definition of $\F_{\e_k}$, we infer from the energy bound that $(u_k,\chi_k) \in X_{h_{\e_k}}(\O)$ for all $k$ and there exists $\mathbf T_k \in \T_{h_{\e_k}}(\O)$ such that $u_k$ is affine and $\chi_k$ is constant on each triangle $T \in \mathbf T_k$. In particular,
$$
\int_\O \chi_k \, dx \leq \frac{M}{\kappa} \e_k \to 0 \quad \text{as } k \nearrow \infty,
$$
so that 
$$\chi_k \to 0 \text{ strongly in } L^1(\O).$$

\medskip
Moreover, if $\alpha \in (0,\infty]$, arguing as in the proof of \cite[Lemma 2.3]{BIR}, we infer that there exists a constant $c=c(\alpha)\in (0,\infty)$ such that
$$
c\int_\O \lvert e(u_\e)\rvert \, dx - \frac{\LL^2(\O)}{c} \leq \int_\O \min \left( \sqrt{2a_0\kappa \eta_\e \e^{-1}} \lvert e(u_\e) \rvert \, ; \, \frac{a_1}{2} \lvert e(u_\e) \rvert^2 \right) \, dx \leq \F_\e(u_\e,\chi_\e)
$$
where we also used Young's inequality and \eqref{eq:rigidity tensors}, so that
\begin{equation*}
 \sup_k \int_\O \lvert e(u_k) \rvert \, dx   < \infty.
\end{equation*}
Therefore, we infer that 
\begin{equation}\label{eq:alpha>0}
u \in BD(\O) \, \text{ and } \, u_k \overset{*}{\rightharpoonup} u \text{ weakly* in }BD(\O), \, \text{ provided }0<\alpha\leq \infty.
\end{equation}

\medskip
On the other hand, introducing the displacements $v_k = (1 - \chi_k) u_k\in SBV^2(\O;\R^2)$ and the damaged sets 
$$
D_k = \underset{ {\scriptstyle T \in \mathbf T_k \, :  \, {\chi_k }_{|T} \equiv 1}}{\bigcup} T
$$
which satisfy $\nabla v_k = (1 - \chi_k) \nabla u_k $ and $J_{v_k} \subset \O \cap \partial D_k$, we infer by the coercivity property \eqref{eq:rigidity tensors} of $\mathbf A_1$ that 
\begin{equation}\label{eq:bound volume v_k}
\sup_{k} \int_\O \left\lvert e(v_k) \right\rvert^2  \, dx \leq \frac{2 M}{a_1} < + \infty.
\end{equation}
We then consider an exhaustion of $\O$ by a sequence of smooth open subsets $\{U_m\}_{m \in \N}$ satisfying $U_m \subset\subset U_{m+1} \subset\subset \O$ for all $m \in \N$ and $\bigcup_m U_m = \O$. In particular, for all $m \in \N$, there exists $k_m \in \N$ such that for all $k \geq k_m$, 
$$
U_m \cap \partial D_k \subset  \underset{ {\scriptstyle T \in \mathbf T_k, \, T \cap U_m \neq \emptyset}}{\bigcup} \partial T \subset \O.
$$
Noticing that $\HH^1(\partial T) \leq 6 \LL^2(T)/(h_{\e_k} \sin \theta_0)$ for all $T \in \mathbf T_k$, we infer that 
\begin{equation}\label{eq:bound jump set}
\HH^1 \big(J_{v_k} \cap U_m \big) \leq \frac{6}{h_{\e_k} \sin \theta_0} \int_\O \chi_k \, dx \leq \frac{6M}{ \sin \theta_0} \frac{\e_k}{h_{\e_k}} 
\end{equation}
for all $m \in \N$ and all $k \geq k_m$. In particular, if $\beta \in (0,\infty]$, as $u_k \to u$ strongly in $L^1(\O;\R^2)$, \cite[Theorem 11.3]{DM2} entails that $u_{|U_m} \in GSBD^2(U_m)$ and, up to a subsequence (not relabeled, depending on $m$),
$$
e(v_k)_{|U_m} \rightharpoonup e(u_{|U_m}) \quad \text{weakly in } L^2(U_m;\Msd)
$$
and
$$
\HH^1 \big(J_{u_{|U_m}} \big) \leq \liminf_k \HH^1(J_{v_k} \cap U_m)  .
$$
By uniformity of the bounds \eqref{eq:bound jump set} and \eqref{eq:bound volume v_k}, arguing as in the proof of \cite[Proposition 3.1]{BB} we infer that $u \in GSBD^2(\O)$ and, using a diagonal extraction argument, we can find a subsequence (not relabeled, independent of $m$) such that  
\begin{equation}\label{eq:beta>0}
e(v_k) \rightharpoonup e(u) \text{ weakly in } L^2(\O;\Msd) \, \text{ and } \, \HH^1(J_u) \leq \frac{6M}{ \beta \sin \theta_0} < \infty, \, \text{ provided } \beta \in (0,\infty].
\end{equation}

\medskip
Next, it remains to prove the ad hoc regularity of $u$ with respect to the corresponding regimes.
\begin{itemize}[leftmargin=*,label=-]
\item If $\alpha = \infty$, it follows from the lower bound inequality of \cite[Theorem 5.1]{BIR} that $u \in H^1(\O;\R^2)$. If $\beta = \infty$, we infer from \eqref{eq:beta>0} that $u \in GSBD^2(\O)$ and $\HH^1(J_u) = 0$. Therefore, one can actually check that $u \in H^1(\O;\R^2)$. Indeed, due to the Generalized area formula \cite[Theorem 2.91]{AFP}, we have for all $\xi \in \Sn^1$:
$$
0 = \HH^1 \left( J_u^\xi \right) = \int_{ J_u ^\xi} \left\lvert \nu_u \cdot \xi \right\rvert \, d \HH^1 = \int_{\Pi_\xi} \# \left( \big( J_u^\xi \big)^\xi_y \right) \, d \HH^1(y)
$$
where $ J_u ^\xi := \{ x \in J_u \, : \, \xi \cdot \left( u^+(x) - u^-(x) \right) \neq 0 \}$. In particular, \cite[Theorem 8.1]{DM2} entails that 
$$J^1_{u^\xi_y} := \left\{ x \in J_{u^\xi_y} \, : \, \left\lvert (u^\xi_y)^+ - (u^\xi_y)^- \right\rvert \geq 1 \right\} \subset J_{u^\xi_y} = \emptyset $$
are empty for $\HH^1$-a.e. $y \in \Pi_\xi$. Moreover, since $u \in L^1(\O)$, we get that $u^\xi_y \in L^1((\O)^\xi_y)$ for $\HH^1$-a.e. $y \in \Pi_\xi$ as well. Therefore, according to \cite[Definition 4.2]{DM2}, we infer that for $\HH^1$-a.e. $y \in \Pi_\xi$,
$$
u^\xi_y \in SBV_{\rm loc}( (\O)^\xi_y)  \quad \text{and} \quad  J_{u^\xi_y} = \emptyset = J^1_{u^\xi_y} .
$$
Thus, \cite[Definition 4.1 (b)]{DM2} entails 
$$
\int_{\Pi_\xi} \lvert D u^\xi_y \rvert \big( (\O)^\xi_y \big) \, d \HH^1 (y) < + \infty
$$
so that $u^\xi_y \in BV((\O)^\xi_y)$ for $\HH^1$-a.e. $y \in \Pi_\xi$. As $D \big(u^\xi_y \big) \in \M  \big((\O)^\xi_y \big)$ and $\lvert D^s \big(u^\xi_y \big) \rvert (I) = 0$ for all open set $I \subset \subset (\O)^\xi_y$, we infer that $D u^\xi_y $ has no Cantor part and no jump part in $(\O)^\xi_y$, so that $u^\xi_y \in W^{1,1} \big((\O)^\xi_y \big)$. Hence, \cite[Proposition 3.2 and Theorem 4.5]{ACDM} entail that $u \in LD(\O) \cap GSBD^2(\O)$. Finally, we deduce that
$$
u \in H^1(\O; \R^2) 
$$
thanks to Korn-Poincaré's inequality (see \cite[Theorem 1.1]{CCF}). 

\item If $\beta=0 < \alpha < \infty$, we infer from \eqref{eq:alpha>0} that $u \in BD(\O)$.
\item If $\alpha=0 < \beta < \infty$, we infer from \eqref{eq:beta>0} that $u \in L^1(\O;\R^2) \cap GSBD^2(\O)$.

\item Eventually, if $\alpha \in (0,\infty)$ and $\beta \in (0,\infty)$, we infer from \eqref{eq:alpha>0} and \eqref{eq:beta>0} that $u \in BD(\O)\cap GSBD^2(\O)$, so that $u \in SBD^2(\O)$. Indeed, by \cite[Definition 4.2]{DM2}, $u^\xi_y \in SBV_{\text{\rm loc}}(\O^\xi_y)$ for all $\xi \in \mathcal{S}^1$ and $\HH^1$-a.e. $y \in \Pi_\xi$, that is $u^\xi_y \in L^1_{\text{\rm loc}}(\O^\xi_y)$ and $D(u^\xi_y) \in \M(\O^\xi_y)$ has no Cantor part. On the other hand, since $u \in L^1(\O)$, so is $u^\xi_y \in L^1(\O^\xi_y)$ for all $\xi \in \mathcal{S}^1$ and $\HH^1$-a.e. $y \in \Pi_\xi$. Therefore, \cite[Proposition 4.7]{ACDM} entails that $u \in SBD^2(\O)$.
\end{itemize}
\end{proof}

\section{Linear elasticity} \label{sec:linear elasticity}
\begin{prop}\label{prop:linear elasticity}
If $\alpha=\infty$ or $\beta=\infty$, then the functional $\F_\e$ $\Gamma$-converges for the strong $L^1(\O;\R^2) \times L^1(\O)$-topology to the functional
$\F_{\alpha,\beta}:L^1(\O;\R^2) \times L^1(\O) \to [0,+\infty]$ defined by
$$\F_{\alpha,\beta}(u,\chi)=
\begin{cases}
\ds  \frac12 \int_\O \mathbf A_1 e(u):e(u)\, dx & \text{ if }
\begin{cases}
\chi=0\text{ a.e. in }\O,\\
u \in H^1(\O;\R^2),
\end{cases}\\
+\infty & \text{ otherwise.}
\end{cases}$$
\end{prop}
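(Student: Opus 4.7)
My plan is to prove the two $\Gamma$-convergence inequalities separately, with Proposition~\ref{prop:domains} providing the correct domain of finiteness up front. Indeed, for any sequence $(u_\e,\chi_\e) \in X_{h_\e}(\O)$ with $\sup_\e \F_\e(u_\e,\chi_\e)<\infty$ converging to $(u,\chi)$ in $L^1(\O;\R^2) \times L^1(\O)$, that result already guarantees $\chi=0$ a.e.\ and $u \in H^1(\O;\R^2)$ as soon as $\alpha=\infty$ or $\beta=\infty$; outside this set there is thus nothing to prove, and I only need to match the value of $\F_{\alpha,\beta}$ on pairs $(u,0)$ with $u \in H^1(\O;\R^2)$.

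For the liminf inequality, I would introduce the modified displacements $v_\e := (1-\chi_\e)u_\e \in SBV^2(\O;\R^2)$, whose approximate symmetric gradient satisfies $e(v_\e) = (1-\chi_\e)e(u_\e)$ a.e.\ in $\O$. Since $\chi_\e \in \{0,1\}$,
$$
\F_\e(u_\e,\chi_\e) \;\geq\; \frac{1}{2}\int_\O (1-\chi_\e)\,\mathbf A_1 e(u_\e):e(u_\e)\,dx \;=\; \frac{1}{2}\int_\O \mathbf A_1 e(v_\e):e(v_\e)\,dx,
$$
so it suffices to show that $e(v_\e) \rightharpoonup e(u)$ weakly in $L^2(\O;\Msd)$ and then invoke weak lower semicontinuity of the convex quadratic form associated to $\mathbf A_1$. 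When $\beta \in (0,\infty]$, this weak convergence is exactly the content of \eqref{eq:beta>0} established in the proof of Proposition~\ref{prop:domains}. In the complementary sub-regime $\alpha=\infty$, $\beta=0$, one combines the energy bounds $\int_\O \chi_\e\,dx \lesssim \e$ and $\int_\O \eta_\e \chi_\e |e(u_\e)|^2\,dx \lesssim 1$ with Cauchy--Schwarz to obtain
$$
\int_\O \chi_\e |e(u_\e)|\,dx \;\lesssim\; \sqrt{\e/\eta_\e} \;\to\; 0,
$$
so that $e(u_\e)-e(v_\e) = \chi_\e e(u_\e) \to 0$ strongly in $L^1(\O;\Msd)$. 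As $\{e(v_\e)\}$ is bounded in $L^2$, it weakly converges along a subsequence to some $\xi \in L^2$; passing to the limit in $Eu_\e = e(u_\e)\,\LL^2$ in the distributional sense and using $u_\e \to u$ in $L^1$ identifies $\xi$ with $e(u)$, so a standard Urysohn-type argument removes the subsequence extraction.

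For the limsup inequality, the recovery sequence is constructed with $\chi_\e \equiv 0$ and $u_\e$ the Lagrange interpolation of a smooth approximation of $u$ on an admissible mesh. Concretely, after extending $u$ to $\R^2$ and approximating by $u^k \in C^\infty(\bar\O;\R^2)$ with $u^k \to u$ in $H^1(\O;\R^2)$, I would pick any $\mathbf T_\e \in \T_{h_\e}(\O)$ and let $u^k_\e$ be the continuous piecewise affine Lagrange interpolate of $u^k$ on $\mathbf T_\e$. The uniform shape-regularity enforced by $\theta_0>0$ in Definition~\ref{def:triangulation Static} yields the classical finite element estimate $\|u^k-u^k_\e\|_{H^1(\O;\R^2)} \lesssim \omega(h_\e)\,\|u^k\|_{H^2(\O;\R^2)} \to 0$ as $\e \to 0$ for each fixed $k$. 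A diagonal extraction $k=k(\e)\to\infty$ then produces $u_\e := u^{k(\e)}_\e$ with $(u_\e,0) \in X_{h_\e}(\O)$, $u_\e \to u$ in $L^1$, $e(u_\e) \to e(u)$ in $L^2$, and therefore $\F_\e(u_\e,0) \to \tfrac{1}{2}\int_\O \mathbf A_1 e(u):e(u)\,dx$. The only mildly delicate step in the whole argument is the sub-regime $\alpha=\infty$, $\beta=0$ of the liminf inequality, where Proposition~\ref{prop:domains} does not directly provide weak $L^2$ convergence of the modified strains; the Cauchy--Schwarz estimate above, which exploits the divergence of $\eta_\e/\e$, handles this cleanly.
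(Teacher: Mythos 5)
Your proof is correct, and its overall architecture (domain identification via Proposition \ref{prop:domains}, liminf via the modified displacements $v_\e=(1-\chi_\e)u_\e$ and weak $L^2$ lower semicontinuity, limsup via Lagrange interpolation of a smooth approximation on an arbitrary admissible mesh with the Ciarlet estimate) coincides with the paper's. The one genuine divergence is in the lower bound for the sub-regime $\alpha=\infty$, $\beta\in[0,\infty)$: the paper simply observes that $X_{h_\e}(\O)\subset H^1(\O;\R^2)\times L^\infty(\O;\{0,1\})$, so the discrete functionals dominate the continuous brittle-damage functionals and the lower bound of \cite[Theorem 5.1]{BIR} applies verbatim, whereas you give a short self-contained argument: the energy bounds yield $\int_\O\chi_\e|e(u_\e)|\,dx\lesssim\sqrt{\e/\eta_\e}\to0$ by Cauchy--Schwarz (using $\eta_\e/\e\to\infty$), so $\chi_\e e(u_\e)\to0$ strongly in $L^1$, and the $L^2$-bounded sequence $e(v_\e)$ is identified with $e(u)$ by passing to the limit in $Eu_\e$ distributionally. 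This buys independence from the continuous-setting lower bound of \cite{BIR} at the level of this proposition (though note that Proposition \ref{prop:domains}, which you still invoke to get $u\in H^1(\O;\R^2)$ when $\beta<\infty$, itself relies on that reference in this regime, so the dependence is displaced rather than removed); the paper's citation is shorter but less transparent. Two cosmetic points: in the limsup you should approximate by functions that are smooth on a neighbourhood of $\bar\O$ (e.g. $C^\infty_c(\R^2;\R^2)$ as in the paper), since the admissible triangles and their vertices may lie outside $\O$ and the interpolation estimate needs $H^2$ regularity there; and the weak convergence in \eqref{eq:beta>0} is only established along a subsequence, which is harmless for the liminf inequality provided you (as the paper does) first pass to a subsequence realizing the liminf.
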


\begin{proof} {\it Lower bound.} Let $(u,\chi) \in L^1(\O;\R^2) \times L^1(\O)$ be such that $\F'_{\alpha,\beta}(u,\chi) <\infty$. According to Proposition \ref{prop:domains}, we infer that $u \in H^1(\O;\R^2)$ and $\chi=0$. If $\alpha=\infty$, as $X_{h_\e}(\O) \subset H^1(\O;\R^2) \times L^\infty(\O;\{0,1\})$, the result follows from \cite[Theorem 5.1]{BIR} which entails that 
$$\F'_{\alpha,\beta} (u,0)\geq \frac12   \int_\O \mathbf A_1 e(u):e(u)\, dx=\F_{\alpha,\beta}(u,0), \, \text{ for all }\beta \in [0,\infty] .$$
Next if $\beta=\infty$ and $\alpha \in [0,\infty]$ is arbitrary, for all $\delta >0$, one can find a subsequence $\e_k \to 0$ as $k \to \infty$ and $(u_k,\chi_k) \in X_{h_{\e_k}}(\O)$ such that $(u_k,\chi_k) \to (u,0)$ strongly in $L^1(\O;\R^2) \times L^1(\O)$ and the following limit exists and satisfies
$$\lim_k \F_{\e_k}(u_k,\chi_k)  \leq \F'_{\alpha,\beta}(u,0) + \delta <\infty.$$
Arguing as in the proof of Proposition \ref{prop:domains}, up to a further subsequence (not relabeled), $v_k= (1-\chi_k)u_k \in SBV^2(\O;\R^2)$ is such that $ e(v_k) \wto e(u) \quad \text{weakly in } L^2(\O;\Msd) $. By lower semicontinuity of the norm, 
$$\delta + \F'_{\alpha,\beta}(u,0)\geq \liminf_{k \to \infty} \frac12 \int_\O \mathbf A_1 e(v_k):e(v_k)\, dx \geq \frac12 \int_\O \mathbf A_1 e(u):e(u)\, dx=\F_{\alpha,\beta}(u,0)$$
and the conclusion follows by taking the limit as $\delta \searrow 0$.

\medskip
\noindent {\it Upper bound.} Let $u \in H^1(\O;\R^2)$. Since $\O$ has a Lipschitz boundary, we can extend $u$ and assume that $u \in H^1(\R^2;\R^2)$. By lower semicontinuity of $\F''_{\alpha,\beta}$ and density of $C^\infty_c (\R^2;\R^2)$ in $H^1(\R^2;\R^2)$, we can also assume without loss of generality that $u \in C^\infty_c(\R^2;\R^2)$. Then, for all $\e >0$, let us fix any triangulation $\mathbf T_\e \in \T_{h_\e}(\O)$ and let $u_\e \in H^1(\O;\R^2)$ be the Lagrange interpolation of the values of $u$ at the vertices of $\mathbf T_\e$. According to \cite[Theorem 3.1.5]{Ciarlet}, there exists a constant $C(\theta_0)>0$ such that $\norme{u_\e - u}_{H^1(T;\R^2)} \leq C \omega(h_\e) \norme{D^2u}_{L^2(T)}$ for all $\e > 0$ and $T \in \mathbf T_\e$. Hence, $(u_\e, 0) \in X_{h_\e}(\O)$ and $u_\e \to u$ strongly in $ H^1(\O;\R^2)$, so that
$$
\F_\e(u_\e,0) = \frac12 \int_\O \mathbf A_1 e(u_\e):e(u_\e)\, dx \to \frac12 \int_\O \mathbf A_1 e(u):e(u)\, dx,
$$
which shows that $\F''_{\alpha,\beta}(u,0) \leq \F_{\alpha,\beta}(u,0)$.
\end{proof}

\section{Trivial regime} \label{sec:trivial regime}
\begin{prop}\label{prop:trivial regime}
Assume that $\theta_0 \leq 45^\circ$. If $\alpha=\beta=0$, then the functional $\F_\e$ $\Gamma$-converges for the strong $L^1(\O;\R^2) \times L^1(\O)$-topology to the functional
$\F_{\alpha,\beta}:L^1(\O;\R^2) \times L^1(\O) \to [0,+\infty]$ defined by
$$\F_{\alpha,\beta}(u,\chi)=
\begin{cases}
0 & \text{ if $\chi=0$ a.e. in $\O$,}\\
+\infty & \text{ otherwise.}
\end{cases}$$
\end{prop}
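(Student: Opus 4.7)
My plan is to handle the two inequalities separately. The lower bound $\F_{\alpha,\beta} \leq \F'_{\alpha,\beta}$ is trivial: by Proposition \ref{prop:domains}, any $(u,\chi) \in L^1(\O;\R^2) \times L^1(\O)$ with $\F'_{\alpha,\beta}(u,\chi) < \infty$ forces $\chi = 0$ a.e., whence $0 = \F_{\alpha,\beta}(u,0) \leq \F'_{\alpha,\beta}(u,0)$ is immediate.

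For the upper bound, since $\F''_{\alpha,\beta}(\cdot,0)$ is lower semicontinuous with respect to strong $L^1$-convergence and dyadic axis-aligned simple functions are dense in $L^1(\O;\R^2)$, it suffices to build a recovery sequence of vanishing energy for every $u = \sum_{i=1}^N c_i \mathds{1}_{Q_i}$ where $\{Q_i\}$ is a finite axis-aligned dyadic-type partition of $\O$ (whose interior jump set $S$ is a finite union of axis-aligned segments). The construction I propose mimics the one-dimensional sketch of Figure \ref{fig:jump zero energy}. Set
$$\delta_\e := \sqrt{\e(h_\e+\eta_\e)},$$
which in the regime $\alpha=\beta=0$ satisfies $h_\e \vee \eta_\e \ll \delta_\e \ll \e$, and let $S_\e := \{x\in\O : \operatorname{dist}(x,S) < \delta_\e/2\}$, so that $\LL^2(S_\e) \leq C\delta_\e \HH^1(S)$. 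I would then build an admissible triangulation $\mathbf T_\e \in \T_{h_\e}(\O,\omega,\theta_0)$ made of right isosceles triangles of leg-length comparable to $h_\e$ — whose $(45^\circ,45^\circ,90^\circ)$-angles are admissible precisely because $\theta_0 < 45^\circ$ — adapted so that both $S$ and the two parallel boundary components of $S_\e$ lie on mesh edges. Define $\chi_\e := \mathds{1}_{S_\e}$, which is piecewise constant on $\mathbf T_\e$ by construction, and let $u_\e$ be the continuous $\mathbf T_\e$-piecewise-affine function equal to $u$ at every vertex of $\mathbf T_\e$ lying in $\O \setminus S_\e$ and linearly interpolating the two locally constant values of $u$ across the strip $S_\e$.

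The convergences $u_\e \to u$ in $L^1(\O;\R^2)$ and $\chi_\e \to 0$ in $L^1(\O)$ follow at once from $\LL^2(S_\e)\to 0$ together with the uniform $L^\infty$-bound $\norme{u_\e}_{\infty} \leq \max_i \lvert c_i \rvert$. On $\O\setminus S_\e$ the displacement is locally constant, so $e(u_\e) = 0$ there, and on $S_\e$ the estimate $\lvert e(u_\e) \rvert \lesssim \max_{i,j}\lvert c_i - c_j\rvert / \delta_\e$ gives
\begin{equation*}
\F_\e(u_\e,\chi_\e) \;\lesssim\; \eta_\e\,\frac{\LL^2(S_\e)}{\delta_\e^2} \;+\; \frac{\LL^2(S_\e)}{\e} \;\lesssim\; \Bigl(\frac{\eta_\e}{\delta_\e} + \frac{\delta_\e}{\e}\Bigr)\HH^1(S) \;\longrightarrow\; 0
\end{equation*}
by the very choice of $\delta_\e$. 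The main obstacle here is strictly technical rather than conceptual: concretely producing the mesh $\mathbf T_\e$ that respects simultaneously the dyadic partition $\{Q_i\}$, the two parallel sides of the tubular neighborhood $S_\e$, and the angle lower bound $\theta_0$, with some bookkeeping required at the corners of the $Q_i$'s and near $\partial\O$. Once the right-isosceles grid is arranged, the remainder of the argument is entirely soft, as the scale $\delta_\e = \sqrt{\e(h_\e+\eta_\e)}$ has been chosen precisely to kill the elastic and dissipative contributions at the same time.
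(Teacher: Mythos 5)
Your proposal is correct and follows essentially the same route as the paper: the same reduction to piecewise constant displacements via lower semicontinuity of $\F''_{\alpha,\beta}$, the same intermediate scale $\delta_\e=\sqrt{\e(h_\e+\eta_\e)}$ with a damaged strip of width $\sim\delta_\e$ around the jump set, and the same energy estimate $\eta_\e/\delta_\e+\delta_\e/\e\to 0$. The only differences are cosmetic: the paper interpolates with a cut-off that sends the displacement to $0$ on a frame around each cube (which automatically disposes of the corner/junction ambiguity you defer to ``bookkeeping'') and obtains the admissible mesh by recursively bisecting a coarse right-isosceles triangulation at scale $\delta_\e$ down to scale $h_\e$, rather than meshing directly at scale $h_\e$ adapted to $\partial S_\e$.
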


\begin{proof}
According to Proposition \ref{prop:domains}, it is enough to identify the $\Gamma$-limit for $u \in L^1(\O;\R^2)$ and $\chi=0$. The proof of the lower bound is then straightforward, as $\F'_{\alpha,\beta}(u,0) \geq 0$. Let us prove the upper bound inequality. By a rescaling and translation argument, we can assume without loss of generality that $\O$ is the unit cube $Q:= (0,1)^2$. Next, arguing as in the proof of \cite[Theorem 4.1]{BIR}, we can assume that $u$ is a piecewise constant function of the form
$$u=\sum_{i\in \llbracket 0, N-1 \rrbracket^2} u_i \mathds{1}_{Q_i}$$
for some constant vectors $u_i \in \R^2$ and $Q_i:=\tfrac1N(i+Q)$ for all $i \in \llbracket 0, N-1 \rrbracket^2$. For all $\e >0$, we define the parameter $\delta_\e := \sqrt{\e \left( \eta_\e + h_\e \right)} >0$ which satisfies $\eta_\e \ll \delta_\e \ll \e \quad \text{and} \quad h_\e \ll \delta_\e \ll \e$. Assume that $\e>0$ is small enough so that $\delta_\e<(2N)^{-1}$ and define $N_\e := \left\lfloor (\delta_\e N)^{-1} \right\rfloor \geq 2 $ and $l_\e := (N_\e N)^{-1}$, so that $\delta_\e \leq l_\e \leq 2 \delta_\e$. Denoting by $Q_M := \tfrac{1}{M} Q$ the cube of side length $M^{-1} >0$, we then subdivide the cube $Q_N $ into $N_\e^2$ subcubes of side length $l_\e$, so that
$$
Q_N = \underset{j \in \llbracket 0,N_\e - 1 \rrbracket^2}{\bigcup} \left( \tfrac{j}{N_\e N} + Q_{N_\e N} \right) \quad \text{and} \quad Q = \underset{\begin{array}{c}
			{\scriptstyle i \in \llbracket 0,N - 1 \rrbracket^2 }\\
			{\scriptstyle j \in \llbracket 0,N_\e - 1 \rrbracket^2}
		  \end{array}}{\bigcup} \left( \tfrac{i}{N} + \tfrac{j }{N_\e N} + Q_{N_\e N} \right)
$$
up to a $\LL^2$-negligible set. These $(N_\e N)^2$ rectangles are then divided into two isoceles right triangles with edges of length $l_\e$ and $\sqrt 2 l_\e$. In particular, this defines a triangulation $\mathbf T_{\delta_\e} \in \T_{\delta_\e}(Q)$. 

\medskip
Let us now construct the recovery sequence. We first consider the cut-off function $\phi_N : \R^2 \to [0,1]$ defined as the Lagrange interpolation of the values $0$ at the vertices on the boundary $\partial Q_N$ and $1$ at the vertices inside $Q_N$, which we extend by $0$ outside $\bar Q_N$. One can check that $\phi_N$ is continuous, affine on each triangle $T \in \mathbf T_{\delta_\e}$ with $\norme{\nabla \phi_N}_{L^\infty} \leq C/\delta_\e$ for some constant $C(\theta_0)\in (0,\infty)$, and satisfies
$$
\phi_N \equiv 0 \quad \text{ in } \R^2 \setminus Q_N \quad \text{and} \quad \phi_N \equiv 1 \quad \text{ in } Q^\e_N := \left[ l_\e, \tfrac1N - l_\e \right]^2
$$
as illustrated in Figure \ref{fig:subdivision Q}. Defining the damaged set and the displacement 
$$
D_\e = \underset{i \in \llbracket 0,N-1 \rrbracket^2}{\bigcup} \left( \tfrac{i}{N} + Q_N \setminus Q^\e_N \right), \quad
u_\e = \underset{i \in \llbracket 0,N-1 \rrbracket^2}{\sum} u_i \phi_N \left( \cdot - \tfrac{i}{N} \right),
$$
as well as the characteristic function $\chi_\e = \mathds{1}_{D_\e}$, one can check that $u_\e \in C^0(Q;\R^2)$, $u_\e$ is affine and $\chi_\e$ is constant on each triangle $T \in \mathbf T_{\delta_\e}$. Let us further remark that $(u_\e,\chi_\e) \in X_{h_\e}(Q)$ is admissible. Indeed, as the mesh-size $\delta_\e \gg h_\e$, we can recursively subdivide the triangulation $\mathbf T_{\delta_\e}$ in $n_\e$ steps as illustrated in Figure \ref{fig:subdivision Q}, where
$$n_\e = \max \left\{ n \in \N \, : \, h_\e/\delta_\e \leq \sqrt 2^{-n} \right\},$$
in order to obtain an admissible triangulation $\mathbf T_\e \in \T_{h_\e}(Q)$. In particular, as $u_\e$ is still affine and $\chi_\e$ constant on each triangle of $\mathbf T_\e$, we infer that $(u_\e,\chi_\e) \in X_{h_\e}(Q)$. Moreover, one can check that
$$
\norme{\chi_\e}_{L^1(Q)} = \LL^2(D_\e) \leq 8 N \delta_\e  \to 0 \quad \text{and} \quad
\norme{u - u_\e}_{L^1(Q;\R^2)}  \leq \norme{u}_{L^\infty(Q;\R^2)} \, \LL^2 (D_\e) \to 0
$$
when $\e \searrow 0$. Finally, as $e(u_\e) = \underset{ i \in \llbracket 0, N-1 \rrbracket^2}{\sum} u_i \odot \nabla \phi_N \left( \cdot - \tfrac{i}{N} \right)
$ is supported in $D_\e $, we have
$$
\F_\e(u_\e,\chi_\e)  \leq C \big( \theta_0, a'_0, u, \kappa \big) \left( \frac{\eta_\e}{\delta^2_\e} + \frac{1}{\e} \right)\LL^2(D_\e) 
 \leq C \big( \theta_0, a'_0, u, \kappa \big) \left( \frac{\eta_\e}{\delta_\e} + \frac{\delta_\e}{\e} \right)  \to 0
$$
when $\e \searrow 0$, which completes the proof of the upper bound inequality.
\end{proof}

\begin{figure}[hbtp]
\centering
\begin{tikzpicture}[line cap=round,line join=round,>=triangle 45,x=0.5cm,y=0.5cm]
\clip(-4,-0.6) rectangle (20.319541019121996,6.4);
\fill[line width=1.pt,pattern=dots,fill opacity=0.4] (11.,5.) -- (11.,0.) -- (12,0) -- (12,5) -- cycle;
\fill[line width=1.pt,pattern=dots,fill opacity=0.4] (12.,5.) -- (12.,4.) -- (16.,4.) -- (16.,5.) -- cycle;
\fill[line width=1.pt,pattern=dots,fill opacity=0.4] (15.,4.) -- (15.,0.) -- (16.,0.) -- (16.,4.) -- cycle;
\fill[line width=1.pt,pattern=dots,fill opacity=0.4] (11.,1.) -- (15.,1.) -- (15.,0.) -- (11.,0.) -- cycle;
\fill[line width=1.pt,fill opacity=0.4] (12.,4.) -- (12.,1.) -- (15.,1.) -- (15.,4.) -- cycle;
\fill[line width=0.5pt, fill opacity=0.15] (4.,3.) -- (4.,4.) -- (5.,4.) -- cycle;
\draw [line width=.5pt] (0.,0.)-- (0.,5.);
\draw [line width=.5pt] (7.,0.)-- (7.,1.);
\draw [line width=0.5pt,opacity=0.4] (11,0.)-- (11,5.);
\draw [line width=.5pt] (0.,0.)-- (5.,0.);
\draw [line width=.5pt] (5.,0.)-- (5.,5.);
\draw [line width=.5pt] (5.,5.)-- (0.,5.);
\draw [line width=.5pt] (1.,5.)-- (1.,0.);
\draw [line width=.5pt] (2.,0.)-- (2.,5.);
\draw [line width=.5pt] (3.,5.)-- (3.,0.);
\draw [line width=.5pt] (4.,5.)-- (4.,0.);
\draw [line width=.5pt] (0.,4.)-- (5.,4.);
\draw [line width=.5pt] (0.,3.)-- (5.,3.);
\draw [line width=.5pt] (0.,2.)-- (5.,2.);
\draw [line width=.5pt] (0.,1.)-- (5.,1.);
\draw [line width=.5pt] (0.,4.)-- (1.,5.);
\draw [line width=.5pt] (0.,3.)-- (2.,5.);
\draw [line width=.5pt] (0.,2.)-- (3.,5.);
\draw [line width=.5pt] (0.,1.)-- (4.,5.);
\draw [line width=.5pt] (0.,0.)-- (5.,5.);
\draw [line width=.5pt] (1.,0.)-- (5.,4.);
\draw [line width=.5pt] (2.,0.)-- (5.,3.);
\draw [line width=.5pt] (3.,0.)-- (5.,2.);
\draw [line width=.5pt] (4.,0.)-- (5.,1.);
\draw (-1.5,3) node[anchor=north west] {$Q_N$};
\draw [line width=.5pt] (2,5.3)-- (3.,5.3);
\draw [line width=.5pt] (2,5.3)-- (2.2,5.4);
\draw [line width=.5pt] (2,5.3)-- (2.2,5.2);
\draw [line width=.5pt] (3.,5.3)-- (2.8,5.4);
\draw [line width=.5pt] (3.,5.3)-- (2.8,5.2);
\draw (2.,6.3) node[anchor=north west] {$l_\varepsilon \sim \delta_\e$};
\draw [line width=.5pt] (7.,0.)-- (8.,0.);
\draw [line width=.5pt] (8.,0.)-- (8.,1.);
\fill [opacity=0.15] (7,0)--(7,1)--(8,1)--cycle;
\draw [line width=.5pt] (8.,1.)-- (7.,1.);
\draw [line width=.5pt] (7.,0.)-- (8.,1.);
\draw [line width=.5pt] (7.,1.3)-- (8.,1.3);
\draw [line width=.5pt] (7.,1.3)-- (7.2,1.4);
\draw [line width=.5pt] (7.,1.3)-- (7.2,1.2);
\draw [line width=.5pt] (8.,1.3)-- (7.8,1.4);
\draw [line width=.5pt] (8.,1.3)-- (7.8,1.2);
\draw (7.15,2.3) node[anchor=north west] {$l_\varepsilon$};
\draw [line width=.5pt] (8.5,0.)-- (9.5,1.);
\draw [line width=.5pt] (9.25,0.9)-- (9.5,1.);
\draw [line width=.5pt] (9.4,0.75)-- (9.5,1.);
\draw [line width=.5pt] (8.6,0.25)-- (8.5,0.);
\draw [line width=.5pt] (8.75,0.1)-- (8.5,0.);
\draw (8.7,0.8) node[anchor=north west] {$\sqrt 2 l_\varepsilon$};
\draw [line width=0.5pt,opacity=0.4] (11.,0.)-- (16.,0.);
\draw [line width=0.5pt,opacity=0.4] (16.,0.)-- (16.,5.);
\draw [line width=0.5pt,opacity=0.4] (16.,5.)-- (11.,5.);
\draw [line width=0.5pt,opacity=0.4] (12.,5.)-- (12.,4.);
\draw [line width=0.5pt,opacity=0.4] (13.,5.)-- (13.,4.);
\draw [line width=0.5pt,opacity=0.4] (14.,5.)-- (14.,4.);
\draw [line width=0.5pt,opacity=0.4] (15.,5.)-- (15.,4.);
\draw [line width=0.5pt,opacity=0.4] (11.,4.)-- (12.,4.);
\draw [line width=0.5pt,opacity=0.4] (11.,3.)-- (12.,3.);
\draw [line width=0.5pt,opacity=0.4] (11.,2.)-- (12.,2.);
\draw [line width=0.5pt,opacity=0.4] (11.,1.)-- (12.,1.);
\draw [line width=0.5pt,opacity=0.4] (12.,0.)-- (12.,1.);
\draw [line width=0.5pt,opacity=0.4] (13.,0.)-- (13.,1.);
\draw [line width=0.5pt,opacity=0.4] (14.,0.)-- (14.,1.);
\draw [line width=0.5pt,opacity=0.4] (15.,0.)-- (15.,1.);
\draw [line width=0.5pt,opacity=0.4] (15.,1.)-- (16.,1.);
\draw [line width=0.5pt,opacity=0.4] (15.,2.)-- (16.,2.);
\draw [line width=0.5pt,opacity=0.4] (15.,3.)-- (16.,3.);
\draw [line width=0.5pt,opacity=0.4] (15.,4.)-- (16.,4.);
\draw [line width=0.5pt,opacity=0.4] (11.,4.)-- (12.,5.);
\draw [line width=0.5pt,opacity=0.4] (12.,4.)-- (13.,5.);
\draw [line width=0.5pt,opacity=0.4] (13.,4.)-- (14.,5.);
\draw [line width=0.5pt,opacity=0.4] (14.,4.)-- (15.,5.);
\draw [line width=0.5pt,opacity=0.4] (15.,4.)-- (16.,5.);
\draw [line width=0.5pt,opacity=0.4] (11.,3.)-- (12.,4.);
\draw [line width=0.5pt,opacity=0.4] (11.,2.)-- (12.,3.);
\draw [line width=0.5pt,opacity=0.4] (11.,1.)-- (12.,2.);
\draw [line width=0.5pt,opacity=0.4] (11.,0.)-- (12.,1.);
\draw [line width=0.5pt,opacity=0.4] (12.,0.)-- (13.,1.);
\draw [line width=0.5pt,opacity=0.4] (13.,0.)-- (14.,1.);
\draw [line width=0.5pt,opacity=0.4] (14.,0.)-- (15.,1.);
\draw [line width=0.5pt,opacity=0.4] (15.,0.)-- (16.,1.);
\draw [line width=0.5pt,opacity=0.4] (15.,1.)-- (16.,2.);
\draw [line width=0.5pt,opacity=0.4] (15.,2.)-- (16.,3.);
\draw [line width=0.5pt,opacity=0.4] (15.,3.)-- (16.,4.);
\draw (12.2,2.9) node[anchor=north west] {$\phi_N \equiv 1$};
\draw (12.5,6.5567700440460195) node[anchor=north west] {$\phi_N \equiv 0$};
\fill[fill=white,fill opacity=0.8] (13.,4.1) -- (13.,4.8) -- (14,4.8) -- (14,4.1) -- cycle;
\draw (12.75,5.05) node[anchor=north west,opacity=10] {$D_{\mathbf \varepsilon}$};
\draw[line width=.5pt,dash pattern=on 5pt off 2pt,smooth,samples=100,domain=5.1:6.5] plot(\x,{(((\x)-6.6)^(2)+1)});
\draw [line width=.5pt] (6.52,1)-- (6.4,1.2);
\draw [line width=.5pt] (6.52,1)-- (6.3,0.95);
\begin{scriptsize}
\fill (11.,5.) circle (1.0pt);
\fill (11.,0.) circle (1.0pt);
\fill (12.,4.) circle (1pt);
\fill (12.,1.) circle (1pt);
\fill (12.,5.) circle (1pt);
\fill (13.,5.) circle (1pt);
\fill (13.,4.) circle (1pt);
\fill (14.,5.) circle (1pt);
\fill (14.,4.) circle (1pt);
\fill (15.,5.) circle (1pt);
\fill (11.,4.) circle (1pt);
\fill (11.,3.) circle (1pt);
\fill (12.,3.) circle (1pt);
\fill (11.,2.) circle (1pt);
\fill (12.,2.) circle (1pt);
\fill (11.,1.) circle (1pt);
\fill (12.,0.) circle (1pt);
\fill (13.,0.) circle (1pt);
\fill (13.,1.) circle (1pt);
\fill (14.,0.) circle (1pt);
\fill (14.,1.) circle (1pt);
\fill (15.,0.) circle (1pt);
\fill (16.,1.) circle (1pt);
\fill (15.,2.) circle (1pt);
\fill (16.,2.) circle (1pt);
\fill (15.,3.) circle (1pt);
\fill (16.,3.) circle (1pt);
\fill (16.,4.) circle (1pt);
\fill (16.,5.) circle (1.0pt);
\fill (15.,4.) circle (1.0pt);
\fill (15.,1.) circle (1.0pt);
\fill (16.,0.) circle (1.0pt);
\end{scriptsize}
\end{tikzpicture}
\begin{tikzpicture}[line cap=round,line join=round,>=triangle 45,x=0.4cm,y=0.4cm]
\clip(-3.,-.2) rectangle (26.,6.6);
\draw [line width=.5pt] (0.,5.)-- (0.,0.);
\draw [line width=.5pt] (0.,0.)-- (5.,0.);
\fill [opacity=0.15] (0,0)--(0,5)--(5,5) --cycle;
\draw [line width=.5pt] (5.,0.)-- (5.,5.);
\draw [line width=.5pt] (5.,5.)-- (0.,5.);
\draw [line width=.5pt] (6.,5.)-- (6.,0.);
\draw [line width=.5pt] (6.,0.)-- (11.,0.);
\draw [line width=.5pt] (11.,0.)-- (11.,5.);
\draw [line width=.5pt] (11.,5.)-- (6.,5.);
\draw [line width=.5pt] (12.,5.)-- (12.,0.);
\draw [line width=.5pt] (12.,0.)-- (17.,0.);
\draw [line width=.5pt] (17.,0.)-- (17.,5.);
\draw [line width=.5pt] (17.,5.)-- (12.,5.);
\draw [line width=.5pt] (0.,0.)-- (5.,5.);
\draw [line width=.5pt] (6.,0.)-- (11.,5.);
\draw [line width=.5pt,color=A2] (6.,5.)-- (8.5,2.5);
\draw [line width=.5pt] (12.,0.)-- (17.,5.);
\draw [line width=.5pt,color=A2] (12.,5.)-- (14.5,2.5);
\draw [line width=.5pt,color=B5] (14.5,2.5)-- (14.5,5.);
\draw [line width=.5pt,color=B5] (14.5,2.5)-- (12.,2.5);
\begin{scope}[xshift=-1cm]
\draw [line width=.5pt] (20.,5.)-- (20.,0.);
\draw [line width=.5pt] (20.,0.)-- (25.,0.);
\draw [line width=.5pt] (25.,0.)-- (25.,1.5);
\draw [line width=.5pt] (25.,2.3)-- (25.,5.);
\draw [line width=.5pt] (25.,5.)-- (20.,5.);
\draw [line width=.5pt] (20.,0.)-- (25.,5.);
\draw [line width=.5pt,color=A2] (22.5,2.5)-- (20.,5.);
\draw [line width=.5pt,color=B5] (22.5,2.5)-- (20.,2.5);
\draw [line width=.5pt,color=B5] (22.5,2.5)-- (22.5,5.);
\draw [line width=.5pt,color=B4] (22.5,5.)-- (23.75,3.75);
\draw [line width=.5pt,color=C3] (23.75,3.75)-- (22.5,3.75);
\draw [line width=.5pt,color=C3] (23.75,3.75)-- (23.75,5.);
\draw [line width=.5pt,dotted] (23.75,5.)-- (24.375,4.375);
\draw [line width=.5pt,dotted] (23.75,5.)-- (23.125,4.375);
\draw [line width=.5pt,dotted] (22.5,3.75)-- (23.125,4.375);
\draw [line width=.5pt,dotted] (22.5,3.75)-- (23.125,3.125);
\draw [line width=.5pt,color=B4] (21.25,3.75)-- (22.5,5.);
\draw [line width=.5pt,color=C3] (21.25,3.75)-- (21.25,5.);
\draw [line width=.5pt,dotted] (21.25,5.)-- (20.,3.75);
\draw [line width=.5pt,dotted] (21.25,5.)-- (22.5,3.75);
\draw [line width=.5pt,dotted] (22.5,3.75)-- (20.,1.25);
\draw [line width=.5pt,color=B4] (21.25,3.75)-- (20.,2.5);
\draw [line width=.5pt,color=B4] (20.,2.5)-- (21.25,1.25);
\draw [line width=.5pt,color=C3] (22.5,3.75)-- (20.,3.75);
\draw [line width=.5pt,color=C3] (20.,1.25)-- (21.25,1.25);
\draw [line width=.5pt,color=C3] (21.25,3.75)-- (21.25,1.25);
\draw [line width=.5pt,dotted] (20.,3.75)-- (21.875,1.875);
\draw [line width=.5pt,dotted] (20.,1.25)-- (20.625,0.625);
\draw [line width=.5pt] (22.138354647388688,1.6713041585573125)-- (22.729440747688045,2.2922430922051227);
\draw [line width=.5pt] (22.729440747688045,2.2922430922051227)-- (22.55,2.25);
\draw [line width=.5pt] (22.729440747688045,2.2922430922051227)-- (22.699587914339595,2.113126092114408);
\draw [line width=.5pt] (22.138354647388688,1.6713041585573125)-- (22.17417804740683,1.8683328586570984);
\draw [line width=.5pt] (22.138354647388688,1.6713041585573125)-- (22.323442214149093,1.701156991905765);
\draw (22.3,2.6) node[anchor=north west] {$\scriptstyle{\sqrt 2 ^{-n_\varepsilon} l_\varepsilon \in [h_\e  , \sqrt{2} h_\e]}$};
\end{scope}
\fill (17.7,2.5) circle (.5pt);
\fill (17.5,2.5) circle (.5pt);
\fill (17.3,2.5) circle (.5pt);
\draw [line width=.5pt] (0.,5.3)-- (5.,5.3);
\draw [line width=.5pt] (0.,5.3)-- (0.2,5.2);
\draw [line width=.5pt] (0.,5.3)-- (0.2,5.4);
\draw [line width=.5pt] (5.,5.3)-- (4.8,5.2);
\draw [line width=.5pt] (5.,5.3)-- (4.8,5.4);
\draw [line width=.5pt] (6.7512814343666605,0.2209355579230366)-- (8.586179323202579,2.0416094321168132);
\draw [line width=.5pt] (6.7512814343666605,0.2209355579230366)-- (6.8,0.45);
\draw [line width=.5pt] (6.7512814343666605,0.2209355579230366)-- (6.9681141030799045,0.24682947797534774);
\draw [line width=.5pt] (8.586179323202579,2.0416094321168132)-- (8.340302729605124,1.98);
\draw [line width=.5pt] (8.586179323202579,2.0416094321168132)-- (8.55,1.8069891492300463);
\draw [line width=.5pt] (12,5.355541353845178)-- (14.5,5.358131238679478);
\draw [line width=.5pt] (14.5,5.358131238679478)-- (14.3,5.458131238679478);
\draw [line width=.5pt] (14.5,5.358131238679478)-- (14.3,5.258131238679478);
\draw [line width=.5pt] (12,5.355541353845178)-- (12.2,5.455541353845178);
\draw [line width=.5pt] (12.,5.355541353845178)-- (12.2,5.255541353845178);
\draw (2.0695478554310345,6.5) node[anchor=north west] {$l_\varepsilon$};
\draw (7.5,1.7) node[anchor=north west] {$\sqrt 2^{-1} l_\varepsilon$};
\draw (12.1,6.85) node[anchor=north west] {$\sqrt 2 ^{-2} l_\varepsilon$};
\draw (0.,4.2) node[anchor=north west] {$T \in \mathbf T_{\delta_\varepsilon}$};
\end{tikzpicture}
\caption{}
\label{fig:subdivision Q}
\end{figure}

\section{Brittle fracture} \label{sec:brittle fracture}
\begin{prop}\label{prop:brittle fracture}
Assume that $\theta_0 \leq 45^\circ - \text{\rm arctan}(1/2)$. If $\alpha=0$ and $\beta\in (0,\infty)$, then the functional $\F_\e$ $\Gamma$-converges for the strong $L^1(\O;\R^2) \times L^1(\O)$-topology to the functional
$\F_{\alpha,\beta}:L^1(\O;\R^2) \times L^1(\O) \to [0,+\infty]$ defined by
$$\F_{\alpha,\beta}(u,\chi)=
\begin{cases}
\ds  \frac12 \int_\O \mathbf A_1 e(u):e(u)\, dx + \beta\kappa \sin\theta_0 \HH^1(J_u)& \text{ if }
\begin{cases}
\chi=0\text{ a.e. in }\O,\\
u \in GSBD^2(\O)\cap L^1(\O;\R^2),
\end{cases}\\
+\infty & \text{ otherwise.}
\end{cases}$$
\end{prop}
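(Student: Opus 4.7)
The plan is to exploit the already-established compactness (Proposition \ref{prop:domains}) to restrict attention to pairs $(u,\chi)$ with $\chi=0$ and $u\in GSBD^2(\O)\cap L^1(\O;\R^2)$, and then to split the proof into a lower and an upper bound, both handled by reduction to the discrete approximation of the Griffith functional from \cite{BB} (Theorem 1.3 of that reference).

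For the lower bound inequality $\F_{\alpha,\beta}\le\F'_{\alpha,\beta}$, I would take a sequence $(u_\e,\chi_\e)\in X_{h_\e}(\O)$ converging to $(u,0)$ in $L^1(\O;\R^2)\times L^1(\O)$ with $\liminf_\e\F_\e(u_\e,\chi_\e)<\infty$, pass to a subsequence realizing the liminf, and, as in the proof of Proposition \ref{prop:domains}, introduce $v_\e:=(1-\chi_\e)u_\e\in SBV^2(\O;\R^2)$. Since $\chi_\e\in\{0,1\}$, one has $(1-\chi_\e)^2=1-\chi_\e$, so
\[
\tfrac12\int_\O \mathbf A_1 e(v_\e):e(v_\e)\,dx \;\le\; \tfrac12\int_\O (1-\chi_\e)\mathbf A_1 e(u_\e):e(u_\e)\,dx \;\le\; \F_\e(u_\e,\chi_\e),
\]
and $e(v_\e)\rightharpoonup e(u)$ weakly in $L^2(\O;\Msd)$, while $J_{v_\e}\subset\O\cap\partial D_\e$ with $D_\e=\{\chi_\e=1\}$. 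The elastic part of the lower bound follows from weak lower semicontinuity of the quadratic form. For the surface part, the key input is the sharp measure-theoretic estimate \cite[Proposition 2.3]{BB}, which states (after localization) that on any Borel set $A$ intersecting $J_u$ transversally, $\liminf_\e h_\e^{-1}\LL^2(D_\e\cap A)\ge \sin\theta_0\,\HH^1(J_u\cap A)$; multiplying by $\kappa h_\e/\e\to\beta\kappa$ and using a standard supremum over disjoint test sets yields $\liminf_\e \tfrac{\kappa}{\e}\int_\O\chi_\e\,dx \ge \beta\kappa\sin\theta_0\,\HH^1(J_u)$. Adding the two contributions gives the desired bound.

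For the upper bound $\F''_{\alpha,\beta}\le\F_{\alpha,\beta}$, the natural route is to first reduce, by a density argument and lower semicontinuity of $\F''_{\alpha,\beta}$, to displacements $u\in SBD^2(\O)$ whose jump set $J_u$ is a finite union of disjoint closed segments and which are smooth away from $J_u$ (such a density result is available in $GSBD^2$). For such $u$, I would copy the recovery construction of \cite[Theorem 1.3]{BB}: around each segment of $J_u$ one places a thin strip of isosceles triangles with base of order $h_\e$ lying on the segment and opposite vertex at angle $\theta_0$ (this uses the assumption $\theta_0\le 45^\circ-\arctan(1/2)$ to ensure that the triangulation can be extended, with angles bounded below by $\theta_0$, to the rest of $\O$); the strip is declared damaged ($\chi_\e=1$), $u_\e$ is taken to be the linear interpolation between the two traces of $u$ on either side of the strip, and outside the strip $u_\e$ is the standard Lagrange interpolant of a regularization of $u$ on an admissible triangulation. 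Then $(u_\e,\chi_\e)\in X_{h_\e}(\O)$ with $u_\e\to u$ in $L^1$, $\chi_\e\to 0$ in $L^1$, the healthy part of the elastic energy converges to $\tfrac12\int_\O\mathbf A_1 e(u):e(u)\,dx$, and the dissipative term converges to $\beta\kappa\sin\theta_0\,\HH^1(J_u)$ since the strip has area asymptotically equivalent to $h_\e\sin\theta_0\,\HH^1(J_u)$.

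The only additional point specific to our setting, not directly present in \cite{BB}, is to show that the elastic energy stored in the damaged strip is negligible. A triangle in the strip has base $\sim h_\e$, height $\sim h_\e\sin\theta_0$, and interpolates a jump of amplitude at most $\|[u]\|_{L^\infty(J_u)}$, so the contribution of each triangle to the damaged elastic energy is $O(\eta_\e |[u]|^2)$; summing over the $O(\HH^1(J_u)/h_\e)$ triangles yields a total of order $\eta_\e/h_\e\,\HH^1(J_u)\|[u]\|_\infty^2$, which tends to $0$ because $\eta_\e/h_\e=(\eta_\e/\e)(\e/h_\e)\to 0\cdot \beta^{-1}=0$ in the regime $\alpha=0<\beta<\infty$. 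The main technical obstacle in writing this cleanly is not any of the above steps individually but rather the passage from regular $u$ to general $u\in GSBD^2\cap L^1$, which requires invoking the appropriate density theorem for $GSBD^2$ (e.g.\ the Chambolle--Crismale density result), and verifying that the construction behaves well at junction points of segments of $J_u$; this is precisely the content of \cite[Theorem 1.3]{BB}, from which the upper bound inequality follows with only the cosmetic modification of adding back the negligible damaged elastic energy above.
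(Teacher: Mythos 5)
Your proposal is correct in substance, and your upper bound is essentially the one in the paper: there too one reduces to $u \in SBV^2(\O;\R^2)\cap L^\infty(\O;\R^2)$ via the $GSBD^2$ density theorem \cite{CCdensity} and lower semicontinuity of $\F''_{0,\beta}$, takes the recovery sequence $u_\e \in V_{h_\e}(\O)$ produced by the constructive proof of \cite[Proposition 3.11]{BB} (this is where $\theta_0 \leq 45^\circ-\arctan(1/2)$ enters), sets $\chi_\e := \mathds{1}_{\{\mathbf A_1 e(u_\e):e(u_\e)\ \geq\ 2\kappa\beta/h_\e\}}$, and checks that the two extra terms vanish: the damaged elastic energy, controlled through $\norme{e(u_\e)}_{L^\infty(T)} \leq C\norme{u}_{L^\infty}/h_\e$ and $\eta_\e/h_\e \to 0$ (your strip computation is exactly this estimate), and the mismatch $(h_\e/\e - \beta)\,\kappa h_\e^{-1}\int_\O \chi_\e\,dx$ between $\kappa/\e$ and $\beta\kappa/h_\e$. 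Where you genuinely diverge is the lower bound: the paper does not split bulk and surface at all, but simply remarks that $\F_\e(u_\e,\chi_\e) \geq \G_\e(u_\e)$ with $\G_\e(u) = \int_\O \min\left( \tfrac12 \mathbf A_1 e(u):e(u)\, ;\, \beta\kappa/h_\e \right) dx$ on $V_{h_\e}(\O)$, and then invokes the full $\Gamma$-liminf inequality of \cite[Theorem 1.3]{BB} (with respect to convergence in measure), which already contains both the elastic and the surface estimates. Your route — additive splitting, weak $L^2$ lower semicontinuity for $v_\e=(1-\chi_\e)u_\e$, and a localized version of the geometric lemma of \cite{BB} for the surface term — re-proves a localized liminf that the citation already provides; the paper's comparison argument is shorter, while yours is more self-contained at the level of ideas.

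One detail you should repair in the lower bound: the geometric estimate you attribute to \cite[Proposition 2.3]{BB} concerns the set of triangles where the elastic energy of $u_\e$ exceeds the threshold, not an arbitrary damage set $D_\e=\{\chi_\e=1\}$, and a priori part of the jump of $u$ could be created on healthy triangles carrying very large strains, in which case $\liminf_\e \frac{\kappa}{\e}\int_\O\chi_\e\,dx$ alone need not be estimated by that lemma as stated. The fix is the normalization the paper uses in the proof of Proposition \ref{prop:intermediate}: replace $\chi_\e$ by the per-triangle minimizer $\tilde\chi_\e = \mathds{1}_{\{\frac12\mathbf A_1 e(u_\e):e(u_\e)\ \geq\ \frac{\eta_\e}{2}\mathbf A_0 e(u_\e):e(u_\e) + \kappa/\e\}}$, which keeps the pair admissible, only decreases $\F_\e$, still satisfies $\tilde\chi_\e \to 0$ in $L^1(\O)$, and makes the damaged set coincide with the threshold set to which the blow-up analysis of \cite{BB} applies (alternatively, argue directly that with bounded energy the jump cannot be accommodated by the healthy phase). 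With this normalization your additive splitting and the superadditivity of the liminf give the stated lower bound.
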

\begin{rem}
Note that the same result holds in $L^0(\O;\R^2)\times L^0(\O)$ for the topology of convergence in measure, up to enlarging the domain of finiteness of the $\Gamma$-limit to 
$$
\left\{ (u,\chi) \in L^0(\O;\R^2)\times L^0(\O) \, : \, u \in GSBD^2(\O) , \, \chi=0 \right\}.
$$ 
The proof is almost exactly the same as below and is the direct consequence of \cite[Theorem 1.3]{BB}.
\end{rem}
\begin{proof}
First note that, according to \cite[Theorem 1.3]{BB}, the functional $\G_\e : L^0(\O;\R^2) \to [0,+\infty]$ defined by
$$
\G_\e ( u ) = \begin{cases}
					\ds \int_\O \min \left( \frac12 \mathbf A_1 e(u):e(u) \, ; \, \frac{\beta \kappa}{h_\e} \right) \, dx & \quad \text{if } u \in V_{h_\e}(\O), \\
					+ \infty & \quad \text{otherwise},
			  \end{cases}
$$
$\Gamma$-converges, for the topology of convergence in measure in $L^0(\O;\R^2)$, to $\G : L^0(\O;\R^2) \to [0,+ \infty]$ defined by
$$\G(u)=
\begin{cases}
\ds  \frac12 \int_\O \mathbf A_1 e(u):e(u)\, dx + \beta\kappa \sin\theta_0 \HH^1(J_u)& \quad \text{if }
u \in GSBD^2(\O), \\
+\infty & \quad \text{otherwise,}
\end{cases}$$
where $u \in V_{h_\e} \Leftrightarrow (u,0) \in X_{h_\e}(\O)$ ({\it i.e.} we only consider the finite elements on displacements and forget about the characteristic functions). Next, according to Proposition \ref{prop:domains}, it is enough to identify the $\Gamma$-limit for $\chi = 0$ and $u \in L^1(\O;\R^2) \cap  GSBD^2(\O)$. 

\medskip
\noindent
{\it Lower bound.} For all $(u_\e, \chi_\e) \in X_{h_\e}(\O)$ converging to $(u,0)$ in $L^1(\O;\R^2) \times L^1(\O)$, one can check that $u_\e \in V_{h_\e}(\O)$ converges in measure to $u$ and $\F_\e(u_\e,\chi_\e) \geq \G_\e(u_\e)$. Hence, passing to the infimum among such sequences leads to 
$$
\F'_{\alpha,\beta}(u,0) \geq \G(u) = \F_{\alpha,\beta}(u,0).
$$

\noindent {\it Upper bound.} Using the density result for $GSBD$ functions (see \cite[Theorem 1.1]{CCdensity}) as well as the lower semicontinuity of $\F_{0,\beta}''$ with respect to the convergence in $L^1(\O;\R^2)\times L^1(\O)$, we can assume without loss of generality that $u \in SBV^2(\O;\R^2) \cap L^\infty(\O;\R^2)$. Looking at the constructive proof for the recovery sequence in \cite[Proposition 3.11]{BB}, one can check that there exists $u_\e \in V_{h_\e}(\O)$ and a triangulation $\mathbf T_{h_\e} \in \T_{h_\e}(\O)$ such that $u_\e$ is affine on each of its triangle,
\begin{gather}
\underset{\e > 0}{\sup} \, \norme{u_\e}_{L^\infty(\O;\R^2)} \leq \norme{u}_{L^\infty(\O;\R^2)}, \quad u_\e \to u \quad \text{strongly in } L^1(\O;\R^2)\label{eq:norme sup}\\
\text{and } \G_\e(u_\e) \to \G(u) \quad \text{as } \e \searrow 0. \label{eq:G eps beta to G beta}
\end{gather}
(Beware that this requires that $\theta_0 \leq \Theta_0 := 45^\circ- \text{\rm arctan}(1/2)$ as the triangulation $\mathbf T_{h_\e}$ might have triangles with angles equal to $\Theta_0$.) Since $u_\e$ is affine on each triangle $T \in \mathbf T_{h_\e}$, the fact that the length of every edge of $T$ is larger than $h_\e$ together with \eqref{eq:norme sup} imply that there exists a constant $C(\theta_0)> 0$ such that
\begin{equation}\label{eq:borne eu eps}
\norme{e(u_\e)}_{L^\infty(T;\R^2)} \leq \frac{C \norme{u}_{L^\infty(\O;\R^2)}}{h_\e}.
\end{equation}
Introducing the characteristic functions $\chi_\e = \mathds{1}_{ \ds \left\{ \mathbf A_1 e(u_\e) : e(u_\e) \geq 2 \kappa \beta / h_\e  \right\} } $, one can check that $(u_\e, \chi_\e) \in X_{h_\e}(\O)$ and
$$\F_\e(u_\e, \chi_\e) = \G_\e (u_\e) + \frac12 \int_\O \eta_\e \chi_\e \mathbf A_0 e(u_\e) : e(u_\e) \, dx + \left( \frac{h_\e}{\e} - \beta  \right) \frac{\kappa}{h_\e}  \int_\O \chi_\e \, dx .
$$
On the one hand, \eqref{eq:G eps beta to G beta} entails that 
$$
M := \sup_{\e > 0} \, \frac{\kappa}{h_\e} \int_\O \chi_\e \, dx \leq \sup_{\e>0} \G_\e(u_\e) < \infty
$$
is bounded, so that $(u_\e, \chi_\e) \to (u,0) $ strongly in $ L^1(\O;\R^2) \times L^1(\O)$ when $\e \searrow 0$. On the other hand, using the growth condition of $\mathbf A_0$ together with \eqref{eq:borne eu eps}, we infer that there exists a constant $C'(a'_0;\kappa;\theta_0;u) >0$ such that 
$$
\F_\e(u_\e, \chi_\e) \leq \G_\e(u_\e) + C' M \left( \frac{\eta_\e}{h_\e} + \frac{h_\e}{\e} - \beta \right)  \to \G(u) = \F_{\alpha,\beta}(u,0) \quad \text{when } \e \searrow 0,
$$
where we also used the facts that $\lim_{\e \searrow 0} \, (\eta_\e / \e )= \alpha = 0$ and $\lim_{\e \searrow 0} \, ( h_\e / \e ) = \beta \in (0,+\infty)$.
\end{proof}

\section{Hencky Plasticity}\label{sec:hencky plasticity}
In this section, we further assume that the Hooke tensor of the damaged state is isotropic.
\begin{prop}\label{prop:hencky plasticity} Let us assume that $\theta_0 \leq \arctan (1/4)$ and that the stiffness tensor $\mathbf A_0$ satisfies \eqref{eq:isotropic Hooke Law A_0} with Lam\'e coefficients $\lambda,\mu >0$, {\it i.e.}
$$
\mathbf A_0 \xi = \lambda {\rm Tr}(\xi) I_2 + 2 \mu \xi , \quad \text{for all } \xi \in \Msd.
$$
If $\alpha \in (0, \infty)$ and $\beta = 0$, then the functional $ \F_\e$ $\Gamma$-converges for the strong $L^1(\O;\R^2) \times L^1(\O)$-topology to the functional
$$
 \F_{\alpha,\beta}(u,\chi) = 
\begin{cases}
\ds \int_\O \bar W_\alpha (e(u)) \, dx + \int_\O \bar W_\alpha^\infty \left( \frac{ d E^su}{d \left\lvert E^su \right\rvert} \right) \, d \left\vert   E^s u \right\rvert & \quad \text{if } u \in BD(\O) \text{ and } \chi = 0 \text{ a.e.},\\
+ \infty & \quad \text{otherwise,}
\end{cases}
$$
where $\bar W_\alpha$ and $\bar W_\alpha^\infty$ are defined in \eqref{eq:W alpha} and \eqref{eq:recession}.
\end{prop}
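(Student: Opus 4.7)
This inequality follows essentially from the continuous analysis. Since $X_{h_\e}(\O) \subset H^1(\O;\R^2)\times L^\infty(\O;\{0,1\})$, every admissible discrete pair is also admissible for the continuous Francfort-Marigo functional studied in \cite{BIR}. Combining Proposition~\ref{prop:domains} (which already secures $u\in BD(\O)$ and $\chi=0$) with the pointwise convergence $SQW_\e \to \bar W_\alpha$ (a direct extension of \cite[Proposition~3.3]{BIR} to the anisotropic setting, as noted in the introduction) and the standard Reshetnyak-type lower semicontinuity on $BD(\O)$ of functionals with density $\bar W_\alpha$ (quadratic at zero, linear at infinity, with recession $\bar W_\alpha^\infty$), one obtains $\F_{\alpha,0}'(u,0) \geq \F_{\alpha,0}(u,0)$.

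\textbf{Upper bound: reduction.} The real work lies in the $\Gamma$-limsup. Using strict density of piecewise affine displacements in $BD(\O)$ together with continuity of $\F_{\alpha,0}(\,\cdot\,,0)$ for strict convergence (which holds because $\bar W_\alpha$ has linear growth at infinity with recession $\sqrt{2\alpha\kappa h}$), the problem is localized and reduced to constructing a recovery sequence when $u(x)=\xi x + b$ is affine on a reference square. By isotropy of $\mathbf A_0$, the explicit formulas \eqref{eq:g isotropic case}--\eqref{eq:h isotropic case} identify the optimal microstructure realizing $\bar W_\alpha(\xi)$ as either a rank-one laminate aligned with a principal direction of $\xi$, or a rank-two laminate in both orthogonal principal directions $(v_1,v_2)$ of $\xi$, with optimal volume fraction $\theta_\e \in [0,1]$ realizing $SQW_\e(\xi) = \theta_\e\kappa/\e + \tfrac12 \mathscr H_\e(\theta_\e,\xi)$.

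\textbf{Upper bound: construction and main obstacle.} In each regime I build an admissible triangulation $\mathbf T_{h_\e}\in\T_{h_\e}(\O)$ whose edges are parallel to $v_1, v_2$ and whose triangles are right-angled with aspect ratio of order $1{:}4$ (this is precisely what dictates $\theta_0\leq\arctan(1/4)$). On this mesh, $\{\chi_\e = 1\}$ is a periodic collection of strips of width of order $h_\e$ in direction $v_1$ (further subdivided by strips in direction $v_2$ in the rank-two case), tuned so that $\int_\O \chi_\e \, dx \approx \theta_\e \LL^2(\O)$, and $u_\e$ is the unique continuous piecewise affine displacement on $\mathbf T_{h_\e}$ whose constant symmetric gradients in the two phases realize the Hashin-Shtrikman optimum. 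A direct Riemann-sum computation then yields $\F_\e(u_\e,\chi_\e) \to \bar W_\alpha(\xi)\LL^2(\O)$. The hard part is the simultaneous enforcement of (i)~continuity of $u_\e$ across every laminate interface, (ii)~piecewise affinity on a triangulation with uniformly positive minimum angle $\theta_0$, and (iii)~compatibility between the mesh scale $h_\e$ and the layer widths inherited from $\theta_\e$ --- exactly the obstruction that rules out any naive diagonal extraction from the $H$-convergent laminates sketched in the introduction. The isotropy hypothesis is indispensable here, as it forces the optimal lamination directions to be orthogonal and hence compatible with a single rectangular grid; a generic anisotropic $\mathbf A_0$ would produce lamination directions incompatible with any admissible triangulation at scale $h_\e$. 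Finally, the passage from the affine model case to general $u \in BD(\O)$, including the handling of a vanishing boundary layer near $\partial\O$ and the gluing of local constructions over a fine cube partition via Lipschitz transition zones of asymptotically negligible volume, proceeds along standard lines once the building block is in place.
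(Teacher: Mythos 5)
Your overall strategy coincides with the paper's: the lower bound is inherited from the continuous analysis of \cite{BIR} because $X_{h_\e}(\O)\subset H^1(\O;\R^2)\times L^\infty(\O;\{0,1\})$, and the upper bound is reduced to an affine displacement on a reference cell, for which one builds an explicit laminate directly at scale $h_\e$ together with an admissible triangulation. There is, however, a concrete error in your identification of the optimal microstructure. The laminate must be built from the optimal plastic strain $\tau\in\Msd$ in the infimal convolution, i.e.\ the unique minimizer of $\tau\mapsto \sqrt{2\alpha\kappa h(\tau)}+\tfrac12\mathbf A_1(\xi-\tau):(\xi-\tau)$, and not from the spectral decomposition of $\xi$ itself: since $\mathbf A_1$ is only assumed to satisfy \eqref{eq:rigidity tensors} (it need not be isotropic), $\tau$ is in general not coaxial with $\xi$, so strips parallel to the principal directions of $\xi$ cannot reproduce $\bar W_\alpha(\xi)$. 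Moreover the correct dichotomy is governed by the sign of $\det\tau$, not by the rank of the laminate read off from \eqref{eq:g isotropic case}: when $\det\tau\le 0$ one writes $\tau=a\odot b$ and laminates in the single direction $b$, which is \emph{not} an eigendirection of $\tau$; only when $\det\tau>0$ does one laminate in the two eigendirections of $\tau$, with volume fractions proportional to $\e\lvert\tau_i\rvert$. Relatedly, your explanation of the role of isotropy is off: orthogonality of the eigendirections of $\tau$ is automatic by the spectral theorem, whatever $\mathbf A_0$ is. Isotropy is needed only in the case $\det\tau>0$, to have the explicit value $h(\tau)=(\lambda+2\mu)(\lvert\tau_1\rvert+\lvert\tau_2\rvert)^2$, which is what certifies that the simple two-direction orthogonal laminate actually attains the dissipation $\sqrt{2\alpha\kappa h(\tau)}$; in the case $\det\tau\le0$ the construction works for a general $\mathbf A_0$.

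The second gap is the gluing. Passing from the per-cell construction to a global admissible pair is not ``standard'': the transition region between two neighbouring cells must itself be triangulated by triangles with edges of length at least $h_\e$ and angles at least $\theta_0$, while matching the laminate vertices (whose positions are dictated by the layer widths, hence by $\tau$, which changes from cell to cell) on one side and a common set of anchor nodes on the interface on the other. The paper devotes a long combinatorial argument (Step 2 of the proof of Lemma \ref{lem:T plasticity}) to precisely this point, and it is there — not in the aspect ratio of the interior laminate triangles, which is of order $1{:}2$ — that the hypothesis $\theta_0\le\arctan(1/4)$ is used. One must also check that the interpolated displacement in the transition layer contributes a vanishing elastic energy, which requires a uniform $C/\e$ bound on its gradient; inserting generic ``Lipschitz transition zones'' does not by itself deliver either of these facts. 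As written, your proposal leaves the decisive steps as unproved claims.
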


Although very similar to the one dimensional analysis in spirit, the vectorial case poses several mathematical challenges. On the one hand, our constructive proof cannot handle the case of a general stiffness tensor $\mathbf A_0$, as we rely on the explicit formula \eqref{eq:h isotropic case} to recover the $\Gamma$-limsup. On the other hand, as explained in the introduction, in the vectorial case we cannot directly use the candidates given by the Hashin-Shtrikman bound which fail to be admissible. Fortunately, as $\Gamma$-convergence only requires the convergence of the energies, we have more flexibility in the way we choose to laminate our approximating materials, so that a perfect immitation of $H$-convergence is not necessary. Finally, an additional step is needed in the vectorial setting, in order to merge in an admissible way the recovery sequences which are constructed locally.

\begin{proof}{\it Lower bound.} The proof of the lower bound inequality is the direct consequence of \cite[Theorem 3.1]{BIR} as it is easily checked that $X_{h_\e}(\O) \subset  H^1(\O;\R^2) \times L^\infty(\O; \{ 0,1 \})$.

\medskip
\noindent
{\it Upper bound.} By a rescaling and translation argument, we can assume without loss of generality that $\O= (0,1)^2$ is the unit cube of $\R^2$. Next owing to \cite[Corollary 1.10]{ARDPR} and \cite[Proposition 3.6]{BIR} together with the lower semi-continuity of $\F''_{\alpha,\beta}$ in $L^1(\O;\R^2) \times L^1(\O)$, we can assume that $u \in LD(\O)$. Then, due to the density result \cite[Theorem 3.2, Section II, 3.3]{Temam} and Reshetnyak's Continuity Theorem \cite[Theorem 2.39]{AFP} together with the lower semi-continuity of $\F''_{\alpha,\beta}$ again, we can further assume that $u \in C^\infty(\bar \O;\R^2)$. Thus dividing $\O$ into $n^2$ subcubes of side length $1/n$, each one being further divided into two isocele right triangles, we can consider the Lagrange interpolation $u_n$ of the values of $u$ at the vertices of the obtained triangulation (see Figure \ref{fig:triangulation plasticity}). In particular, since $\F_{\alpha,\beta}(u_n,0) \to \F_{\alpha,\beta}(u,0)$ as $n \nearrow \infty$ by standard approximation arguments (see \cite[Theorem 3.1.5]{Ciarlet}), we can eventually assume that $u = u_n \in H^1(\O;\R^2)$ for some $n \geq 1$.
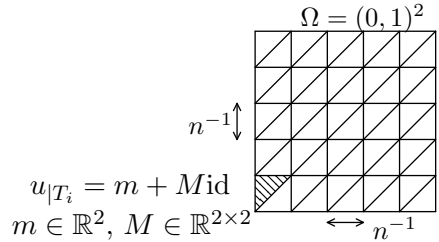
\begin{figure}[hbtp]
\begin{tikzpicture}[line cap=round,line join=round,>=triangle 45,x=0.6cm,y=0.6cm]
\clip(-10.5,-.7) rectangle (16,5.);
\begin{scope}[scale=0.8]
\draw [line width=.5pt] (0,0)-- (0,5);
\draw [line width=.5pt] (0.,0.)-- (5.,0.);
\draw [line width=.5pt] (5.,0.)-- (5.,5.);
\draw [line width=.5pt] (5.,5.)-- (0.,5.);
\draw [line width=.5pt] (0.,4.)-- (5.,4.);
\draw [line width=.5pt] (0.,3.)-- (5.,3.);
\draw [line width=.5pt] (0.,2.)-- (5.,2.);
\draw [line width=.5pt] (0.,1.)-- (5.,1.);
\draw [line width=.5pt] (1.,5.)-- (1.,0.);
\draw [line width=.5pt] (2.,5.)-- (2.,0.);
\draw [line width=.5pt] (3.,5.)-- (3.,0.);
\draw [line width=.5pt] (4.,5.)-- (4.,0.);
\draw [line width=.5pt] (0.,4.)-- (1.,5.);
\draw [line width=.5pt] (0.,3.)-- (2.,5.);
\draw [line width=.5pt] (0.,2.)-- (3.,5.);
\draw [line width=.5pt] (0.,1.)-- (4.,5.);
\draw [line width=.5pt] (0.,0.)-- (5.,5.);
\draw [line width=.5pt] (1.,0.)-- (5.,4.);
\draw [line width=.5pt] (2.,0.)-- (5.,3.);
\draw [line width=.5pt] (3.,0.)-- (5.,2.);
\draw [line width=.5pt] (4.,0.)-- (5.,1.);

\fill[line width=.5pt,pattern=north west lines,fill opacity=1] (0.,0.) -- (0.,1.) -- (1,1) -- cycle;

\draw (-6.5,1.3) node[anchor=north west]  {\large $u_{|{T_i}} = m + M {\rm id}$};
\draw (-7,0.3) node[anchor=north west] {\large $ m \in \R^2, \, M \in \R^{2 \times 2}$};

\draw (-2.1,3) node[anchor=north west] {$n^{-1}$};
\draw (3,0.1) node[anchor=north west] {$n^{-1}$};

\draw [line width=0.5pt] (-0.5,3)-- (-0.5,2);
\draw [line width=0.5pt] (-0.6,2.8)-- (-0.5,3);
\draw [line width=0.5pt] (-0.4,2.8)-- (-0.5,3);
\draw [line width=0.5pt] (-0.6,2.2)-- (-0.5,2);
\draw [line width=0.5pt] (-0.4,2.2)-- (-0.5,2);
\draw [line width=0.5pt] (2,-0.3)-- (3,-0.3);
\draw [line width=0.5pt] (2.2,-0.2)-- (2,-0.3);
\draw [line width=0.5pt] (2.2,-0.4)-- (2,-0.3);
\draw [line width=0.5pt] (2.8,-0.2)-- (3,-0.3);
\draw [line width=0.5pt] (2.8,-0.4)-- (3,-0.3);

\draw (1.,6.) node[anchor=north west] {$\O = (0,1)^2$};
\end{scope}
\end{tikzpicture}
\caption{Reduction to the case of a piecewise affine displacement.}
\label{fig:triangulation plasticity}
\end{figure}
Let 
$\{T_i\}_{1 \leq i \leq 2n^2}$
be an enumeration of the above triangles. The idea is to construct a local recovery sequence $(u_\e^i,\chi_\e^i)$ on each triangle $T_i$ in such a way that the local triangulation have all its vertices anchored to $\partial T_i$ in a specific way, allowing to glue them at the junctions of each triangle, thus realizing a global recovery sequence $(u_\e,\chi_\e) \in X_{h_\e}(\O)$. Indeed, applying Lemma \ref{lem:T plasticity} below to each triangle $T_i$ (up to a rescaling, rotation and translation argument), we obtain a local admissible triangulation $\mathbf T_\e^i \in \T_{h_\e}(T_i)
$ and its associated local recovery sequence $(u_\e^i, \chi_\e^i) \in X_{h_\e}(T_i)$. In particular, setting
\[
 u_\e := \sum_i \mathds{1}_{\ds T_i} u_\e^i \quad \text{and} \quad  \chi_\e := \sum_i \mathds{1}_{\ds T_i} \chi_\e^i ,
\]
one can check that $(u_\e,\chi_\e) \in X_{h_\e}(\O)$ is adapted to the global triangulation $\mathbf T_\e := \cup_i \mathbf T_\e^i \in \mathcal T_{h_\e}(\O)$. Eventually, by Fubini's Theorem and a rescaling argument, we infer that
\[ 
(u_\e,\chi_\e) \to (u,0) \quad \text{strongly in } L^1(\O;\R^2)\times L^1(\O) 
\]
and
\[
\F_\e(u_\e,\chi_\e) = \sum_i \E_\e^{T_i}(u_\e^i,\chi_\e^i) \to \sum_i \tfrac{1}{2n^2} \, \bar W_\alpha \left( e(u)_{|T_i} \right) = \F_{\alpha,\beta}(u,0)
\]
as $\e \searrow 0$.
\end{proof}

We are back to establishing the following result. 
\begin{lem}\label{lem:T plasticity}
Let $T := ABC $ be the isocele right triangle where $A=(0,0)$, $B=(1,0)$ and $C=(0,1)$. Let $v \in H^1(\R^2;\R^2)$ be an affine function. There exists an admissible triangulation $\mathbf T_\e \in \mathcal T_{h_\e}(T,\theta_0)$ and a recovery sequence $(v_\e,\chi_\e) \in X_{h_\e}(T)$ adapted to $\mathbf T_\e$, such that:\vspace{2mm}
\begin{itemize}[leftmargin=5mm,label=-]
\item the vertices of the triangulation are anchored to the segments $[AB]$, $[AC]$ and $[BC]$ at the nodes 
$$\bar x_{i,\e} = \frac{i}{ m_\e} \, e_1 , \quad \bar y_{i,\e} =  \frac{i}{ m_\e}  \, e_2 \quad \text{and} \quad \bar z_{i,\e} = e_1 +  \frac{i}{ m_\e}  \, (e_2 - e_1) $$
respectively, for $ i \in \llbracket 0, m_\e \rrbracket$ where $m_\e = \lfloor h_\e^{-1} \rfloor$;\vspace{2mm}
\item $v_\e \equiv v$ on $\partial T$ and $\chi_\e \equiv 1$ in each triangle in contact with $\partial T$;\vspace{2mm}
\item $(v_\e, \chi_\e) \to (v,0)$ strongly in $L^1(T;\R^2) \times L^1(T)$ as $\e \searrow 0$;\vspace{2mm}
\item $\E_\e^T(v_\e,\chi_\e) := \ds \int_T \left( \chi_\e  \tfrac\kappa\e + \tfrac12 \left( \chi_\e \eta_\e \mathbf A_0  + (1-\chi_\e) \mathbf A_1 \right) e(v_\e):e(v_\e) \right) \, dx \to \tfrac12 \bar W_{\alpha} (e(v))$ as $\e \searrow 0$.
\end{itemize}
\end{lem}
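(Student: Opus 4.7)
The plan is to construct explicitly, at scale $h_\e$, a laminated microstructure in $T$ that mimics the Hashin-Shtrikman bound discussed in the introduction. As explained there, the displacements inherited directly from the $H$-convergence procedure fail to be piecewise affine on an admissible mesh, so we build a laminated structure by hand directly at the required discrete scale.

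We first perform an optimal additive decomposition of $\xi := e(v)$. Exploiting the infimal convolution formula $\bar W_\alpha = f \infconv \sqrt{2 \alpha \kappa h}$, there exist $\xi_e,\xi_p \in \Msd$ with $\xi = \xi_e + \xi_p$ and
$$\bar W_\alpha(\xi) = \frac{1}{2} \mathbf A_1 \xi_e : \xi_e + \sqrt{2 \alpha \kappa h(\xi_p)}.$$
Thanks to the isotropy of $\mathbf A_0$ and the explicit formula \eqref{eq:h isotropic case}, the plastic part $\xi_p$ can be decomposed as a sum of at most two symmetric rank-one tensors, $\xi_p = \sum_{j=1}^p a_j \odot b_j$ with $p \in \{1,2\}$ and $b_j \in \mathbb S^1$. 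The unit vectors $b_j$ will play the role of lamination directions.

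With this decomposition at hand, we construct the bulk lamination. Away from $\partial T$, we split the triangle into thin parallel bands of width $l_\e \sim m_\e^{-1}$ (with $m_\e := \lfloor \theta_\e/h_\e \rfloor \to \infty$) oriented along $b_1$. Each band is subdivided into a thin damaged sub-strip of width $\theta_\e l_\e \gtrsim h_\e$ and a healthy sub-strip, where the volume fraction $\theta_\e \sim \e$ is selected to realize the optimal energy balance $\kappa \theta_\e/\e + \tfrac{1}{2} \eta_\e \mathbf A_0 \cdot :\cdot \approx \sqrt{2\alpha \kappa h(\xi_p)}$. If $p=2$, we iterate the construction inside each healthy strip using the second direction $b_2$. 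We define $v_\e$ piecewise affine on these strips, taking $e(v_\e)$ close to $\xi_e$ on healthy regions and adjusting the gradient on damaged ones so that $v_\e$ remains continuous across each interface via rank-one compatibility. These strips are then meshed into triangles of diameter in $[h_\e,\omega(h_\e)]$ with angles $\geq \theta_0$; the assumption $\theta_0 \leq \arctan(1/4)$ is what licenses the highly elongated triangles needed to resolve the lamination without violating the minimal-angle constraint.

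Finally, we handle the boundary condition. We introduce a thin frame of thickness $O(h_\e)$ around $\partial T$, whose triangles have vertices at the prescribed anchor points $\bar x_{i,\e},\bar y_{i,\e},\bar z_{i,\e}$; on this frame we set $\chi_\e \equiv 1$ and define $v_\e$ as the Lagrange interpolant of $v$ at the frame's vertices, which automatically matches $v$ on $\partial T$. The elastic contribution of the frame vanishes because $\eta_\e \to 0$ and $e(v_\e)$ stays bounded, while its dissipative contribution is of order $\kappa h_\e/\e \cdot \mathcal H^1(\partial T) \to 0$ since $\beta = 0$. Gluing the boundary frame with the interior lamination inside a single admissible triangulation $\mathbf T_\e \in \mathcal T_{h_\e}(T,\theta_0)$, one verifies $L^1$-convergence of $(v_\e,\chi_\e) \to (v,0)$ and that the total energy converges to $|T| \bar W_\alpha(\xi) = \tfrac{1}{2} \bar W_\alpha(e(v))$. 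The main obstacle is precisely this junction step: matching the boundary Lagrange interpolant to the interior laminated displacement across a transition layer of thickness $O(h_\e)$, while simultaneously preserving the minimal-angle condition and the rank-one compatibility that ensures the continuity of $v_\e$; the explicit isotropic form \eqref{eq:h isotropic case} and the specific regular spacing of the anchor points are what make this junction tractable.
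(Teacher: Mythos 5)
Your proposal follows the same broad strategy as the paper (optimal splitting of $\xi=e(v)$ via the infimal convolution, an explicit lamination at scale $h_\e$, and a fully damaged layer anchored at the nodes $\bar x_{i,\e},\bar y_{i,\e},\bar z_{i,\e}$), but two steps, as you describe them, do not go through. In the bulk, writing $\xi_p=\sum_{j=1}^p a_j\odot b_j$ with $p\in\{1,2\}$ hides the case analysis on which the constant depends. If $\det \xi_p\le 0$ one \emph{must} use the single oblique rank-one writing $\xi_p=a\odot b$ (with $b$ not an eigenvector in general) and laminate in that one direction: laminating along the two eigendirections would produce in the limit $\sqrt{2\kappa\alpha(\lambda+2\mu)}\,(|\tau_1|+|\tau_2|)$, which is strictly larger than $\sqrt{2\alpha\kappa\, h(\xi_p)}=\sqrt{2\alpha\kappa\,\mathbf A_0\xi_p:\xi_p}$ when $\tau_1\tau_2<0$, so the upper bound would be missed. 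If $\det\xi_p>0$, a single volume fraction ``$\theta_\e\sim\e$'' is not enough: the two crossed stripe families need fractions tuned separately, $\theta_{i,\e}\propto\e|\tau_i|$, one must check that the overlap of the two damaged families contributes nothing in the limit, and isotropy is what makes the two contributions add up exactly to $I_\KK^*(\xi_p)=\sqrt{2\kappa\alpha(\lambda+2\mu)}(|\tau_1|+|\tau_2|)$ (through $\mathbf A_0(b\otimes b):(b\otimes b)=\lambda+2\mu$ for every unit $b$). Note also that no separation of scales is available here (fractions $\sim\e$ with damaged widths $\ge h_\e$ force both periods to be $\sim h_\e/\e$), so the ``iteration inside each healthy strip'' is really a superposition of two families at the same scale, whose continuity rests on $b_1\cdot b_2=0$; and the resulting stripes are meshed by near-isotropic right triangles with edges in $[h_\e,2h_\e]$, so the bound $\theta_0\le\arctan(1/4)$ is \emph{not} consumed by ``elongated triangles resolving the lamination''.

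The more serious gap is the junction, which you yourself single out as the main obstacle but leave unresolved, and where your stated estimate is wrong. If on the boundary frame you take the Lagrange interpolant of $v$ alone, it does not match the laminated displacement at the frame's inner vertices (the lamination deviates from $v$ by up to $Ch_\e/\e$ in $L^\infty$), so $v_\e$ is discontinuous and $(v_\e,\chi_\e)\notin X_{h_\e}(T)$. The only way out is to interpolate $v$ at the anchors and the \emph{laminated} values at the interior vertices; but then on the transition triangles the slopes are of order $1/\e$ (a discrepancy $Ch_\e/\e$ across edges of length $\sim h_\e$), so the claim that ``$e(v_\e)$ stays bounded'' on the frame fails, and the elastic contribution of the layer is of order $\eta_\e\,\LL^2(R_\e)\,\e^{-2}\sim(\eta_\e/\e)(h_\e/\e)$, which tends to $0$ only because \emph{both} $\alpha<\infty$ and $\beta=0$ — a finer estimate than the one you invoke. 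Finally, one still has to show that the transition triangles, joining the obliquely oriented interior vertices to the equispaced anchors, can be chosen with all angles $\ge\theta_0$ and edge lengths in $[h_\e,\omega(h_\e)]$; this combinatorial construction is precisely where the hypothesis $\theta_0\le\arctan(1/4)$ enters, and it constitutes the bulk of the technical work that your sketch leaves out.
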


Note that the  gluing of recovery sequences is an issue inherent to the vectorial setting which did not occur in the scalar case. Yet, it is rather a technical difficulty than a substantial argument for the understanding of the proof. The core arguments lie in the construction of recovery sequence for an affine displacement in the whole plane.

\begin{proof}[Proof of Lemma \ref{lem:T plasticity}] The proof consists of two steps. First, we construct a recovery sequence $( v_\e,  \chi_\e) \in X_{h_\e}(T)$ with laminated structure, associated to a triangulation ${\mathbf T}_\e$. Then, we will carefully modify the recovery sequence and the triangulation, in order to ensure that the vertices of the new optimal triangulation $\mathbf{\tilde T}_\e$ are anchored in the above ad hoc way at the boundary $\partial T$ and the modified displacements satisfy $\tilde v_\e = v$ on $\partial T$.

\medskip
\noindent
\textbf{Step 1: lamination.} Let $\xi :=  e(v) \in \Msd$. By strict convexity of $\tau \in \Msd \mapsto \sqrt{2\kappa \alpha h(\tau)} + \frac12 \mathbf A_1 (\xi - \tau) : (\xi - \tau)$, there exists a unique $\tau \in \Msd$ such that 
\begin{equation}\label{eq:tau dans Wbar(xi)}
\bar W_\alpha(\xi) =  \sqrt{2 \kappa \alpha h(\tau)} + \frac12 \mathbf A_1 (\xi-\tau):(\xi - \tau).
\end{equation}
Let $\{ b_1,b_2\} \subset \mathcal{S}^1$ be an othonormal basis of $\R^2$ and $\tau_1, \tau_2 \in \R $ be such that 
$$
\tau = \tau_1 b_1 \otimes b_1 + \tau_2 b_2 \otimes b_2. 
$$
If $\tau = 0$, then $(v_\e, \chi_\e) \equiv (v,0) \in X_{h_\e}(T)$ directly gives the desired upper bound. We now assume that $\tau \neq 0$. Depending on the sign of ${\rm det} \,  \tau = \tau_1 \tau_2$, we must consider two different constructions. If ${\rm det} \, \tau \leq 0$, then one can find two vectors $a \in \R^2$ and $b \in \mathbb{S}^1$ such that $\tau = a \odot b$. In this case, we will construct a competitor material with a laminated structure in a single direction. More precisely, $T$ will be discretized in stripes (of widths at least $h_\e$) parallel to the direction $b$ with alternating layers of sound and damaged materials, {\it i.e.} where $\chi_\e$ is alternatively constant equal to $0$ and $1$ respectively. The displacement $v_\e$ will then be explicitly constructed to be affine along each stripe (see Figure \ref{fig:single lamination}). In this case, it is actually not necessary to assume that $\mathbf A_0$ is isotropic. 
Whereas if ${\rm det} \, \tau > 0$, we will rely on the isotropic character of $\mathbf A_0$ which provides an explicit formula for $I_\KK^*(\tau) = \sqrt{2 \kappa \alpha(\lambda + 2\mu)} (\lvert \tau_1 \rvert + \lvert \tau_2 \rvert )$. In this case, we will construct a laminated material with two directions of lamination, each one contributing to recover $\sqrt{2 \kappa \alpha(\lambda + 2\mu)} \lvert \tau_1 \rvert$ and $\sqrt{2 \kappa \alpha(\lambda + 2\mu)} \lvert \tau_2 \rvert$ respectively in the limit. In other words, $T$ will be discretized in stripes parallel to the direction $b_1$ and $b_2$, with alternating layers of sound and damaged materials in both directions, whose widths will be carrefully tuned and differ with respect to the direction of lamination, as illustrated in Figure \ref{fig:two laminations}.

\medskip
{\it Case 1.} Let us assume that ${\rm det} \big( \tau \big) = \tau_1 \tau_2 \leq 0$. In particular there exist $a \in \R^2$ and $b \in \mathbb{S}^1$ such that
$$
\tau = \tau_1 b_1 \otimes b_1 + \tau_2 b_2 \otimes b_2 = a \odot b.
$$ 
This is immediate if ${\rm det} (\tau) = 0$, while if ${\rm det}(\tau) <0$, then $\delta := \sqrt{ \lvert \tau_1 \rvert / \lvert \tau_2 \rvert} >0$ and $\delta \tau_2 + \tau_1 /\delta = 0$. Hence, one can check that $\tau = ( \delta b_1 + b_2) \odot \left( \frac{\tau_1}{\delta} b_1 + \tau_2 b_2 \right)$. Here, the construction of a recovery sequence relies on the fact that 
\begin{equation}\label{eq:lower bound h}
h(\tau) = \sup_{\eta \in \Msd} \left\{ 2 \tau:\eta - \sup_{k \in \mathbb S^1} \left\lvert \Pi_k \left( \mathbf A_0^{-\scriptstyle{\frac12}} \eta \right) \right\rvert^2 \right\} \geq \mathbf A_0 \tau:\tau ,
\end{equation}
where we used the fact that $\Pi_k ( \mathbf A_0^{-\scriptstyle{\frac12}} \eta ) \leq \mathbf A_0 \eta:\eta$. As will be detailed below, \eqref{eq:lower bound h} ensures that laminations in the single direction $b$ (which is not an eigen direction) prove to be sufficient. Let us remark that in the case where $\mathbf A_0$ is isotropic, $h(\tau) = \mathbf A_0 \tau : \tau$ is actually an equality (see \eqref{eq:h isotropic case}).

We must first introduce some notation, and we refer to Figure \ref{fig:single lamination} to ease the reading. Let $\gamma \in [0,\pi/4]$ be the (absolute) angle between $e_1 $ and $b$,
$$
\gamma = \left\lvert \arccos{ \left( e_1 \cdot b \right)} \right\rvert.
$$
Setting $l = \lvert \sin \gamma \rvert \geq 0$ and $L = \lvert \cos \gamma \rvert +l \geq 1$, we infer that there exists $\bar x \in \R^2$ such that 
$$
T \subset (0,1)^2 \subset \bar x + Q \quad \text{where} \quad Q = \{ x b + y b^\perp \, , \, (x,y) \in [0,L]^2 \}
$$
with $b^\perp = (- b\cdot e_2 , b \cdot e_1 ) \in \mathbb{S}^1$ as illustrated in Figure \ref{fig:single lamination}. Given a fixed parameter 
\begin{equation}\label{eq:choice for Theta}
\Theta \in (0,\infty)
\end{equation}
to be specified later on, we then define the following quantities:
\begin{equation}\label{eq:quantities cas 2}
 \theta_\e = \e \Theta \gg h_\e > 0, \quad m_\e = \left\lfloor \frac{ L \theta_\e}{h_\e} \right\rfloor \in \N \setminus \{0\}, \quad l_\e = \frac{L}{m_\e} >0 ,
\end{equation}
as well as the characteristic function
\begin{equation}\label{eq:chi eps plasticity cas 2}
 \chi_\e = \sum_{k=0}^{m_\e - 1} \mathds{1}_{\big( k l_\e, (k + \theta_\e) l_\e \big)} \big( ( \text{\rm id} - \bar x) \cdot b \big) \in L^1(\bar x + Q) 
\end{equation}
and the displacement
\begin{multline}\label{eq:u eps single lamination}
 v_\e =   v + a \otimes b \sum_{k=0}^{m_\e - 1} \Bigg( \frac{1 - \theta_\e}{\theta_\e} \big( \text{\rm id} - \bar x - k l_\e b  \big) \mathds{1}_{\big( k l_\e, (k + \theta_\e) l_\e \big)} \big( (\text{\rm id} - \bar x) \cdot b  \big) \\
  \hspace*{5cm} + \big( (k+1)l_\e b + \bar x - \text{\rm id} \big) \mathds{1}_{ \big[ (k + \theta_\e) l_\e, (k+1)l_\e \big)} \big( (\text{\rm id} - \bar x) \cdot b \big) \Bigg) .
\end{multline}
In particular, one can check that $v_\e$ is affine on all stripes $\bar x +  [kl_\e , (k+\theta_\e)l_\e] b + \R b^\perp$ and $\bar x + [(k+\theta_\e)l_\e , (k+1)l_\e] b + \R b^\perp$, for all $k \in \llbracket 0, m_\e - 1 \rrbracket$, and
\begin{equation}\label{eq:eu eps plasticity cas 2}
e(v_\e) = \xi + \tau \left( \frac{1 - \theta_\e}{\theta_\e} \chi_\e  - ( 1 - \chi_\e) \right) .
\end{equation}
Also note that, since $(  a \otimes b )  b^\perp = (b \cdot b^\perp) a = 0$, we have $v_\e = v $ on $ \bar x +  k l_\e b  + \R b^\perp $ and $v_\e$ is continuous across the interfaces $\bar x +  (k+\theta_\e)l_\e b + \R b^\perp$ for all $k \in \llbracket 0, m_\e - 1 \rrbracket$, with $v_\e \equiv v + (1 - \theta_\e) l_\e a  $ on $ \bar x +  (k+\theta_\e)l_\e b + \R b^\perp$. Thus $v_\e \in H^1(\bar x + Q; \R^2)$. Moreover, using \eqref{eq:quantities cas 2}, \eqref{eq:eu eps plasticity cas 2} together with the fact that $\lvert \tau \rvert = \lvert a \odot b \rvert \geq \lvert a \rvert / \sqrt{2} $, one can find a constant $C = C(\kappa, \alpha, \mathbf A_0,\tau) >0$ such that
\begin{equation}\label{eq:Linfty bound case 1}
\norme{v_\e - v}_{L^\infty(\bar x + Q;\R^2)} \leq  l_\e \sqrt{2}\lvert \tau \rvert \leq  \frac{C h_\e}{\e} \quad \text{and} \quad
\norme{e(v_\e) - \xi }_{L^\infty(\bar x + Q; \Msd)} \leq \frac{\lvert \tau \rvert}{\theta_\e} \leq \frac{C }{\e}.
\end{equation}
Next, noticing that $h_\e \leq \theta_\e l_\e  \leq 2 h_\e \leq \omega (h_\e)$ for $ \e > 0 $ small enough and $(1 - \theta_\e) l_\e \gg h_\e$, we have 
\begin{equation}\label{eq:widths triangulation case 1}
 n_\e = \left\lfloor \frac{(1 - \theta_\e)l_\e}{h_\e} \right\rfloor \gg 1 \quad \text{and} \quad h_\e \leq L_\e := \frac{(1 - \theta_\e)l_\e}{n_\e} \leq 2 h_\e \leq \omega (h_\e).
\end{equation}
Therefore, it is easily seen from Figure \ref{fig:single lamination} that we can define an admissible triangulation $\mathbf T_\e \in \T_\e ((0,1)^2) \subset \T_\e(T)$, composed of rectangles (parallel to $b$ and $b^\perp$) with edges of lengths $L / \lfloor \frac{L}{h_\e}\rfloor $, $\theta_\e l_\e$ or $L_\e$, each subdivided into two right subtriangles. Indeed, one can check that the hypothenuse of each triangle has length between $h_\e$ and $\sqrt{8} h_\e \leq \omega (h_\e)$. Moreover, $v_\e$ is affine and $\chi_\e$ is constant on each triangle of $\mathbf T_\e $, hence $(v_\e, \chi_\e) \in X_{h_\e}(T)$. 

On the one hand, $(v_\e, \chi_\e) \to (v,0) $ strongly in $L^1(T;\R^2) \times L^1(T; \R)$ when $ \e \searrow 0$. Indeed, using  \eqref{eq:quantities cas 2} together with the change of variables 
$$
\Phi = \big( (id - \bar x ) \cdot b, (id- \bar x) \cdot b^\perp  \big) : \R^2 \to \R^2
$$
which satisfies $\nabla \Phi \nabla \Phi^T = I_2$, one can check that
\begin{equation}\label{eq:kappa chi / eps}
\frac{\kappa}{\e} \int_T \chi_\e \, dx = \LL^2(T) \frac{\kappa \theta_\e}{\e} + o_{\e \searrow 0} (1) \to \frac{ \kappa \Theta}{2}
\end{equation}
and
$$
\begin{aligned}
\norme{v_\e - v}_{L^1(T; \R^2)} & \leq  \lvert a \otimes b \rvert L \sum_{k=0}^{m_\e - 1} \left( \int_{k l_\e}^{(k + \theta_\e) l_\e} \frac{1 - \theta_\e}{\theta_\e} ( s - k l_\e ) \, ds + \int_{(k + \theta_\e) l_\e}^{(k+1)l_\e} ( (k+1) l_\e - s \big) \, ds \right) \\
& =  \lvert a \otimes b \rvert \frac{ L m_\e}{2} \left( \frac{1 - \theta_\e }{\theta_\e} \lvert \theta_\e l_\e \rvert^2 + \lvert (1 - \theta_\e) l_\e \rvert^2 \right) = \lvert a \otimes b \rvert \frac{  L^2}{2} (1 - \theta_\e) l_\e \to 0 
\end{aligned}
$$
when $\e \searrow 0$. On the other hand, using \eqref{eq:eu eps plasticity cas 2}, \eqref{eq:kappa chi / eps} together with the facts that $\chi_\e \to 0$ strongly in $L^1(T)$ and $\eta_\e / \theta_\e \to   \alpha / \Theta$ when $\e \searrow 0$, we infer that
\begin{multline}\label{eq:volume cas 2}
\frac12 \int_T \left( \eta_\e \chi_\e \mathbf A_0 + (1 - \chi_\e) \mathbf A_1 \right) e(v_\e):e(v_\e) \, dx \\
= \frac{ \LL^2(T)}{2} \left( \eta_\e \theta_\e \frac{ \lvert 1 - \theta_\e \rvert^2}{\lvert \theta_\e \rvert^2} \mathbf A_0 \tau:\tau +\mathbf A_1 (\xi - \tau) : (\xi- \tau) \right) \, dx + o_{\e \searrow 0}(1) \\
 \to \frac14 \left( \frac{\alpha}{\Theta} \mathbf A_0 \tau:\tau  + \mathbf A_1 (\xi - \tau): ( \xi - \tau) \right) \quad \text{when } \e \searrow 0.
\end{multline}
Therefore, gathering \eqref{eq:kappa chi / eps} and \eqref{eq:volume cas 2} entails that
$$
\F_\e \big({v_\e}_{|T}, {\chi_\e}_{|T} \big) \to \LL^2(T) \left( \frac12  \mathbf A_1 (\xi-\tau) : (\xi - \tau) + \kappa \Theta + \frac{\alpha}{2\Theta} \mathbf A_0 \tau:\tau \right) \quad \text{when } \e \searrow 0.
$$
Finally choosing 
$$
\Theta = \frac{\sqrt{2 \alpha \kappa h(\tau)} + \sqrt{\Delta}}{2\kappa}
$$
in \eqref{eq:choice for Theta}, where $\Delta = 2 \kappa \alpha \left( h(\tau) - \mathbf A_0 \tau:\tau \right) \geq 0 $ according to \eqref{eq:lower bound h}, one can check that
$$
\F_\e \big({v_\e}_{|T}, {\chi_\e}_{|T} \big) \to \LL^2(T) \left( \frac12  \mathbf A_1 (\xi-\tau) : (\xi - \tau) + \sqrt{2 \alpha \kappa h(\tau)} \right) = \LL^2(T) \, \bar W_\alpha(\xi) \, \text{ when } \e \searrow 0.
$$
\begin{figure}[hbtp]
\begin{tikzpicture}[line cap=round,line join=round,>=triangle 45,x=.7cm,y=0.7cm]

\begin{scope}[scale=0.6]
\begin{scope}[xshift=150]

\begin{scope}[xshift=-150,yshift=-120]
\draw (-6.8,8.4) node[anchor=north west] {$e_1$};
\draw (-6.9,9.6) node[anchor=north west] {$b$};
\draw (-11.5,12.9) node[anchor=north west] {$e_2$};
\draw (-13.,12.75) node[anchor=north west] {$b^\perp$};
\draw [line width=.5pt] (-11.,8.)-- (-7.,8.);
\draw [line width=.5pt] (-7.,8.)-- (-7.35,8.2);
\draw [line width=.5pt] (-7.,8.)-- (-7.35,7.8);

\draw [line width=.5pt] (-11.,8.)-- (-11.,12.);
\draw [line width=.5pt] (-11.,12.)-- (-11.2,11.6);
\draw [line width=.5pt] (-11.,12.)-- (-10.8,11.6);

\draw [line width=.5pt] (-11.,8.)-- (-7.12261793264476,8.982806340919716);
\draw [line width=.5pt] (-7.12261793264476,8.982806340919716)-- (-7.45,9.05);
\draw [line width=.5pt] (-7.12261793264476,8.982806340919716)-- (-7.35,8.75);

\draw [line width=.5pt] (-11.,8.)-- (-11.980280425947944,11.878021439665247);
\draw [line width=.5pt] (-11.980280425947944,11.878021439665247)-- (-12.069090909090903,11.440330578512391);
\draw [line width=.5pt] (-11.980280425947944,11.878021439665247)-- (-11.688925619834706,11.5395041322314);
\end{scope}

\fill[line width=.5pt,fill opacity=0.5] (-11.,6.) -- (-11.,2.) -- (-7.,2.) -- cycle;
\fill[line width=.6pt,fill opacity=0.2] (-10.71,1.0318299881936266) -- (-5.9651948051948125,2.2612672176308557) -- (-7.2985281385281455,6.936591892955529) -- (-12.060432900432906,5.68) -- cycle;
\draw [line width=.5pt] (-10.71,1.0318299881936266)-- (-5.9651948051948125,2.2612672176308557);
\draw [line width=.5pt] (-5.9651948051948125,2.2612672176308557)-- (-7.2985281385281455,6.936591892955529);
\draw [line width=.5pt] (-7.2985281385281455,6.936591892955529)-- (-12.060432900432906,5.68);
\draw [line width=.5pt] (-12.060432900432906,5.68)-- (-10.71,1.0318299881936266);

\begin{scope}[xshift=-20,yshift=-10]
\draw [line width=.5pt] (-4.2,6.)-- (-3.4,6.);
\draw [line width=.5pt] (-3.4,6.)-- (-3.6,6.2);
\draw [line width=.5pt] (-3.4,6.)-- (-3.6,5.8);
\draw (-4.35,7.258348082147434) node[anchor=north west] {$\Phi$};
\end{scope}

\draw (-10,1.2) node[anchor=north west] {$\bar x + Q$};
\draw (-10.314657099192818,3.3957981827220314) node[anchor=north west] {$T$};
\draw (-13.4,1.4) node[anchor=north west] {$l$};
\draw (-12.9,3.4144578440719124) node[anchor=north west] {$L$};

\draw [line width=.5pt] (-12.7235329603582,5.506506212227982)-- (-11.294223251978725,0.8473278121857986);
\draw [line width=.5pt] (-11.294223251978725,0.8473278121857986)-- (-11.5,1.);
\draw [line width=.5pt] (-11.294223251978725,0.8473278121857986)-- (-11.231981973992896,1.083049616572006);
\draw [line width=.5pt] (-12.7235329603582,5.506506212227982)-- (-12.524,5.3);
\draw [line width=.5pt] (-12.7235329603582,5.506506212227982)-- (-12.8,5.2);

\draw [line width=.5pt] (-12.742095424103388,1.47845157952219)-- (-12.426533540435193,0.5874533197531668);
\draw [line width=.5pt] (-12.426533540435193,0.5874533197531668)-- (-12.336030464386289,0.8);
\draw [line width=.5pt] (-12.426533540435193,0.5874533197531668)-- (-12.621287012143432,0.7);
\draw [line width=.5pt] (-12.742095424103388,1.47845157952219)-- (-12.8,1.25);
\draw [line width=.5pt] (-12.742095424103388,1.47845157952219)-- (-12.55,1.3);

\begin{scope}[yshift=10,xshift=-25]
\fill[line width=.5pt,fill opacity=0.2] (-2.,1.) -- (2.958878439283476,0.9941704146639477) -- (2.9415199267744336,5.92865182619081) -- (-2.002881713835631,5.933906238136196) -- cycle;
\draw [line width=.5pt] (-2.,1.)-- (2.95,1);
\draw [line width=.5pt] (2.95,1)-- (2.95,5.93);
\draw [line width=.5pt] (2.95,5.93)-- (-2.,5.93);
\draw [line width=.5pt] (-2.,5.93)-- (-2.,1.);
\filldraw [line width=.5pt,pattern=north west lines,fill opacity=0.5] (-1.,1)-- (-1.,5.93)--(-0.7,5.93)--(-0.7,1);
\filldraw [line width=.5pt,pattern=north west lines,fill opacity=0.5] (0.,5.93)-- (0.,1)--(0.3,1)--(0.3,5.93);
\filldraw [line width=.5pt,pattern=north west lines,fill opacity=0.5] (1,1)-- (1,5.93)--(1.3,5.93)--(1.3,1);
\filldraw [line width=.5pt,pattern=north west lines,fill opacity=0.5] (2,5.93)-- (2,1)--(2.3,1)--(2.3,5.93);
\filldraw [line width=.5pt,pattern=north west lines,fill opacity=0.5] (-2.,1.)--(-2,5.93)--(-1.7,5.93)-- (-1.7,1);
\draw [line width=.5pt] (-1.,6.5)-- (0.,6.5);
\draw [line width=.5pt] (0.,6.5)-- (-0.2,6.6);
\draw [line width=.5pt] (0.,6.5)-- (-0.2,6.4);
\draw [line width=.5pt] (-1.,6.5)-- (-0.8,6.6);
\draw [line width=.5pt] (-1.,6.5)-- (-0.8,6.4);
\draw [line width=0.5pt,dotted] (-1,5.93)--(-1,6.5);
\draw [line width=0.5pt,dotted] (0,5.93)--(0,6.5);

\draw [line width=.5pt] (1.,6.5)-- (1.3,6.5);
\draw [line width=.5pt] (1.,6.5)-- (1.05,6.55);
\draw [line width=.5pt] (1.,6.5)-- (1.05,6.45);
\draw [line width=.5pt] (1.3,6.5)-- (1.25,6.55);
\draw [line width=.5pt] (1.3,6.5)-- (1.25,6.45);

\draw [line width=0.8pt,dotted] (-0.3,2)--(-0.2,0.4);
\draw [line width=0.8pt,dotted] (-0.8,2)--(-1.8,0.5);

\draw (-1,7.482264018346008) node[anchor=north west] {$l_\varepsilon$};
\draw (1.2,7.2) node[anchor=north west] {$\theta_\varepsilon \, l_\varepsilon $};
\draw (-3.5,0.7) node[anchor=north west] {$\chi_\varepsilon = 1$};
\draw (-0.7,0.5) node[anchor=north west] {$\chi_\varepsilon = 0$};
\end{scope}
\end{scope}

\begin{scope}[xshift=-280,yshift=220]
\fill[line width=.5pt,fill opacity=0.2] (6.,-2.) -- (14.,-2.5) -- (20.,-0.5) -- (12.,0.) -- cycle;
\fill[line width=.5pt,fill opacity=0.5] (6.,2.) -- (12.,6.) -- (20.,4.) -- (14.,0.) -- cycle;
\draw [line width=.5pt] (6.,-2.)-- (14.,-2.5);
\draw [line width=.5pt] (14.,-2.5)-- (20.,-0.5);
\draw [line width=.5pt] (20.,-0.5)-- (12.,0.);
\draw [line width=.5pt] (12.,0.)-- (6.,-2.);

\draw [line width=.5pt] (6.,2.)-- (12.,6.);
\draw [line width=.5pt] (12.,6.)-- (20.,4.);
\draw [line width=.5pt] (20.,4.)-- (14.,0.);
\draw [line width=.5pt] (14.,0.)-- (6.,2.);

\draw [line width=.5pt,dotted] (12.,6.)-- (12.,0.);
\draw [line width=.5pt,dotted] (14.,0.)-- (14.,-2.5);
\draw [line width=.5pt,dotted] (20.,4.)-- (20.,-0.5);
\draw [line width=.5pt,dotted] (6.,2.)-- (6.,-2.);

\filldraw [line width=.5pt,color=red,fill=red,fill opacity=0.7] (7.5,3.)-- (7.875,3.75)-- (15.875,1.75)-- (15.5,1.);
\filldraw [line width=.5pt,color=red,fill=red,fill opacity=0.7] (9.,4.)-- (9.375,4.75) -- (17.375,2.75) -- (17.,2.) ;
\filldraw [line width=.5pt,color=red,fill=red,fill opacity=0.7] (10.5,5.)-- (10.875,5.75) -- (18.875,3.75) -- (18.5,3.) ;
\filldraw [line width=.5pt,color=red,fill=red,fill opacity=0.7]  (6.,2.)-- (6.375,2.75) -- (14.375,0.75)--(14.,0.) ;

\filldraw [line width=.5pt,color=red,fill=red,fill opacity=0.4] (6.375,2.75) -- (7.5,3.) -- (15.5,1.) --(14.375,0.75)  ;
\filldraw [line width=.5pt,color=red,fill=red,fill opacity=0.4]  (7.875,3.75)-- (15.875,1.75) -- (17.,2) -- (9.,4.) ;
\filldraw [line width=.5pt,color=red,fill=red,fill opacity=0.4]  (9.375,4.75) -- (17.375,2.75)-- (18.5,3.) -- (10.5,5.) ;
\filldraw [line width=.5pt,color=red,fill=red,fill opacity=0.4]  (10.875,5.75) -- (18.875,3.75)  -- (20,4) --(12,6) ;

\draw [line width=.pt] (6.,2.)-- (6.375,2.75);
\draw [line width=.pt] (7.5,3.)-- (6.375,2.75);
\draw [line width=.pt] (7.5,3.)-- (7.875,3.75);
\draw [line width=.pt] (7.875,3.75)-- (9.,4.);
\draw [line width=.pt] (9.,4.)-- (9.375,4.75);
\draw [line width=.pt] (9.375,4.75)-- (10.5,5.);
\draw [line width=.pt] (10.5,5.)-- (10.875,5.75);
\draw [line width=.pt] (10.875,5.75)-- (12.,6.);
\draw [line width=.pt] (14.,0.)-- (14.375,0.75);
\draw [line width=.pt] (14.375,0.75)-- (15.5,1.);
\draw [line width=.pt] (15.5,1.)-- (15.875,1.75);
\draw [line width=.pt] (15.875,1.75)-- (17.,2.);
\draw [line width=.pt] (17.,2.)-- (17.375,2.75);
\draw [line width=.pt] (17.375,2.75)-- (18.5,3.);
\draw [line width=.pt] (18.5,3.)-- (18.875,3.75);
\draw [line width=.pt] (18.875,3.75)-- (20.,4.);

\draw (8,6.) node[anchor=north west] {${\color{red}{v_\varepsilon}}$};
\draw (0,2.5) node[anchor=north west] {$v= m + M {\rm id}$};
\end{scope}
\end{scope}
\end{tikzpicture}
\caption{Recovery sequence using a single direction of lamination, when ${\rm det}(\tau) \leq 0$.}
\label{fig:single lamination}
\end{figure}
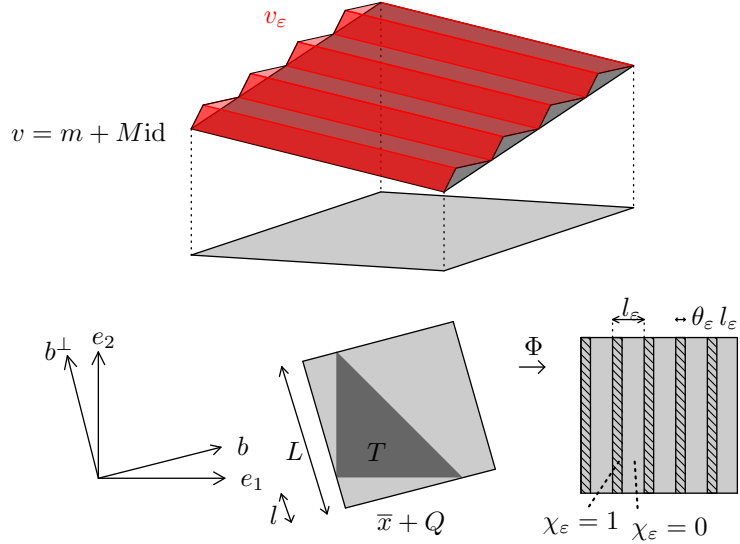

{\it Case 2.} Let us assume that ${\rm det} ( \tau ) = \tau_1 \tau_2 > 0$. As mentionned previously, in this case we rely on the isotropic character of $\mathbf A_0$ so that, setting $a := \lambda + 2 \mu$, one has $I_\KK^*(\tau) = \sqrt{2 \alpha\kappa a} ( \lvert \tau_1 \rvert  + \lvert \tau_2 \rvert )$. As before, we start by setting some notation and refer to Figure \ref{fig:two laminations} to help the reading. Let $\gamma := \lvert \arccos{ \left( e_1 \cdot b_1 \right)} \rvert \in [0,\pi /4]$ be the (absolute) angle between $e_1 $ and $b_1$. Setting $l = \lvert \sin \gamma \rvert \geq 0$ and $L = \lvert \cos \gamma \rvert +l \geq 1$, there exists $\bar x \in \R^2$ such that 
$$
T \subset (0,1)^2 \subset \bar x + Q \quad \text{where} \quad Q = \{ x b_1 + y b_2 \, , \, (x,y) \in [0,L]^2 \}.
$$
We then define the change of variables
\begin{equation}\label{eq:change of variables}
\Phi = \big( (\text{\rm id} - \bar x ) \cdot b_1, (\text{\rm id} - \bar x) \cdot b_2 \big) : \R^2 \to \R^2
\end{equation}
which satisfies $\nabla \Phi \nabla \Phi^T = I_2$. We introduce the quantities $\theta_{1,\e}$ and $\theta_{2,\e} \in (0,1)$, corresponding to the optimal proportion of weak material in the directions $b_1$ and $b_2$ respectively:
\begin{equation}\label{eq:quantities cas 1}
 \theta_{i,\e} = \frac{\e}{2 \kappa} \sqrt{2 \kappa \alpha a} \lvert \tau_i \rvert \gg h_\e > 0,
\end{equation}
as well as the subintervals lengths
\[
l_{i,\e} = \frac{L}{m_{i,\e}} >0  \quad \text{where} \quad m_{i,\e} = \left\lfloor \frac{L \theta_{i,\e}}{h_\e} \right\rfloor \in \N \setminus \{0\}
 \]
for all $\e >0$ and for $i \in \{ 1, 2 \}$. We then define the characteristic functions
\[
 \chi_{\e}^{(i)} = \sum_{k=0}^{m_{i,\e} - 1} \mathds{1}_{\big( k l_{i,\e}, (k + \theta_{i,\e}) l_{i,\e} \big)}  \in L^1([0,L]) \quad \text{for } i \in \{1,2\}
 \]
and
\begin{equation}\label{eq:chi eps plasticity}
 \ds \chi_\e = \chi_{\e}^{(1)} \big( b_1 \cdot ( \text{\rm id} - \bar x) \big) + \chi_{\e}^{(2)} \big( b_2 \cdot ( \text{\rm id} - \bar x) \big) \big( 1 - \chi_{\e}^{(1)} \big( b_1 \cdot ( \text{\rm id} - \bar x) \big) \big)  \in L^1( \bar x + Q),
\end{equation}
as well as the displacements
\begin{multline}\label{eq:u eps deux toles}
 v_\e =   v + \sum_{i=1}^2   \tau_i b_i \otimes b_i \sum_{k=0}^{m_{i,\e} - 1} \Bigg( \frac{1 - \theta_{i,\e}}{\theta_{i,\e}} \big( \text{\rm id} - \bar x  - k l_{i,\e} b_i \big) \mathds{1}_{\big( k l_{i,\e}, (k + \theta_{i,\e}) l_{i,\e} \big)} \big( b_i \cdot ( \text{\rm id} - \bar x) \big) \\
  \hspace*{5cm} + \big( (k+1)l_{i,\e} b_i - (\text{\rm id} - \bar x) \big) \mathds{1}_{ \big[ (k + \theta_{i,\e}) l_{i,\e}, (k+1)l_{i,\e} \big)} \big( b_i \cdot ( \text{\rm id} - \bar x) \big) \Bigg) .
\end{multline}
In particular, one can check that $v_\e = v$ on $\left\{ k l_{1,\e} b_1 + k' l_{2,\e} b_2\, : \,  (k,k') \in \llbracket 0,m_{1,\e}\rrbracket \times \llbracket 0, m_{2,\e} \rrbracket \right\}$ and $v_\e$ is piecewise affine. Using the fact that $b_1 \cdot b_2 = 0$, which ensures the continuity of the displacement across the interfaces $\bar x + (k + \theta_{i,\e})l_{i,\e} b_i  + \R b_{3-i} $, we infer that $v_\e \in \mathcal{C}^0(\bar x + Q; \R^2)$. Hence $v_\e \in H^1(\bar x + Q; \R^2)$, and we have
\begin{equation}\label{eq:eu eps plasticity cas 1}
e(v_\e) = \xi + \sum_{i=1}^2   \tau_i b_i \otimes b_i \left( \frac{1 - \theta_{i,\e}}{\theta_{i,\e}} \chi_\e^{(i)} \big( b_i \cdot ( \text{\rm id} - \bar x) \big) - \bigg( 1 - \chi_\e^{(i)} \big( b_i \cdot ( \text{\rm id} - \bar x) \big) \bigg) \right) .
\end{equation}
Moreover, using \eqref{eq:quantities cas 1}, we infer that there exists a constant $C = C(\kappa,\alpha,a,\tau) >0$ such that
\begin{eqnarray}
&\ds \norme{v_\e - v}_{L^\infty( \bar x + Q;\R^2)} \leq \sum_{i=1}^2 \lvert \tau_i\rvert (1 - \theta_{i,\e}) l_{i,\e} \leq C \frac{h_\e}{\e}, \label{seq:Linfty control plasticity} \\
&\ds \norme{e(v_\e) - \xi}_{L^\infty(\bar x + Q; \Msd)} \leq \sum_{i=1}^{2} \frac{\lvert \tau_i \rvert}{\theta_{i,\e}} \leq  \frac{C}{\e}.
 \label{seq:Linfty control plasticity strain}
\end{eqnarray}
Also note that for $i \in \{ 1, 2 \}$ and $ \e > 0 $ small enough, 
\begin{empheq}[left=\empheqlbrace]{align}\label{eq:widths triangulation case 2}
		&\ds h_\e \leq \theta_{i,\e} l_{i,\e} \leq 2 h_\e \leq \omega (h_\e), \\ 
		&\ds h_\e \leq L_{i,\e} := \frac{(1 - \theta_{i,\e})l_{i,\e}}{n_{i,\e}} \leq 2 h_\e \leq \omega (h_\e), 
\end{empheq}
where $ n_{i,\e} := \left\lfloor (1 - \theta_{i,\e})l_{i,\e} / h_\e \right\rfloor \gg 1 $, since $(1 - \theta_{i,\e}) l_{i,\e} \gg h_\e$. We can thus define an admissible triangulation $\mathbf T_\e$ composed of rectangles parallel to $b_1$ and $b_2$, with edges of lengths $\theta_{i,\e} l_{i,\e}$ or $L_{i,\e}$ for $i \in \{1,2 \}$, each one being subdivided into two right triangles, as illustrated in Figure \ref{fig:two laminations}. In particular, one can check that the hypothenuse of each triangle has length in between $h_\e$ and $\sqrt{8} h_\e \leq \omega (h_\e)$. Therefore, $\mathbf T_\e \in \T_\e ((0,1)^2) \subset \T_\e (T)$ is an admissible triangulation for $T$ and $(v_\e, \chi_\e) \in X_{h_\e}(T)$. We will now show that $(v_\e,\chi_\e)$ is a recovery sequence for $(v,0)$ in $T$. 
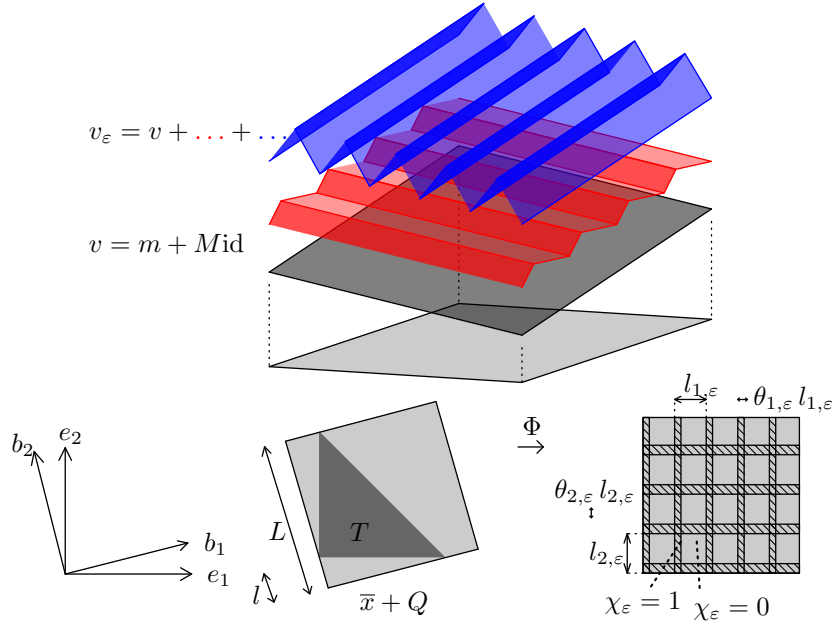
\begin{figure}[hbtp]
\begin{tikzpicture}[line cap=round,line join=round,>=triangle 45,x=.7cm,y=0.7cm]
\begin{scope}[scale=0.6]

\begin{scope}[xshift=150]

\begin{scope}[xshift=-200,yshift=-130]
\draw (-6.8,8.4) node[anchor=north west] {$e_1$};
\draw (-6.9,9.6) node[anchor=north west] {$b_1$};
\draw (-11.5,12.9) node[anchor=north west] {$e_2$};
\draw (-13.,12.75) node[anchor=north west] {$b_2$};
\draw [line width=.5pt] (-11.,8.)-- (-7.,8.);
\draw [line width=.5pt] (-7.,8.)-- (-7.35,8.2);
\draw [line width=.5pt] (-7.,8.)-- (-7.35,7.8);

\draw [line width=.5pt] (-11.,8.)-- (-11.,12.);
\draw [line width=.5pt] (-11.,12.)-- (-11.2,11.6);
\draw [line width=.5pt] (-11.,12.)-- (-10.8,11.6);

\draw [line width=.5pt] (-11.,8.)-- (-7.12261793264476,8.982806340919716);
\draw [line width=.5pt] (-7.12261793264476,8.982806340919716)-- (-7.45,9.05);
\draw [line width=.5pt] (-7.12261793264476,8.982806340919716)-- (-7.35,8.75);

\draw [line width=.5pt] (-11.,8.)-- (-11.980280425947944,11.878021439665247);
\draw [line width=.5pt] (-11.980280425947944,11.878021439665247)-- (-12.069090909090903,11.440330578512391);
\draw [line width=.5pt] (-11.980280425947944,11.878021439665247)-- (-11.688925619834706,11.5395041322314);
\end{scope}

\begin{scope}[xshift=-40]

\begin{scope}[xshift=-10,yshift=-10]
\draw [line width=.5pt] (-4.2,6.)-- (-3.4,6.);
\draw [line width=.5pt] (-3.4,6.)-- (-3.6,6.2);
\draw [line width=.5pt] (-3.4,6.)-- (-3.6,5.8);
\draw (-4.35,7.258348082147434) node[anchor=north west] {$\Phi$};
\end{scope}

\fill[line width=.5pt,fill opacity=0.5] (-11.,6.) -- (-11.,2.) -- (-7.,2.) -- cycle;
\fill[line width=.6pt,fill opacity=0.2] (-10.71,1.0318299881936266) -- (-5.9651948051948125,2.2612672176308557) -- (-7.2985281385281455,6.936591892955529) -- (-12.060432900432906,5.68) -- cycle;
\draw [line width=.5pt] (-10.71,1.0318299881936266)-- (-5.9651948051948125,2.2612672176308557);
\draw [line width=.5pt] (-5.9651948051948125,2.2612672176308557)-- (-7.2985281385281455,6.936591892955529);
\draw [line width=.5pt] (-7.2985281385281455,6.936591892955529)-- (-12.060432900432906,5.68);
\draw [line width=.5pt] (-12.060432900432906,5.68)-- (-10.71,1.0318299881936266);

\draw (-10,1.2) node[anchor=north west] {$\bar x + Q$};
\draw (-10.314657099192818,3.3957981827220314) node[anchor=north west] {$T$};
\draw (-13.4,1.4) node[anchor=north west] {$l$};
\draw (-12.9,3.4144578440719124) node[anchor=north west] {$L$};

\draw [line width=.5pt] (-12.7235329603582,5.506506212227982)-- (-11.294223251978725,0.8473278121857986);
\draw [line width=.5pt] (-11.294223251978725,0.8473278121857986)-- (-11.5,1.);
\draw [line width=.5pt] (-11.294223251978725,0.8473278121857986)-- (-11.231981973992896,1.083049616572006);
\draw [line width=.5pt] (-12.7235329603582,5.506506212227982)-- (-12.524,5.3);
\draw [line width=.5pt] (-12.7235329603582,5.506506212227982)-- (-12.8,5.2);

\draw [line width=.5pt] (-12.742095424103388,1.47845157952219)-- (-12.426533540435193,0.5874533197531668);
\draw [line width=.5pt] (-12.426533540435193,0.5874533197531668)-- (-12.336030464386289,0.8);
\draw [line width=.5pt] (-12.426533540435193,0.5874533197531668)-- (-12.621287012143432,0.7);
\draw [line width=.5pt] (-12.742095424103388,1.47845157952219)-- (-12.8,1.25);
\draw [line width=.5pt] (-12.742095424103388,1.47845157952219)-- (-12.55,1.3);
\end{scope}

\begin{scope}[yshift=10,xshift=-15]
\fill[line width=.5pt,fill opacity=0.2] (-2.,1.) -- (2.95,1) -- (2.95,5.93) -- (-2.,5.93) -- cycle;
\draw [line width=.5pt] (-2.,1.)-- (2.95,1);
\draw [line width=.5pt] (2.95,1)-- (2.95,5.93);
\draw [line width=.5pt] (2.95,5.93)-- (-2.,5.93);
\draw [line width=.5pt] (-2.,5.93)-- (-2.,1.);

\draw (-4,2.3) node[anchor=north west] {$l_{2,\varepsilon}$};
\draw (-5.1,4.25) node[anchor=north west] {$\theta_{2,\varepsilon} \, l_{2,\varepsilon} $};
\draw [line width=0.5pt,dotted] (-2,1)--(-2.5,1);
\draw [line width=0.5pt,dotted] (-2,2.25)--(-2.5,2.25);
\draw [line width=0.5pt] (-2.5,1)--(-2.5,2.25);
\draw [line width=0.5pt] (-2.5,1)--(-2.6,1.3);
\draw [line width=0.5pt] (-2.5,1)--(-2.4,1.3);
\draw [line width=0.5pt] (-2.5,2.25)--(-2.6,1.95);
\draw [line width=0.5pt] (-2.5,2.25)--(-2.4,1.95);

\begin{scope}[xshift=57,yshift=35,rotate=90]
\draw [line width=.5pt] (1.,6.5)-- (1.3,6.5);
\draw [line width=.5pt] (1.,6.5)-- (1.05,6.55);
\draw [line width=.5pt] (1.,6.5)-- (1.05,6.45);
\draw [line width=.5pt] (1.3,6.5)-- (1.25,6.55);
\draw [line width=.5pt] (1.3,6.5)-- (1.25,6.45);
\end{scope}

\filldraw [line width=.5pt,pattern=north west lines,fill opacity=0.5] (-2.,1)-- (2.95,1)--(2.95,1.3)--(-2,1.3);
\filldraw [line width=.5pt,pattern=north west lines,fill opacity=0.5] (-2.,2.25)-- (2.95,2.25)--(2.95,2.55)--(-2,2.55);
\filldraw [line width=.5pt,pattern=north west lines,fill opacity=0.5] (-2.,3.5)-- (2.95,3.5)--(2.95,3.8)--(-2,3.8);
\filldraw [line width=.5pt,pattern=north west lines,fill opacity=0.5] (-2.,4.75)-- (2.95,4.75)--(2.95,5.05)--(-2,5.05);

\filldraw [line width=.5pt,pattern=north west lines,fill opacity=0.5] (-1.,1)-- (-1.,5.93)--(-0.8,5.93)--(-0.8,1);
\filldraw [line width=.5pt,pattern=north west lines,fill opacity=0.5] (0.,5.93)-- (0.,1)--(0.2,1)--(0.2,5.93);
\filldraw [line width=.5pt,pattern=north west lines,fill opacity=0.5] (1,1)-- (1,5.93)--(1.2,5.93)--(1.2,1);
\filldraw [line width=.5pt,pattern=north west lines,fill opacity=0.5] (2,5.93)-- (2,1)--(2.2,1)--(2.2,5.93);
\filldraw [line width=.5pt,pattern=north west lines,fill opacity=0.5] (-2.,1.)--(-2,5.93)--(-1.8,5.93)-- (-1.8,1);
\draw [line width=.5pt] (-1.,6.5)-- (0.,6.5);
\draw [line width=.5pt] (0.,6.5)-- (-0.2,6.6);
\draw [line width=.5pt] (0.,6.5)-- (-0.2,6.4);
\draw [line width=.5pt] (-1.,6.5)-- (-0.8,6.6);
\draw [line width=.5pt] (-1.,6.5)-- (-0.8,6.4);
\draw [line width=0.5pt,dotted] (-1,5.93)--(-1,6.5);
\draw [line width=0.5pt,dotted] (0,5.93)--(0,6.5);

\draw [line width=.5pt] (1.,6.5)-- (1.3,6.5);
\draw [line width=.5pt] (1.,6.5)-- (1.05,6.55);
\draw [line width=.5pt] (1.,6.5)-- (1.05,6.45);
\draw [line width=.5pt] (1.3,6.5)-- (1.25,6.55);
\draw [line width=.5pt] (1.3,6.5)-- (1.25,6.45);

\draw [line width=0.8pt,dotted] (-0.3,2)--(-0.2,0.4);
\draw [line width=0.8pt,dotted] (-0.8,2)--(-1.8,0.5);

\draw (-1,7.6) node[anchor=north west] {$l_{1,\varepsilon}$};
\draw (1.2,7.2) node[anchor=north west] {$\theta_{1,\varepsilon} \, l_{1,\varepsilon} $};
\draw (-3.5,0.7) node[anchor=north west] {$\chi_\varepsilon = 1$};
\draw (-0.7,0.5) node[anchor=north west] {$\chi_\varepsilon = 0$};
\end{scope}
\end{scope}

\begin{scope}[xshift=-260,yshift=200]
\fill[line width=.5pt,fill opacity=0.2] (6.,-2.) -- (14.,-2.5) -- (20.,-0.5) -- (12.,0.) -- cycle;
\draw [line width=.5pt] (6.,-2.)-- (14.,-2.5);
\draw [line width=.5pt] (14.,-2.5)-- (20.,-0.5);
\draw [line width=.5pt] (20.,-0.5)-- (12.,0.);
\draw [line width=.5pt] (12.,0.)-- (6.,-2.);

\begin{scope}[yshift=-20]
\fill[line width=.5pt,fill opacity=0.5] (6.,2.) -- (12.,6.) -- (20.,4.) -- (14.,0.) -- cycle;
\draw [line width=.5pt] (6.,2.)-- (12.,6.);
\draw [line width=.5pt] (12.,6.)-- (20.,4.);
\draw [line width=.5pt] (20.,4.)-- (14.,0.);
\draw [line width=.5pt] (14.,0.)-- (6.,2.);
\end{scope}

\draw [line width=.5pt,dotted] (12.,4.5)-- (12.,0.);
\draw [line width=.5pt,dotted] (14.,-1.4)-- (14.,-2.5);
\draw [line width=.5pt,dotted] (20.,2.6)-- (20.,-0.5);
\draw [line width=.5pt,dotted] (6.,0.6)-- (6.,-2.);

\begin{scope}[yshift=10]
\filldraw [line width=.5pt,color=red,fill=red,fill opacity=0.7] (7.5,3.)-- (7.875,3.75)-- (15.875,1.75)-- (15.5,1.);
\filldraw [line width=.5pt,color=red,fill=red,fill opacity=0.7] (9.,4.)-- (9.375,4.75) -- (17.375,2.75) -- (17.,2.) ;
\filldraw [line width=.5pt,color=red,fill=red,fill opacity=0.7] (10.5,5.)-- (10.875,5.75) -- (18.875,3.75) -- (18.5,3.) ;
\filldraw [line width=.5pt,color=red,fill=red,fill opacity=0.7]  (6.,2.)-- (6.375,2.75) -- (14.375,0.75)--(14.,0.) ;

\filldraw [line width=.5pt,color=red,fill=red,fill opacity=0.4] (6.375,2.75) -- (7.5,3.) -- (15.5,1.) --(14.375,0.75)  ;
\filldraw [line width=.5pt,color=red,fill=red,fill opacity=0.4]  (7.875,3.75)-- (15.875,1.75) -- (17.,2) -- (9.,4.) ;
\filldraw [line width=.5pt,color=red,fill=red,fill opacity=0.4]  (9.375,4.75) -- (17.375,2.75)-- (18.5,3.) -- (10.5,5.) ;
\filldraw [line width=.5pt,color=red,fill=red,fill opacity=0.4]  (10.875,5.75) -- (18.875,3.75)  -- (20,4) --(12,6) ;
\end{scope}

\begin{scope}[yshift=50]
\filldraw [line width=.5pt,color=blue,fill=blue,fill opacity=0.5] (13.181898947551968,1.4207394377228377)  -- (19.160133564513558,5.406229182363898)  -- (20,4) -- (14.,0.) ;

\filldraw [line width=.5pt,color=blue,fill=blue,fill opacity=0.8]   (12.359505350588046,0.4101236623529884)--(18.36521962642836,4.413933179579864)  -- (19.160133564513558,5.406229182363898) --  (13.181898947551968,1.4207394377228377) ;

\filldraw [line width=.5pt,color=blue,fill=blue,fill opacity=0.5]  (12.359505350588046,0.4101236623529884)--(18.36521962642836,4.413933179579864)  -- (17.569626646813987,5.803855911788791)--(11.591392029852397,1.8183661671477305);

\filldraw [line width=.5pt,color=blue,fill=blue,fill opacity=0.8]   (17.569626646813987,5.803855911788791)--(11.591392029852397,1.8183661671477305)-- (10.769025860682943,0.8077435348292642)-- (16.77403366719961,4.811082072507042);

\filldraw [line width=.5pt,color=blue,fill=blue,fill opacity=0.5]   (10.769025860682943,0.8077435348292642)-- (16.77403366719961,4.811082072507042)-- (15.979061766299909,6.201497131917311)--(10.000827149338319,2.21600738727625);

\filldraw [line width=.5pt,color=blue,fill=blue,fill opacity=0.8]  (10.000827149338319,2.21600738727625) --  (15.979061766299909,6.201497131917311)-- (15.184089865400207,5.213174981073778) --(9.174056286392393,1.206485928401902);

\filldraw [line width=.5pt,color=blue,fill=blue,fill opacity=0.5]   (15.184089865400207,5.213174981073778) --(9.174056286392393,1.206485928401902) -- (8.408055572489173,2.6142002814885363)-- (14.386290189450763,6.599690026129597);

\filldraw [line width=.5pt,color=blue,fill=blue,fill opacity=0.8]  (8.408055572489173,2.6142002814885363)-- (14.386290189450763,6.599690026129597) -- (13.58849051350132,5.592203618804298) --  (7.60013460748718,1.599966348128205);

\filldraw [line width=.5pt,color=blue,fill=blue,fill opacity=0.5]   (13.58849051350132,5.592203618804298) --  (7.60013460748718,1.599966348128205)  --  (6.81601063978907,3.012211514663562)-- (12.79424525675066,6.997701259304622);

\filldraw [line width=.5pt,color=blue,fill=blue,fill opacity=0.8]   (6.81601063978907,3.012211514663562)-- (12.79424525675066,6.997701259304622)  --  (12.,6.) -- (6,2) ;
\end{scope}

\draw (0,6.) node[anchor=north west] {$v_\varepsilon = v + {\color{red}{\dots}} + {\color{blue}{\dots}}$};
\draw (0,2.5) node[anchor=north west] {$v= m + M {\rm id}$};
\end{scope}
\end{scope}
\end{tikzpicture}
\caption{Illustration of the recovery sequence in the case of two orthogonal directions of lamination, when ${\rm det}(\tau) > 0$.}
\label{fig:two laminations}
\end{figure}
We first prove that
\begin{equation}\label{eq:u eps chi eps to 0 plasticity}
(v_\e, \chi_\e) \to (v,0) \quad \text{strongly in } L^1(\bar x + Q;\R^2) \times L^1(\bar x + Q) \quad \text{when } \e \searrow 0.
\end{equation}
Indeed, using the change of variables \eqref{eq:change of variables}, we deduce from \eqref{eq:quantities cas 1} that
$$
\int_{\bar x + Q} \chi_\e \, dx = \int_0^L \int_0^L \big( \chi_\e^{(1)}(s) +  \chi_\e^{(2)}(t)(1 - \chi_\e^{(1)}(s)) \big) \, ds dt = L^2 \big( \theta_{1,\e} +  \theta_{2,\e}(1 - \theta_{1,\e}) \big) \to 0 
$$
and
\begin{align*}
\norme{v_\e - v}_{L^1(\bar x + Q; \R^2)}&  \leq  \sum_{i=1}^2 \lvert \tau_i \rvert \frac{ L m_{i,\e}}{2} \left( \frac{1 - \theta_{i,\e} }{ \theta_{i,\e}} \lvert \theta_{i,\e} l_{i,\e} \rvert^2 +  \lvert (1 - \theta_{i,\e} ) l_{i,\e} \rvert^2 \right) \\
&= \sum_{i=1}^2 \lvert \tau_i \rvert \frac{L^2 (1 - \theta_{i,\e}) }{2} l_{i,\e} \to 0 .
\end{align*}
Next, we prove that
\begin{equation}\label{eq:CV energie plasticity cas 1}
\int_T \left( \chi_\e \frac\kappa\e + \frac12 \big(\eta_\e \chi_\e \mathbf A_0 + (1-\chi_\e) \mathbf A_1 \big) e(v_\e):e(v_\e) \right) \, dx  \to \LL^2(T) \bar W_\alpha (\xi)  \quad \text{when } \e \searrow 0.
\end{equation}
On the one hand, note that for all $0 \leq s < s' \leq L$ and $ \lvert t \rvert \leq \min \left( L - s' \, ; \, s \right)$,
\begin{equation}\label{eq:almost translation invariance}
\int_{s + t}^{s' + t} \chi_\e^{(i)}(r) \, dr =   o_{\e \searrow 0}(\e) + (s' - s) \theta_{i,\e} \quad \text{for $i \in \{ 1, 2 \}$.}
\end{equation}
Thus, using the change of variables \eqref{eq:change of variables} together with Fubini's Theorem, we have
\begin{align*}
2 \int_T \chi_\e \, dx  + o_{\e \searrow 0}(\e) & = \int_{(0,1)^2} \chi_\e \, dx  = \left( \theta_{1,\e} + \big(1 - \theta_{1,\e} \big) \theta_{2,\e} \right)  + o_{\e \searrow 0 }(\e).
\end{align*}
Thus, we infer by \eqref{eq:quantities cas 1} that
\begin{equation}\label{eq:CV volume cas 1 plasticity}
\frac{\kappa}{\e} \int_T \chi_\e(x) \, dx \to \frac14   \sqrt{2 \kappa \alpha a}(\lvert \tau_1 \rvert + \lvert \tau_2 \rvert)  \quad \text{when } \e \searrow 0.
\end{equation}
On the other hand, recalling \eqref{eq:chi eps plasticity} together with \eqref{eq:eu eps plasticity cas 1}, we have 
$$
\int_T \big( \chi_\e \eta_\e \mathbf A_0 + (1 - \chi_\e) \mathbf A_1 \big) e(v_\e):e(v_\e) \, dx = T_\e^{(1)} + T_\e^{(2)} + T_\e^{(3)} + T_\e^{(4)}
$$
where:
$$
\begin{aligned}
T_\e^{(1)} &:= \ds \int_T \chi_\e^{(1)}  {\scriptstyle{( (x-\bar x ) \cdot b_1 )}} \chi_\e^{(2)}  {\scriptstyle{( (x-\bar x ) \cdot b_2 )}} \eta_\e \mathbf A_0 \bigg( \underset{=: M_\e^{(1)}}{\underbrace{   \xi + \sum_{i=1}^2 \tau_i \frac{1- \theta_{i,\e}}{\theta_{i,\e}} b_i \otimes b_i   }}\bigg):M_\e^{(1)} \, dx, \\
T_\e^{(2)} &:= \ds \int_T  \chi_\e^{(1)}  {\scriptstyle{((x - \bar x) \cdot b_1 )}}  ( 1 - \chi_\e^{(2)}  {\scriptstyle{( (x - \bar x ) \cdot b_2 )}} )   \eta_\e \mathbf A_0 \bigg( \underset{=: M_\e^{(2)}}{\underbrace{  \xi + \tau_1 \frac{1 - \theta_{1,\e}}{\theta_{1,\e}} b_1 \otimes b_1- \tau_2 b_2 \otimes b_2  }}\bigg):M_\e^{(2)}  \, dx , \\
T_\e^{(3)} & := \ds \int_T  ( 1 - \chi_\e^{(1)} {\scriptstyle{( (x - \bar x) \cdot b_1 ) }} ) \chi_\e^{(2)} {\scriptstyle{ ( (x - \bar x ) \cdot b_2 ) }}    \eta_\e \mathbf A_0 \bigg( \underset{=: M_\e^{(3)}}{\underbrace{ \xi - \tau_1  b_1 \otimes b_1 + \tau_2 \frac{1 - \theta_{2,\e}}{\theta_{2,\e}} b_2 \otimes b_2}} \bigg):  M_\e^{(3)}  \, dx 
\end{aligned}
$$
and 
$$  T_\e^{(4)} := \ds \int_T (1 - \chi_\e) \mathbf A_1 e(v_\e):e(v_\e) \, dx \underset{\e}{\to} \frac12 \mathbf A_1 \big( \xi - \tau \big) : \big( \xi - \tau \big).
$$
Using the fact that $\eta_\e/\theta_{i,\e} = \sqrt{2 \kappa \alpha a^{-1} }/\lvert \tau_i \rvert \, + o_{\e \searrow 0} \big( 1 \big)$, the change of variables \eqref{eq:change of variables} and \eqref{eq:almost translation invariance}, we infer that:

\vspace{2mm}
\begin{itemize}[leftmargin=5mm]
\item[$\mathbf{1.}$] $T_\e^{(1)} \to 0$ when $\e \searrow 0$. Indeed, one can check that $\theta_{i,\e} M_\e^{(1)} = \tau_i ( b_1 \otimes b_1 + b_2 \otimes b_2) + o_{\e \searrow 0} \big( 1 \big)$ and 
$$
 \iint_{\Phi ( T )} \chi_\e^{(1)}\chi_\e^{(2)} \, ds dt \leq \theta_{1,\e} \theta_{2,\e} L^2,
$$
so that $\ds T_\e^{(1)} \leq \theta_{1,\e} \theta_{2,\e} L^2 \, \eta_\e \mathbf A_0 M_\e^{(1)} :  M_\e^{(1)}  = \eta_\e    L^2 \mathbf A_0 \big(\theta_{1,\e} M_\e^{(1)}\big) : \big(\theta_{2,\e} M_\e^{(1)} \big)  \underset{\e}{\to} 0 $. 
  
\item[$\mathbf{2.}$] $T_\e^{(2)} + T_\e^{(3)}  \to \tfrac12 \sqrt{2 \kappa \alpha a} (\lvert \tau_1 \rvert + \lvert \tau_2 \rvert)$ when $\e \searrow 0$. Indeed, as $\theta_{1,\e} M_\e^{(2)} = \tau_1 b_1 \otimes b_1 + o_{\e \searrow 0} (1)$ and
$$
 \iint_{\Phi ( T )} \chi_\e^{(1)} (1 - \chi_\e^{(2)}) \, ds dt = \theta_{1,\e}(1 - \theta_{2,\e}) \left( \tfrac12 + o_{\e \searrow 0} (1) \right), 
$$
by isotropy of $\mathbf A_0$ we infer that $ T_\e^{(2)}  =  \frac{|\tau_1|^2}{2}  \frac{\eta_\e}{\theta_{1,\e}} \mathbf A_0 ( b_1 \otimes b_1) : ( b_1 \otimes b_1  ) + o_{\e \searrow 0} (1) \underset{\e}{\to} \frac12 \sqrt{2 \kappa \alpha a} \lvert \tau_1 \rvert $. Similarily, $  T_\e^{(3)} \underset{\e}{\to} \frac12 \sqrt{2 \kappa \alpha a} \lvert \tau_2 \rvert$.
\end{itemize}
\noindent
Therefore, we infer that
\begin{equation*}
\frac12 \int_T \big( \chi_\e \eta_\e \mathbf A_0 + (1 - \chi_\e) \mathbf A_1 \big) e(v_\e):e(v_\e) \, dx \to \frac14 \sqrt{2 \kappa \alpha h(\tau)} + \frac14 \mathbf A_1 \big( \xi - \tau \big) : \big( \xi - \tau \big) 
\end{equation*}
which, together with \eqref{eq:CV volume cas 1 plasticity}, yields \eqref{eq:CV energie plasticity cas 1}.

\bigskip
\textbf{Step 2: adaptation close to the boundary to allow gluing.} We now slightly modify the recovery sequence and the triangulation in a neighborhood of $\partial T$, such that the first two points of Lemma \ref{lem:T plasticity} are satisfied. This step is technical and quite involved. We treat both cases 1 and 2 at once, and denote by $\mathbf T_\e \in \mathcal{T}_\e(T)$ the triangulation associated to $(v_\e, \chi_\e)$ constructed above, which can be generically discribed as follows: there exists two unit orthogonal vectors $b,b^\perp \in \mathbb S^1$ with $b^\perp = (- b\cdot e_2, b \cdot e_1)$ and 
$$\gamma := \lvert {\rm arccos } (b \cdot e_1) \rvert \in [0,\tfrac{\pi}{4}],$$
such that $\mathbf T_\e $ is composed by rectangles (each one subdivided into two right triangles) parallel to $b$ and $b^\perp$, with side lengths in between $h_\e$ and $d_\e h_\e$ where $\sqrt{2} \geq d_\e \geq 1$ and $ d_\e \to 1$ as $\e \searrow 0$ (see \eqref{eq:widths triangulation case 1} and \eqref{eq:widths triangulation case 2}). We then introduce the vertices on $\partial T$ where the modified triangulation will be anchored, which are defined for all $i \in \llbracket 0, m_\e \rrbracket$, where $m_\e = \lfloor h_\e^{-1} \rfloor$, by:
$$
\bar x_{i,\e} = \frac{i}{m_\e} e_1, \quad \bar y_{i,\e} = \frac{i}{m_\e} e_2 \quad \text{and} \quad \bar z_{i,\e} = e_1 + \frac{i}{m_\e} (e_2-e_1)
$$
as illustrated in Figure \ref{fig:adaptation triangulation T}. Then, we introduce the closed triangle 
$$T^{\rm in}_\e = \left\{ x \in T \, : \, {\rm dist}(x,\partial T) \geq \frac{1}{m_\e} \right\} \subset \subset T$$
and define the triangulation $\mathbf T^{\rm in}_\e$ composed of all the triangles of $\mathbf T_\e$ which are contained in $T^{\rm in}_\e$ (up to switching the diagonal that subdivides a rectangle into two right triangles, so that each time three vertices at distance less than $d_\e h_\e$ from each other are inside $T^{\rm in}_\e$,  either the whole rectangle they are part of is included in $T^{\rm in}_\e$, or the triangle they form is added to $\mathbf  T^{\rm in}_\e$), as illustrated in Figure \ref{fig:adaptation triangulation T}.
\begin{figure}[hbtp]
\begin{tikzpicture}[line cap=round,line join=round,>=triangle 45,x=5.5cm,y=5.5cm]
\draw[line width=.5pt] (0.,1.) -- (0.,0.) -- (1.,0.) -- cycle;
\fill[line width=.5pt,fill opacity=0.15] (0.05,0.8792902994331602) -- (0.05,0.05) -- (0.8792902994331602,0.05) -- cycle;
\fill[line width=.5pt,color=red,pattern=north east lines,pattern color=red] (0.05,0.8792902994331602) -- (0.1,0.8292902994331602) -- (0.09980417919340398,0.05000038345935337) -- (0.05,0.05) -- cycle;
\fill[line width=.5pt,color=green,pattern=north east lines,pattern color=green] (0.1,0.1) -- (0.09980417919340398,0.05000038345935337) -- (0.8792902994331602,0.05) -- (0.8292902994331602,0.1) -- cycle;
\fill[line width=.5pt,pattern=north east lines] (0.1,0.8292902994331602) -- (0.1,0.7585799137110342) -- (0.7585799137110342,0.1) -- (0.8292902994331602,0.1) -- cycle;
\draw [line width=.5pt] (0.5,0.5)-- (0.46445924265117566,0.4648310567819846);
\draw [line width=.5pt] (0.4,0.05)-- (0.4,0.1);

\draw (-0.2086459882060116,0.7162920505372126) node[anchor=north west] {$T$};
\fill [white,fill opacity=0.5] (0.35,0.32) circle (10pt);
\draw (0.2868993028405091,0.37027972450823576) node[anchor=north west] {$T^{\rm in}_\varepsilon$};
\draw (0.77,0.6) node[anchor=north west] {$m_\varepsilon^{-1}$};
\draw (0.15,-0.10787801246648096) node[anchor=north west] {$d_\varepsilon \, h_\varepsilon$};
\draw [line width=.5pt,dotted] (0.38376763342403847,0.06596904026986557)-- (0.2514982203898729,-0.1081732685980446);
\draw [line width=.5pt] (0.4,0.1)-- (0.39168653337426623,0.08824585244998599);
\draw [line width=.5pt] (0.4,0.1)-- (0.4079891932748392,0.08838029634542365);
\draw [line width=.5pt] (0.4,0.05)-- (0.4075987516600748,0.058316292008567736);
\draw [line width=.5pt] (0.4,0.05)-- (0.39159064545473604,0.05812107120118555);
\draw [line width=.5pt,dotted] (0.4967499323525812,0.4767355671021258)-- (0.7811191734677155,0.5272682816177744);

\draw [line width=.5pt] (0.5,0.5)-- (0.48,0.495);
\draw [line width=.5pt] (0.5,0.5)-- (0.495,0.48);
\draw [line width=.5pt] (0.46445924265117566,0.46483105678198455)-- (0.48438048875699417,0.46983105678198455);
\draw [line width=.5pt] (0.46445924265117566,0.46483105678198455)-- (0.46983105678198455,0.48438048875699417);
\draw [color=green](0.6,-0.05919286106541889) node[anchor=north west] {$S_{\bar x}$};
\draw (0.76,0.38) node[anchor=north west] {$S_{\bar z}$};
\draw [color=red](-0.25,0.35984719206515103) node[anchor=north west] {$S_{\bar y}$};
\draw [line width=.5pt,dotted,color=red] (-0.11692837670808932,0.3318617113026523)-- (0.07545996505005505,0.5033706981413107);
\draw [line width=.5pt,dotted] (0.765970059887426,0.31993065134865867)-- (0.57656448311778,0.32142203384290785);
\draw [line width=.5pt,dotted,color=green] (0.5564407975835196,0.0775710178950593)-- (0.6325013047902278,-0.06261893656436582);
\draw [line width=.5pt] (0.,0.95)-- (0.05,0.95);
\draw [line width=.5pt] (0.,0.9)-- (0.05,0.95);
\draw [line width=.5pt] (0.,0.05)-- (0.05,0.);
\draw [line width=.5pt] (0.95,0.05)-- (0.95,0.);
\draw [line width=.5pt] (0.95,0.05)-- (0.9,0.);
\draw (-0.43,0.02) node[anchor=north west] {$\begin{aligned} \bar x_{0,\varepsilon} &= (0,0)\\ &=\bar y_{0,\varepsilon} \end{aligned}$};
\draw (-0.15,0.55) node[anchor=north west] {$\bar y_{i,\varepsilon}$};
\draw (-0.36,1.12) node[anchor=north west] {$\bar y_{m_\varepsilon,\varepsilon} = (0,1) = \bar z_{m_\varepsilon,\varepsilon}$};
\draw (1.,0.06078126203005548) node[anchor=north west] {$\begin{aligned}(1,0) &= \bar z_{0,\varepsilon}  \\
																				&= \bar x_{m_\varepsilon,\varepsilon} \end{aligned}$};
\draw (0.45,0.65) node[anchor=north west] {$\bar z_{i,\varepsilon}$};
\draw (0.4,-0.012246465071537614) node[anchor=north west] {$\bar x_{i,\varepsilon}$};
\draw [line width=.5pt] (-0.035,0.2)-- (-0.035,0.15);
\draw [line width=.5pt] (-0.035,0.15)-- (-0.0425,0.16);
\draw [line width=.5pt] (-0.035,0.15)-- (-0.0275,0.16);
\draw [line width=.5pt] (-0.035,0.2)-- (-0.0425,0.19);
\draw [line width=.5pt] (-0.035,0.2)-- (-0.0275,0.19);
\draw (-0.25,0.2294405365265919) node[anchor=north west] {$m_\varepsilon^{-1}$};
\draw [line width=.5pt] (0.3345750055964303,0.7277256713858388)-- (0.3810171081355779,0.680372939385139);
\draw [line width=.5pt] (0.3810171081355779,0.680372939385139)-- (0.37965116394325,0.6940323813084178);
\draw [line width=.5pt] (0.3810171081355779,0.680372939385139)-- (0.3669023514815232,0.6844707719621226);
\draw [line width=.5pt] (0.3345750055964303,0.7277256713858388)-- (0.34003878236574175,0.7108790263471283);
\draw [line width=.5pt] (0.3345750055964303,0.7277256713858388)-- (0.3518769653659166,0.7236278388088552);
\draw (0.35,0.8014910654890712) node[anchor=north west] {$\sqrt{2} m_\varepsilon^{-1}$};
\fill  (0.,1.) circle (1pt);
\fill  (0.,0.) circle (1pt);
\fill  (1.,0.) circle (1pt);
\fill  (0.1,0.) circle (1pt);
\fill  (0.,0.1) circle (1pt);
\fill  (0.9,0.) circle (1pt);
\fill  (0.9,0.1) circle (1pt);
\fill   (0.,0.9) circle (1pt);
\fill  (0.1,0.9) circle (1pt);
\fill   (0.,0.2) circle (1pt);
\fill   (0.,0.3) circle (1pt);
\fill   (0.,0.4) circle (1pt);
\fill   (0.,0.5) circle (1pt);
\fill   (0.,0.6) circle (1pt);
\fill   (0.,0.7) circle (1pt);
\fill   (0.,0.8) circle (1pt);
\fill  (0.2,0.8) circle (1pt);
\fill  (0.3,0.7) circle (1pt);
\fill  (0.4,0.6) circle (1pt);
\fill  (0.5,0.5) circle (1pt);
\fill  (0.6,0.4) circle (1pt);
\fill  (0.7,0.3) circle (1pt);
\fill  (0.8,0.2) circle (1pt);
\fill  (0.,0.05) circle (1pt);
\fill  (0.,0.15) circle (1pt);
\fill  (0.,0.25) circle (1pt);
\fill  (0.,0.35) circle (1pt);
\fill  (0.05,0.) circle (1pt);
\fill  (0.15,0.) circle (1pt);
\fill   (0.2,0.) circle (1pt);
\fill  (0.25,0.) circle (1pt);
\fill   (0.3,0.) circle (1pt);
\fill  (0.35,0.) circle (1pt);
\fill    (0.4,0) circle (1pt);
\fill  (0.45,0.) circle (1pt);
\fill   (0.5,0.) circle (1pt);
\fill  (0.55,0.) circle (1pt);
\fill   (0.6,0.) circle (1pt);
\fill  (0.,0.45) circle (1pt);
\fill  (0.,0.55) circle (1pt);
\fill  (0.,0.65) circle (1pt);
\fill  (0.,0.75) circle (1pt);
\fill  (0.,0.85) circle (1pt);
\fill  (0.,0.95) circle (1pt);
\fill(0.05,0.95) circle (1pt);
\fill(0.15,0.85) circle (1pt);
\fill(0.25,0.75) circle (1pt);
\fill(0.35,0.65) circle (1pt);
\fill(0.45,0.55) circle (1pt);
\fill(0.55,0.45) circle (1pt);
\fill(0.65,0.35) circle (1pt);
\fill(0.75,0.25) circle (1pt);
\fill  (0.5,0.5) circle (1pt);
\fill(0.85,0.15) circle (1pt);
\fill(0.95,0.05) circle (1pt);
\fill  (0.95,0.) circle (1pt);
\fill  (0.85,0.) circle (1pt);
\fill   (0.8,0.) circle (1pt);
\fill  (0.75,0.) circle (1pt);
\fill   (0.7,0.) circle (1pt);
\fill  (0.65,0.) circle (1pt);

\draw [line width=.pt] (0.8,0.8) -- (1.,0.8);
\draw [line width=.pt] (0.8,0.8) -- (0.9909022526850197,0.8596349722879523);
\draw (0.95,0.8) arc[start angle=0,end angle=13.5,radius=0.2]; 
\draw (1,0.87) node[anchor=north west] {$\gamma \in [0,\frac{\pi}{4}]$};

\draw [line width=1.pt,color=blue] (0.13995020450636275,0.05314513997560981)-- (0.07911991248884448,0.07735773269707912);
\draw [line width=1.pt,color=blue] (0.07911991248884448,0.07735773269707912)-- (0.05203846957519554,0.1640502934517487);
\draw [line width=1.pt,color=blue] (0.05203846957519554,0.1640502934517487)-- (0.08302795176346858,0.23536349037668552);
\draw [line width=1.pt,color=blue] (0.08302795176346858,0.23536349037668552)-- (0.05456682539202146,0.32647266557722354);
\draw [line width=1.pt,color=blue] (0.05456682539202146,0.32647266557722354)-- (0.10099242061921858,0.3828513543872846);
\draw [line width=1.pt,color=blue] (0.10099242061921858,0.3828513543872846)-- (0.05598855941136322,0.5269167937509449);

\draw [line width=1.pt,color=blue] (0.05598855941136322,0.5269167937509449)-- (0.08806127936084297,0.5775268507471532);
\draw [line width=1.pt,color=blue] (0.08806127936084297,0.5775268507471532)-- (0.05899808030546658,0.6705633687003167);
\draw [line width=1.pt,color=blue] (0.13995020450636275,0.05314513997560981)-- (0.2919195741573003,0.10061806929040569);
\draw [line width=1.pt,color=blue] (0.2919195741573003,0.10061806929040569)-- (0.37116476291679035,0.06258918935425603);
\draw [line width=1.pt,color=blue] (0.05899808030546658,0.6705633687003167)-- (0.09269168955717065,0.7383822956941709);
\draw [line width=1.pt,color=blue] (0.09269168955717065,0.7383822956941709)-- (0.08181385655403697,0.7732041920573088);

\draw [line width=1.pt,color=blue] (0.08181385655403697,0.7732041920573088)-- (0.15406787984577788,0.7575552879219803);

\draw [line width=1.pt,color=blue] (0.15406787984577788,0.7575552879219803)-- (0.20720942833240594,0.7168623489796634);
\draw [line width=1.pt,color=blue] (0.37116476291679035,0.06258918935425603)-- (0.40799992065512247,0.07409593471766557);
\draw [line width=1.pt,color=blue] (0.40799992065512247,0.07409593471766557)-- (0.4794874081155688,0.059211419672018734);

\draw [line width=1.pt,color=blue] (0.20720942833240594,0.7168623489796634)-- (0.23627262738778232,0.6238258310265);
\draw [line width=1.pt,color=blue] (0.23627262738778232,0.6238258310265)-- (0.3087206498868862,0.6058664708752461);
\draw [line width=1.pt,color=blue] (0.3087206498868862,0.6058664708752461)-- (0.36677433667097203,0.5631994214390925);
\draw [line width=1.pt,color=blue] (0.36677433667097203,0.5631994214390925)-- (0.3944732234382344,0.4745303120580204);
\draw [line width=1.pt,color=blue] (0.3944732234382344,0.4745303120580204)-- (0.4625496555718905,0.45392032232059554);

\draw [line width=1.pt,color=blue] (0.4794874081155688,0.059211419672018734)-- (0.5763941610576789,0.08948362151844394);
\draw [line width=1.pt,color=blue] (0.5763941610576789,0.08948362151844394)-- (0.6340791494283251,0.05750353817490293);
\draw [line width=1.pt,color=blue] (0.6340791494283251,0.05750353817490293)-- (0.6926422169910847,0.07579775347138229);
\draw [line width=1.pt,color=blue] (0.6926422169910847,0.07579775347138229)-- (0.7465764422929175,0.05060556323148212);

\draw [line width=1.pt,color=blue] (0.47946301650048917,0.39977761749709434)-- (0.5344652071282602,0.37638565137678553);
\draw [line width=1.pt,color=blue] (0.5344652071282602,0.37638565137678553)-- (0.5520065256569655,0.3202327415368017);

\draw [line width=1.pt,color=blue] (0.5520065256569655,0.3202327415368017)-- (0.6214894010624671,0.30357069147272703);
\draw [line width=1.pt,color=blue] (0.6214894010624671,0.30357069147272703)-- (0.6376510361333741,0.25183439607861147);
\draw [line width=1.pt,color=blue] (0.6376510361333741,0.25183439607861147)-- (0.6919195741573003,0.22557201079542802);
\draw [line width=1.pt,color=blue] (0.7203807005287475,0.13446283559489014)-- (0.6919195741573003,0.22557201079542802);
\draw [line width=1.pt,color=blue] (0.7203807005287475,0.13446283559489014)-- (0.7960586516594033,0.10810348129188985);
\draw [line width=1.pt,color=blue] (0.8463551828613287,0.08177493051255723)-- (0.7465764422929175,0.05060556323148212);
\draw [line width=1.pt,color=blue] (0.8463551828613287,0.08177493051255723)-- (0.7960586516594033,0.10810348129188985);
\draw [line width=1.pt,color=blue] (0.4625496555718905,0.45392032232059554)-- (0.47946301650048917,0.39977761749709434);

\begin{scope}[opacity=0.5]
\draw [line width=.pt] (0.08302795176346858,0.23536349037668552)-- (0.13995020450636275,0.05314513997560981);
\draw [line width=.pt] (0.10099242061921858,0.3828513543872846)-- (0.1982941912685987,0.07137091774825537);
\draw [line width=.pt] (0.08806127936084297,0.5775268507471532)-- (0.24191957415730034,0.08499882660227794);
\draw [line width=.pt] (0.09269168955717065,0.7383822956941709)-- (0.30978857262818305,0.043416197126446655);
\draw [line width=.pt] (0.08181385655403697,0.7732041920573088)-- (0.1431900468426442,0.7923771842851182);
\draw [line width=.pt] (0.1431900468426442,0.7923771842851182)-- (0.38175689081735464,0.028681886317677194);
\draw [line width=.pt] (0.07115281592861776,0.6316538778055052)-- (0.21936416395555708,0.677952858084852);
\draw [line width=.pt] (0.21936416395555708,0.677952858084852)-- (0.2802595235154391,0.696975646075784);
\draw [line width=.pt] (0.05899808030546658,0.6705633687003167)-- (0.20720942833240594,0.7168623489796634);
\draw [line width=.pt] (0.09269168955717065,0.7383822956941709)-- (0.15406787984577788,0.7575552879219803);
\draw [line width=.pt] (0.23627262738778232,0.6238258310265)-- (0.4185920485556868,0.04018863168108674);
\draw [line width=.pt] (0.08806127936084297,0.5775268507471532)-- (0.29716798694766433,0.642848619017432);
\draw [line width=.pt] (0.05598855941136322,0.5269167937509449)-- (0.3087206498868862,0.6058664708752461);
\draw [line width=.pt] (0.2802595235154391,0.696975646075784)-- (0.49401408795489254,0.01369666881771333);
\draw [line width=.pt] (0.3944732234382344,0.4745303120580204)-- (0.5344666836447851,0.026386112618184243);
\draw [line width=.pt] (0.47946301650048917,0.39977761749709434)-- (0.5906247242434024,0.04392903391817498);
\draw [line width=.pt] (0.07329353385195628,0.4715204637683568)-- (0.36677433667097203,0.5631994214390925);
\draw [line width=.pt] (0.08444968578281031,0.435807618550407)-- (0.3779304886018261,0.5274865762211427);
\draw [line width=.pt] (0.10099242061921858,0.3828513543872846)-- (0.3944732234382344,0.4745303120580204);
\draw [line width=.pt] (0.05456682539202146,0.32647266557722354)-- (0.4625496555718905,0.45392032232059554);
\draw [line width=.pt] (0.07148018632062009,0.27232996075372234)-- (0.47946301650048917,0.39977761749709434);
\draw [line width=.pt] (0.5520065256569655,0.3202327415368017)-- (0.6340791494283251,0.05750353817490293);
\draw [line width=.pt] (0.49371797130129236,0.013656832071749775)-- (0.8343900042828823,0.1200776152756963);
\draw [line width=.pt] (0.7046073955695312,0.03749506870824322)-- (0.7465764422929175,0.05060556323148212);
\draw [line width=.pt] (0.8343900042828823,0.1200776152756963)-- (0.8463551828613287,0.08177493051255723);
\draw [line width=.pt] (0.7582470978717011,0.013245699466594003)-- (0.8580258384401123,0.04441506674766911);
\draw [line width=.pt] (0.8343900042828823,0.1200776152756963)-- (0.8580258384401123,0.04441506674766911);
\draw [line width=.pt] (0.8196944858166332,0.03244093276386266)-- (0.7712359605731154,0.18756537192873765);
\draw [line width=.pt] (0.38175689081735464,0.028681886317677194)-- (0.7818280884736797,0.1536580688921588);
\draw [line width=.pt] (0.6796200828567605,0.26494489060185034)-- (0.7582470978717011,0.013245699466594003);
\draw [line width=.pt] (0.6105695932197253,0.33852695683328105)-- (0.7046073955695312,0.03749506870824322);
\draw [line width=.pt] (0.30978857262818305,0.043416197126446655)-- (0.7712359605731154,0.18756537192873765);
\draw [line width=.pt] (0.2919195741573003,0.10061806929040569)-- (0.6919195741573003,0.22557201079542802);
\draw [line width=.pt] (0.07911991248884448,0.07735773269707912)-- (0.6796200828567605,0.26494489060185034);
\draw [line width=.pt] (0.06295827741793729,0.1290940280911947)-- (0.6214894010624671,0.30357069147272703);
\draw [line width=.pt] (0.05203846957519554,0.1640502934517487)-- (0.6105695932197253,0.33852695683328105);
\draw [line width=.pt] (0.08302795176346858,0.23536349037668552)-- (0.5344652071282602,0.37638565137678553);
\end{scope}
\end{tikzpicture}
\caption{}
\label{fig:adaptation triangulation T}
\end{figure}
Let us also define the set of vertices $V_\e$ which compose its boundary $\Gamma_\e = \partial \left( \cup_{T_\e' \in \mathbf T^{\rm in}_\e} T_\e' \right)$:
$$
V_\e = \left\{ v \in T^{\rm in}_\e \, : \, v \text{ is a vertex of a triangle of } \mathbf T_\e \right\} \cap \Gamma_\e.
$$ 
In particular, one can check that ${\rm dist}(v,\partial T^{\rm in}_\e ) \leq  d_\e h_\e$ for all $v \in V_\e$.
Indeed, if $v \in V_\e$ is such that ${\rm dist}(x,\partial T^{\rm in}_\e) >d_\e h_\e$, then $v$ has four neighboring vertices of $\mathbf T_\e$ included in $T^{\rm in}_\e$, $v_j \in \bar B_{d_\e h_\e}(v) \subset T^{\rm in}_\e$ as illustrated in Figure \ref{fig:V_eps}, which is impossible by definition of $V_\e$ and $\mathbf T^{\rm in}_\e$.
Moreover, since all vertex of $ V_\e$ is at distance at most $\sqrt2 d_\e h_\e$ from its closest neighbor in $ V_\e$ and because $\Gamma_\e$ is connected, when one slides a rectangle $R$ of sides lengths $d_\e h_\e$ and $2 d_\e h_\e$ along the boundary $\partial T^{\rm in}_\e$ while remaining inside $T^{\rm in}_\e$  (and outside the right angle of $T^{\rm in}_\e$, as illustrated in Figure \ref{fig:V_eps}), the rectangle always contains at least one vertex of $V_\e$. Arguing in a similar way, one can also check that there exists at least one vertex of $ V_\e$ in each acute corner of $T^{\rm in}_\e$, and in each side of its obtuse corner, {\it i.e.}
$$
V_\e \cap C_\e^{(i)} \neq \emptyset \quad \text{for } i \in \llbracket 1, 4 \rrbracket,
$$
where $ C_\e^{(i)}$ are illustrated in Figure \ref{fig:V_eps}. In particular, we infer that 
\begin{equation}\label{eq:admissible lengths}
h_\e \leq m_\e^{-1} \leq {\rm dist}(\bar v,V_\e) \leq 2 \sqrt{2} (d_\e h_\e + m_\e^{-1}) \leq 6 h_\e \quad \text{for } \e>0 \text{ small enough,}
\end{equation}
for all $\bar v \in \bar V_\e := \{ \bar x_{i,\e}, \bar y_{i,\e}, \bar z_{i,\e} \}_{i \in \llbracket 0, m_\e \rrbracket}$.  

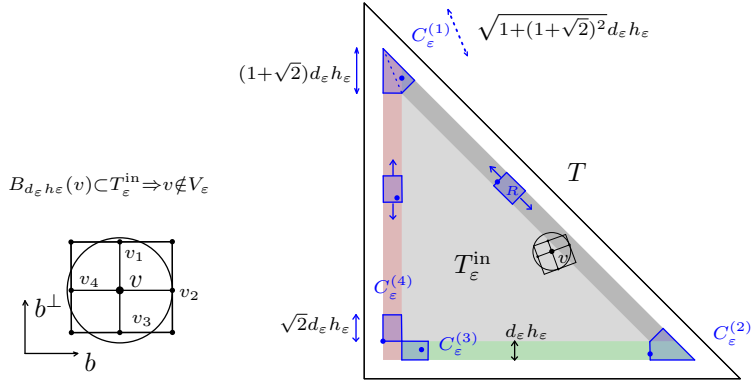
\begin{figure}[hbtp]
\begin{tikzpicture}[line cap=round,line join=round,>=triangle 45,x=.7cm,y=.7cm]
\draw[line width=.5pt] (-0.9260405493689347,0.9177754150343975) -- (0.9975154297806933,0.9248865277114756) -- (1.,-0.8) -- (-0.9189294366918566,-0.803113852818507) -- cycle;
\draw [line width=.5pt] (-0.9260405493689347,0.9177754150343975)-- (0.9975154297806933,0.9248865277114756);
\draw [line width=.5pt] (0.9975154297806933,0.9248865277114756)-- (1.,-0.8);
\draw [line width=.5pt] (1.,-0.8)-- (-0.9189294366918566,-0.803113852818507);
\draw [line width=.5pt] (-0.9189294366918566,-0.803113852818507)-- (-0.9260405493689347,0.9177754150343975);
\draw [line width=.5pt] (0.,0.9211988551429702)-- (0.,-0.801622703137993);
\draw [line width=.5pt] (-0.9222480889762307,0.)-- (0.99884765974833,0.);
\draw [line width=.5pt] (0.,0.) circle (0.99884765974833);
\draw (-2.3,2.4) node[anchor=north west] {$\scriptstyle B_{d_\varepsilon h\varepsilon}(v) \subset T^{\rm in}_\e \Rightarrow v \notin V_\e$};
\draw (0.,0.45) node[anchor=north west] {$ v$};
\draw (-0.0899789028551332,0.9342090021363277) node[anchor=north west] {$\scriptstyle v_1$};
\draw (0.95,0.2) node[anchor=north west] {$\scriptstyle v_2$};
\draw (0.05,-0.3) node[anchor=north west] {$\scriptstyle v_3$};
\draw (-0.95,0.45) node[anchor=north west] {$\scriptstyle v_4$};
\draw (-0.85,-1.) node[anchor=north west] {$b$};
\draw (-1.8,0.1) node[anchor=north west] {$b^\perp$};
\draw [line width=.5pt] (-1.8,-1.2)-- (-0.8,-1.2);
\draw [line width=.5pt] (-0.8,-1.2)-- (-0.879818316967927,-1.1728917120265692);
\draw [line width=.5pt] (-0.8,-1.2)-- (-0.8691516479523098,-1.2475583951358893);
\draw [line width=.5pt] (-1.8,-1.2)-- (-1.8,-0.2);
\draw [line width=.5pt] (-1.8,-0.2)-- (-1.8398185283734714,-0.28044707105326533);
\draw [line width=.5pt] (-1.8,-0.2)-- (-1.7459681575053432,-0.28691394734515413);

\fill(-0.9260405493689347,0.9177754150343975) circle (1.0pt);
\fill(0.9975154297806933,0.9248865277114756) circle (1.0pt);
\fill(1.,-0.8) circle (1.0pt);
\fill(-0.9189294366918566,-0.803113852818507) circle (1.0pt);
\fill(0.,0.9211988551429702) circle (1.0pt);
\fill(0.,-0.801622703137993) circle (1.0pt);
\fill(-0.9222480889762307,0.) circle (1.0pt);
\fill (0.99884765974833,0.) circle (1.0pt);
\fill  (0.,0.) circle (1.5pt);
\end{tikzpicture}
\begin{tikzpicture}[line cap=round,line join=round,>=triangle 45,x=5.0cm,y=5.0cm]
\draw[line width=.5pt] (0.,1.) -- (0.,0.) -- (1.,0.) -- cycle;
\fill[line width=.pt,fill opacity=0.15] (0.05,0.8792902994331602) -- (0.05,0.05) -- (0.8792902994331602,0.05) -- cycle;
\fill[line width=.pt,fill=red,fill opacity=0.15] (0.05,0.8792902994331602) -- (0.1,0.8292902994331602) -- (0.09980417919340398,0.05000038345935337) -- (0.05,0.05) -- cycle;
\fill[line width=.pt,fill=green,fill opacity=0.15] (0.1,0.1) -- (0.09980417919340398,0.05000038345935337) -- (0.8792902994331602,0.05) -- (0.8292902994331602,0.1) -- cycle;
\fill[line width=.pt,fill opacity=0.15] (0.1,0.8292902994331602) -- (0.1,0.7585799137110342) -- (0.7585799137110342,0.1) -- (0.8292902994331602,0.1) -- cycle;
\fill[line width=0.5pt,color=blue,fill=blue,fill opacity=0.15] (0.050246873107573864,0.760300640808397) -- (0.1,0.76) -- (0.13398797354371808,0.7939879735437181) -- (0.05055407702197197,0.8784110969835932) -- cycle;
\fill[line width=0.5pt,color=blue,fill=blue,fill opacity=0.15] (0.759364206441339,0.09852682462452662) -- (0.7605632880987034,0.04996401750126662) -- (0.8786728313491008,0.04996401750126662) -- (0.79473711533359,0.1338997335167777) -- cycle;
\fill[line width=0.5pt,color=blue,fill=blue,fill opacity=0.15] (0.0498784930253531,0.17010961501875332) -- (0.09907309451459545,0.17010961501875332) -- (0.1,0.1) -- (0.049949418349947504,0.0999999488298812) -- cycle;
\fill[line width=0.5pt,color=blue,fill=blue,fill opacity=0.15] (0.1,0.1) -- (0.09999981053851656,0.050097329534453415) -- (0.16975499320603563,0.05023311483806997) -- (0.1702368927243943,0.09978978255341087) -- cycle;
\fill[line width=0.5pt,color=blue,fill=blue,fill opacity=0.15] (0.05047654451612099,0.5397851753287766) -- (0.050843031639766015,0.4697861347125781) -- (0.10072907738090946,0.4700473181981338) -- (0.1,0.54) -- cycle;
\fill[line width=0.5pt,color=blue,fill=blue,fill opacity=0.15] (0.3785597673446907,0.5489184650462958) -- (0.4287245316957276,0.5000973997403755) -- (0.39423111092155,0.46465461876140435) -- (0.3443405845416286,0.5137574698725076) -- cycle;

\draw [line width=.pt] (0.,1.)-- (0.,0.);
\draw [line width=.pt] (0.,0.)-- (1.,0.);
\draw [line width=.pt] (1.,0.)-- (0.,1.);
\draw [line width=.5pt] (0.4,0.05)-- (0.4,0.1);
\draw [line width=.5pt] (0.4,0.1)-- (0.39168653337426623,0.08824585244998599);
\draw [line width=.5pt] (0.4,0.1)-- (0.4079891932748392,0.08838029634542365);
\draw [line width=.5pt] (0.4,0.05)-- (0.4075987516600748,0.058316292008567736);
\draw [line width=.5pt] (0.4,0.05)-- (0.39159064545473604,0.05812107120118555);
\begin{scope}[xshift=-60,yshift=10,blue]
\draw [line width=.5pt] (0.4,0.03)-- (0.4,0.1);
\draw [line width=.5pt] (0.4,0.1)-- (0.39168653337426623,0.08824585244998599);
\draw [line width=.5pt] (0.4,0.1)-- (0.4079891932748392,0.08838029634542365);
\draw [line width=.5pt] (0.4,0.03)-- (0.4075987516600748,0.038316292008567736);
\draw [line width=.5pt] (0.4,0.03)-- (0.39159064545473604,0.03812107120118555);
\end{scope}
\begin{scope}[xshift=29,yshift=18,blue,rotate=45]
\draw [line width=.5pt] (0.4,0.06)-- (0.4,0.1);
\draw [line width=.5pt] (0.4,0.06)-- (0.4075987516600748,0.068316292008567736);
\draw [line width=.5pt] (0.4,0.06)-- (0.39159064545473604,0.06812107120118555);
\end{scope}
\begin{scope}[xshift=17.5,yshift=30,blue,rotate=45]
\draw [line width=.5pt] (0.4,0.06)-- (0.4,0.1);
\draw [line width=.5pt] (0.4,0.1)-- (0.39168653337426623,0.08824585244998599);
\draw [line width=.5pt] (0.4,0.1)-- (0.4079891932748392,0.08838029634542365);
\end{scope}
\begin{scope}[xshift=-46,yshift=52,blue]
\draw [line width=.5pt] (0.4,0.06)-- (0.4,0.1);
\draw [line width=.5pt] (0.4,0.06)-- (0.4075987516600748,0.068316292008567736);
\draw [line width=.5pt] (0.4,0.06)-- (0.39159064545473604,0.06812107120118555);
\end{scope}
\begin{scope}[xshift=-46,yshift=68.5,blue]
\draw [line width=.5pt] (0.4,0.06)-- (0.4,0.1);
\draw [line width=.5pt] (0.4,0.1)-- (0.39168653337426623,0.08824585244998599);
\draw [line width=.5pt] (0.4,0.1)-- (0.4079891932748392,0.08838029634542365);
\end{scope}
\draw (0.21394296479083336,0.3643041763464595) node[anchor=north west] {$T^{\rm in}_\varepsilon$};
\draw (0.35,0.1705955270184695) node[anchor=north west] {$\scriptstyle d_\varepsilon  h_\varepsilon$};
\draw (-0.25,0.2) node[anchor=north west] {$\scriptstyle \sqrt 2 d_\varepsilon h_\varepsilon$};
\draw [line width=.5pt,color=blue] (0.050246873107573864,0.760300640808397)-- (0.1,0.76);
\draw [line width=.5pt,color=blue] (0.1,0.76)-- (0.13398797354371808,0.7939879735437181);
\draw [line width=.5pt,color=blue] (0.13398797354371808,0.7939879735437181)-- (0.05055407702197197,0.8784110969835932);
\draw [line width=.5pt,color=blue] (0.05055407702197197,0.8784110969835932)-- (0.050246873107573864,0.760300640808397);
\draw [line width=.5pt,dotted,color=blue] (0.05055407702197197,0.8784110969835932)-- (0.1,0.76);
\draw [line width=.5pt,color=blue] (-0.02,0.88)-- (-0.02,0.7595026307566123);
\draw [line width=.5pt,color=blue] (-0.02,0.7595026307566123)-- (-0.02505739526209852,0.7723287643040692);
\draw [line width=.5pt,color=blue] (-0.02,0.7595026307566123)-- (-0.014210475982627369,0.7717344125627283);
\draw [line width=.5pt,color=blue] (-0.02,0.88)-- (-0.024635317383801356,0.8667481107503986);
\draw [line width=.5pt,color=blue] (-0.02,0.88)-- (-0.015342916665414263,0.8663002842097534);
\draw [line width=.7pt,dotted,color=blue] (0.22312550610469958,0.9819682159875782)-- (0.27417972996736306,0.8597056250777013);
\draw [line width=.5pt,color=blue] (0.27417972996736306,0.8597056250777013)-- (0.2739021696711589,0.8699513573387057);
\draw [line width=.5pt,color=blue] (0.27417972996736306,0.8597056250777013)-- (0.26617545813777604,0.866928992338548);
\draw [line width=.5pt,color=blue] (0.22312550610469958,0.9819682159875782)-- (0.22355767338982954,0.96939213195474);
\draw [line width=.5pt,color=blue] (0.22312550610469958,0.9819682159875782)-- (0.231790826737489,0.9727937690491082);
\draw [line width=.5pt,color=blue] (0.759364206441339,0.09852682462452662)-- (0.7605632880987034,0.04996401750126662);
\draw [line width=.5pt,color=blue] (0.7605632880987034,0.04996401750126662)-- (0.8786728313491008,0.04996401750126662);
\draw [line width=.5pt,color=blue] (0.8786728313491008,0.04996401750126662)-- (0.79473711533359,0.1338997335167777);
\draw [line width=.5pt,color=blue] (0.79473711533359,0.1338997335167777)-- (0.759364206441339,0.09852682462452662);
\draw (-0.36,0.8682749843438573) node[anchor=north west] {$\scriptstyle (1+\sqrt2) d_\varepsilon h_\varepsilon$};
\draw (0.2746821175462195,0.9979612834702235) node[anchor=north west] {$\scriptstyle{ \sqrt{1 + (1 + \sqrt2)^2}d_\varepsilon h_\varepsilon}$};
\draw [line width=.5pt,color=blue] (0.0498784930253531,0.17010961501875332)-- (0.09907309451459545,0.17010961501875332);
\draw [line width=.5pt,color=blue] (0.09907309451459545,0.17010961501875332)-- (0.1,0.1);
\draw [line width=.5pt,color=blue] (0.1,0.1)-- (0.049949418349947504,0.0999999488298812);
\draw [line width=.5pt,color=blue] (0.049949418349947504,0.0999999488298812)-- (0.0498784930253531,0.17010961501875332);
\draw [line width=.5pt,color=blue] (0.1,0.1)-- (0.09999981053851656,0.050097329534453415);
\draw [line width=.5pt,color=blue] (0.09999981053851656,0.050097329534453415)-- (0.16975499320603563,0.05023311483806997);
\draw [line width=.5pt,color=blue] (0.16975499320603563,0.05023311483806997)-- (0.1702368927243943,0.09978978255341087);
\draw [line width=.5pt,color=blue] (0.1702368927243943,0.09978978255341087)-- (0.1,0.1);
\draw [color=blue](0.1,0.97) node[anchor=north west] {$\scriptstyle C_\varepsilon^{(1)}$};
\draw [color=blue](0.9017728297775038,0.17716192191094374) node[anchor=north west] {$\scriptstyle C_\varepsilon^{(2)}$};
\draw [line width=.5pt,color=blue] (0.05047654451612099,0.5397851753287766)-- (0.050843031639766015,0.4697861347125781);
\draw [line width=.5pt,color=blue] (0.050843031639766015,0.4697861347125781)-- (0.10072907738090946,0.4700473181981338);
\draw [line width=.5pt,color=blue] (0.10072907738090946,0.4700473181981338)-- (0.1,0.54);
\draw [line width=.5pt,color=blue] (0.1,0.54)-- (0.05047654451612099,0.5397851753287766);
\draw [line width=.5pt,color=blue] (0.3785597673446907,0.5489184650462958)-- (0.4287245316957276,0.5000973997403755);
\draw [line width=.5pt,color=blue] (0.4287245316957276,0.5000973997403755)-- (0.39423111092155,0.46465461876140435);
\draw [line width=.5pt,color=blue] (0.39423111092155,0.46465461876140435)-- (0.3443405845416286,0.5137574698725076);
\draw [line width=.5pt,color=blue] (0.3443405845416286,0.5137574698725076)-- (0.3785597673446907,0.5489184650462958);
\draw [color=blue](0.,0.3) node[anchor=north west] {$\scriptstyle C_\varepsilon^{(4)}$};
\draw [color=blue](0.1729029967128697,0.15) node[anchor=north west] {$\scriptstyle C_\varepsilon^{(3)}$};
\draw (0.5159971298446457,0.5990527937524136) node[anchor=north west] {$T$};
\draw [line width=.pt] (0.49881545648490955,0.3388668764372225) circle (0.050174249656008446);
\draw [line width=.pt] (0.49881545648490955,0.3388668764372225)-- (0.545681114398746,0.3562912877128799);
\draw [line width=.pt] (0.49881545648490955,0.3388668764372225)-- (0.48846348714598037,0.36702423303911025);
\draw [line width=.pt] (0.49881545648490955,0.3388668764372225)-- (0.4613296144767788,0.32490938207249287);
\draw [line width=.pt] (0.49881545648490955,0.3388668764372225)-- (0.5157976796737873,0.2918391814526377);
\draw [line width=.pt] (0.45079912155962176,0.35342321213292527)-- (0.5354916518488883,0.3840066258484935);
\draw [line width=.pt] (0.5354916518488883,0.3840066258484935)-- (0.562925430392585,0.309386748209637);
\draw [line width=.pt] (0.562925430392585,0.309386748209637)-- (0.47877562778615756,0.2780543748987329);
\draw [line width=.pt] (0.47877562778615756,0.2780543748987329)-- (0.45079912155962176,0.35342321213292527);
\draw (0.49,0.35445458400774815) node[anchor=north west] {$\scriptstyle v$};
\draw [color=blue](0.35,0.5366720422739083) node[anchor=north west] {$\scriptscriptstyle R$};

\begin{scope}[blue]
\fill(0.760165270087012,0.06608374697477139) circle (1.pt);
\fill(0.1523650486680179,0.07751335126039864) circle (1pt);
\fill(0.08722745505664807,0.48136643165089643) circle (1pt);
\fill(0.04994941834994752,0.0999999488298812) circle (1.pt);
\fill(0.3541751974541631,0.523862760204653) circle (1.pt);
\fill(0.1,0.8) circle (1.pt);
\end{scope}

\fill(0.5157976796737873,0.2918391814526377) circle (.5pt);
\fill(0.4613296144767788,0.32490938207249287) circle (.5pt);
\fill(0.48846348714598037,0.36702423303911025) circle (.5pt);
\fill(0.545681114398746,0.3562912877128799) circle (.5pt);
\fill (0.49881545648490955,0.3388668764372225) circle (1pt);
\end{tikzpicture}
\caption{Some illustrations of the properties of $V_\e$.}
\label{fig:V_eps}
\end{figure}

\medskip

The following procedure then consists of connecting the vertices $V_\e$ to the anchors $\bar V_\e$ in an admissible way. To do so, we first add the five triangles at the extremities of $T$ to $\mathbf T^{\rm in}_\e$, whose vertices are respectively $\left\{ \bar x_{0,\e}, \bar x_{1,\e}, \bar y_{1,\e} \right\}$, $\left\{ \bar y_{m_\e -1,\e}, \bar y_{m_\e,\e}, \bar z_{m_\e -1,\e} \right\}$, $\left\{ \bar y_{m_\e -2,\e}, \bar y_{m_\e-1,\e}, \bar z_{m_\e -1,\e} \right\}$, $\left\{ \bar x_{m_\e - 1,\e}, \bar x_{m_\e,\e}, \bar z_{ 1,\e} \right\}$ and $\left\{ \bar x_{m_\e - 2,\e}, \bar x_{m_\e-1,\e}, \bar z_{ 1,\e} \right\}$. We then introduce three sectors:
$$
S_{\bar x} := \left\{ x \in T^{\rm in}_\e \, : \, m_\e^{-1} + d_\e h_\e  \leq x \cdot e_2  \text{ and } x \cdot e_1 \leq m_\e^{-1} + d_\e h_\e \right\},
$$
$$
S_{\bar y} := \left\{ x \in T^{\rm in}_\e \, : \, x\cdot e_1 \leq m_\e^{-1} + d_\e h_\e \right\},
$$
and
$$
S_{\bar z} := \left\{ x \in T^{\rm in}_\e \, : \, m_\e^{-1} + d_\e h_\e  \leq x \cdot e_2 , \,  m_\e^{-1} + d_\e h_\e \leq x \cdot e_1 \text{ and } {\rm dist}(x,\partial T^{\rm in}_\e) < d_\e h_\e \right\}
$$
as illustrated in Figure \ref{fig:adaptation triangulation T}, as well as the corresponding vertices:
$$
V_{\bar x,\e} := V_\e \cap S_{\bar x}, \quad V_{\bar y,\e} = V_\e \cap S_{\bar y} \setminus V_{\bar x,\e}, \quad V_{\bar z,\e} = V_\e \cap S_{\bar z} \setminus (V_{\bar x,\e} \cup V_{\bar y,\e}).
$$
One can check that, except for three eventual pairs of vertices, the vertices of $V_{\bar y,\e}$ are well separated vertically, the ones of $V_{\bar x,\e}$ are well separated horizontally, and the vertices of $V_{\bar z,\e}$ are well separated along the diagonal direction (see Figure \ref{fig:well separated}). 
\begin{enumerate}[leftmargin=*,label=(\roman*)]
\item If $v,v' \in V_{\bar y,\e}$ are such that $v \neq v'$ and $(v - v') \cdot e_2 = 0$, then either $v \cdot e_2 > 1 - (1 + \sqrt{2})m_\e^{-1} -2 d_\e h_\e$ or $v \cdot e_2 < m_\e^{-1} + d_\e h_\e$. Moreover, there exists at most one such pair of vertices for each case, which are respectively the highest and lowest vertices of $V_{\bar y,\e}$. 
\item If $v,v' \in V_{\bar x,\e}$ are such that $v \neq v'$ and $(v - v') \cdot e_1 = 0$, then $v \cdot e_1 >  1 - (1 + \sqrt{2})m_\e^{-1} -2 d_\e h_\e$. Moreover, there is at most one such pair of vertices, which are the vertices the most on the right of $V_{\bar x,\e}$.
\item For all $v,v' \in V_{\bar z,\e}$, $(v - v') \cdot (e_2-e_1) \neq 0$.
\end{enumerate}
Indeed, let $v,v' \in S_{\bar x}$ be such that $v\cdot e_1 < v' \cdot e_1$ and $v\cdot e_2 = v' \cdot e_2$. By construction, the segment $[v,v']$ is a common edge of two right triangles $T_1,T_2$ such that exactly one of them (say $T_1$) is included in the inner triangle $T_\e$. Because $T_1$ and $T_2$ are right triangles, we have $T_1 \subset S_Y$ and $T_2 \subset \{ x \in T \, : \, m_\e^{-1}\leq x \cdot e_1 \leq m_\e^{-1} + d_\e h_\e \}$. Hence, either $v\cdot e_2 > 1 - (1 + \sqrt{2})m_\e^{-1} -2 d_\e h_\e$ or $v \cdot e_2 < m_\e^{-1} + d_\e h_\e$, otherwise $T_2 \subset T^{\rm in}_\e$. Moreover, by definition of $V_\e$, one can check that for each case, there is actually not enough room to have two such pairs of vertices. Indeed, in the case where $v \cdot e_2 > 1 - (1 + \sqrt{2})m_\e^{-1} -2 d_\e h_\e$, if there exists another pair $v_*,v_*'\in S_{\bar y}$ such that $v_* \cdot e_1 < v_*' \cdot e_1$ and $v_* \cdot e_2 = v_*' \cdot e_2 >   1 - (1 + \sqrt{2})m_\e^{-1} -2 d_\e h_\e$, then necessarily $(v-v_*) \cdot e_2 = 0$ (otherwise the whole triangle formed by $v, v'$ and $v_*$ is included in $ S_{\bar y}$ which contradicts the fact that $v' \in V_\e$). Hence, $2 h_\e \leq (v'-v) \cdot e_1 + (v_*' - v_*) \cdot e_1 \leq d_\e h_\e \leq \sqrt2 h_\e$, which is impossible. The remaining cases can be treated similarily and we omit the proof.

In the case there exists $v,v' \in V_{\bar y,\e}$ such that $v \cdot e_1 <  v' \cdot e_1$, $(v - v') \cdot e_2 = 0$, and $v \cdot e_2 < m_\e^{-1} + d_\e h_\e$, then we remove $v'$ from $V_{\bar y,\e}$ and add it to $V_{\bar x,\e}$, and one can check that $(ii)$ remains valid (otherwise $v' \notin V_\e$). Next, in the case there exists $v,v' \in V_{\bar y,\e}$ such that $v \cdot e_1 <  v' \cdot e_1$, $(v - v') \cdot e_2 = 0$, and $v \cdot e_2 > 1 - (1 + \sqrt{2})m_\e^{-1} -2 d_\e h_\e$, then we remove $v'$ from $V_{\bar y,\e}$ and add it to $V_{\bar z,\e}$. By doing so, one can check that $(iii)$ remains valid, otherwise $v' \notin V_\e$. Finally, if there exists $v,v' \in V_{\bar x,\e}$ are such that $v \cdot e_2 <  v' \cdot e_2$, $(v - v') \cdot e_1 = 0$ and $v \cdot e_1 >  1 - (1 + \sqrt{2})m_\e^{-1} -2 d_\e h_\e$, then we remove $v'$ from $V_{\bar x,\e}$ and add it to $V_{\bar z,\e}$. By doing so, one can also check that $(iii)$ remains valid, otherwise $v' \notin V_\e$ (see Figure \ref{fig:well separated}).

\begin{figure}[hbtp]
\begin{tikzpicture}[line cap=round,line join=round,>=triangle 45,x=1.0cm,y=1.0cm]
\filldraw [line width=0.5pt,color=red,fill=red,fill opacity=0.15] (0.,4.) -- (0.5,3.4995955752898946) -- (0.5,0.) -- (0.,0.) -- cycle;
\filldraw [line width=0.5pt,color=green,fill=green,fill opacity=0.15] (0.5,0.5) -- (0.5,0.) -- (3.9967672171536877,0.) -- (3.4971713150094765,0.5) -- cycle;
\filldraw [line width=0.5pt,fill opacity=0.15] (0.5,3.4995955752898946) -- (0.5,2.7922028226483517) -- (2.7903502741570203,0.5) -- (3.4971713150094765,0.5) -- cycle;
\filldraw [line width=0.5pt,color=blue,fill=blue,fill opacity=0.1] (0.7875289187562333,3.2118340889344315) -- (0.503851832562187,2.924094088176796) -- (0.15437885461986184,3.2738497371347086) -- (0.43357750193984973,3.56607179914909) -- cycle;
\filldraw [line width=0.5pt,color=blue,fill=blue,fill opacity=0.1] (2.879891986368259,0.4979862376410919) -- (2.878383280142858,0.) -- (3.3822911594265848,0.) -- (3.380782453201185,0.49647753141569156) -- cycle;
\filldraw [line width=0.5pt,color=blue,fill=blue,fill opacity=0.1] (0.,0.6) -- (0.5,0.6) -- (0.5,0.1) -- (0.,0.1) -- cycle;
\draw [line width=0.5pt] (0.,-0.2136308295209641)-- (0.5023088950562918,-0.2136308295209641);
\draw [line width=0.5pt] (0.,-0.2136308295209641)-- (0.07565414229770648,-0.17836249777928582);
\draw [line width=0.5pt] (0.,-0.2136308295209641)-- (0.07565414229770648,-0.2564431201307321);
\draw [line width=0.5pt] (0.5023088950562918,-0.2136308295209641)-- (0.43753824361320215,-0.17921175704510826);
\draw [line width=0.5pt] (0.5023088950562918,-0.2136308295209641)-- (0.44195146436095206,-0.2520298993829821);
\draw (-0.05,-0.23) node[anchor=north west] {${\scriptscriptstyle{d_\varepsilon  h_\varepsilon}}$};
\draw [line width=0.5pt,dotted] (0.5,0.4961108708228137)-- (0.,0.4961682030175763);
\draw [line width=0.5pt,dotted] (0.3976313550052226,0.)-- (0.4028979032503602,3.596776213014192);
\draw [line width=0.5pt,dotted] (0.,3.4283534610016226)-- (3.4255826804366674,0.);

\draw [line width=0.5pt] (0.40206209959228406,3.0259661537131946-0.3)-- (0.,3.0266031264770965-0.3);
\draw [line width=0.5pt] (0.,3.0266031264770965-0.3)-- (0.05,3.07-0.3);
\draw [line width=0.5pt] (0.,3.0266031264770965 -0.3)-- (0.05,2.97-0.3);
\draw [line width=0.5pt] (0.40206209959228406,3.0259661537131946-0.3)-- (0.35,3.07-0.3);
\draw [line width=0.5pt] (0.40206209959228406,3.0259661537131946-0.3)-- (0.35,2.97-0.3);
\draw (-0.05,3.05-0.3) node[anchor=north west] {${\scriptscriptstyle{h_\varepsilon}}$};
\draw [line width=0.5pt] (0.017108488483926737,4.062822142929176)-- (0.40384678733824564,3.679315640444116);
\draw [line width=0.5pt] (0.40384678733824564,3.679315640444116)-- (0.39291187320241955,3.7230571983559924);
\draw [line width=0.5pt] (0.40384678733824564,3.679315640444116)-- (0.3600256511410914,3.687311304811071);
\draw [line width=0.5pt] (0.017108488483926737,4.062822142929176)-- (0.03474752311912302,4.011357441233875);
\draw [line width=0.5pt] (0.017108488483926737,4.062822142929176)-- (0.07657138492340157,4.048778791269282);
\draw (0.0,4.25) node[anchor=north west] {${\scriptscriptstyle{\sqrt2 h_\varepsilon > d_\varepsilon  h_\varepsilon}}$};
\draw [line width=0.5pt] (0.846225881906347,3.2708676920931485)-- (0.4911838653209333,3.6229261623206988);
\draw [line width=0.5pt] (0.4911838653209333,3.6229261623206988)-- (0.5196617721857049,3.542105667220553);
\draw [line width=0.5pt] (0.4911838653209333,3.6229261623206988)-- (0.5810491107988853,3.5964773099922267);
\draw [line width=0.5pt] (0.846225881906347,3.2708676920931485)-- (0.8327371991129249,3.3360196018763038);
\draw [line width=0.5pt] (0.846225881906347,3.2708676920931485)-- (0.7713498604997444,3.2860327690055717);
\draw (0.5,3.8) node[anchor=north west] {${\scriptscriptstyle{d_\varepsilon  h_\varepsilon}}$};
\draw (0.75,3.35) node[anchor=north west] {${\scriptscriptstyle{v}}$};
\draw (0.45,3.15) node[anchor=north west] {${\scriptscriptstyle{v'}}$};
\draw [line width=0.5pt,dash pattern=on 1pt off 1pt,color=blue] (0.5642385217633922,3.2952443290220805)-- (1.662636712845795,3.5970020738249358);
\draw (1.5,4.6) node[anchor=north west] {$\begin{aligned}  & {\scriptstyle{\Rightarrow T_2  \subset T_\varepsilon}} \\ & {\scriptstyle{\Rightarrow v' \notin V_\varepsilon \, : \,  \text{impossible} }}\end{aligned}$};
\draw (-0.1,0.9) node[anchor=north west] {${\scriptscriptstyle{v}}$};
\draw (0.4,1) node[anchor=north west] {${\scriptscriptstyle{v'}}$};
\draw (-2.,1.5) node[anchor=north west] {$\begin{aligned}  & {\scriptstyle{\Rightarrow T_2  \subset T_\varepsilon}} \\ & {\scriptstyle{ \Rightarrow v' \notin V_\varepsilon }}\end{aligned}$};
\draw [line width=.5pt,dotted] (3.,0.)-- (3.,0.5);
\draw [line width=.5pt] (3.05,-0.2)-- (4.,-0.2);
\draw [line width=.5pt] (3.05,-0.2)-- (3.15,-0.15);
\draw [line width=.5pt] (3.05,-0.2)-- (3.15,-0.25);
\draw [line width=.5pt] (4.,-0.2)-- (3.9,-0.15);
\draw [line width=.5pt] (4.,-0.2)-- (3.9,-0.25);
\draw (3.1,-0.276478172428172) node[anchor=north west] {${\scriptscriptstyle{2 d_\varepsilon  h_\varepsilon}}$};
\draw (2.5,0.6) node[anchor=north west] {${\scriptscriptstyle{v'}}$};
\draw (2.5,0.25) node[anchor=north west] {${\scriptscriptstyle{v}}$};
\draw (3.5,2.5) node[anchor=north west] {$\begin{aligned}  & {\scriptstyle{\Rightarrow T_2  \subset T_\varepsilon}} \\ & {\scriptstyle{\Rightarrow v' \notin V_\varepsilon}}\end{aligned}$};
\draw [line width=.5pt,dash pattern=on 1pt off 1pt,color=blue] (0.09512964070727835,0.25140219790202656)-- (-0.6417866682411005,0.5539257352597806);
\draw [line width=0.5pt,dash pattern=on 1pt off 1pt,color=blue] (3.2987763311881255,0.37551441835648974)-- (3.8883093783468285,1.4382253059978307);
\draw (-0.1,1.900920802210346) node[anchor=north west] {${\scriptscriptstyle{S_{\bar y}}}$};
\draw (1.701492554517906,0.45) node[anchor=north west] {${\scriptscriptstyle{S_{\bar x}}}$};
\draw (2.1,1.55) node[anchor=north west] {${\scriptscriptstyle{S_{\bar z}}}$};
\filldraw [blue] (0.7875289187562333,3.2118340889344315) circle (1.pt);
\filldraw [blue] (0.503851832562187,2.924094088176796) circle (1.pt);
\filldraw [blue] (2.8797077409816048,0.4371714379419707) circle (1.pt);
\filldraw [blue] (2.8784438912726324,0.020006219877134357) circle (1pt);
\filldraw [blue] (0.059578183783078525,0.6) circle (1pt);
\filldraw [blue] (0.46184721538130064,0.6) circle (1.pt);

\begin{scope}[xshift=180]
\draw [line width=.pt] (3.,0.)-- (3.,0.41);
\draw [line width=.pt] (0.5,0.4961108708228137)-- (0.,0.4961682030175763);
\draw [line width=.pt] (0.3976313550052226,0.)-- (0.4028979032503602,3.596776213014192);
\draw [line width=.pt] (0.4,3.)-- (3.4255826804366674,0.);
\draw [line width=.pt] (0.,3.2)-- (0.4,3.2);

\fill [line width=0.5pt,fill=red,fill opacity=0.15] (0.,4.) -- (0.5,3.4995955752898946) -- (0.5,0.) -- (0.,0.) -- cycle;
\fill [line width=0.5pt,fill=green,fill opacity=0.15] (0.5,0.5) -- (0.5,0.) -- (3.9967672171536877,0.) -- (3.4971713150094765,0.5) -- cycle;
\fill [line width=0.5pt,fill opacity=0.15] (0.5,3.4995955752898946) -- (0.5,2.7922028226483517) -- (2.7903502741570203,0.5) -- (3.4971713150094765,0.5) -- cycle;
\draw [green, dash pattern=on 1pt off 1pt] (0.46184721538130064,0.4)--(0.7,0.6);
\draw (.5,0.85) node[anchor=north west] {{\color{green}{ ${\scriptscriptstyle{ \text{Added to } V_{\bar x,\e}}}$}}};
\draw [dash pattern=on 1pt off 1pt] (0.46184721538130064,3.3)--(2.1,2.8);
\draw [dash pattern=on 1pt off 1pt] (3.1,0.4371714379419707)--(2.7,2.6);
\draw (2.1,3) node[anchor=north west] { ${\scriptscriptstyle{ \text{Added to } V_{\bar z,\e}}}$};
\draw [line width=.pt] (0.7875289187562333,3.2118340889344315) -- (0.503851832562187,2.924094088176796);
\draw [line width=.pt] (2.93,0.5) -- (2.93+0.28,0.5+0.28);

\filldraw  (3.1,0.4371714379419707) circle (1.pt);
\filldraw [green] (3.1,0.020006219877134357) circle (1pt);
\draw [blue,line width=0.5pt] (3.1,0.4371714379419707)--(3.1,0.020006219877134357);

\filldraw [red] (0.059578183783078525,3.3) circle (1pt);
\filldraw  (0.46184721538130064,3.3) circle (1.pt);
\draw [blue,line width=0.5pt] (0.059578183783078525,3.3)--(0.46184721538130064,3.3);

\filldraw [red] (0.059578183783078525,0.4) circle (1pt);
\filldraw [green] (0.46184721538130064,0.4) circle (1.pt);
\draw [blue,line width=0.5pt] (0.059578183783078525,0.4)--(0.46184721538130064,0.4);
\end{scope}
\end{tikzpicture}
\caption{}
\label{fig:well separated}
\end{figure}

We next proceed in three steps, in the spirit of \cite[Appendix]{CDM}. First, for $i \in \llbracket 1, m_\e - 2 \rrbracket$, $\bar y_{i,\e}$ is connected to:
\begin{enumerate}[leftmargin=*,label=(\roman*)]
\item the lowest vertex $v \in V_{\bar y,\e} $ such that $\bar y _{i,\e} \cdot e_2 \leq v \cdot e_2 < \bar y _{i+1,\e} \cdot e_2$, provided it exists;
\item each vertex $v \in V_{\bar y,\e}$ such that $\bar y _{i-1,\e} \cdot e_2 \leq v \cdot e_2 < \bar y _{i,\e} \cdot e_2$ provided there exist such vertices, otherwise $\bar y_{i,\e}$ is connected to the highest vertex $v \in V_{\bar y,\e}$ such that $v \cdot e_2 < \bar y_{i,\e} \cdot e_2$ provided it exists. 
\end{enumerate}
Next, denoting by $v_0 $ the lowest vertex of $V_{\bar y ,\e}$, for $i \in \llbracket 1, m_\e - 2 \rrbracket$, we connect $\bar x_{i,\e}$ to the vertices $\{ v_0\} \cup V_{\bar x,\e} $ in the same manner as above, where we replace the scalar product with $e_2$ by the scalar product with $e_1$, "lowest" by "the most on the left" and "highest" by "the most on the right". Finally, denoting by $v_1$ the vertex of $V_{\bar x ,\e}$ the most on the right and $v_2$ the highest vertex of $ V_{\bar y,\e} $, we connect $\bar z_{1,\e}$ to $v_1$. Then, for $i \in \llbracket 1, m_\e - 1 \rrbracket$, we connect $\bar z_{i,\e}$ to the vertices $ \{ v_1, v_2 \} \cup V_{\bar z,\e} $ in the same manner as above, where we replace the scalar product with $e_2$ by the scalar product with $e_2 - e_1$, "lowest" by "the lowest most on the right" and "highest" by "the highest most on the left". We claim that all the triangles $\tilde T_\e$ created through this process are admissible, provided $\theta_0 < \Theta_0 = \arctan (1/4)$ (independently of $\gamma \in [0,\pi /4]$). Since the proof involves extensive calculations, we only provide the main arguments and refer to Figure \ref{fig:GLUING} for further details.
\begin{itemize}[leftmargin=*]
\item First, since $m_\e^{-1} \sim h_\e \sim d_\e h_\e$ and two succesive vertices $v,v' \in V_\e$ are such that either $\lvert v - v' \rvert \sim h_\e $ and $v - v'$ is parallel to $b$ or $b^\perp$, or $\lvert v - v' \rvert \sim \sqrt2 h_\e $ and $v - v'$ is parallel to $b - b^\perp$ or $b + b^\perp$, we infer that there are at most two vertices of $V_\e$ in each rectangle of $S_{\bar y}$ and $S_{\bar x}$ delimited by the heigths of $\bar y_{i,\e}$ and $\bar y_{i+1,\e}$, and the abscissae of $\bar x_{i,\e}$ and $\bar x_{i+1,\e}$ respectively. Arguing similarly along the diagonal in $S_{\bar z}$, a more accurate study shows that the triangles we have constructed are of five different kinds; then, studying every type of triangle, we may establish that their angles are all asymptotically greater than $\Theta_0 = \arctan (1/4)$, and the edges of the triangles have lengths in between $m_\e^{-1} $ (which is larger than $h_\e$) and $  6 h_\e$, for $\e>0$ small enough.
\item Second, it remains to control the triangles created near the acute corners of $T^{\rm in}_\e$ and near the right corner of $T^{\rm in}_\e$. The latest causes no difficulty if there exists a vertex $v \in V_\e$ in the rectangle delimited by $\bar x_{1,\e}$, $\bar x_{2,\e}$, $\bar y_{1,\e}$ and $\bar y_{2,\e}$. Otherwise, we may slightly rearrange the triangulation there, as illustrated in Figure \ref{fig:GLUING}, in a way that ensures the admissibility of the modified triangles, based on the fact that $C_\e^{(3)} \cap V_\e \neq \emptyset$ and $C_\e^{(4)} \cap V_\e \neq \emptyset$. As for the acute corners, two situations may occur according to the position of the vertices in $ V_\e \cap C_\e^{(1)}$ and $V_\e \cap C_\e^{(2)}$, which are illustrated in Figure \ref{fig:GLUING}. More precisely, in this example there exists $v \in V_\e \cap C_\e^{(2)}$ at distance less than $1$ from the acute corner of $T^{\rm in}_\e$, in which case the admissibility of the triangles poses to particular difficulty. Whereas in the other acute corner of $T^{\rm in}_\e$, the vertex $v \in V_\e \cap  C_\e^{(1)}$ is at distance strictly greater than $1$ from the acute corner. In this case, we may slitghly rearrange the triangulation near this corner in a way that ensures the admissibility of the modified triangles, provided $\theta_0 < \Theta_0 = \arctan (1/4)$.
\end{itemize}
Therefore, provided $\theta_0 < \Theta_0 = \arctan (1/4)$, the global triangulation we construct in this way is in $\mathcal{T}_{h_\e}(T,\theta_0)$ for $\e>0 $ sufficiently small.

\definecolor{qqqqff}{rgb}{0.,0.,1.}
\definecolor{uuuuuu}{rgb}{0.26666666666666666,0.26666666666666666,0.26666666666666666}
\definecolor{qqzzqq}{rgb}{0.,0.6,0.}
\definecolor{qqccqq}{rgb}{0.,0.8,0.}
\definecolor{ffqqqq}{rgb}{1.,0.,0.}
\definecolor{zzttqq}{rgb}{0.6,0.2,0.}

\begin{figure}[hbtp]
\begin{tikzpicture}[line cap=round,line join=round,>=triangle 45,x=8.0cm,y=8.0cm]

\begin{scope}[scale=0.7,yshift=80,xshift=15]
\draw[line width=.5pt] (0.,1.) -- (0.,0.) -- (1.,0.) -- cycle;
\fill[line width=.5pt,color=ffqqqq,fill=ffqqqq,fill opacity=0.15] (0.05,0.8792902994331602) -- (0.1,0.8292902994331602) -- (0.09980417919340398,0.05000038345935337) -- (0.05,0.05) -- cycle;
\fill[line width=.pt,color=qqccqq,fill=qqccqq,fill opacity=0.15] (0.1,0.1) -- (0.09980417919340398,0.05000038345935337) -- (0.8792902994331602,0.05) -- (0.8292902994331602,0.1) -- cycle;
\fill[line width=.pt,fill=black,fill opacity=0.15] (0.1,0.8292902994331602) -- (0.1,0.7585799137110342) -- (0.7585799137110342,0.1) -- (0.8292902994331602,0.1) -- cycle;

\draw [line width=.5pt] (0.,0.95)-- (0.05,0.95);
\draw [line width=.5pt] (0.,0.05)-- (0.05,0.);
\draw [line width=.5pt] (0.95,0.05)-- (0.95,0.);
\draw [line width=.5pt] (-0.03695146270062737,0.2003464727801436)-- (-0.03695146270062737,0.1517208437990187);
\draw [line width=.5pt] (-0.03695146270062737,0.1517208437990187)-- (-0.03387389234680213,0.15638287360456785);
\draw [line width=.5pt] (-0.03695146270062737,0.1517208437990187)-- (-0.04051752109316883,0.15648353464617945);
\draw [line width=.5pt] (-0.03695146270062737,0.2003464727801436)-- (-0.03951091067705266,0.19624464608276843);
\draw [line width=.5pt] (-0.03695146270062737,0.2003464727801436)-- (-0.03437719755486021,0.1960433239995452);
\draw (-0.2,0.25) node[anchor=north west] {$\scriptstyle m_\varepsilon^{-1}$};
\draw [line width=.5pt] (0.3345750055964303,0.7277256713858388)-- (0.3810171081355779,0.680372939385139);
\draw [line width=.5pt] (0.3810171081355779,0.680372939385139)-- (0.37965116394325,0.6940323813084178);
\draw [line width=.5pt] (0.3810171081355779,0.680372939385139)-- (0.3669023514815232,0.6844707719621226);
\draw [line width=.5pt] (0.3345750055964303,0.7277256713858388)-- (0.34003878236574175,0.7108790263471283);
\draw [line width=.5pt] (0.3345750055964303,0.7277256713858388)-- (0.3518769653659166,0.7236278388088552);
\draw (0.37383078760727495,0.7637293893474378) node[anchor=north west] {$\scriptstyle \sqrt{2} m_\varepsilon^{-1}$};
\draw [line width=.5pt,color=qqqqff] (0.13995020450636275,0.05314513997560981)-- (0.07911991248884448,0.07735773269707912);
\draw [line width=.5pt,color=qqqqff] (0.07911991248884448,0.07735773269707912)-- (0.05203846957519554,0.1640502934517487);
\draw [line width=.5pt,color=qqqqff] (0.05203846957519554,0.1640502934517487)-- (0.08302795176346858,0.23536349037668552);
\draw [line width=.5pt,color=qqqqff] (0.08302795176346858,0.23536349037668552)-- (0.05456682539202146,0.32647266557722354);
\draw [line width=.5pt,color=qqqqff] (0.05456682539202146,0.32647266557722354)-- (0.10099242061921858,0.3828513543872846);
\draw [line width=.5pt,color=qqqqff] (0.10099242061921858,0.3828513543872846)-- (0.05598855941136322,0.5269167937509449);
\draw [line width=.5pt,color=qqqqff] (0.05598855941136322,0.5269167937509449)-- (0.08806127936084297,0.5775268507471532);
\draw [line width=.5pt,color=qqqqff] (0.08806127936084297,0.5775268507471532)-- (0.05899808030546658,0.6705633687003167);
\draw [line width=.5pt,color=qqqqff] (0.13995020450636275,0.05314513997560981)-- (0.2919195741573003,0.10061806929040569);
\draw [line width=.5pt,color=qqqqff] (0.2919195741573003,0.10061806929040569)-- (0.37116476291679035,0.06258918935425603);
\draw [line width=.5pt,color=qqqqff] (0.05899808030546658,0.6705633687003167)-- (0.09269168955717065,0.7383822956941709);
\draw [line width=.5pt,color=qqqqff] (0.09269168955717065,0.7383822956941709)-- (0.08181385655403697,0.7732041920573088);
\draw [line width=.5pt,color=qqqqff] (0.08181385655403697,0.7732041920573088)-- (0.15406787984577788,0.7575552879219803);
\draw [line width=.5pt,color=qqqqff] (0.15406787984577788,0.7575552879219803)-- (0.20720942833240594,0.7168623489796634);
\draw [line width=.5pt,color=qqqqff] (0.37116476291679035,0.06258918935425603)-- (0.40799992065512247,0.07409593471766557);
\draw [line width=.5pt,color=qqqqff] (0.40799992065512247,0.07409593471766557)-- (0.4794874081155688,0.059211419672018734);
\draw [line width=.5pt,color=qqqqff] (0.20720942833240594,0.7168623489796634)-- (0.23627262738778232,0.6238258310265);
\draw [line width=.5pt,color=qqqqff] (0.23627262738778232,0.6238258310265)-- (0.3087206498868862,0.6058664708752461);
\draw [line width=.5pt,color=qqqqff] (0.3087206498868862,0.6058664708752461)-- (0.36677433667097203,0.5631994214390925);
\draw [line width=.5pt,color=qqqqff] (0.36677433667097203,0.5631994214390925)-- (0.3944732234382344,0.4745303120580204);
\draw [line width=.5pt,color=qqqqff] (0.3944732234382344,0.4745303120580204)-- (0.4625496555718905,0.45392032232059554);
\draw [line width=.5pt,color=qqqqff] (0.4794874081155688,0.059211419672018734)-- (0.5763941610576789,0.08948362151844394);
\draw [line width=.5pt,color=qqqqff] (0.5763941610576789,0.08948362151844394)-- (0.6340791494283251,0.05750353817490293);
\draw [line width=.5pt,color=qqqqff] (0.6340791494283251,0.05750353817490293)-- (0.6926422169910847,0.07579775347138229);
\draw [line width=.5pt,color=qqqqff] (0.6926422169910847,0.07579775347138229)-- (0.7465764422929175,0.05060556323148212);
\draw [line width=.5pt,color=qqqqff] (0.47946301650048917,0.39977761749709434)-- (0.5344652071282602,0.37638565137678553);
\draw [line width=.5pt,color=qqqqff] (0.5344652071282602,0.37638565137678553)-- (0.5520065256569655,0.3202327415368017);
\draw [line width=.5pt,color=qqqqff] (0.5520065256569655,0.3202327415368017)-- (0.6214894010624671,0.30357069147272703);
\draw [line width=.5pt,color=qqqqff] (0.6214894010624671,0.30357069147272703)-- (0.6376510361333741,0.25183439607861147);
\draw [line width=.5pt,color=qqqqff] (0.6376510361333741,0.25183439607861147)-- (0.6919195741573003,0.22557201079542802);
\draw [line width=.5pt,color=qqqqff] (0.7203807005287475,0.13446283559489014)-- (0.6919195741573003,0.22557201079542802);
\draw [line width=.5pt,color=qqqqff] (0.7203807005287475,0.13446283559489014)-- (0.7960586516594033,0.10810348129188985);
\draw [line width=.5pt,color=qqqqff] (0.8463551828613287,0.08177493051255723)-- (0.7465764422929175,0.05060556323148212);
\draw [line width=.5pt,color=qqqqff] (0.8463551828613287,0.08177493051255723)-- (0.7960586516594033,0.10810348129188985);
\draw [line width=.5pt,color=qqqqff] (0.4625496555718905,0.45392032232059554)-- (0.47946301650048917,0.39977761749709434);
\draw [line width=.5pt,color=ffqqqq] (0.,0.05)-- (0.07911991248884448,0.07735773269707912);
\draw [line width=.5pt,color=ffqqqq] (0.,0.1)-- (0.06295827741793729,0.1290940280911947);
\draw [line width=.5pt,color=ffqqqq] (0.,0.1)-- (0.07911991248884448,0.07735773269707912);
\draw [line width=.5pt,color=ffqqqq] (0.,0.15)-- (0.05203846957519554,0.1640502934517487);
\draw [line width=.5pt,color=ffqqqq] (0.,0.15)-- (0.06295827741793729,0.1290940280911947);
\draw [line width=.5pt,color=ffqqqq] (0.,0.2)-- (0.08302795176346858,0.23536349037668552);
\draw [line width=.5pt,color=ffqqqq] (0.,0.2)-- (0.05203846957519554,0.1640502934517487);
\draw [line width=.5pt,color=ffqqqq] (0.,0.25)-- (0.07148018632062009,0.27232996075372234);
\draw [line width=.5pt,color=ffqqqq] (0.,0.25)-- (0.08302795176346858,0.23536349037668552);
\draw [line width=.5pt,color=ffqqqq] (0.,0.3)-- (0.05456682539202146,0.32647266557722354);
\draw [line width=.5pt,color=ffqqqq] (0.,0.3)-- (0.07148018632062009,0.27232996075372234);
\draw [line width=.5pt,color=ffqqqq] (0.,0.35)-- (0.10099242061921858,0.3828513543872846);
\draw [line width=.5pt,color=ffqqqq] (0.,0.35)-- (0.05456682539202146,0.32647266557722354);
\draw [line width=.5pt,color=ffqqqq] (0.,0.4)-- (0.08444968578281031,0.435807618550407);
\draw [line width=.5pt,color=ffqqqq] (0.,0.4)-- (0.10099242061921858,0.3828513543872846);
\draw [line width=.5pt,color=ffqqqq] (0.,0.45)-- (0.07329353385195628,0.4715204637683568);
\draw [line width=.5pt,color=ffqqqq] (0.,0.45)-- (0.08444968578281031,0.435807618550407);
\draw [line width=.5pt,color=ffqqqq] (0.,0.5)-- (0.05598855941136322,0.5269167937509449);
\draw [line width=.5pt,color=ffqqqq] (0.,0.5)-- (0.07329353385195628,0.4715204637683568);
\draw [line width=.5pt,color=ffqqqq] (0.,0.55)-- (0.08806127936084297,0.5775268507471532);
\draw [line width=.5pt,color=ffqqqq] (0.,0.55)-- (0.05598855941136322,0.5269167937509449);
\draw [line width=.5pt,color=ffqqqq] (0.,0.6)-- (0.07115281592861776,0.6316538778055052);
\draw [line width=.5pt,color=ffqqqq] (0.,0.6)-- (0.08806127936084297,0.5775268507471532);
\draw [line width=.5pt,color=ffqqqq] (0.,0.65)-- (0.05899808030546658,0.6705633687003167);
\draw [line width=.5pt,color=ffqqqq] (0.,0.65)-- (0.07115281592861776,0.6316538778055052);
\draw [line width=.5pt,color=ffqqqq] (0.,0.7)-- (0.09269168955717065,0.7383822956941709);
\draw [line width=.5pt,color=ffqqqq] (0.,0.7)-- (0.05899808030546658,0.6705633687003167);
\draw [line width=.5pt,color=ffqqqq] (0.,0.75)-- (0.08181385655403697,0.7732041920573088);
\draw [line width=.5pt,color=ffqqqq] (0.,0.75)-- (0.09269168955717065,0.7383822956941709);
\draw [line width=.5pt,color=ffqqqq] (0.,0.8000680841009774)-- (0.08181385655403697,0.7732041920573088);
\draw [line width=.5pt,color=ffqqqq] (0.,0.85)-- (0.08181385655403697,0.7732041920573088);
\draw [line width=.5pt,color=qqzzqq] (0.05,0.)-- (0.07911991248884448,0.07735773269707912);
\draw [line width=.5pt,color=qqzzqq] (0.1,0.)-- (0.13995020450636275,0.05314513997560981);
\draw [line width=.5pt,color=qqzzqq] (0.15,0.)-- (0.1982941912685987,0.07137091774825537);
\draw [line width=.5pt,color=qqzzqq] (0.15,0.)-- (0.13995020450636275,0.05314513997560981);
\draw [line width=.5pt,color=qqzzqq] (0.2,0.)-- (0.24191957415730034,0.08499882660227794);
\draw [line width=.5pt,color=qqzzqq] (0.2,0.)-- (0.1982941912685987,0.07137091774825537);
\draw [line width=.5pt,color=qqzzqq] (0.25,0.)-- (0.2919195741573003,0.10061806929040569);
\draw [line width=.5pt,color=qqzzqq] (0.25,0.)-- (0.24191957415730034,0.08499882660227794);
\draw [line width=.5pt,color=qqzzqq] (0.3,0.)-- (0.2919195741573003,0.10061806929040569);
\draw [line width=.5pt,color=qqzzqq] (0.35,0.)-- (0.37116476291679035,0.06258918935425603);
\draw [line width=.5pt,color=qqzzqq] (0.35,0.)-- (0.2919195741573003,0.10061806929040569);
\draw [line width=.5pt,color=qqzzqq] (0.3998489369208939,-4.894250669973278E-5)-- (0.40799992065512247,0.07409593471766557);
\draw [line width=.5pt,color=qqzzqq] (0.3998489369208939,-4.894250669973278E-5)-- (0.37116476291679035,0.06258918935425603);
\draw [line width=.5pt,color=qqzzqq] (0.45,0.)-- (0.4794874081155688,0.059211419672018734);
\draw [line width=.5pt,color=qqzzqq] (0.45,0.)-- (0.40799992065512247,0.07409593471766557);
\draw [line width=.5pt,color=qqzzqq] (0.4993911564772603,0.)-- (0.5202361204590616,0.0719407002184532);
\draw [line width=.5pt,color=qqzzqq] (0.4993911564772603,0.)-- (0.4794874081155688,0.059211419672018734);
\draw [line width=.5pt,color=qqzzqq] (0.55,0.)-- (0.5763941610576789,0.08948362151844394);
\draw [line width=.5pt,color=qqzzqq] (0.55,0.)-- (0.5202361204590616,0.0719407002184532);
\draw [line width=.5pt,color=qqzzqq] (0.65,0.)-- (0.6926422169910847,0.07579775347138229);
\draw [line width=.5pt,color=qqzzqq] (0.65,0.)-- (0.6340791494283251,0.05750353817490293);
\draw [line width=.5pt,color=qqzzqq] (0.7,0.)-- (0.7465764422929175,0.05060556323148212);
\draw [line width=.5pt,color=qqzzqq] (0.7,0.)-- (0.6926422169910847,0.07579775347138229);
\draw [line width=.5pt,color=qqzzqq] (0.75,0.)-- (0.7465764422929175,0.05060556323148212);
\draw [line width=.5pt,color=qqzzqq] (0.8,0.)-- (0.8080238302378496,0.06980079652875078);
\draw [line width=.5pt,color=qqzzqq] (0.8,0.)-- (0.7465764422929175,0.05060556323148212);
\draw [line width=.5pt,color=qqzzqq] (0.85,0.)-- (0.8463551828613287,0.08177493051255723);
\draw [line width=.5pt,color=qqzzqq] (0.85,0.)-- (0.8080238302378496,0.06980079652875078);
\draw [line width=.5pt,color=qqzzqq] (0.9,0.)-- (0.8463551828613287,0.08177493051255723);
\draw [line width=.5pt] (0.95,0.05)-- (0.8463551828613287,0.08177493051255723);
\draw [line width=.5pt] (0.9,0.1)-- (0.8463551828613287,0.08177493051255723);
\draw [line width=.5pt] (0.85,0.15)-- (0.7960586516594033,0.10810348129188985);
\draw [line width=.5pt] (0.85,0.15)-- (0.8463551828613287,0.08177493051255723);
\draw [line width=.5pt] (0.8,0.2)-- (0.7203807005287475,0.13446283559489014);
\draw [line width=.5pt] (0.8,0.2)-- (0.7960586516594033,0.10810348129188985);
\draw [line width=.5pt] (0.75,0.25)-- (0.6919195741573003,0.22557201079542802);
\draw [line width=.5pt] (0.75,0.25)-- (0.7203807005287475,0.13446283559489014);
\draw [line width=.5pt] (0.7,0.3)-- (0.6376510361333741,0.25183439607861147);
\draw [line width=.5pt] (0.7,0.3)-- (0.6919195741573003,0.22557201079542802);
\draw [line width=.5pt] (0.65,0.35)-- (0.5520065256569655,0.3202327415368017);
\draw [line width=.5pt] (0.65,0.35)-- (0.6214894010624671,0.30357069147272703);
\draw [line width=.5pt] (0.65,0.35)-- (0.6376510361333741,0.25183439607861147);
\draw [line width=.5pt] (0.6,0.4)-- (0.5344652071282602,0.37638565137678553);
\draw [line width=.5pt] (0.6,0.4)-- (0.5520065256569655,0.3202327415368017);
\draw [line width=.5pt] (0.55,0.45)-- (0.47946301650048917,0.39977761749709434);
\draw [line width=.5pt] (0.55,0.45)-- (0.5344652071282602,0.37638565137678553);
\draw [line width=.5pt] (0.5,0.5)-- (0.3944732234382344,0.4745303120580204);
\draw [line width=.5pt] (0.5,0.5)-- (0.4625496555718905,0.45392032232059554);
\draw [line width=.5pt] (0.5,0.5)-- (0.47946301650048917,0.39977761749709434);
\draw [line width=.5pt] (0.45,0.55)-- (0.3779304886018261,0.5274865762211427);
\draw [line width=.5pt] (0.45,0.55)-- (0.3944732234382344,0.4745303120580204);
\draw [line width=.5pt] (0.4,0.6)-- (0.3087206498868862,0.6058664708752461);
\draw [line width=.5pt] (0.4,0.6)-- (0.36677433667097203,0.5631994214390925);
\draw [line width=.5pt] (0.4,0.6)-- (0.3779304886018261,0.5274865762211427);
\draw [line width=.5pt] (0.35,0.65)-- (0.23627262738778232,0.6238258310265);
\draw [line width=.5pt] (0.35,0.65)-- (0.3087206498868862,0.6058664708752461);
\draw [line width=.5pt] (0.3,0.7)-- (0.21936416395555708,0.677952858084852);
\draw [line width=.5pt] (0.3,0.7)-- (0.23627262738778232,0.6238258310265);
\draw [line width=.5pt] (0.25,0.75)-- (0.20720942833240594,0.7168623489796634);
\draw [line width=.5pt] (0.25,0.75)-- (0.21936416395555708,0.677952858084852);
\draw [line width=.5pt] (0.2,0.8)-- (0.15406787984577788,0.7575552879219803);
\draw [line width=.5pt] (0.2,0.8)-- (0.20720942833240594,0.7168623489796634);
\draw [line width=.5pt] (0.1,0.)-- (0.07911991248884448,0.07735773269707912);
\draw [line width=.5pt] (0.15,0.85)-- (0.08181385655403697,0.7732041920573088);
\draw [line width=.5pt] (0.15,0.85)-- (0.15406787984577788,0.7575552879219803);
\draw [line width=.5pt] (0.1,0.9)-- (0.08181385655403697,0.7732041920573088);
\draw [line width=.5pt] (0.,0.9)-- (0.05,0.95);
\draw [line width=.5pt] (0.,0.9)-- (0.05,0.8792902994331602);
\draw [line width=.5pt] (0.05,0.8792902994331602)-- (0.05,0.95);
\draw [line width=.5pt] (0.05,0.8792902994331602)-- (0.1,0.9);
\draw [line width=.5pt] (0.,0.85)-- (0.05,0.8792902994331602);
\draw [line width=.5pt] (0.05,0.8792902994331602)-- (0.08181385655403697,0.7732041920573088);

\fill [black] (0.,1.) circle (1.5pt);
\fill [black] (0.,0.) circle (1.5pt);
\fill[black] (1.,0.) circle (1.5pt);
\fill[qqzzqq] (0.1,0.) circle (1.5pt);
\fill[ffqqqq] (0.,0.1) circle (1.5pt);
\fill[qqzzqq] (0.9,0.) circle (1.5pt);
\fill[black] (0.9,0.1) circle (1.5pt);
\fill[black] (0.,0.9) circle (1.5pt);
\fill[black] (0.1,0.9) circle (1.5pt);
\fill[ffqqqq] (0.,0.2) circle (1.5pt);
\fill[ffqqqq] (0.,0.3) circle (1.5pt);
\fill[ffqqqq] (0.,0.4) circle (1.5pt);
\fill[ffqqqq] (0.,0.5) circle (1.5pt);
\fill[ffqqqq] (0.,0.6) circle (1.5pt);
\fill[ffqqqq] (0.,0.7) circle (1.5pt);
\fill[ffqqqq] (0.,0.800068084109774) circle (1.5pt);
\fill[black] (0.2,0.8) circle (1.5pt);
\fill[black] (0.3,0.7) circle (1.5pt);
\fill[black] (0.4,0.6) circle (1.5pt);
\fill[black] (0.5,0.5) circle (1.0pt);
\fill[black] (0.6,0.4) circle (1.5pt);
\fill[black] (0.7,0.3) circle (1.5pt);
\fill[black] (0.8,0.2) circle (1.5pt);
\fill[ffqqqq] (0.,0.05) circle (1.5pt);
\fill[ffqqqq] (0.,0.15) circle (1.5pt);
\fill[ffqqqq] (0.,0.25) circle (1.5pt);
\fill[ffqqqq] (0.,0.35) circle (1.5pt);
\fill[qqzzqq] (0.05,0.) circle (1.5pt);
\fill[qqzzqq] (0.15,0.) circle (1.5pt);
\fill[qqzzqq] (0.2,0.) circle (1.5pt);
\fill[qqzzqq] (0.25,0.) circle (1.5pt);
\fill[qqzzqq] (0.3,0.) circle (1.5pt);
\fill[qqzzqq] (0.35,0.) circle (1.5pt);
\fill[qqzzqq] (0.3998489369208939,-4.894250669973278E-5) circle (1.5pt);
\fill[qqzzqq] (0.45,0.) circle (1.5pt);
\fill[qqzzqq] (0.55,0.) circle (1.5pt);
\fill[ffqqqq] (0.,0.45) circle (1.5pt);
\fill[ffqqqq] (0.,0.55) circle (1.5pt);
\fill[ffqqqq] (0.,0.65) circle (1.5pt);
\fill[ffqqqq] (0.,0.75) circle (1.5pt);
\fill[ffqqqq] (0.,0.85) circle (1.5pt);
\fill[black] (0.,0.95) circle (1.5pt);
\fill[black] (0.05,0.95) circle (1.5pt);
\fill[black] (0.15,0.85) circle (1.5pt);
\fill[black] (0.25,0.75) circle (1.5pt);
\fill[black] (0.35,0.65) circle (1.5pt);
\fill[black] (0.45,0.55) circle (1.5pt);
\fill[black] (0.55,0.45) circle (1.5pt);
\fill[black] (0.65,0.35) circle (1.5pt);
\fill[black] (0.75,0.25) circle (1.5pt);
\fill[uuuuuu] (0.5,0.5) circle (1.5pt);
\fill[black] (0.85,0.15) circle (1.5pt);
\fill[black] (0.95,0.05) circle (1.5pt);
\fill[black] (0.95,0.) circle (1.5pt);
\fill[qqzzqq] (0.85,0.) circle (1.5pt);
\fill[qqzzqq] (0.8,0.) circle (1.5pt);
\fill[qqzzqq] (0.75,0.) circle (1.5pt);
\fill[qqzzqq] (0.7,0.) circle (1.5pt);
\fill[qqzzqq] (0.65,0.) circle (1.5pt);
\fill[qqqqff] (0.05456682539202146,0.32647266557722354) circle (1.5pt);
\fill[qqqqff] (0.07148018632062009,0.27232996075372234) circle (1.5pt);
\fill[qqqqff] (0.08302795176346858,0.23536349037668552) circle (1.5pt);
\fill[qqqqff] (0.13995020450636275,0.05314513997560981) circle (1.5pt);
\fill[qqqqff] (0.10099242061921858,0.3828513543872846) circle (1.5pt);
\fill[qqqqff] (0.08444968578281031,0.435807618550407) circle (1.5pt);
\fill[qqqqff] (0.07329353385195628,0.4715204637683568) circle (1.5pt);
\fill[qqqqff] (0.05598855941136322,0.5269167937509449) circle (1.5pt);
\fill[qqqqff] (0.05203846957519554,0.1640502934517487) circle (1.5pt);
\fill[qqqqff] (0.06295827741793729,0.1290940280911947) circle (1.5pt);
\fill[ffqqqq] (0.07911991248884448,0.07735773269707912) circle (1.5pt);
\fill[qqqqff] (0.05899808030546658,0.6705633687003167) circle (1.5pt);
\fill[qqqqff] (0.07115281592861776,0.6316538778055052) circle (1.5pt);
\fill[qqqqff] (0.08806127936084297,0.5775268507471532) circle (1.5pt);
\fill[qqqqff] (0.1982941912685987,0.07137091774825537) circle (1.5pt);
\fill[qqqqff] (0.24191957415730034,0.08499882660227794) circle (1.5pt);
\fill[qqqqff] (0.2919195741573003,0.10061806929040569) circle (1.5pt);
\fill[qqqqff] (0.37116476291679035,0.06258918935425603) circle (1.5pt);
\fill[ffqqqq] (0.08181385655403697,0.7732041920573088) circle (1.5pt);
\fill[qqqqff] (0.09269168955717065,0.7383822956941709) circle (1.5pt);
\fill[qqqqff] (0.15406787984577788,0.7575552879219803) circle (1.5pt);
\fill[qqqqff] (0.20720942833240594,0.7168623489796634) circle (1.5pt);
\fill[qqqqff] (0.40799992065512247,0.07409593471766557) circle (1.5pt);
\fill[qqqqff] (0.4794874081155688,0.059211419672018734) circle (1.50pt);
\fill[qqqqff] (0.21936416395555708,0.677952858084852) circle (1.50pt);
\fill[qqqqff] (0.23627262738778232,0.6238258310265) circle (1.50pt);
\fill[qqqqff] (0.3087206498868862,0.6058664708752461) circle (1.50pt);
\fill[qqqqff] (0.36677433667097203,0.5631994214390925) circle (1.50pt);
\fill[qqqqff] (0.3779304886018261,0.5274865762211427) circle (1.50pt);
\fill[qqqqff] (0.3944732234382344,0.4745303120580204) circle (1.50pt);
\fill[qqqqff] (0.4625496555718905,0.45392032232059554) circle (1.50pt);
\fill[qqqqff] (0.5202361204590616,0.0719407002184532) circle (1.50pt);
\fill[qqqqff] (0.5763941610576789,0.08948362151844394) circle (1.50pt);
\fill[qqqqff] (0.6340791494283251,0.05750353817490293) circle (1.50pt);
\fill[qqqqff] (0.6926422169910847,0.07579775347138229) circle (1.50pt);
\fill[qqqqff] (0.7465764422929175,0.05060556323148212) circle (1.50pt);
\fill[qqqqff] (0.8080238302378496,0.06980079652875078) circle (1.50pt);
\fill[qqqqff] (0.7960586516594033,0.10810348129188985) circle (1.50pt);
\fill[qqzzqq] (0.8463551828613287,0.08177493051255723) circle (1.5pt);
\fill[qqqqff] (0.47946301650048917,0.39977761749709434) circle (1.50pt);
\fill[qqqqff] (0.5344652071282602,0.37638565137678553) circle (1.50pt);
\fill[qqqqff] (0.5520065256569655,0.3202327415368017) circle (1.50pt);
\fill[qqqqff] (0.6214894010624671,0.30357069147272703) circle (1.50pt);
\fill[qqqqff] (0.6376510361333741,0.25183439607861147) circle (1.50pt);
\fill[qqqqff] (0.6919195741573003,0.22557201079542802) circle (1.50pt);
\fill[qqqqff] (0.7203807005287475,0.13446283559489014) circle (1.50pt);
\fill[qqzzqq] (0.4993911564772603,0.) circle (1.5pt);
\fill[black] (0.05,0.8792902994331602) circle (1.5pt);

\filldraw [fill=green] (0.8463551828613287,0.08177493051255723) circle (1.5pt);
\filldraw [fill=red] (0.08181385655403697,0.7732041920573088) circle (1.5pt);
\filldraw [fill=red] (0.07911991248884448,0.07735773269707912) circle (1.5pt);

\draw [line width=.5pt] (0.9,0.)-- (0.95,0.05);
\draw [line width=.5pt,color=qqzzqq] (0.6,0.)-- (0.5763941610576789,0.08948362151844394);
\draw [line width=.5pt,color=qqzzqq] (0.6,0.)-- (0.6340791494283251,0.05750353817490293);

\draw [color=ffqqqq](0.05,0.16) node[anchor=north west] {$\scriptstyle v_0$};
\draw [color=qqzzqq](0.9063300227332884,0.19829205720332474) node[anchor=north west] {$\scriptstyle v_1$};
\draw [color=ffqqqq](0.1,0.75) node[anchor=north west] {$\scriptstyle v_2$};
\end{scope}

\draw [line width=.5pt] (-0.8,1.05)-- (-0.8,0.95);
\draw [line width=.5pt] (-0.8,0.95)-- (-0.8,0.85);
\draw [line width=.5pt] (-0.8,0.85)-- (-0.8,0.75);
\draw [line width=.5pt] (-0.8,0.95)-- (-0.7,0.95);
\draw [line width=.5pt] (-0.7,0.95)-- (-0.6,0.95);
\draw [line width=.5pt] (-0.7,0.95)-- (-0.7,0.75);
\draw [line width=.5pt] (-0.7,0.95)-- (-0.7,1.05);
\draw [line width=.5pt] (-0.7,1.05)-- (-0.6,1.05);
\draw [line width=.5pt] (-0.6,1.05)-- (-0.6,0.95);
\draw [line width=.5pt] (-0.6,0.95)-- (-0.6,0.75);
\draw [line width=.5pt] (-0.6,0.75)-- (-0.7,0.75);
\draw [line width=.5pt] (-0.7,0.85)-- (-0.6,0.85);
\draw [line width=.5pt] (-0.8,0.95)-- (-0.7,0.95);
\draw [line width=.5pt] (-0.7,0.95)-- (-0.6,0.85);
\draw [line width=.5pt] (-0.6,0.85)-- (-0.8,0.95);

\fill[line width=.pt,fill=black,fill opacity=0.15] (-0.8,0.95) -- (-0.7,0.95) -- (-0.6,0.85) -- cycle;
\draw[line width=1.pt,color=ffqqqq] (-0.8,0.95) -- (-0.6524643078284141,0.970236151849565) -- (-0.6345455899166497,0.8316018606374934) -- cycle;
\draw[line width=1.pt,color=ffqqqq] (-0.8,0.6) -- (-0.658937451895758,0.5886693646823111) -- (-0.6124491499134396,0.5001321534160827) -- cycle;
\fill[line width=.pt,fill=black,pattern=north east lines,pattern color=black] (-0.6,0.5) -- (-0.6,0.6) -- (-0.6124491499134396,0.6) -- (-0.6124491499134396,0.5001321534160827) -- cycle;
\draw[line width=1.pt,color=ffqqqq] (-0.45,0.6) -- (-0.3506398504808524,0.5876538043736991) -- (-0.3038484647002904,0.4992763965324718) -- cycle;
\draw[line width=1.pt,color=ffqqqq] (-0.8,0.3) -- (-0.6178171429181265,0.28218526737637645) -- (-0.8,0.2) -- cycle;
\draw[line width=1.pt,dotted,color=ffqqqq] (-0.45,0.95) -- (-0.30414132892276446,0.9707097701013675) -- (-0.28485018583178084,0.8321642879024874) -- cycle;
\draw[line width=1.pt,color=ffqqqq] (-0.45,0.95) -- (-0.28485018583178084,0.8321642879024874) -- (-0.45,0.85) -- cycle;
\filldraw[line width=.pt,fill=black,fill opacity=0.10000000149011612] (-0.45,0.95) -- (-0.35,0.75) -- (-0.45,0.85) -- cycle;
\filldraw[line width=.pt,fill=black,fill opacity=0.10000000149011612] (-0.45,0.95) -- (-0.25,0.75) -- (-0.45,0.85) -- cycle;
\filldraw[line width=.pt,fill=black,fill opacity=0.10000000149011612] (-0.7,0.2) -- (-0.6,0.2) -- (-0.6,0.3) -- (-0.7,0.3) -- cycle;

\draw (-1.02,0.9833167028014623) node[anchor=north west] {$\scriptstyle V_\varepsilon \cap Q = \emptyset$};
\draw (-0.85,1) node[anchor=north west] {$\bar v$};
\draw (-0.6655491627417824,1.03) node[anchor=north west] {$v$};
\draw (-0.65,0.85) node[anchor=north west] {$v'$};
\draw [shift={(-0.8,0.95)},line width=.5pt]  plot[domain=-0.46364760900080615:0.,variable=\t]({1.*0.06512854122103169*cos(\t r)+0.*0.06512854122103169*sin(\t r)},{0.*0.06512854122103169*cos(\t r)+1.*0.06512854122103169*sin(\t r)});
\draw [shift={(-0.6,0.85)},line width=.5pt]  plot[domain=2.356194490192345:2.677945044588988,variable=\t]({1.*0.060766028310588166*cos(\t r)+0.*0.060766028310588166*sin(\t r)},{0.*0.060766028310588166*cos(\t r)+1.*0.060766028310588166*sin(\t r)});
\draw (-0.8412190135050034,1.15) node[anchor=north west] {$\scriptstyle \arctan\tfrac12$};
\draw (-0.634440960002462,0.9357394515530904) node[anchor=north west] {$\scriptstyle \arctan \tfrac13$};
\draw [color=ffqqqq](-0.77,0.9) node[anchor=north west] {$\scriptstyle \tilde T_\varepsilon$};
\draw (-1.05,0.913780720207688) node[anchor=north west] {$\scriptstyle \lvert v - v' \rvert \sim \sqrt2 h_\varepsilon$};

\draw [line width=.5pt] (-0.8,0.7)-- (-0.8,0.4);
\draw [line width=.5pt] (-0.7,0.7)-- (-0.7,0.4);
\draw [line width=.5pt] (-0.6,0.7)-- (-0.6,0.4);
\draw [line width=.5pt] (-0.8,0.6)-- (-0.6,0.6);
\draw [line width=.5pt] (-0.7,0.5)-- (-0.6,0.5);
\draw (-1.022,0.58) node[anchor=north west] {$\scriptstyle \# V_\varepsilon \cap Q = 2$};
\draw (-1.025,0.5441420758934133) node[anchor=north west] {$\scriptstyle \lvert v - v' \rvert \sim h_\varepsilon$};
\draw (-0.85,0.63) node[anchor=north west] {$\bar v$};
\draw [line width=.5pt] (-0.6124491499134396,0.663877912637784)-- (-0.6124491499134396,0.4332376133000251);
\draw [line width=0.5pt] (-0.6124491499134396,0.5001321534160827)-- (-0.6957277767483788,0.6583957603340425);
\draw [line width=.5pt] (-0.6,0.5)-- (-0.6,0.6);
\draw [line width=.5pt] (-0.6,0.6)-- (-0.6124491499134396,0.6);
\draw [line width=.5pt] (-0.6124491499134396,0.6)-- (-0.6124491499134396,0.5001321534160827);
\draw [line width=.5pt] (-0.6124491499134396,0.5001321534160827)-- (-0.6,0.5);
\draw [shift={(-0.6124491499134396,0.5001321534160827)},line width=.5pt]  plot[domain=1.5707963267948966:2.055185147081925,variable=\t]({1.*0.1637457592217012*cos(\t r)+0.*0.1637457592217012*sin(\t r)},{0.*0.1637457592217012*cos(\t r)+1.*0.1637457592217012*sin(\t r)});
\draw (-0.67,0.7106624552627153) node[anchor=north west] {$\scriptstyle \gamma$};
\draw (-0.66,0.64) node[anchor=north west] {$v$};
\draw (-0.6,0.55) node[anchor=north west] {$v'$};

\draw [line width=.5pt,dotted] (-0.6124491499134396,0.5001321534160827)-- (-0.7012644488954771,0.45417736403857295);
\draw [line width=.5pt,dotted] (-0.7472184065637304,0.5429910556036844)-- (-0.658937451895758,0.5886693646823111);
\draw [line width=.5pt,dotted] (-0.7472184065637304,0.5429910556036844)-- (-0.7012644488954771,0.45417736403857295);
\draw [line width=.pt] (-0.6124491499134396,0.46548624631340096)-- (-0.6,0.46574961527338965);
\draw [line width=.pt] (-0.6,0.46574961527338965)-- (-0.6021887199764262,0.46785656695329997);
\draw [line width=.pt] (-0.6,0.46574961527338965)-- (-0.6019253510164374,0.46377434807347384);
\draw [line width=.pt] (-0.6124491499134396,0.46548624631340096)-- (-0.6081145215761738,0.4686466738332663);
\draw [line width=.pt] (-0.6124491499134396,0.46548624631340096)-- (-0.6086412594961513,0.4629842411935075);
\draw (-0.6,0.4892452475299072) node[anchor=north west] {$\scriptstyle 1 - \cos \gamma$};
\draw [shift={(-0.8,0.6)},line width=.5pt]  plot[domain=5.7938893778396:6.203033902931456,variable=\t]({1.*0.03328028368804163*cos(\t r)+0.*0.03328028368804163*sin(\t r)},{0.*0.03328028368804163*cos(\t r)+1.*0.03328028368804163*sin(\t r)});
\draw [shift={(-0.6124491499134396,0.5001321534160827)},line width=.5pt]  plot[domain=2.051247419212382:2.6522967242498066,variable=\t]({1.*0.024669421756758923*cos(\t r)+0.*0.024669421756758923*sin(\t r)},{0.*0.024669421756758923*cos(\t r)+1.*0.024669421756758923*sin(\t r)});
\draw (-.695,0.445) node[anchor=north west] {$ \scriptstyle\arctan \tfrac{2 - \cos \gamma}{\cos \gamma} - \gamma $};
\draw (-1.1,0.69) node[anchor=north west] {$\scriptstyle{ \geq  \arctan\tfrac{(\cos \gamma)^2}{2}} $};

\draw (-0.68,1.0949402538072581) node[anchor=north west] {$\scriptstyle 1$};

\draw [line width=.5pt] (-0.45,0.6)-- (-0.25,0.6);
\draw [line width=.5pt] (-0.25,0.6)-- (-0.25,0.5);
\draw [line width=.5pt] (-0.25,0.5)-- (-0.35,0.5);
\draw [line width=.5pt] (-0.35,0.5)-- (-0.35,0.6);
\draw [line width=.pt] (-0.34983741603467217,0.47957862199204093)-- (-0.3038484647002904,0.47923926850791415);
\draw [line width=.pt] (-0.3038484647002904,0.47923926850791415)-- (-0.31250853278073143,0.4836508638015617);
\draw [line width=.pt] (-0.3038484647002904,0.47923926850791415)-- (-0.3101330583918443,0.4751670266983934);
\draw [line width=.pt] (-0.34983741603467217,0.47957862199204093)-- (-0.3407116677144845,0.48575413836698145);
\draw [line width=.pt] (-0.34983741603467217,0.47957862199204093)-- (-0.3414178799311827,0.47516095511650835);
\draw [shift={(-0.45,0.6)},line width=.5pt]  plot[domain=5.67976321654871:6.159561933498742,variable=\t]({1.*0.04802243963202228*cos(\t r)+0.*0.04802243963202228*sin(\t r)},{0.*0.04802243963202228*cos(\t r)+1.*0.04802243963202228*sin(\t r)});
\draw [shift={(-0.3038484647002904,0.4992763965324718)},line width=.5pt]  plot[domain=2.062613342255034:2.538170562958916,variable=\t]({1.*0.04126402963068424*cos(\t r)+0.*0.04126402963068424*sin(\t r)},{0.*0.04126402963068424*cos(\t r)+1.*0.04126402963068424*sin(\t r)});
\draw (-0.36,0.4819256704147731) node[anchor=north west] {$\scriptstyle \sin \gamma$};
\draw (-0.31603935549412404,0.5679307015175993) node[anchor=north west] {$ \scriptstyle \arctan(1 + \sin \gamma) - \gamma $};
\draw (-0.5411163517845009,0.68) node[anchor=north west] {$ \scriptstyle \arctan \tfrac{1}{1+\sin \gamma} - \arctan(1-\cos \gamma) $};

\draw (-1.,0.30625581965155346) node[anchor=north west] {$\scriptstyle \# V_\varepsilon \cap Q = 1$};
\draw [line width=.pt] (-0.8,0.3)-- (-0.8,0.1);
\draw [line width=.pt] (-0.8,0.3)-- (-0.6,0.3);
\draw [line width=.pt] (-0.6,0.3)-- (-0.6,0.2);
\draw [line width=.pt] (-0.6,0.2)-- (-0.7,0.2);
\draw [line width=.pt] (-0.7,0.2)-- (-0.7,0.3);
\draw [shift={(-0.8,0.3)},line width=.5pt]  plot[domain=4.71238898038469:6.124397888600517,variable=\t]({1.*0.027800308754739322*cos(\t r)+0.*0.027800308754739322*sin(\t r)},{0.*0.027800308754739322*cos(\t r)+1.*0.027800308754739322*sin(\t r)});
\draw [shift={(-0.8,0.2)},line width=.5pt]  plot[domain=0.42378007991997063:1.5707963267948966,variable=\t]({1.*0.044293900208140106*cos(\t r)+0.*0.044293900208140106*sin(\t r)},{0.*0.044293900208140106*cos(\t r)+1.*0.044293900208140106*sin(\t r)});
\draw [shift={(-0.6178171429181265,0.28218526737637645)},line width=.5pt]  plot[domain=3.0441176461193744:3.565372733509764,variable=\t]({1.*0.039350340058558586*cos(\t r)+0.*0.039350340058558586*sin(\t r)},{0.*0.039350340058558586*cos(\t r)+1.*0.039350340058558586*sin(\t r)});
\draw (-0.7552139824021764,0.17267353730035523) node[anchor=north west] {$\scriptstyle \geq \arctan \tfrac12$};

\draw [line width=.5pt] (-0.45,0.95)-- (-0.45,0.85);
\draw [line width=.5pt] (-0.45,0.95)-- (-0.45,1.05);
\draw [line width=.5pt] (-0.45,0.85)-- (-0.45,0.75);
\draw [line width=.5pt] (-0.45,0.95)-- (-0.25,0.95);
\draw [line width=.5pt] (-0.35,0.95)-- (-0.35,0.75);
\draw [line width=.5pt] (-0.35,0.75)-- (-0.25,0.75);
\draw [line width=.5pt] (-0.25,0.95)-- (-0.25,0.75);
\draw [line width=.5pt] (-0.35,0.85)-- (-0.25,0.85);
\draw [line width=.5pt,dotted,color=ffqqqq] (-0.45,0.95)-- (-0.30414132892276446,0.9707097701013675);
\draw [line width=.5pt,dotted,color=ffqqqq] (-0.30414132892276446,0.9707097701013675)-- (-0.28485018583178084,0.8321642879024874);
\draw [line width=.5pt,color=ffqqqq] (-0.28485018583178084,0.8321642879024874)-- (-0.45,0.95);
\draw [line width=.5pt,color=ffqqqq] (-0.45,0.95)-- (-0.28485018583178084,0.8321642879024874);
\draw [line width=.5pt,color=ffqqqq] (-0.28485018583178084,0.8321642879024874)-- (-0.45,0.85);
\draw [line width=.5pt,color=ffqqqq] (-0.45,0.85)-- (-0.45,0.95);

\draw [shift={(-0.45,0.95)},line width=.5pt]  plot[domain=4.71238898038469:5.176036589385496,variable=\t]({1.*0.06873098422955504*cos(\t r)+0.*0.06873098422955504*sin(\t r)},{0.*0.06873098422955504*cos(\t r)+1.*0.06873098422955504*sin(\t r)});
\draw [shift={(-0.35,0.75)},line width=.5pt]  plot[domain=2.0344439357957027:2.356194490192345,variable=\t]({1.*0.05434244016595504*cos(\t r)+0.*0.05434244016595504*sin(\t r)},{0.*0.05434244016595504*cos(\t r)+1.*0.05434244016595504*sin(\t r)});
\draw [shift={(-0.25,0.75)},line width=.5pt]  plot[domain=2.3561944901923453:2.677945044588987,variable=\t]({1.*0.0806749251325863*cos(\t r)+0.*0.0806749251325863*sin(\t r)},{0.*0.0806749251325863*cos(\t r)+1.*0.0806749251325863*sin(\t r)});
\draw (-0.31,0.9467188172257917) node[anchor=north west] {$\scriptstyle \sqrt2$};
\draw (-0.5173277261603147,1.05) node[anchor=north west] {$\scriptstyle \arctan \tfrac12$};
\draw (-0.4459618492877562,0.76) node[anchor=north west] {$\scriptstyle \arctan \tfrac13$};
\draw (-0.2757816813608859,0.8350952662199959) node[anchor=north west] {$\scriptstyle \arctan \tfrac13$};
\draw (-0.6472502199539469,0.25684867412439794) node[anchor=north west] {$\scriptstyle Q$};

\fill[ffqqqq] (-0.8,0.85) circle (1.5pt);
\fill[ffqqqq] (-0.8,0.6) circle (1.5pt);
\fill[qqqqff] (-0.6524643078284141,0.970236151849565) circle (1.5pt);
\fill[qqqqff] (-0.6345455899166497,0.8316018606374934) circle (1.5pt);
\fill[qqqqff] (-0.658937451895758,0.5886693646823111) circle (1.5pt);
\fill[qqqqff] (-0.6124491499134396,0.5001321534160827) circle (1.5pt);
\fill[qqqqff] (-0.7012644488954771,0.45417736403857295) circle (1.5pt);
\fill[ffqqqq] (-0.45,0.6) circle (1.5pt);
\fill[qqqqff] (-0.3506398504808524,0.5876538043736991) circle (1.5pt);
\fill[qqqqff] (-0.3038484647002904,0.4992763965324718) circle (1.5pt);
\fill[ffqqqq] (-0.8,0.3) circle (1.5pt);
\fill[ffqqqq] (-0.8,0.2) circle (1.5pt);
\fill[qqqqff] (-0.6178171429181265,0.28218526737637645) circle (1.5pt);
\fill[ffqqqq] (-0.45,0.95) circle (1.5pt);
\fill[ffqqqq] (-0.45,0.85) circle (1.5pt);
\fill[qqqqff] (-0.30414132892276446,0.9707097701013675) circle (1.5pt);
\fill[qqqqff] (-0.28485018583178084,0.8321642879024874) circle (1.5pt);
\fill[ffqqqq] (-0.8,0.95) circle (1.5pt);

\begin{scope}[scale=0.6,xshift=45,yshift=40]
\fill (-0.55,-0.3) circle (1.5pt);
\fill (-0.45,-0.3) circle (1.5pt);
\fill (-0.55,-0.2) circle (1.5pt);
\fill (-0.55,-0.1) circle (1.5pt);
\fill (-0.35,-0.3) circle (1.5pt);
\fill (-0.25,-0.3) circle (1.5pt);
\fill (-0.45,-0.2) circle (1.5pt);
\draw [fill=blue] (-0.21051600305385818,-0.10113272373409347) circle (2pt);
\draw [fill=blue] (-0.3517988055217461,0.03943621975207706) circle (2pt);
\fill (-0.55,0.) circle (1.5pt);
\draw [line width=.pt] (-0.55,0.15)-- (-0.55,-0.3);
\draw [line width=.pt] (-0.55,-0.3)-- (0.,-0.3);
\draw [line width=.pt] (-0.55,-0.2)-- (-0.45,-0.3);
\draw [line width=.pt] (-0.45,-0.2)-- (-0.45088439121096696,0.07223526161355277);
\draw [line width=.pt] (-0.45,-0.2)-- (-0.05,-0.2);
\draw [line width=.pt] (-0.35,-0.2)-- (-0.3520483889007433,0.07265787715617661);
\filldraw [line width=.5pt,color=blue, fill opacity=0.15] (-0.3507512611340923,-0.10000079284884116)-- (-0.35,-0.2)-- (-0.21,-0.2)-- (-0.21075126113409226,-0.10000079284884116)-- cycle;
\draw [line width=.pt] (-0.45,-0.2)-- (-0.45,-0.3);
\draw [color=blue](-0.18008483544488665,0.0) node[anchor=north west] {$C_\varepsilon^{(3)}$};
\draw [line width=.5pt,dotted] (-0.21,-0.2)-- (-0.20994657168192243,-0.3);
\draw (-0.12,-0.17) node[anchor=north west] {$\scriptstyle{  \arctan{ \tfrac{2}{6 - \sqrt2}}}$};
\draw [line width=.pt] (-0.55,-0.2)-- (-0.45,-0.2);
\draw [line width=.pt] (-0.45,-0.2)-- (-0.35,-0.3);
\draw [line width=.pt] (-0.45,-0.2)-- (-0.55,-0.1);
\filldraw [line width=.5pt,color=qqzzqq, fill opacity=0.15] (-0.35,-0.3)-- (-0.25,-0.3)-- (-0.21051600305385818,-0.10113272373409347)--  cycle;
\draw [line width=.pt] (-0.45,-0.2)-- (-0.21051600305385818,-0.10113272373409347);
\filldraw [line width=.5pt,color=blue, fill opacity=0.15] (-0.45032485941472994,-0.10000105534753098)-- (-0.3507512611340923,-0.10000079284884116)-- (-0.3517988055217461,0.03943621975207706)-- (-0.4505247431606776,0.039998801961282415) -- cycle;
\filldraw [line width=.5pt,color=red,fill opacity=0.15] (-0.55,-0.1)-- (-0.55,0.)-- (-0.3517988055217461,0.03943621975207706)-- cycle;
\draw [line width=.pt] (-0.45,-0.2)-- (-0.3517988055217461,0.03943621975207706);
\draw [line width=.5pt,color=blue] (-0.3517988055217461,0.03943621975207706)-- (-0.21051600305385818,-0.10113272373409347);
\draw [color=blue](-0.45,0.2) node[anchor=north west] {$C_\varepsilon^{(4)}$};
\draw [shift={(-0.21051600305385818,-0.10113272373409347)},line width=.5pt]  plot[domain=4.100729127945055:4.5163933644903755,variable=\t]({1.*0.09221468778930107*cos(\t r)+0.*0.09221468778930107*sin(\t r)},{0.*0.09221468778930107*cos(\t r)+1.*0.09221468778930107*sin(\t r)});
\end{scope}
\end{tikzpicture}
\caption{Illustration of the gluing procedure.}
\label{fig:GLUING}
\end{figure}
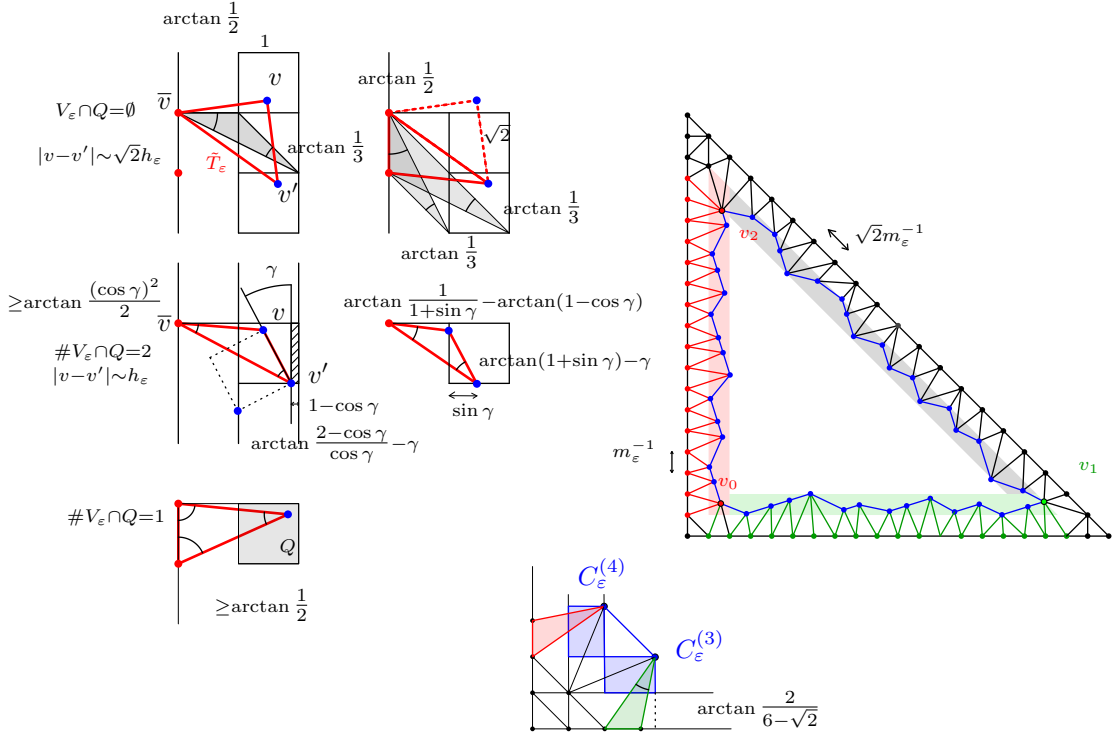

\medskip
Then, we introduce the modified characteristic function $\tilde \chi_\e$ and displacement $\tilde v_\e$ defined by $\tilde \chi_\e \equiv 1$ on each triangle $\tilde T_\e$ created through the above process while $\tilde \chi_\e = \chi_\e$ on each triangle of $\mathbf{T}^{\rm in}_\e$, and $\tilde{v}_\e$ is the Lagrange interpolation of the values of $v$ at the anchored vertices of $\bar V_\e$ and the values of $v_\e$ at the vertices of $V_\e$, while $\tilde v_\e = v_\e$ in $\cup_{T_\e' \in \mathbf T^{\rm in}_\e} T_\e'$. In particular, $\tilde v_\e = v$ on $\partial T$. Next, we add all the triangles $\tilde T_\e$ created through the above process to $\mathbf T^{\rm in}_\e$, thus defining an admissible triangulation $\mathbf{\tilde T}_\e \in X_{h_\e}(T)$ adapted to $(\tilde v_\e, \tilde \chi_\e) \in X_{h_\e}(T)$. Moreover, setting
$$
R_\e :=   \left\{ x \in T \, : \, {\rm dist}(x,\partial T) \leq m_\e^{-1} + d_\e h_\e \right\} ,
$$
we infer that $\norme{\tilde \chi_\e }_{L^1(T)} \leq \LL^2 \left(  R_\e \right) +  \norme{\chi_\e }_{L^1(T)} \leq 8 (m_\e^{-1} + d_\e h_\e) + \norme{ \chi_\e  }_{L^1(T)} \to 0$ and
\begin{align*}
 \norme{\tilde v_\e - v}_{L^1(T;\R^2)}& = \norme{v_\e - v}_{L^1(T^{\rm in}_\e;\R^2)} +  \int_{R_\e} \lvert \tilde v_\e - v \rvert \, dx \\
 &\leq \norme{v_\e - v}_{L^1(T;\R^2)} 
 + (3 \norme{v}_{L^\infty(T;\R^2)} + \tfrac{Ch_\e}{\e} ) \LL^2 \left( R_\e \right) \to 0
\end{align*}
when $\e \searrow 0$, where we used \eqref{eq:Linfty bound case 1}, \eqref{seq:Linfty control plasticity} together with the fact that $\norme{\tilde v_\e}_{L^\infty(T;\R^2)} \leq \norme{v}_{L^\infty(T;\R^2)} + \norme{v_\e}_{L^\infty(T;\R^2)}$ by convexity. Finally, noticing that
\begin{align*}
\left\lvert \F_\e (\tilde v_\e, \tilde \chi_\e) - \F_\e(v_\e, \chi_\e) \right\rvert &\leq \sum_{\tilde T_\e} \int_{\tilde T_\e} \tfrac{\eta_\e}{2} \mathbf A_0 e(\tilde v_\e):e(\tilde v_\e) \, dx  \\
& \hfill + \int_{R_\e} \left( \tfrac{\kappa}{\e} (1 - \chi_\e) + \tfrac{1}{2} \left( \eta_\e \chi_\e \mathbf A_0 + (1-\chi_\e) \mathbf A_1 \right) e(v_\e):e(v_\e) \right) \, dx\\
 &=  \sum_{\tilde T_\e} \int_{\tilde T_\e}  \tfrac{\eta_\e}{2} \mathbf A_0 e(\tilde v_\e):e(\tilde v_\e) \, dx + \LL^2 \left(  R_\e \right)  \tfrac{\kappa}{\e} + \F_\e({v_\e}_{|R_\e}, {\chi_\e}_{|R_\e})
\end{align*}
and $\LL^2 \left(  R_\e \right)  \tfrac{\kappa}{\e} + \F_\e({v_\e}_{|R_\e}, {\chi_\e}_{|R_\e}) \leq 8 \kappa (m_\e^{-1} + d_\e h_\e) \e^{-1} + \F_\e({v_\e}_{|R_\e}, {\chi_\e}_{|R_\e}) \to 0$ when $\e \searrow$, since $m_\e^{-1} \sim d_\e h_\e \sim h_\e \ll \e$ and $\F_\e(v_\e, \chi_\e)$ converges to $\F_{\alpha,\beta}(v,0)$, the crucial point is to establish that
$$
\sum_{\tilde T_\e} \int_{\tilde T_\e}  \eta_\e \mathbf A_0 e(\tilde v_\e):e(\tilde v_\e) \, dx \to 0 \quad \text{when } \e \searrow 0.
$$
This is due from the fact that $\tilde v_\e$ has uniformly bounded slopes along the edges of each triangle $\tilde T_\e$. Indeed, let us denote its vertices by $a,b,c \in \R^2$, such that $\tilde v_\e(a) = v(a)$, $\tilde v_\e (b) = v_\e(b)$ and either $\tilde v_\e(c) = v(c)$ or $\tilde v_\e(c) = v_\e(c)$. Then, using \eqref{eq:Linfty bound case 1}, \eqref{seq:Linfty control plasticity}, we get
$$
\left\lvert \nabla \tilde v_{\e | {\tilde T_\e}} \frac{a-b}{\lvert a-b \rvert} \right\rvert \leq \left\lvert \frac{v(a) - v(b)}{\lvert a-b \rvert} \right\rvert + \left\lvert \frac{v(b) - v_\e(b)}{\lvert a-b \rvert} \right\rvert \leq \left\lvert \nabla  v  \right\rvert +  \frac{C}{\e}, 
$$
and similarily for $a$ and $c$. Therefore, using \cite[Remark 3.5]{CDM}, we infer that
$$
\lvert \nabla \tilde v_{\e | {\tilde T_\e}} \rvert \leq \left( \left\lvert \nabla  v  \right\rvert +  \frac{C}{\e} \right) \frac{\sqrt{5}}{\sin \theta_0},
$$ 
hence 
$$
\sum_{\tilde T_\e} \int_{\tilde T_\e}  \eta_\e \mathbf A_0 e(\tilde v_\e):e(\tilde v_\e) \, dx 
\leq C \LL^2(R_\e) \eta_\e \left( 1 +  \frac{1}{\e} \right)^2  \leq C \frac{ h_\e}{\e} \frac{\eta_\e}{\e}  \to 0 \quad \text{when } \e \searrow 0,
$$
for a constant $C(\theta_0,\kappa, \alpha,\mathbf A_0, \tau,\nabla v) \in (0,\infty)$, thus concluding the proof of Lemma \ref{lem:T plasticity}.
\end{proof}

\section{In between plasticity and brittle fracture} \label{sec:intermediate}
\begin{prop}\label{prop:intermediate}
Assume that $\theta_0 \leq \arctan (1/4)$. If $\alpha \in (0,\infty)$ and $\beta\in (0,\infty)$, then the functional $\F_\e$ $\Gamma$-converges for the strong $L^1(\O;\R^2) \times L^1(\O)$-topology to the functional
$\F_{\alpha,\beta}:L^1(\O;\R^2) \times L^1(\O) \to [0,+\infty]$ defined by
$$\F_{\alpha,\beta}(u,\chi)=
\begin{cases}
\ds  \frac12 \int_\O \mathbf A_1 e(u):e(u)\, dx + \int_{J_u} \phi_{\alpha,\beta} \left( \lvert \sqrt{\mathbf A_0} \left[ u \right] \odot \nu_u \rvert \right) \, d\HH^1 & \text{ if }
\begin{cases}
\chi=0\text{ a.e. in }\O,\\
u \in SBD^2(\O),
\end{cases}\\
+\infty & \text{ otherwise,}
\end{cases}$$
where $\big[u\big] := u^+ - u^-$ and $\phi_{\alpha,\beta} $ is defined in \eqref{eq:density intermediate}.
\end{prop}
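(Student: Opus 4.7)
The plan is to establish $\F_{\alpha,\beta}\leq \F'_{\alpha,\beta}\leq\F''_{\alpha,\beta}\leq\F_{\alpha,\beta}$ by a lower- and upper-bound inequality, restricting the analysis to $u\in SBD^2(\O)$ with $\chi=0$ thanks to Proposition \ref{prop:domains}.

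For the lower bound I would apply the Fonseca-Müller blow-up method to the non-negative Radon measures
$$\mu_\e:=\tfrac12\bigl((1-\chi_\e)\mathbf A_1+\eta_\e\chi_\e\mathbf A_0\bigr)e(u_\e){:}e(u_\e)\,\LL^2\res\O+\tfrac{\kappa}{\e}\chi_\e\,\LL^2\res\O,$$
which, after extraction, converge weakly-$*$ to some $\mu\in\M(\overline\O)$. The bulk Radon-Nikod\'ym density $d\mu/d\LL^2$ at $\LL^2$-a.e.~$x_0\in\O\setminus J_u$ is bounded below by $\tfrac12\mathbf A_1 e(u){:}e(u)(x_0)$ via standard weak-$L^2$ lower semi-continuity applied to $v_\e:=(1-\chi_\e)u_\e\in SBV^2(\O;\R^2)$, which satisfies $e(v_\e)\wto e(u)$ in $L^2(\O;\Msd)$ as in the proof of Proposition \ref{prop:domains}. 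At $\HH^1$-a.e.~$x_0\in J_u$, an orthogonal change of coordinates and rescaling reduces the analysis to a sequence whose bulk limit is the step function taking values $u^{\pm}(x_0)$ across $\{y\cdot\nu_u(x_0)=0\}$ in the unit square; the mesh constraint together with $\theta_0>0$ forces, via \cite[Proposition 2.3]{BB}, every admissible damaged strip separating the two sides of the limit jump to carry $\HH^1$-measure at least $\sin\theta_0$ per unit length of crack, so the slice-wise Young-type inequality from the one-dimensional sketch of the introduction (now with minimal width $l\geq\beta\sin\theta_0\e$ and toughness $\kappa/\e$) produces $d\mu/d(\HH^1\res J_u)(x_0)\geq\phi_{\alpha,\beta}(|\sqrt{\mathbf A_0}\left[u\right](x_0)\odot\nu_u(x_0)|)$.

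For the upper bound I would first reduce by density to $u\in SBV^2(\O;\R^2)\cap L^\infty(\O;\R^2)$ with a polyhedral jump set, using \cite[Theorem 1.1]{CCdensity} and the lower semi-continuity of $\F''_{\alpha,\beta}$. Following the heuristic of Figure \ref{fig:intro wrong recovery}, each polygonal component $\Gamma_k\subset J_u$ is partitioned into short subsegments $S_{k,j}$ on which $t_{k,j}:=|\sqrt{\mathbf A_0}\left[u\right]\odot\nu_u|$ is essentially constant (at an intermediate scale $\delta$ with $h_\e\ll\delta\ll 1$ and $\delta\searrow 0$ after $\e$), and to each $S_{k,j}$ one attaches an $\e$-thin slab orientated along $\nu_u$ of width
$$l_{\e,k,j}:=\max\bigl(\beta\e\sin\theta_0,\,\e\,t_{k,j}\sqrt{\alpha/(2\kappa)}\bigr),$$
triangulated by right triangles of minimal angle $\theta_0$ in the spirit of the triangulations $\widetilde{\mathbf T}_\e$ and $\widehat{\mathbf T}_\e$ of the introduction. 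The Lagrange interpolation $u_\e$ of the values $u^{\pm}$ at the slab-boundary vertices realises the effective jump across the slab with symmetric gradient essentially equal to $\left[u\right]\odot\nu_u/l_{\e,k,j}$ inside, and zero symmetric gradient on the triangles outside the slab whose long edge lies on a slab boundary; setting $\chi_\e\equiv 1$ inside the slab yields, by the direct computation that realises the minimisation defining $\phi_{\alpha,\beta}$, a surface contribution converging to $\int_{\Gamma_k}\phi_{\alpha,\beta}(|\sqrt{\mathbf A_0}\left[u\right]\odot\nu_u|)\,d\HH^1$. In the complement of these slabs $u\in H^1$, and the bulk contribution is handled by applying the piecewise-affine Hencky-plasticity recovery sequence of Lemma \ref{lem:T plasticity} on a triangulation of the complementary polygonal region, whose vertex-anchoring mechanism ensures the compatible gluing with the slab triangulation provided $\theta_0<\arctan(1/4)$.

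The main obstacle will lie in this last construction: making the slab width $l_{\e,k,j}$ genuinely depend on $j$ without losing admissibility (angles $\geq\theta_0$, edges of length in $[h_\e,\omega(h_\e)]$), and inserting admissible transition triangles between adjacent slabs of different widths, between the slabs and the outer bulk triangulation, and at the junctions where polygonal pieces of $J_u$ meet. I expect this to be handled by elementary but delicate geometric constructions in the spirit of the gluing step in Lemma \ref{lem:T plasticity}, whose transition regions contribute at most $o_\delta(1)+o_\e(1)$ to the total energy; the angle restriction $\theta_0<\arctan(1/4)$ imposed by the statement enters precisely at this gluing step, exactly as in the plasticity regime.
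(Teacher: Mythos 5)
Your overall architecture (blow-up lower bound with the bulk part handled through $v_\e=(1-\chi_\e)u_\e$, plus an upper bound built on a jump-adapted mesh whose damaged width is $\max\bigl(\beta\e\sin\theta_0,\e\,t\sqrt{\alpha/(2\kappa)}\bigr)$, which is indeed the optimal scaling) matches the paper's strategy, but the two steps you lean on at the crux do not go through as stated. For the surface lower bound you propose to conclude via ``the slice-wise Young-type inequality from the one-dimensional sketch, now with minimal width $l\geq\beta\sin\theta_0\,\e$''. This is exactly the mechanism the paper points out fails in the vectorial setting: on a one-dimensional section the damaged set is a union of sections of triangles, and these sections can be arbitrarily short near vertices, so the per-slice constraint $l\geq c\,\e$ that drives the $1$d argument is simply not available; slice-wise you only recover the unconstrained bound $\sqrt{2\alpha\kappa}\,\lvert\cdot\rvert$, which is too weak for small jump amplitudes. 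The correct vectorial substitute, which your sketch is missing, is global: after blow-up one keeps the \emph{total} damaged volume fraction $l(x_0):=\liminf_j\int_B\chi_j/\e_j'\geq 2\beta\sin\theta_0$ (the \cite[Proposition 3.5]{BB} input you correctly identified), uses $(1-\chi_j)e(u_j)\to0$ in $L^2$ and the lower semicontinuity of the total variation of $\sqrt{\mathbf A_0}\,Eu_j$ under weak* convergence in $\M(B;\Msd)$ to get $2\lvert\sqrt{\mathbf A_0}[u]\odot\nu\rvert\leq\liminf_j\int_B\chi_j\lvert\sqrt{\mathbf A_0}e(u_j)\rvert\,dx$, and then a single Cauchy--Schwarz splitting $\int_B\chi_j\lvert\sqrt{\mathbf A_0}e(u_j)\rvert\leq\bigl(\int_B\chi_j/\e_j'\bigr)^{1/2}\bigl(\int_B\e_j'\chi_j\lvert\sqrt{\mathbf A_0}e(u_j)\rvert^2\bigr)^{1/2}$ to couple the jump amplitude to the elastic energy stored in the damaged region; monotonicity of $l\mapsto\kappa l+\tfrac{2\alpha}{l}\mathbf A_0[u]\odot\nu:[u]\odot\nu$ on the relevant range then yields $\phi_{\alpha,\beta}$. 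Without this (or an equivalent) device the jump lower bound is not established.

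On the upper bound, handling the bulk with ``the piecewise-affine Hencky-plasticity recovery sequence of Lemma \ref{lem:T plasticity}'' is not legitimate in this regime: that construction laminates with damaged layers of thickness $\theta_\e l_\e\geq h_\e$ where $\theta_\e=\Theta\e\gg h_\e$, which forces $\beta=0$; for $h_\e\sim\beta\e$ the lamination scales collapse (and any damaged layer already costs a fixed amount $\gtrsim\kappa\beta$ per unit length, so one could not reach $\bar W_\alpha$ anyway -- which is consistent with the fact that the bulk density of the $\Gamma$-limit here is the full elastic energy $\tfrac12\mathbf A_1e(u):e(u)$, not $\bar W_\alpha$). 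The paper instead reduces, after the $GSBD/SBD$ density step, to a function that is $W^{2,\infty}$ off finitely many disjoint closed segments (a Cortesani--Toader type approximation) and recovers the bulk by plain Lagrange interpolation with Ciarlet's error estimate; only the gluing \emph{mechanism} of Lemma \ref{lem:T plasticity} is reused, not its laminated recovery sequence. Relatedly, your two-scale scheme (slabs of piecewise-constant width on subsegments of length $\delta$, with transition triangles between slabs of different widths) is where all the real difficulty sits, and you leave it as an expectation of ``$o_\delta(1)+o_\e(1)$''; the paper avoids the intermediate scale altogether by letting the slab half-width $h_\e l_i(x)/2$ vary \emph{continuously} along each segment through a Lipschitz amplitude function $l_i$, the Lipschitz bound on $x\mapsto[\bar v](x)$ being exactly what guarantees that the adapted triangles have angles uniformly close to a fixed $\theta>\theta_0$ and edges of admissible length. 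As written, your proposal therefore has a genuine gap both in the vectorial jump lower bound and in the admissibility/energy control of the recovery mesh.
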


\begin{proof}
{\it Lower bound.} As seen in Proposition \ref{prop:domains}, we can assume that $\chi = 0$ and $u \in SBD^2(\O)$. Let $(u_\e, \chi_\e) \in X_{h_\e}(\O;\R^2)$ be such that $(u_\e , \chi_\e) \to (u,0) $ in $ L^1(\O;\R^2) \times L^1(\O) $ as $ \e \searrow 0$ and, up to a subsequence (not relabeled), the following limit exists and is finite
$$
\lim_{\e \searrow 0} \F_\e(u_\e,\chi_\e)  < + \infty.
$$
By definition of the finite element set $X_{h_\e}(\O;\R^2)$, there exists an admissible triangulation $\mathbf T^\e \in \mathcal T_{\e}(\O)$ such that $u_\e$ and $\chi_\e$ are respectively affine and constant on each $T \in \mathbf T^\e$. Without loss of generality, we can assume that  
$$
 \ds \chi_\e = \mathds{1}_{ \left\{  \textstyle{\frac12 \mathbf A_1 e(u_\e):e(u_\e)  \geq  \frac12 \eta_\e \mathbf A_0 e(u_\e):e(u_\e) + \frac{\kappa}{\e} }\right\} }
$$
and
$$
\F_\e(u_\e,\chi_\e) =  \int_\O \min \, \left( \textstyle{\frac12 \eta_\e \mathbf A_0 e(u_\e):e(u_\e) + \frac{\kappa}{\e} \, ; \,  \frac12 \mathbf A_1 e(u_\e):e(u_\e)} \right) \, dx .
$$
We introduce the Radon measures 
$$ \lambda_\e:=\min \, \left( \textstyle{\frac12 \eta_\e \mathbf A_0 e(u_\e):e(u_\e) + \frac{\kappa}{\e} \, ; \,  \frac12 \mathbf A_1 e(u_\e):e(u_\e) } \right)   \LL^2\res \O,$$
which, as they are uniformly bounded in $\M(\O)$, weakly* converge to some nonnegative measure $\lambda \in \M(\O)$ (up to a subsequence, not relabeled). In particular, it is enough to show that
\begin{equation}\label{eq:Lebesgue}
\frac{d\lambda}{d\LL^2}(x_0) \geq {\textstyle{ \frac12}} \mathbf A_1 e(u)(x_0):e(u)(x_0) \quad \text{for } \LL^2\text{-a.e. } x_0 \in \O,
\end{equation}
and
\begin{equation}\label{eq:Hausdorff}
\frac{d\lambda}{d\HH^1\res J_u}(x_0) \geq \phi_{\alpha,\beta} \left( \lvert \sqrt{\mathbf A_0} \left[ u \right](x_0) \odot \nu_u(x_0) \rvert \right) \quad \text{for } \HH^1\text{-a.e. } x_0 \in J_u.
\end{equation}
Indeed, $\LL^2\res \O$ and $\HH^1 \res J_u$ being mutually singular, it follows from Radon-Nikod\'ym and Besicovitch Theorems together with the lower semicontinuity of weak* convergence in $\M(\O)$ along open sets, that
$$\lim_{\e \searrow 0} \F_\e(u_\e, \chi_\e) \geq \lambda(\O) \geq \int_\O \frac12 \mathbf A_1 e(u):e(u)\, dx + \int_{J_u} \phi_{\alpha,\beta} \left( \lvert \sqrt{\mathbf A_0} \left[ u \right] \odot \nu_u \rvert \right)\, d \HH^1=\F_{\alpha,\beta}(u,0).$$

\bigskip
\noindent
\textbf{Step 1.} We first establish the lower bound for the bulk energy \eqref{eq:Lebesgue}. Let $x_0 \in \O$ be such that
$$\frac{d \lambda}{d \LL^2}(x_0)=\lim_{\varrho  \searrow 0} \frac{\lambda\big(\bar{B_\varrho (x_0)}\big)}{\pi \varrho^2}$$
exists and is finite, and 
$$\lim_{\varrho  \searrow 0} \frac{1}{\varrho^2}\int_{B_\varrho(x_0)}|e(u) (y)- e(u)(x_0)|^2\, dy = 0.$$
According to Besicovitch and Lebesgue differentiation Theorems, $\LL^2$-almost every point $x_0$ in $\O$ satisfies these properties. We next consider a sequence of radii $\varrho_j  \searrow  0$ as $j \to  \infty$, such that $\lambda(\partial B_{\varrho_j}(x_0))=0$ for all $j \in \N$. Recalling \eqref{eq:beta>0} in the proof of Proposition \ref{prop:domains}, one can check that $v_\e := (1 - \chi_\e) u_\e \in SBV^2(\O)$ converges in measure to $u$ and
\begin{equation*}
e(v_\e) = (1 - \chi_\e) e(u_\e) \wto e(u) \quad \text{weakly in } L^2(\O;\Msd).
\end{equation*} 
Hence
\begin{eqnarray*}
\lambda(B_{\varrho_j}(x_0))= \lim_{\e \searrow 0} \lambda_\e(B_{\varrho_j}(x_0)) & \geq & \liminf_{\e \searrow 0}  \ds \int_{B_{\varrho_j}(x_0)} \textstyle{\frac12 \mathbf A_1 e(v_\e):e(v_\e)} \, dx  \\
 & \geq  & \int_{B_{\varrho_j}(x_0)} \textstyle{\frac12 \mathbf A_1 e(u) : e(u) } \, dx
\end{eqnarray*}
for all $j \in \N$. Thus, by the choice of the point $x_0$, dividing the previous inequality by $\pi\varrho_j^2$ and passing to the limit as $j \to \infty$ implies that
$$
\frac{d\lambda}{d\LL^2}(x_0) = \underset{j \to \infty}{\lim} \, \frac{\lambda(B_{\varrho_j}(x_0))}{\pi \varrho_j^2}  \geq \lim_{j \to \infty}   \int_{B_{\varrho_j}(x_0)}  \frac{ \mathbf A_1 e(u) : e(u)}{2 \pi \varrho_j^2} \, dx = {\textstyle{ \frac12}} \mathbf A_1 e(u)(x_0): e(u)(x_0).
$$

\medskip
\textbf{Step 2.} We next pass to the lower bound inequality for the singular part of the energy \eqref{eq:Hausdorff}. As mentionned in the Introduction, the difficult part of the proof was actually done in \cite{BB}, so that the following substantially relies on Cauchy-Schwarz' inequality and the lower-semicontinuity of the norm for the weak*-convergence in $\M(\O;\Msd)$.

\medskip

Let $x_0 \in J_u$ be such that 
$$ \frac{d \lambda}{d \HH^1 \res J_u}(x_0)=\lim_{\varrho \searrow 0} \, \frac{\lambda\big(\bar{B_\varrho (x_0)}\big)}{\HH^1\big(J_u \cap \bar{B_\varrho (x_0)}\big)}$$
exists and is finite, and
$$\lim_{\varrho \searrow 0} \, \frac{\HH^1( J_u \cap B_\varrho (x_0))}{2 \varrho}=1.$$
According to the Besicovitch differentiation Theorem and the  countably $(\HH^1,1)$-rectifiability of $J_u$ (see \cite[Theorem 2.83]{AFP}), it follows that $\HH^1$-almost every point $x_0$ in $J_u$ fulfills these conditions. By definition of the jump set $J_u$, there exist $\nu :=\nu_u(x_0) \in \mathbb S^1$ and $u^\pm(x_0) \in \R^2$ with $u^+(x_0) \neq u^-(x_0)$ such that the function $u_{x_0,\varrho} := u( x_0 + \varrho \, \cdot )$ converges in $L^1(B_1(0))$ to the jump function
$$\bar{u} : y \in B_1(0) \mapsto 
\begin{cases}
u^+(x_0) & \text{ if } y \cdot \nu > 0, \\
u^-(x_0) & \text{ if } y \cdot \nu < 0,
\end{cases} $$
as $\varrho \searrow 0 $. The point $x_0 \in J_u$ being fixed, we will write
$$
\left[u \right] \odot \nu := \left[u \right](x_0) \odot \nu_u(x_0)
$$
and we will sometimes omit to write the dependence in $x_0$ in the rest of the proof. As before, we consider a sequence of radii $\{\varrho_j\}_{j \in \N}$ such that $\varrho_j \searrow 0$ and $\lambda(\partial B_{\varrho_j}(x_0))=0$ for all $j \in \N$. Hence, writing $B:= B_1(0)$, we have    
$$  \begin{cases}
\ds \lim_{j \to \infty}\lim_{\e \searrow 0} u_\e (x_0 + \varrho_j \, \cdot)  =\lim_{j \to \infty} u_{x_0,\varrho_j} =   \bar{u} \quad \text{ in } L^1(B), \\
\ds\lim_{j \to \infty}\lim_{\e \searrow 0}  \frac{\lambda_\e (B_{\varrho_j}(x_0))}{2\varrho_j} =\lim_{j \to \infty}\frac{ \lambda (B_{\varrho_j}(x_0))}{2\varrho_j}  = \frac{d \lambda}{d \HH^1 \res J_u}(x_0), \\
\ds\lim_{j \to \infty}\lim_{\e \searrow 0} \frac{\eta_\e}{\varrho_j} =\lim_{j \to \infty}\lim_{\e \searrow 0} \frac{\e}{\varrho_j} = \lim_{j \to \infty}\lim_{\e \searrow 0} \frac{\omega(h_\e)}{\varrho_j}= 0. 
\end{cases}   $$
Therefore, there exists a subsequence $\{\e_j\}_{j \in \N}$ such that $\e_j \searrow  0$ as $j \nearrow \infty$ and
\begin{subequations}\label{eq:beforeSections}
	\begin{empheq}[left=\empheqlbrace]{align}
	& u_j := u_{\e_j}(x_0 + \varrho_j \, \cdot) \to \bar{u} \quad\text{ in } L^1(B), \label{seq:CVL1} \\
	& \frac{ \lambda_{\e_j}( B_{\varrho_j}(x_0))}{2\varrho_j} \to \frac{d \lambda}{d \HH^1 \res J_u}(x_0), \label{seq:2h(x_0)}\\
	& \eta'_j := \frac{\eta_{\e_j}}{\varrho_j} \to 0, \quad \e'_j := \frac{\e_j}{\varrho_j} \to 0, \quad \frac{\omega(h_{\e_j})}{\varrho_j}\to 0. \label{seq:eps'}
	\end{empheq}
\end{subequations}
Using a change of variables, we get that
\begin{multline}\label{eq:liminf}
2\frac{d \lambda}{d \HH^1 \res J_u}(x_0)  =  \lim_{j \to \infty}\frac{1}{\varrho_j}  \int_{B_{\varrho_j}(x_0)} \min \, \left( \frac{ \eta_{\e_j}}{2} \mathbf A_0 e(u_{\e_j}):e(u_{\e_j}) + \frac{\kappa}{\e_j} \, ; \,  \frac12 \mathbf A_1 e(u_{\e_j}):e(u_{\e_j}) \right)  \, dx  \\
=  \lim_{j \to \infty} \int_{B}  \min \, \left( \frac{\eta'_j}{2} \mathbf A_0 e(u_j):e(u_j) + \frac{\kappa}{\e'_j} \, ; \,  \frac{1}{2 \varrho_j} \mathbf A_1 e(u_j):e(u_j) \right)   \, dy  \\ 
=  \lim_{j \to \infty} \int_{B}  \left(  \left( \frac{\eta'_j}{2} \mathbf A_0 e(u_j):e(u_j) + \frac{\kappa}{\e'_j} \right) \chi_j +  \frac{1}{2 \varrho_j} \mathbf A_1 e(u_j):e(u_j) (1 - \chi_j) \right)  \, dy  
\end{multline}
where 
$$
\chi_j := \chi_{\e_j}(x_0 + \varrho_j \cdot) = \ds \mathds{1}_{\ds \left\{ \tfrac{1}{2 \varrho_j} \mathbf A_1 e(u_j):e(u_j) \geq  \tfrac{\eta'_j}{2} \mathbf A_0 e(u_j):e(u_j) + \tfrac{\kappa}{\e'_j} \right\} }.
$$
As before, one can check that $u_j \overset{*}{\wto} \bar u $ weakly* in $ BD(B)$, so that 
$$
\sqrt{ \mathbf A_0} e(u_j) \, \LL^2 \res B \overset{*}{\wto} \sqrt{ \mathbf A_0} E (\bar u) = \sqrt{ \mathbf A_0} \left[ u \right] \odot \nu \,  \HH^1 \res B^{\nu} 
$$
weakly* in $\M(B)$. In particular, by lower semi-continuity of the norm for the weak* convergence of measures, we get that
\begin{equation}\label{eq:lsc}
2 \lvert \sqrt{ \mathbf A_0} \left[ u \right]  \odot \nu  \rvert = \lvert \sqrt{ \mathbf A_0} E(\bar u) \rvert (B) \leq \liminf_{j \nearrow \infty} \int_B \left\lvert \sqrt{ \mathbf A_0} e(u_j) \right\rvert \, dx.
\end{equation}
On the one hand, following the proof of \cite[Proposition 3.5]{BB}, one can check that
\begin{equation}\label{eq:morceau fracture}
l(x_0) := \liminf_{j \nearrow \infty} \int_B \frac{\chi_j}{\e'_j}   \, dx \geq 2 \beta  \sin \theta_0.
\end{equation}
Up to a subsequence (not relabeled), we can assume that the above limit exists. On the other hand, \eqref{eq:liminf} entails that $(1-\chi_j) e(u_j) \to 0$ strongly in $L^2(B;\Msd)$, so that Cauchy-Schwarz' inequality together with \eqref{eq:morceau fracture} yield 
\begin{multline*}
\liminf_{j \nearrow \infty} \int_B \left\lvert \sqrt{ \mathbf A_0} e(u_j) \right\rvert \, dx = \liminf_{j \nearrow \infty}  \int_B \chi_j \left\lvert \sqrt{ \mathbf A_0} e(u_j) \right\rvert \, dx \\
\leq  \sqrt{ \frac{2}{\alpha}l(x_0)} \, \liminf_{j \nearrow \infty}  \left( \int_B \frac{\eta'_j \chi_j }{2}  \mathbf A_0 e(u_j):e(u_j)  \, dx \right)^\frac12 .
\end{multline*}
Using \eqref{eq:lsc}, we infer that
\begin{equation}\label{eq:morceau plastique}
\int_B \frac{\eta'_j \chi_j }{2}  \mathbf A_0 e(u_j):e(u_j)  \, dx \geq \frac{2\alpha}{ l(x_0) }  \mathbf A_0 \left[ u \right] \odot \nu : \left[ u \right] \odot \nu.
\end{equation}
Therefore, gathering \eqref{eq:morceau plastique} and \eqref{eq:morceau fracture}, \eqref{eq:liminf} entails in turn that
$$
2\frac{d \lambda}{d \HH^1 \res J_u}(x_0) \geq \kappa l(x_0) + \frac{2 \alpha}{l(x_0)} \mathbf A_0 \left[ u \right]  \odot \nu : \left[ u \right] \odot \nu \geq 2 \phi_{\alpha,\beta} \left( \lvert \sqrt{ \mathbf A_0} \left[ u \right] \odot \nu  \rvert \right),
$$
which concludes the proof of the lower bound for the jump part \eqref{eq:Hausdorff}. Indeed, either $\lvert \sqrt{ \mathbf A_0} \left[ u \right] \odot \nu  \rvert \geq \sqrt{\frac{2  \kappa}{\alpha}} \beta \sin \theta_0$ and Young's inequality entails that 
$$
\frac{d \lambda}{d \HH^1 \res J_u}(x_0) \geq \sqrt{2 \alpha \kappa} \, \lvert \sqrt{ \mathbf A_0} \left[ u \right] \odot \nu  \rvert ,
$$
or $\sqrt{ \frac{2 \alpha}{\kappa}} \lvert \sqrt{ \mathbf A_0} \left[ u \right] \odot \nu  \rvert \leq 2 \beta \sin \theta_0 \leq  l(x_0)
$ and
$$
\frac{d \lambda}{d \HH^1 \res J_u}(x_0) \geq \kappa \beta \sin \theta_0 + \frac{ \alpha}{2 \beta \sin \theta_0} \mathbf A_0 \left[ u \right] \odot \nu : \left[ u \right] \odot \nu
$$
since $l \in (0,+\infty) \mapsto \kappa l + \frac{2 \alpha}{l} \mathbf A_0 \left[ u \right] \odot \nu : \left[ u \right] \odot \nu $ is non-decreasing on $\left( \sqrt{ \frac{2 \alpha}{\kappa}} \lvert \sqrt{ \mathbf A_0} \left[ u \right] \odot \nu  \rvert , + \infty \right)$.

\bigskip
\noindent
{\it Upper bound.} We now turn to the proof of the upper bound inequality which, as mentionned in the introduction, represents the most delicate part. We can assume that $\F_{\alpha,\beta}(u,\chi)<\infty$, and thus that $\chi = 0$ and $u \in SBD^2(\O)$. Using the density result for $SBD^2$ functions (see \cite[Theorem 1.1]{Crismale_density}) as well as the lower semicontinuity of $\F_{\alpha,\beta}''( \cdot, 0)$ with respect to the $L^1(\O;\R^2)$-convergence, we can further assume without loss of generality that $u \in SBV^2(\O;\R^2) \cap L^\infty(\O;\R^2)$. Indeed, $\exists u_k \in SBV^2(\O) \cap L^\infty(\O;\R^2)$ such that $u_k \to u$ strongly in $BD(\O)$, $eu_k \to eu$ strongly in $L^2(\O;\Msd)$ and $\HH^1(J_{u_k} \Delta J_u) \to 0$. In particular, $\left\lvert E^j (u_k) - E^j (u) \right\rvert (\O) \to 0$. Therefore, since $\phi_{\alpha,\beta}$ is Lipschitz continuous and $\phi_{\alpha,\beta} \leq c (1 + {\rm id })$ for some constant $c(\alpha,\beta,\kappa,\theta_0)>0$, we infer that
\begin{multline*}
\left\lvert \int_{J_{u_k}} \phi_{\alpha,\beta} \left( \lvert \sqrt{\mathbf A_0} \left[ u_k \right] \odot \nu_{u_k} \rvert \right) \, d \HH^1 -  \int_{J_{u}} \phi_{\alpha,\beta} \left( \lvert \sqrt{\mathbf A_0} \left[ u \right] \odot \nu_{u} \rvert \right) \, d \HH^1 \right\rvert \\
\leq \text{\rm Lip} (\phi_{\alpha,\beta})   \sqrt{a'_0}  \int_{J_{u_k} \cap J_u} \lvert \left[ u_k - u \right] \odot \nu \rvert \, d \HH^1 \\
 \hfill + c \biggl( \HH^1 \left( J_{u_k} \Delta J_u \right) +  \sqrt{a'_0}  \Bigl( \lvert E^j(u_k) \rvert (\O \setminus J_u) +  \lvert E^j(u) \rvert (\O \setminus J_{u_k})  \Bigr) \biggr) \\
\to 0 \text{ when } k \nearrow \infty.
\end{multline*}
Hence, $\F_{\alpha,\beta}(u_k,0) \to \F_{\alpha,\beta}(u,0)$ as $k \nearrow \infty$. Next, applying \cite[Lemma 4.2]{CDM} to both components of $u \in SBV^2(\O)\cap L^\infty(\O;\R^2)$, we can find an extension $v \in SBV^2(\O';\R^2) \cap L^\infty(\O';\R^2)$ for some rectangle $\O' \supset \supset \O$, such that
$$
{v}_{|\O} = u , \quad \norme{v}_{L^\infty(\O';\R^2)} \leq \sqrt{2} \norme{u}_{L^\infty(\O;\R^2)} \quad \text{and} \quad \HH^1(\partial \O \cap J_{v} ) = 0.
$$
Then, owing to the density result in $SBV$ (see \cite[Theorem 3.1 and Remark 3.5]{CT}), we can find a sequence $\{v_k\}_{k \in \N}$ in $SBV^2(\O';\R^2) \cap L^\infty(\O';\R^2)$ as well as $N_k$ disjoint closed segments $L^k_1,\ldots,L_{N_k}^k  \subset \bar{\O'}$ such that:
$$\bar{J_{v_k}} = \bigcup_{i=1}^{N_k} L^k_i  , \quad \HH^1( \bar{J_{v_k}} \setminus J_{v_k}) = 0, \quad v_k \in W^{2,\infty}(\O' \setminus \bar{J_{v_k}};\R^2),$$
$$
v_k \to v \, \text{ strongly in } L^1(\O';\R^2), \quad \nabla v_k \to  \nabla v \, \text{ strongly in } L^2(\O'; \mathbb M^{2 \times 2})
$$ 
and
$$
\limsup_{k \nearrow \infty} \ds \int_{\bar{A} \cap J_{v_k} } \phi_{\alpha,\beta} \left( \lvert \sqrt{\mathbf A_0} \left[ v_k \right] \odot \nu_{v_k} \rvert \right) \, d \HH^1 \leq  \ds \int_{\bar{A} \cap J_{v} } \phi_{\alpha,\beta} \left( \lvert \sqrt{\mathbf A_0} \left[ v \right] \odot \nu_{v} \rvert \right) \, d \HH^1 
$$
for all open subset $ A \subset \subset \O'$. Especially, since $\HH^1(\partial \O \cap J_v) = 0$, we obtain that
$$
\limsup_k \int_\O \frac12 \mathbf A_1 e(v_k):e(v_k) \, dx + \int_{\bar \O \cap J_{v_k}} \phi_{\alpha,\beta} \left( \lvert \sqrt{\mathbf A_0} \left[v_k \right] \odot \nu_{v_k} \rvert \right) \, d \HH^1 \leq \F_{\alpha,\beta} \left( v_{|\O} , 0\right) = \F_{\alpha,\beta}(u,0)
$$
and, by lower semicontinuity of $\F_{\alpha,\beta} ''(\cdot , 0)$ in $L^1(\O;\R^2)$ once more, we infer that the proof of the upper bound inequality follows from Lemma \ref{lem:upper bound in between} below.
\end{proof}

We are back to establishing the following Lemma.

\begin{lem}\label{lem:upper bound in between}
Let $v \in SBV^2(\O';\R^2) \cap L^\infty(\O';\R^2)$ be such that
$$ \bar{J_v} =  \bigcup_{i=1}^{N} L_i  , \quad \HH^1( \bar{J_{v}} \setminus J_{v}) = 0, \quad v \in W^{2,\infty}(\O' \setminus \bar{J_{v}};\R^2),$$
for some pairwise disjoint closed segments $L_1,\ldots, L_N \subset \bar{\O'} $. Then, 
$$ \F_{\alpha,\beta}''(v_{|\O},0) \leq  \int_\O \frac12 \mathbf A_1 e(v) : e(v) \, dx +  \int_{J_v \cap \bar{\O}} \phi_{\alpha,\beta} \left( \lvert \sqrt{\mathbf A_0} \left[v \right] \odot \nu_{v} \rvert \right) \, d \HH^1.$$
\end{lem}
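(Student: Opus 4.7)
The plan is to construct an explicit recovery sequence $(v_\varepsilon,\chi_\varepsilon)\in X_{h_\varepsilon}(\O)$ by combining two ingredients: a standard Lagrange interpolation of $v$ away from $\bar{J_v}=\bigcup_{i=1}^{N}L_i$, and, near each segment $L_i$, a thin damaged strip whose width is tuned to the local jump amplitude exactly so as to produce $\phi_{\alpha,\beta}(\lvert\sqrt{\mathbf A_0}[v]\odot\nu_v\rvert)$ in the limit. The localization relies on the fact that the segments $L_i$ are pairwise disjoint and closed, so one can pick disjoint tubular neighborhoods $U_i^\varepsilon:=\{x:{\rm dist}(x,L_i)\leq r_\varepsilon\}$ with $h_\varepsilon\ll r_\varepsilon\ll 1$. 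On $\O_\varepsilon:=\O\setminus\bigcup_i U_i^\varepsilon$ the function $v$ is smooth ($W^{2,\infty}$) and the argument of Proposition \ref{prop:linear elasticity} gives an admissible Lagrange interpolation with $\chi_\varepsilon\equiv 0$ contributing the bulk term $\tfrac12\int_{\O}\mathbf A_1 e(v):e(v)\,dx$ in the limit.

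For each $L_i$, after rotation and translation I may assume $L_i=[a_i,b_i]\times\{0\}$ with $\nu_i=e_2$. The Lipschitz traces $v^\pm$ along $L_i$ give a Lipschitz function $s(x):=\lvert\sqrt{\mathbf A_0}[v](x,0)\odot e_2\rvert$. I partition $L_i$ into consecutive subsegments $I_k^\varepsilon$ of common length $\delta_\varepsilon$ with $h_\varepsilon\ll\delta_\varepsilon\ll r_\varepsilon$, on each of which $s$ is nearly constant, equal to some $s_k^\varepsilon$. On the subsegment $I_k^\varepsilon$ I center a rectangular strip $R_k^\varepsilon:=I_k^\varepsilon\times(-l_k^\varepsilon,l_k^\varepsilon)$ of half-width
$$
l_k^\varepsilon\;:=\;\varepsilon\cdot\max\Big(\beta\sin\theta_0,\;\tfrac{s_k^\varepsilon\sqrt{\alpha}}{\sin\theta_0\sqrt{2\kappa}}\Big),
$$
which is precisely the constrained minimizer of $t\mapsto \tfrac{\alpha (s_k^\varepsilon)^2}{2\beta t}+\kappa\beta t$ under $t\geq\beta\sin\theta_0$; evaluating at this minimizer yields $\phi_{\alpha,\beta}(s_k^\varepsilon)$. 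In each strip I triangulate by a periodic row of isoceles ``zig-zag'' triangles with base $\sim h_\varepsilon$ on the top/bottom edges of $R_k^\varepsilon$ and apex on the opposite edge (as in the constructions $\widetilde{\mathbf T}_\varepsilon$ and $\widehat{\mathbf T}_\varepsilon$ sketched in the introduction), set $\chi_\varepsilon\equiv 1$ on $R_k^\varepsilon$, and take $v_\varepsilon$ to be the Lagrange interpolation of $v^+(x,0)$ at vertices on $\{y=l_k^\varepsilon\}$ and of $v^-(x,0)$ at vertices on $\{y=-l_k^\varepsilon\}$. A direct computation gives the constant gradient $\partial_y v_\varepsilon\approx [v](x_k^\varepsilon)/(2l_k^\varepsilon)$ inside $R_k^\varepsilon$, so
$$
\int_{R_k^\varepsilon}\Big(\tfrac{\kappa\chi_\varepsilon}{\varepsilon}+\tfrac{\eta_\varepsilon\chi_\varepsilon}{2}\mathbf A_0 e(v_\varepsilon):e(v_\varepsilon)\Big)\,dx\;=\;\delta_\varepsilon\,\phi_{\alpha,\beta}(s_k^\varepsilon)+o(\delta_\varepsilon),
$$
and summing over $k$ and $i$ the Riemann sum converges as $\delta_\varepsilon\searrow 0$ to $\int_{J_v\cap\bar\O}\phi_{\alpha,\beta}(\lvert\sqrt{\mathbf A_0}[v]\odot\nu_v\rvert)\,d\HH^1$.

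The hard part will be Step~3: the \emph{gluing} of these three pieces into a single admissible triangulation. Three types of interface must be treated. First, between two adjacent strips $R_k^\varepsilon$ and $R_{k+1}^\varepsilon$ whose widths $l_k^\varepsilon\neq l_{k+1}^\varepsilon$ differ: the choice $h_\varepsilon\ll \delta_\varepsilon$ together with the Lipschitz continuity of $s(\cdot)$ ensures that $\lvert l_k^\varepsilon-l_{k+1}^\varepsilon\rvert=O(\delta_\varepsilon h_\varepsilon/\varepsilon)\ll l_k^\varepsilon$, so a short transition zone of width $O(\delta_\varepsilon)$ can be tiled with right triangles whose angles remain above $\theta_0$. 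Second, between the strips and the bulk triangulation on $\O\setminus\bigcup_{k,i}R_k^\varepsilon$: here I follow the anchor-and-interpolate procedure of Step~2 of Lemma \ref{lem:T plasticity}, distributing anchor vertices $\bar z_{j,\varepsilon}$ on the horizontal boundaries $\{y=\pm l_k^\varepsilon\}$ of each strip and connecting them to the adjacent vertices of the bulk mesh; the assumption $\theta_0\leq\arctan(1/4)$ is exactly what guarantees admissibility, as in Figure \ref{fig:GLUING}. Third, at the endpoints of each $L_i$ (and at $\partial\O\cap\bar{J_v}$ when applicable), one caps the strip by a finite union of triangles, whose total area is $O(\varepsilon r_\varepsilon)$ and whose elastic and damage energies are $O(r_\varepsilon/\varepsilon\cdot\varepsilon)=O(r_\varepsilon)\to 0$. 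Everywhere in these transition regions, the Lagrange interpolation $v_\varepsilon$ and $\chi_\varepsilon=1$ are used, and the same bounds on $\nabla v_\varepsilon$ as in Lemma \ref{lem:T plasticity} (via $\lvert\nabla v_\varepsilon\rvert\lesssim (1+\varepsilon^{-1})/\sin\theta_0$) show that the extra energy on these regions is $O(r_\varepsilon\cdot\eta_\varepsilon/\varepsilon^{2}+r_\varepsilon/\varepsilon)\to 0$ provided $r_\varepsilon/\varepsilon\to 0$, which can be arranged. Convergence $(v_\varepsilon,\chi_\varepsilon)\to(v,0)$ in $L^1(\O;\R^2)\times L^1(\O)$ is then immediate from $\lvert\chi_\varepsilon\rvert\lesssim r_\varepsilon\to 0$ and $\norme{v_\varepsilon}_{L^\infty}\leq\norme{v}_{L^\infty}$.
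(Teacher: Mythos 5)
Your overall route is the one the paper takes: interpolate $v$ on a background mesh away from $\bigcup_i L_i$ and, along each segment, insert a damaged band whose thickness is tuned to the local jump amplitude so that the constrained minimisation behind $\phi_{\alpha,\beta}$ is realised. But the quantitative heart of the construction is wrong as written. For a band of full width $w$ carrying the interpolated gradient $[v]\otimes\nu_i/w$, the limiting cost per unit length is $\kappa\beta t+\tfrac{\alpha s^2}{2\beta t}$ with $t=w/h_\e$ and the mesh constraint $t\geq\sin\theta_0$, whose constrained minimum is exactly $\phi_{\alpha,\beta}(s)$; the correct tuning is therefore $w=h_\e\max\bigl(\sin\theta_0,\tfrac{s}{\beta}\sqrt{\alpha/(2\kappa)}\bigr)$, i.e.\ the paper's $h_\e\,l_i$. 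Your half-width $l_k^\e=\e\max\bigl(\beta\sin\theta_0,\tfrac{s\sqrt{\alpha}}{\sin\theta_0\sqrt{2\kappa}}\bigr)$ is not the minimiser of the functional you display (the constrained minimiser of $t\mapsto\tfrac{\alpha s^2}{2\beta t}+\kappa\beta t$ on $\{t\geq\beta\sin\theta_0\}$ is $\max\bigl(\beta\sin\theta_0,\tfrac{s}{\beta}\sqrt{\alpha/(2\kappa)}\bigr)$, and its value in the constrained branch is $\kappa\beta^2\sin\theta_0+\tfrac{\alpha s^2}{2\beta^2\sin\theta_0}$, not $\phi_{\alpha,\beta}(s)$), and feeding your $l_k^\e$ into the strip energy gives in the limit $2\kappa\beta\sin\theta_0+\tfrac{\alpha s^2}{4\beta\sin\theta_0}$ for small $s$ and $\sqrt{2\kappa\alpha}\,s\bigl(\tfrac{1}{\sin\theta_0}+\tfrac{\sin\theta_0}{4}\bigr)$ for large $s$, both strictly larger than $\phi_{\alpha,\beta}(s)$; so the claimed identity $\int_{R_k^\e}(\dots)=\delta_\e\phi_{\alpha,\beta}(s_k^\e)+o(\delta_\e)$ fails. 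Relatedly, "zig-zag triangles with base $\sim h_\e$" cannot be kept once the band is several $h_\e$ wide (large jumps): base $h_\e$ with height $\gg h_\e\sin\theta_0$ forces angles below $\theta_0$. The longitudinal spacing must scale with the local width, which is exactly why the paper places the nodes $a^{i,j}_\e$ at distance $h_\e\,l_i(a^{i,j}_\e)/\tan\theta$ for a fixed $\theta>\theta_0$ and then needs the Lipschitz continuity of $x\mapsto[\bar v](x)$ plus an Al-Kashi computation to check that all angles stay above $\theta_0$ while the amplitude varies.

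The gluing step also fails in this regime. You set $\chi_\e=1$ on the transition regions and import the estimate of Step 2 of Lemma \ref{lem:T plasticity}, but that estimate is affordable only because there $h_\e\ll\e$; here $h_\e\sim\e$, so any additionally damaged layer of width comparable to $h_\e$ along the strip boundaries (total length of order $1$) costs an extra $\kappa\,O(h_\e/\e)=O(\kappa\beta)$ of surface energy that does not vanish, while damaging the whole transition zone of area $O(r_\e)$ forces $r_\e/\e\to0$, contradicting your own requirement $h_\e\ll\delta_\e\ll r_\e$ since $h_\e\sim\e$. The paper avoids this entirely: the only damaged triangles are those of the amplitude-adapted band; all gluing triangles lie on one side of each $L_i$ (inside $\R^2\setminus\bar{J_v}$), are left undamaged, and their elastic energy is controlled by the standard Lagrange interpolation estimate. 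A related compatibility point: the paper interpolates $\bar v$ itself at every vertex (all vertices are off $\bar{J_v}$), which makes the band automatically continuous with the bulk interpolation, whereas prescribing the traces $v^\pm$ on $\{y=\pm l_k^\e\}$ creates a mismatch with the exterior interpolation that your plan never reconciles. These are the points you would need to repair; once the width is retuned as above and the gluing is done without extra damage, the argument does reduce to the paper's.
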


\begin{proof} Since $\O\subset \subset \O'$, then $d:={\rm dist}(\O,\R^2 \setminus \O')>0$. For all $\delta \in (0,d)$, let us consider the open sets
$$\O_\delta:=\{x \in \O' : \; {\rm dist}(x,\R^2\setminus \O')>\delta\}$$
which satisfy $\O \subset\subset \O_\delta \subset\subset \O'$. We introduce a cut-off function $\phi_\delta \in C^\infty_c(\R^2;[0,1])$ which is supported in $\O'$ and such that $\phi_\delta=1$ in $\O_{\delta}$, $\phi_\delta=0$ in $\R^2 \setminus \O_{\frac\delta2}$. We next introduce the function $ \bar{v} := \phi_\delta v \in SBV^2(\R^2;\R^2) \cap L^\infty(\R^2;\R^2)$. We remark that 
\begin{equation}\label{eq:Sv}
 \bar{v} \in W^{2,\infty} \left(\R^2 \setminus  \cup_{i=1}^N {L_i} ;\R^2 \right), \quad J_{\bar{v}} \subset J_v, \quad \text{and} \quad  J_v \setminus J_{\bar{v}} \subset J_v \setminus \O_\delta.
\end{equation}

\begin{figure}
\begin{tikzpicture}[line cap=round,line join=round,>=triangle 45,x=0.6cm,y=0.6cm]
\clip(-5,-2.5) rectangle (23.889655285978645,11.750551517717625);

\draw [line width=1.pt,color=red] (0.,0.)-- (0.,9.);

\fill [color=black,fill=black,fill opacity=0.1] (0,-1) -- (3,-1) -- (3,10) -- (0,10) -- cycle;
\fill [color=black,fill=black,fill opacity=0.05] (0,-1) -- (-3,-1) -- (-3,10) -- (0,10) -- cycle;
\draw [color=black](1.5,0) node[anchor=north west] {$R_i^+$};
\draw [color=black](-2.7,0) node[anchor=north west] {$R_i^-$};

\draw [fill=black] (0.,5.) circle(1pt);
\draw [color=black](0,5) node[anchor=north west] {$x$};
\draw [color=black](1.7,6.4) node[anchor=north west] {$h_\e  \max \left( \sin \theta;\sqrt{\frac{\alpha}{2 \kappa}} \frac{ \left\lvert \sqrt{\mathbf A_0} \left[ \bar v \right](x) \odot \nu_i \right\rvert}{\beta}  \right)$};
 
\draw[line width=1.pt,dash pattern=on 2pt off 2pt,smooth,samples=100,domain=0.4:2] plot(\x,{3.17*((\x)-0.4)^(0.5)});
\draw[line width=1.pt,dash pattern=on 2pt off 2pt,smooth,samples=100,domain=-2:-0.4] plot(\x,{3.17*(abs(-(\x)-0.4))^(0.5)});

\draw[line width=1.pt,dash pattern=on 2pt off 2pt,smooth,samples=100,domain=-2:-0.4] plot(\x,{(-0.15*((\x)-0.4)^(4)+9+2.7*((\x)+2)*((\x)+0.4))});
\draw[line width=1.pt,dash pattern=on 2pt off 2pt,smooth,samples=100,domain=0.4:2] plot(\x,{(-0.15*(-(\x)-0.4)^(4)+9+2.7*(-(\x)+2)*(-(\x)+0.4))});

\draw [line width=1.pt,color=red] (6.5599042738052935,0.27995213690264675)-- (18.,6.);

\fill [color=black,fill=black,fill opacity=0.1] (5.5599042738052935,-0.22) -- (6.5599042738052935,-2.12) -- (20.,4.5) -- (19.,6.5) -- cycle;
\fill [color=black,fill=black,fill opacity=0.05] (5.5599042738052935,-0.22) -- (4.5599042738052935,1.78) -- (18.,8.5) -- (19.,6.5) -- cycle;
\draw [color=black](16.5,7.5) node[anchor=north west] {$R_j^-$};
\draw [color=black](17.5,5) node[anchor=north west] {$R_j^+$};

\draw [line width=1.pt,dash pattern=on 2pt off 2pt,domain=42:192] plot ({12 + 1.643823555378385*cos(\x)}, {3 + 1.643823555378385*sin(\x)});
\draw [line width=1.pt,dash pattern=on 2pt off 2pt,domain=224:371] plot ({12 + 1.643823555378385*cos(\x)}, {3 + 1.643823555378385*sin(\x)});

\draw [line width=1.pt,dash pattern=on 2pt off 2pt] (6.371966934802747,0.6724858182339979)-- (10.40868365491987,2.6774267253409376);
\draw [line width=1.pt,dash pattern=on 2pt off 2pt] (6.771286399333424,-0.13292123734481923)-- (10.808003119450545,1.8720196697621212);
\draw [line width=1.pt,dash pattern=on 2pt off 2pt] (13.205468543691232,4.09214281693733)-- (17.808737057116463,6.388335857913015);
\draw [line width=1.pt,dash pattern=on 2pt off 2pt] (13.60478800822191,3.2867357613585133)-- (18.20805652164714,5.582928802334198);

\draw [line width=0.7pt] (-1.7,5)-- (1.7,5);
\draw [line width=0.7pt] (-1.7,5)-- (-1.5,5.1);
\draw [line width=0.7pt] (-1.7,5)-- (-1.5,4.9);
\draw [line width=0.7pt] (1.7,5)-- (1.5,5.1);
\draw [line width=0.7pt] (1.7,5)-- (1.5,4.9);

\draw [color=red](-0.4,-0.3) node[anchor=north west] {$L_i$};
\draw (5.4,0.4) node[anchor=north west,color=red] {$L_j$};

\draw [color=red,line width=1pt] (-2-0.3,8.3)-- (-1.2-0.3,8.3);
\draw [color=red](-1.3-0.3,8.8) node[anchor=north west] {$\nu_i$};
\draw [color=red,line width=1pt](-2-0.3,8.3)--(-2-0.3,9.1);
\draw [color=red](-2-0.3,9.8) node[anchor=north west] {$\nu_i^\perp$};
\draw [color=red,line width=0.8pt](-2.1-0.3,8.9)--(-2-0.3,9.1);
\draw [color=red,line width=0.8pt](-1.9-0.3,8.9)--(-2-0.3,9.1);
\draw [color=red,line width=0.8pt] (-1.4-0.3,8.4)-- (-1.2-0.3,8.3);
\draw [color=red,line width=0.8pt] (-1.4-0.3,8.2)-- (-1.2-0.3,8.3);

\end{tikzpicture}
\caption{}
\label{fig:R_i pm}
\end{figure}
\noindent
Before going into the construction of an optimal mesh and displacement, let us remark that the amplitude of the jump of $\bar v$ is Lischitz-continuous along each segment $L_i$. 

\medskip
Indeed, since $\bar v \in W^{1,\infty}(\R^2 \setminus \cup_{i=1}^N L_i;\R^2)$, we infer that $\bar v_{|U}$ is Lipschitz continuous with constant
\begin{equation}\label{eq:bar v lip}
\text{\rm Lip}\left(\bar v_{|U} \right) \leq \norme{\nabla \bar v}_{L^\infty(\R^2 \setminus \bar{J_v})} = : \text{\rm Lip}(\bar v) <+ \infty 
\end{equation}
on all open set with Lipschitz boundary $U \subset \R^2 \setminus \cup_{i=1}^N L_i$. In particular, all $x \in \R^2 \setminus \cup_{i=1}^N L_i$ is a Lebesgue point of $\bar v$, so that its discontinuity set is included in the segments $L_i$:
$$
S_{\bar v} \subset \cup_{i=1}^N L_i.
$$ 
Moreover, one can actually check that 
$$
S_{\bar v} = J_{\bar v}.
$$
Indeed, since $\bar v_{|R_i^\pm}$ is Lipschitz-continuous on each open rectangle 
$$
R_i^\pm = \left\{ x + s \nu_i^\perp \pm t \nu_i \, : \, x \in L_i, \lvert s \rvert < r, 0 < t < r \right\} \subset \R^2 \setminus \bar{J_v}
$$
where 
$$
\nu_i := {\nu_v}_{| L_i} \in \mathcal{S}^1 \quad \text{and} \quad 2 r =  \min \big\{ \text{\rm dist}(L_i , L_j) \, : \, 1 \leq i <j  \leq N \big\} > 0 
$$
(see Figure \ref{fig:R_i pm}), it admits a unique Lipschitz-continuous extension $\bar v^\pm$ onto $L_i \cup R_i^\pm$. Thus, for all $x \in L_i$,
$$
\lim_{\varrho \searrow 0} \frac{1}{\varrho^2} \int_{B^\pm_\varrho (x,\nu_i))} \left\lvert  \bar v (y) - \bar v^\pm(x) \right\rvert \, dy = 0
$$
where $B^\pm_\varrho (x,\nu_i) = \left\{ y \in B_\varrho(x) \, : \, \pm (y - x)\cdot \nu_i > 0 \right\}$. Hence, either $\bar v^+(x) = \bar v^-(x)$ and $x \notin S_{\bar v}$ is a Lebesgue point of $\bar v$, or $\bar v^+ \neq \bar v^-$ and $x \in J_{\bar v}$. In particular, \eqref{eq:bar v lip} entails that for all $i \in \llbracket 1,N \rrbracket$ and all $x,x' \in L_i $,
\begin{multline*}
\left\lvert \bar v^\pm (x) - \bar v^\pm(x') \right\rvert  = \lim_{\varrho \searrow 0} \frac{2}{\pi \varrho^2} \left\lvert \int_{B^\pm_\varrho(x,\nu_i)} \bar v(y)  \, dy - \int_{B^\pm_\varrho(x',\nu_i)} \bar v(z) \, dz \right\rvert \\
 \leq  \limsup_{\varrho \searrow 0} \Biggl( \left\lvert \bar v (x \pm \varrho \nu_i) - \bar v (x' \pm \varrho \nu_i) \right\rvert + \frac{2}{\pi \varrho^2}  \int_{B^\pm_\varrho(x,\nu_i)} \left\lvert \bar v(y) - \bar v(x \pm \varrho \nu_i) \right\rvert \, dy   \\
   + \frac{2}{\pi \varrho^2} \int_{B^\pm_\varrho(x',\nu_i)}  \left\lvert \bar v(z) - \bar v (x' \pm \varrho \nu_i) \right\rvert \, dz  \Biggr) \\
 \leq \text{\rm Lip}(\bar v) \left( \left\lvert x - x' \right\rvert + \lim_{\varrho \searrow 0} 4  \varrho \right) = \text{\rm Lip}(\bar v) \left\lvert x - x' \right\rvert.
\end{multline*}
Therefore, for all $i \in \llbracket 1,N \rrbracket$ and all $x,x' \in L_i $,
\begin{equation}\label{eq:almost lip continuity of the jump}
\left\lvert  \left[ \bar v \right](x) - \left[ \bar v \right] (x') \right\rvert \leq 2 \, \text{\rm Lip} ( \bar v) \left\lvert x - x' \right\rvert,
\end{equation}
where $\left[ \bar v \right] (x) := \bar v^+(x) - \bar v^-(x) $ for all $x \in L_i$.

\bigskip
\noindent
\textbf{Step 1: construction of the mesh.} Unlike the optimal mesh constructed in \cite[Appendix A]{CDM} (which was optimal for recovering the length of the cracks), here one must take into account the amplitude of the jump in order to capture the "cohesive" part of the limit energy. More precisely, the larger the amplitude of the jump is, further away from the jump set must the interpolation points be chosen (see Figure \ref{fig:recovery cohesive fracture}). We proceed independently on each segment $L_i$ and construct a local admissible triangulation covering $L_i$, which we eventually glue together by means of a background admissible triangulation. In the first step, contrary to \cite{CDM} where the optimal triangles were the smallest isocele ones parallel to the jump set with angles $\theta_0$ (and $\pi - \theta_0$) and height $h_\e \sin \theta_0$ exactly, here one cannot directly use the angle $\theta_0$ but has to introduce a fixed angle $\theta > \theta_0$ to ensure the admissibility of the local mesh. Finally, in the gluing step, as in \cite{CDM}, one introduces some universal angle $\Theta_0 = \arctan (1/4)$ which a posteriori imposes an upper bound for $\theta_0$.

\bigskip
Let us fix some angle 
$$
\theta_0 < \theta \leq \frac{\pi}{3}.
$$
In all what follows, we will denote by 
\begin{equation}\label{eq:notation constant}
C \in (0,+\infty)
\end{equation}
a generic constant, which can change from line to line, depending only of $\alpha$, $\beta$, $\kappa$, $a'_0, \theta, \norme{\nabla \bar v}_{L^\infty(\R^2 \setminus \bar{J_v})}$ and $ \norme{v}_{L^\infty(\R^2 \setminus \bar{J_v})}$. For all $i \in \llbracket 1,N \rrbracket$, we introduce the amplitude function
\begin{equation}\label{eq:l_i(x)}
l_i : x \in  L_i \mapsto \max \left( \sin \theta \, , \,\sqrt{\frac{\alpha}{2 \kappa}} \frac{ \left\lvert \sqrt{\mathbf A_0} \left[ \bar v \right](x) \odot \nu_i \right\rvert}{\beta} \right)
\end{equation}
which is taken equal to zero if $x \not \in J_{\bar v}$, as well as the incremental points $a^{i,j}_\e \in L_i$ defined inductively by
$$
a^{i,1}_\e = \text{\rm argmin} \left\{ x \cdot \nu_i^\perp \, : \, x \in L_i \right\}
$$
and
\begin{equation}\label{eq:a_i,j}
a^{i,j+1}_\e =	a^{i,j}_\e +  h_\e \dfrac{ l_i \left( a^{i,j}_\e \right)}{\tan \theta}  \nu_i^\perp
\end{equation}
as long as the latest remains in $L_i$. Letting $M^i_\e \in \N $ be the number of these incremental points, one can check that
\begin{equation}\label{eq:M_i}
1 \leq M^i_\e \leq \frac{ \HH^1(L_i)}{h_\e \cos \theta} + 1.
\end{equation}
For later convenience, we also introduce the two extreme incremental points
$$
a^{i,0}_\e := a^{i,1}_\e -  h_\e \dfrac{ l_i \left( a^{i,1}_\e \right)}{\tan \theta}  \nu_i^\perp \quad \text{and} \quad a^{i,M^i_\e +1}_\e := a^{i,M^i_\e}_\e +  h_\e \dfrac{ l_i \left( a^{i,M^i_\e}_\e \right)}{\tan \theta}  \nu_i^\perp 
$$
so that
$$
L_i \subset \subset \left[ a^{i,0}_\e, a^{i,M^i_\e + 1}_\e \right] \subset \subset L_i \cup \left( \O' \setminus \bar{J_v} \right).
$$
We next define the interpolation vertices $ \left\{  G^{i,j}_\e \right\}_{j \in \llbracket 1,M^i_\e \rrbracket}, \left\{ D^{i,j}_\e \right\}_{j \in \llbracket 1,M^i_\e \rrbracket} \subset \O' \setminus \bar{ J_v}$ defined by
\begin{equation}\label{eq:G_i,j D_i,j}
G^{i,j}_\e = a^{i,j}_\e -h_\e \frac{ l_i \left( a^{i,j}_\e \right)}{2} \nu_i \quad \text{and} \quad  D^{i,j}_\e = a^{i,j}_\e +h_\e \frac{ l_i \left( a^{i,j}_\e \right)}{2} \nu_i
\end{equation}
for $ j \in \llbracket 1,M^i_\e \rrbracket$, as well as the additional extreme vertices
$$
G^{i,0}_\e := a^{i,1}_\e -h_\e \frac{  l_i \left( a^{i,1}_\e \right)}{\tan \theta}  \nu_i^\perp  -h_\e  \frac{l_i \left( a^{i,1}_\e \right)}{2} \nu_i  \in \O' \setminus \bar J_v$$
and 
$$
\begin{cases}
G^{i,M^i_\e +1}_\e := a^{i,M^i_\e}_\e +h_\e \dfrac{  l_i \big( a^{i,M^i_\e}_\e \big)}{\tan \theta}  \nu_i^\perp  -h_\e \dfrac{ l_i \big( a^{i,M^i_\e}_\e \big)}{2} \nu_i  \in \O' \setminus \bar{ J_v}, \vspace{2mm}\\
D^{i, M^i_\e +1}_\e := a^{i,M^i_\e}_\e +h_\e \dfrac{  l_i \big( a^{i,M^i_\e}_\e \big)}{\tan \theta}  \nu_i^\perp +h_\e  \dfrac{ l_i \big( a^{i,M^i_\e}_\e \big)}{2} \nu_i  \in \O' \setminus \bar{ J_v}.
\end{cases}
$$
Note that, up to considering $-\nu_i$ instead of $\nu_i$, for $\e >0$ small enough one has 
$$
\left\{ G^{i,j}_\e \right\}_{0 \leq j \leq M^i_\e + 1} \subset R_i^- \quad \text{and} \quad \left\{ D^{i,j}_\e \right\}_{1 \leq j \leq M^i_\e + 1} \subset R_i^+.
$$
We finally define the local triangulation $\mathbf T^i_\e$ composed of the $M^i_\e$ following triangles: 
$$
T^{i,j}_\e = \begin{cases}
				 G^{i,j+1}_\e D^{i,j}_\e G^{i,j-1}_\e & \text{if } j \text{ is odd,} \vspace{2mm}\\
				 D^{i,j+1}_\e G^{i,j}_\e D^{i,j-1}_\e & \text{if } j \text{ is even,}
			\end{cases}
$$
for all $j \in \llbracket 1,M^i_\e \rrbracket$ (see Figure \ref{fig:recovery cohesive fracture}). In particular, one can check that the damage sets
$$
D^i_\e := \ds \bigcup_{j=1}^{M^i_\e} T^{i,j}_\e \subset \subset \O'
$$
are pairwise disjoint for $\e >0$ small enough. For the sake of curiosity, also note that for all $x \in L_i \setminus \{ a^{i,j}_\e \, : \, j \in \llbracket 1,M^i_\e \rrbracket \}$, the section orthogonal to $L_i$ passing through $x$ crosses exactly two triangles of $\mathbf T^i_\e$. 


\begin{figure}
\begin{tikzpicture}[line cap=round,line join=round,>=triangle 45,x=0.7cm,y=0.7cm]
\clip(-3.,-2.0162159721530024) rectangle (23.889655285978645,11.750551517717625);

\fill[line width=1.pt,fill=black,fill opacity=0.05] (-0.4,-0.6) -- (0.4,0.) -- (-0.4397624149076536,0.5986481627201515) -- cycle;
\fill[line width=1.pt,fill=black,fill opacity=0.05] (0.4,0.) -- (-0.4397624149076536,0.5986481627201515) -- (0.5523510208782617,1.2373197540262435) -- cycle;
\fill[line width=1.pt,fill=black,fill opacity=0.1] (-0.4397624149076536,0.5986481627201515) -- (-0.8147077332685259,2.041410429296884) -- (0.5523510208782617,1.2373197540262435) -- cycle;
\draw (-2,1.5) node[anchor=north west] {$T^{i,j}_\e$};

\fill[line width=1.pt,fill=black,fill opacity=0.05] (1.4525814181032777,3.252273883358846) -- (0.5523510208782617,1.2373197540262435) -- (-0.8147077332685259,2.041410429296884) -- cycle;
\fill[line width=1.pt,fill=black,fill opacity=0.05] (-1.5588763939683166,5.400080065219799) -- (-0.8147077332685259,2.041410429296884) -- (1.4525814181032777,3.252273883358846) -- cycle;
\fill[line width=1.pt,fill=black,fill opacity=0.05] (1.4525814181032777,3.252273883358846) -- (-1.5588763939683166,5.400080065219799) -- (0.6780132573158881,7.805093582435323) -- cycle;
\fill[line width=1.pt,fill=black,fill opacity=0.05] (-0.4221311887869993,8.835576022839954) -- (-1.5588763939683166,5.400080065219799) -- (0.6780132573158881,7.805093582435323) -- cycle;
\fill[line width=1.pt,fill=black,fill opacity=0.05] (-0.4221311887869993,8.835576022839954) -- (0.41972494914674985,9.438851463383562) -- (0.6780132573158881,7.805093582435323) -- cycle;
\fill[line width=1.pt,fill=black,fill opacity=0.05] (5.794532710020824,0.38775420614486034) -- (6.771286399333424,-0.13292123734481923) -- (6.953552179569636,0.9586747518091671) -- cycle;
\fill[line width=1.pt,fill=black,fill opacity=0.05] (6.771286399333424,-0.13292123734481923) -- (7.957475332908588,0.462475073846904) -- (6.953552179569636,0.9586747518091671) -- cycle;
\fill[line width=1.pt,fill=black,fill opacity=0.05] (6.953552179569636,0.9586747518091671) -- (8.167363245994464,1.5678821294257212) -- (7.957475332908588,0.462475073846904) -- cycle;
\fill[line width=1.pt,fill=black,fill opacity=0.05] (8.167363245994464,1.5678821294257212) -- (9.353552179569629,2.1494673741926142) -- (9.175890088141692,1.057871385038627) -- cycle;
\fill[line width=1.pt,fill=black,fill opacity=0.05] (9.353552179569629,2.1494673741926142) -- (10.359013021407204,3.1929170435044254) -- (10.348267955292027,1.6486640074220738) -- cycle;
\fill[line width=1.pt,fill=black,fill opacity=0.05] (10.359013021407204,3.1929170435044254) -- (13.542703550797997,4.260361697550868) -- (12.533698306781957,1.4333426245687462) -- cycle;
\fill[line width=1.pt,fill=black,fill opacity=0.05] (13.542703550797997,4.260361697550868) -- (14.766658344666336,4.870892339409538) -- (14.584476106223814,3.7754217866029576) -- cycle;
\fill[line width=1.pt,fill=black,fill opacity=0.05] (14.766658344666336,4.870892339409538) -- (15.97838879013843,5.475325256654477) -- (15.788509261124037,4.376015157014175) -- cycle;
\fill[line width=1.pt,fill=black,fill opacity=0.05] (7.957475332908588,0.462475073846904) -- (9.175890088141692,1.057871385038627) -- (8.167363245994464,1.5678821294257212) -- cycle;
\fill[line width=1.pt,fill=black,fill opacity=0.05] (9.175890088141692,1.057871385038627) -- (10.348267955292027,1.6486640074220738) -- (9.353552179569629,2.1494673741926142) -- cycle;
\fill[line width=1.pt,fill=black,fill opacity=0.05] (10.348267955292027,1.6486640074220738) -- (12.533698306781957,1.4333426245687462) -- (10.359013021407204,3.1929170435044254) -- cycle;
\fill[line width=1.pt,fill=black,fill opacity=0.05] (12.533698306781957,1.4333426245687462) -- (14.584476106223814,3.7754217866029576) -- (13.542703550797997,4.260361697550868) -- cycle;
\fill[line width=1.pt,fill=black,fill opacity=0.05] (14.584476106223814,3.7754217866029576) -- (15.788509261124037,4.376015157014175) -- (14.766658344666336,4.870892339409538) -- cycle;
\fill[line width=1.pt,fill=black,fill opacity=0.05] (15.97838879013843,5.475325256654477) -- (16.98150033692288,4.966570348172506) -- (15.788509261124037,4.376015157014175) -- cycle;
\fill[line width=1.pt,fill=black,fill opacity=0.05] (15.97838879013843,5.475325256654477) -- (17.17583454931682,6.072632715724544) -- (16.98150033692288,4.966570348172506) -- cycle;
\fill[line width=1.pt,fill=black,fill opacity=0.05] (16.98150033692288,4.966570348172506) -- (18.20805652164714,5.582928802334198) -- (17.17583454931682,6.072632715724544) -- cycle;
\fill[line width=1.pt,fill=black,fill opacity=0.05] (17.17583454931682,6.072632715724544) -- (18.400597953780096,6.6846219423158155) -- (18.20805652164714,5.582928802334198) -- cycle;
\fill[line width=1.pt,fill=black,fill opacity=0.05] (5.608914896965494,8.398641524978338) -- (6.2,4.) -- (7.919987243363377,5.997219125468108) -- cycle;

\draw [line width=1.pt,color=red] (0.,0.)-- (0.,9.);

\draw[line width=1.pt,dash pattern=on 2pt off 2pt,smooth,samples=100,domain=0.4:2] plot(\x,{3.17*((\x)-0.4)^(0.5)});
\draw[line width=1.pt,dash pattern=on 2pt off 2pt,smooth,samples=100,domain=-2:-0.4] plot(\x,{3.17*(abs(-(\x)-0.4))^(0.5)});

\draw [line width=0.3pt] (-0.4,0.)-- (0.4,0.);
\draw [line width=0.3pt] (-0.4397624149076536,0.5986481627201515)-- (0.43784669862560316,0.5990982914336569);
\draw [line width=0.3pt] (-0.5517297058752677,1.2347941696371818)-- (0.5523510208782617,1.2373197540262435);
\draw [line width=0.3pt] (-0.8147077332685259,2.041410429296884)-- (0.8143221634768217,2.040461219568319);
\draw [line width=0.3pt] (-1.4529669878949816,3.2528694970222642)-- (1.4525814181032777,3.252273883358846);
\draw [line width=0.3pt] (-1.5588763939683166,5.400080065219799)-- (1.5671230869018764,5.389130855129186);

\draw[line width=1.pt,dash pattern=on 2pt off 2pt,smooth,samples=100,domain=-2:-0.4] plot(\x,{(-0.15*((\x)-0.4)^(4)+9+2.7*((\x)+2)*((\x)+0.4))});
\draw[line width=1.pt,dash pattern=on 2pt off 2pt,smooth,samples=100,domain=0.4:2] plot(\x,{(-0.15*(-(\x)-0.4)^(4)+9+2.7*(-(\x)+2)*(-(\x)+0.4))});

\draw [line width=0.3pt] (-0.67740087813073,7.807280636672901)-- (0.6780132573158881,7.805093582435323);
\draw [line width=0.3pt] (-0.4221311887869993,8.835576022839954)-- (0.41619615396659965,8.838155491586887);

\draw [line width=0.7pt] (-0.4,-0.6)-- (0.4,0.);
\draw [line width=0.7pt] (0.4,0.)-- (-0.4397624149076536,0.5986481627201515);
\draw [line width=0.7pt] (-0.4397624149076536,0.5986481627201515)-- (-0.4,-0.6);
\draw [line width=0.7pt] (-0.4397624149076536,0.5986481627201515)-- (0.5523510208782617,1.2373197540262435);
\draw [line width=0.7pt] (0.5523510208782617,1.2373197540262435)-- (0.4,0.);
\draw [line width=0.7pt] (-0.4397624149076536,0.5986481627201515)-- (-0.8147077332685259,2.041410429296884);
\draw [line width=0.7pt] (1.4525814181032777,3.252273883358846)-- (0.5523510208782617,1.2373197540262435);
\draw [line width=0.7pt] (0.5523510208782617,1.2373197540262435)-- (-0.8147077332685259,2.041410429296884);
\draw [line width=0.7pt] (-0.8147077332685259,2.041410429296884)-- (1.4525814181032777,3.252273883358846);
\draw [line width=0.7pt] (-1.5588763939683166,5.400080065219799)-- (-0.8147077332685259,2.041410429296884);
\draw [line width=0.7pt] (1.4525814181032777,3.252273883358846)-- (-1.5588763939683166,5.400080065219799);
\draw [line width=0.7pt] (-1.5588763939683166,5.400080065219799)-- (0.6780132573158881,7.805093582435323);
\draw [line width=0.7pt] (0.6780132573158881,7.805093582435323)-- (1.4525814181032777,3.252273883358846);
\draw [line width=0.7pt] (-0.4221311887869993,8.835576022839954)-- (-1.5588763939683166,5.400080065219799);
\draw [line width=0.7pt] (0.6780132573158881,7.805093582435323)-- (-0.4221311887869993,8.835576022839954);
\draw [line width=0.7pt] (-0.4221311887869993,8.835576022839954)-- (0.41972494914674985,9.438851463383562);
\draw [line width=0.7pt] (0.41972494914674985,9.438851463383562)-- (0.6780132573158881,7.805093582435323);

\draw [line width=1.pt,color=red] (6.5599042738052935,0.27995213690264675)-- (18.,6.);

\draw [line width=1.pt,dash pattern=on 2pt off 2pt,domain=42:192] plot ({12 + 1.643823555378385*cos(\x)}, {3 + 1.643823555378385*sin(\x)});
\draw [line width=1.pt,dash pattern=on 2pt off 2pt,domain=224:371] plot ({12 + 1.643823555378385*cos(\x)}, {3 + 1.643823555378385*sin(\x)});

\draw [line width=1.pt,dash pattern=on 2pt off 2pt] (6.371966934802747,0.6724858182339979)-- (10.40868365491987,2.6774267253409376);
\draw [line width=1.pt,dash pattern=on 2pt off 2pt] (6.771286399333424,-0.13292123734481923)-- (10.808003119450545,1.8720196697621212);
\draw [line width=1.pt,dash pattern=on 2pt off 2pt] (13.205468543691232,4.09214281693733)-- (17.808737057116463,6.388335857913015);
\draw [line width=1.pt,dash pattern=on 2pt off 2pt] (13.60478800822191,3.2867357613585133)-- (18.20805652164714,5.582928802334198);

\draw [line width=0.3pt] (10.359013021407204,3.1929170435044254)-- (11.164452015990445,1.5758433697642413);
\draw [line width=0.3pt] (11.059125378237253,4.351510058789538)-- (12.533698306781957,1.4333426245687462);
\draw [line width=0.3pt] (13.542703550797997,4.260361697550868)-- (13.948671296822372,3.458270924230137);
\draw [line width=0.3pt] (14.185427970935399,4.5682673496943424)-- (14.584476106223814,3.7754217866029576);
\draw [line width=0.3pt] (14.766658344666336,4.870892339409538)-- (15.177038118692211,4.071002364926434);
\draw [line width=0.3pt] (15.36553000395857,5.169620282933558)-- (15.788509261124037,4.376015157014175);
\draw [line width=0.3pt] (15.97838879013843,5.475325256654477)-- (16.37676860827669,4.669449488572588);
\draw [line width=0.3pt] (16.573244375094465,5.772049910143182)-- (16.98150033692288,4.966570348172506);
\draw [line width=0.3pt] (17.17583454931682,6.072632715724544)-- (17.583727103258465,5.271502071175783);
\draw [line width=0.3pt] (6.953552179569636,0.9586747518091671)-- (7.352871644100313,0.15326769623034997);
\draw [line width=0.3pt] (6.371966934802747,0.6724858182339979)-- (6.771286399333424,-0.13292123734481923);
\draw [line width=0.3pt] (7.558155868377911,1.2678821294257212)-- (7.957475332908588,0.462475073846904);
\draw [line width=0.3pt] (8.776570623611015,1.8632784406174447)-- (9.175890088141692,1.057871385038627);
\draw [line width=0.3pt] (8.167363245994464,1.5678821294257212)-- (8.566682710525141,0.7624750738469037);
\draw [line width=0.3pt] (9.353552179569629,2.1494673741926142)-- (9.752871644100306,1.3440603186137965);
\draw [line width=0.3pt] (9.94894849076135,2.4540710630008915)-- (10.348267955292027,1.6486640074220738);

\draw [line width=0.7pt] (5.794532710020824,0.38775420614486034)-- (6.771286399333424,-0.13292123734481923);
\draw [line width=0.7pt] (6.771286399333424,-0.13292123734481923)-- (6.953552179569636,0.9586747518091671);
\draw [line width=0.7pt] (6.953552179569636,0.9586747518091671)-- (5.794532710020824,0.38775420614486034);
\draw [line width=0.7pt] (6.771286399333424,-0.13292123734481923)-- (7.957475332908588,0.462475073846904);
\draw [line width=0.7pt] (7.957475332908588,0.462475073846904)-- (6.953552179569636,0.9586747518091671);

\draw [line width=0.7pt] (6.953552179569636,0.9586747518091671)-- (8.167363245994464,1.5678821294257212);
\draw [line width=0.7pt] (8.167363245994464,1.5678821294257212)-- (7.957475332908588,0.462475073846904);
\draw [line width=0.7pt] (8.167363245994464,1.5678821294257212)-- (9.353552179569629,2.1494673741926142);
\draw [line width=0.7pt] (9.353552179569629,2.1494673741926142)-- (9.175890088141692,1.057871385038627);
\draw [line width=0.7pt] (9.175890088141692,1.057871385038627)-- (8.167363245994464,1.5678821294257212);
\draw [line width=0.7pt] (9.353552179569629,2.1494673741926142)-- (10.359013021407204,3.1929170435044254);
\draw [line width=0.7pt] (10.348267955292027,1.6486640074220738)-- (9.353552179569629,2.1494673741926142);
\draw [line width=0.7pt] (10.359013021407204,3.1929170435044254)-- (13.542703550797997,4.260361697550868);
\draw [line width=0.7pt] (13.542703550797997,4.260361697550868)-- (14.766658344666336,4.870892339409538);
\draw [line width=0.7pt] (14.766658344666336,4.870892339409538)-- (15.97838879013843,5.475325256654477);
\draw [line width=0.7pt] (7.957475332908588,0.462475073846904)-- (9.175890088141692,1.057871385038627);
\draw [line width=0.7pt] (9.175890088141692,1.057871385038627)-- (10.348267955292027,1.6486640074220738);
\draw [line width=0.7pt] (10.348267955292027,1.6486640074220738)-- (12.533698306781957,1.4333426245687462);
\draw [line width=0.7pt] (12.533698306781957,1.4333426245687462)-- (10.359013021407204,3.1929170435044254);
\draw [line width=0.7pt] (10.359013021407204,3.1929170435044254)-- (10.348267955292027,1.6486640074220738);
\draw [line width=0.7pt] (12.533698306781957,1.4333426245687462)-- (14.584476106223814,3.7754217866029576);
\draw [line width=0.7pt] (14.584476106223814,3.7754217866029576)-- (13.542703550797997,4.260361697550868);
\draw [line width=0.7pt] (13.542703550797997,4.260361697550868)-- (12.533698306781957,1.4333426245687462);
\draw [line width=0.7pt] (14.584476106223814,3.7754217866029576)-- (15.788509261124037,4.376015157014175);
\draw [line width=0.7pt] (15.788509261124037,4.376015157014175)-- (14.766658344666336,4.870892339409538);
\draw [line width=0.7pt] (14.766658344666336,4.870892339409538)-- (14.584476106223814,3.7754217866029576);
\draw [line width=0.7pt] (16.98150033692288,4.966570348172506)-- (15.788509261124037,4.376015157014175);
\draw [line width=0.7pt] (15.788509261124037,4.376015157014175)-- (15.97838879013843,5.475325256654477);
\draw [line width=0.7pt] (15.97838879013843,5.475325256654477)-- (17.17583454931682,6.072632715724544);
\draw [line width=0.7pt] (16.98150033692288,4.966570348172506)-- (15.97838879013843,5.475325256654477);
\draw [line width=0.7pt] (16.98150033692288,4.966570348172506)-- (18.20805652164714,5.582928802334198);
\draw [line width=0.7pt] (17.17583454931682,6.072632715724544)-- (16.98150033692288,4.966570348172506);
\draw [line width=0.7pt] (17.17583454931682,6.072632715724544)-- (18.400597953780096,6.6846219423158155);
\draw [line width=0.7pt] (18.400597953780096,6.6846219423158155)-- (18.20805652164714,5.582928802334198);
\draw [line width=0.7pt] (18.20805652164714,5.582928802334198)-- (17.17583454931682,6.072632715724544);

\draw [line width=1pt,color=red] (7.,9.)-- (7.,4.);

\draw [line width=0.3pt] (6.075615369349821,5.998824996518861)-- (7.919987243363377,5.997219125468108);
\draw (10.,6.5) node[anchor=north west] {$h_\varepsilon \, l_i \big( a_\varepsilon^{i,j}\big)$};
\draw [line width=0.3pt] (5.608914896965494,8.398641524978338)-- (8.385671575096966,8.398641524978338);
\draw (10.2,8.9) node[anchor=north west] {$h_\varepsilon \, l_i \big( a_\varepsilon^{i,j+1}\big) $};
\draw [line width=0.3pt] (6.2,4.)-- (7.8,4.);
\draw [line width=0.7pt] (5.608914896965494,8.398641524978338)-- (6.2,4.);
\draw [line width=0.7pt] (6.2,4.)-- (7.919987243363377,5.997219125468108);
\draw [line width=0.7pt] (7.919987243363377,5.997219125468108)-- (5.608914896965494,8.398641524978338);

\draw (2.8,7.9) node[anchor=north west] {$h_\varepsilon \, \frac{l_i \big(a^{i,j}_\varepsilon \big)}{\tan \theta}$};
\draw (2.8,5.5) node[anchor=north west] {$h_\varepsilon \, \frac{l_i \big(a^{i,j-1}_\varepsilon \big)}{\tan \theta}$};
\draw [line width=0.5pt] (5.303815221499847,8.144941594436245)-- (5.303815221499847,6.226666138918378);
\draw [line width=0.5pt] (5.303815221499847,6.226666138918378)-- (5.4,6.4);
\draw [line width=0.5pt] (5.303815221499847,6.226666138918378)-- (5.2,6.4);
\draw [line width=0.5pt] (5.303815221499847,8.144941594436245)-- (5.2,7.94);
\draw [line width=0.5pt] (5.303815221499847,8.144941594436245)-- (5.4,7.94);

\draw [line width=0.5pt] (5.7,5.76)-- (5.7,4.);
\draw [line width=0.5pt] (5.7,5.76)-- (5.6,5.56);
\draw [line width=0.5pt] (5.7,5.76)-- (5.8,5.56);
\draw [line width=0.5pt] (5.7,4.)-- (5.6,4.2);
\draw [line width=0.5pt] (5.7,4.)-- (5.8,4.2);

\draw (11,0) node[anchor=north west] {${\left\lvert l_i \big( a^{i,j}_\varepsilon \big) - l_i \big( a^{i,j+1}_\varepsilon \big) \right\rvert \leq C h_\varepsilon}$};

\draw [line width=1.pt,color=red] (7.,4.)-- (7.,3.5);

\draw[line width=1pt,dash pattern=on 2pt off 2pt,color=blue,smooth,samples=100,domain=7.800000284756093:8.4] plot(\x,{(5.6*((\x)-7.8)^(0.5)+4)});
\draw[line width=1.pt,dash pattern=on 2pt off 2pt,color=blue,smooth,samples=100,domain=5.58:6.2] plot(\x,{(5.6*(abs((\x)-6.2))^(0.5)+4)});

\draw [line width=0.5pt,dotted] (1.065444622709632,1.4392908745516793)-- (3.7,6.5);
\draw [line width=0.5pt,dotted,opacity=0.2] (3.7,6.5)-- (4.63,8.3);
\draw [line width=0.5pt,dotted] (4.63,8.3)-- (5.0119917487619885,9.00521023537731);

\draw [line width=0.5pt,dotted] (1.0782882651358743,1.3509063352237427)-- (4.615745235631819,3.2);

\draw (5.6,10.3) node[anchor=north west] {\textbf{Close-up:}};
\fill [fill=white, fill opacity=0.7] (7.18,5.55)circle (0.28cm);
\draw (6.63,6.) node[anchor=north west] {$a^{i,j}_\varepsilon$};
\fill [fill=white, fill opacity=0.7] (8.23,6.5)circle (0.34cm);
\draw (7.63,6.9) node[anchor=north west] {$D^{i,j}_\varepsilon$};
\fill [fill=white, fill opacity=0.7] (6.43,6.5)circle (0.34cm);
\draw (5.83,6.9) node[anchor=north west] {$G^{i,j}_\varepsilon$};

\draw [line width=0.5pt] (9.22-0.8,6.)-- (10.71-0.8,6.);
\draw [line width=0.5pt] (10.71-0.8,6.)-- (10.51-0.8,6.1);
\draw [line width=0.5pt] (10.71-0.8,6.)-- (10.51-0.8,5.9);
\draw [line width=0.5pt] (9.22-0.8,6.)-- (9.42-0.8,6.1);
\draw [line width=0.5pt] (9.22-0.8,6.)-- (9.42-0.8,5.9);

\draw [line width=0.5pt] (9.-0.3,8.4)-- (10.71-0.5,8.4);
\draw [line width=0.5pt] (10.71-0.5,8.4)-- (10.51-0.5,8.5);
\draw [line width=0.5pt] (10.71-0.5,8.4)-- (10.51-0.5,8.3);
\draw [line width=0.5pt] (9.-0.3,8.4)-- (9.2-0.3,8.5);
\draw [line width=0.5pt] (9.-0.3,8.4)-- (9.2-0.3,8.3);

\draw [color=red](-0.4,-0.47696684326043215) node[anchor=north west] {$L_i$};

\draw [color=red,line width=1pt] (-2,9.)-- (-1.2,9.);
\draw [color=red](-1.3,9.5) node[anchor=north west] {$\nu_i$};
\draw [color=red,line width=1pt](-2,9)--(-2,9.8);
\draw [color=red](-2,10.5) node[anchor=north west] {$\nu_i^\perp$};
\draw [color=red,line width=0.8pt](-2.1,9.6)--(-2,9.8);
\draw [color=red,line width=0.8pt](-1.9,9.6)--(-2,9.8);
\draw [color=red,line width=0.8pt] (-1.4,9.1)-- (-1.2,9.);
\draw [color=red,line width=0.8pt] (-1.4,8.9)-- (-1.2,9.);

\draw [line width=0.5pt,dash pattern=on 1.5pt off 1.5pt] (4.98+0.5,0.09+0.2)-- (5.45+0.5,-0.8+0.2);
\draw [line width=0.5pt,dash pattern=on 1.pt off 1.pt] (5.45+0.5,-0.8+0.2)-- (5.45+0.5,-0.58+0.2);
\draw [line width=0.5pt,dash pattern=on 1.pt off 1.pt] (5.45+0.5,-0.8+0.2)-- (5.25+0.5,-0.65+0.2);
\draw [line width=0.5pt,dash pattern=on 1.pt off 1.pt] (4.98+0.5,0.09+0.2)-- (5.18+0.5,-0.06+0.2);
\draw [line width=0.5pt,dash pattern=on 1.pt off 1.pt] (4.98+0.5,0.09+0.2)-- (4.98+0.5, -0.15+0.2);
\draw (2.8,0.) node[anchor=north west] {$  \geq h_\varepsilon \, \sin \theta$};

\begin{scriptsize}
\draw [fill=blue] (6.075615369349821,5.998824996518861) circle (1.pt);
\draw [fill=blue] (7.919987243363377,5.997219125468108) circle (1.pt);
\draw [fill=blue] (7.,5.998020146618993) circle (1.pt);
\draw [fill=blue] (7.,8.398641524978338) circle (1.pt);
\draw [fill=blue] (5.608914896965494,8.398641524978338) circle (1.pt);
\draw [fill=blue] (8.385671575096966,8.398641524978338) circle (1.pt);
\draw [fill=blue] (6.2,4.) circle (1.pt);
\draw [fill=blue] (7.8,4.) circle (1.pt);
\draw [fill=blue] (7.,4.) circle (1.pt);
\end{scriptsize}
\end{tikzpicture}
\caption{}
\label{fig:recovery cohesive fracture}
\end{figure}

\medskip
We next show that the local triangulation $\mathbf T^i_\e$ is admissible (for $\e >0$ small enough), in the sense that all its triangles have edges of length larger than $h_\e$, while their angles are all greater or equal to $\theta_0$. As we consider a fixed $i \in \llbracket 1, N \rrbracket$, we will momentarily omit to write the dependences in $i$ in what follows. We only detail the calculations for triangles $T^j_\e$ with $j$ odd, as the remaining ones can be treated in a similar way. Let $j \in \llbracket 1,M_\e \rrbracket$, $j$ odd, and let us denote the angles and the (rescaled) lengths of the edges of the triangle $T^j_\e = G^{j+1}_\e D^{j}_\e G^{j-1}_\e$ by:
$$
\alpha_{j,\e} := \yhwidehat{\scriptstyle{D^j_\e G^{j-1}_\e G^{j+1}_\e}}, \, \beta_{j,\e}:= \yhwidehat{\scriptstyle{G^{j-1}_\e G^{j+1}_\e D^{j}_\e}}, \, \gamma_{j,\e} := \yhwidehat{\scriptstyle{G^{j+1}_\e D^j_\e G^{j-1}_\e}} \in (0,\pi )
$$
and
$$
L_{\alpha_{j,\e}} := \frac{\left\lvert D^j_\e - G^{j+1}_\e \right\rvert}{h_\e}, \quad L_{\beta_{j,\e}} := \frac{\left\lvert G^{j-1}_\e - D^j_\e \right\rvert}{h_\e}, \quad L_{\gamma_{j,\e}} := \frac{\left\lvert G_\e^{j+1} - G_\e^{j-1} \right\rvert}{h_\e}.
$$
On the one hand, using Pythagore's Theorem together with the fact that $0< \theta\leq \pi / 3$, one can check that for all $\e > 0$,
$$
L_{\alpha_{j,\e}}^2 = \left\lvert  \frac{l(a^j_\e)}{\tan \theta} \right\rvert^2 + \left\lvert  \frac{ l(a^j_\e) + l(a^{j+1}_\e)}{2} \right\rvert^2 \geq   \lvert \cos \theta \rvert^2 + \lvert \sin \theta \rvert^2  = 1,
$$
similarily
$$
L_{\beta_{j,\e}}^2 = \left\lvert  \frac{l(a^{j-1}_\e)}{\tan \theta} \right\rvert^2 + \left\lvert  \frac{ l(a^{j-1}_\e) + l(a^{j}_\e)}{2} \right\rvert^2  \geq 1,
$$
and
$$
L_{\gamma_{j,\e}}^2 = \left\lvert  \frac{l(a^{j-1}_\e) + l(a^j_\e)}{\tan \theta} \right\rvert^2 + \left\lvert  \frac{ l(a^{j-1}_\e) - l(a^{j+1}_\e)}{2} \right\rvert^2   \geq \left\lvert 2  \cos \theta \right\rvert^2 \geq 1.
$$
On the other hand, \eqref{eq:bar v lip} entails that for all $\e >0$,
$$
\left\lvert  l(a^{j+1}_\e) -  l(a^{j}_\e) \right\rvert \leq \sqrt{\frac{a'_0}{2}} \left\lvert \left[ \bar v \right](a^{j+1}_\e) - \left[ \bar v \right](a^{j}_\e) \right\rvert 
 \leq C h_\e.
$$ 
Therefore, Al-Kashi's Theorem entails in turn that for all $\e \searrow 0$, 
\begin{multline*}
2 \left\lvert \cos \alpha_{j,\e} - \cos \theta \right\rvert = \left\lvert \frac{ L_{\gamma_{j,\e}}}{L_{\beta_{j,\e}}} + \frac{L_{\beta_{j,\e}}^2 - L^2_{\alpha_{j,\e}}}{L_{\gamma_{j,\e}}L_{\beta_{j,\e}}} - 2\cos \theta \right\rvert \\
\leq \left\lvert \frac{ l(a^{j-1}_\e) + l(a^j_\e) }{\tan (\theta) L_{\beta_{j,\e}}} - 2\cos \theta \right\rvert + C h_\e + C h_\e \left( L_{\beta_{j,\e}}+ L_{\alpha_{j,\e}} \right) \\
\leq 2 \cos \theta \left\lvert  \frac{l(a^{j-1}_\e) + l(a^j_\e)}{2 \sin \theta}  -  L_{\beta_{j,\e}} \right\rvert + C h_\e  \leq C h_\e.
\end{multline*}
One can proceed in the same manner to obtain the same estimate for $\beta_{j,\e}$. Therefore, we have
\begin{equation}\label{eq:cos}
\lvert \cos \alpha_{j,\e} - \cos \theta \rvert + \lvert \cos \beta_{j,\e} - \cos \theta \rvert \leq C h_\e .
\end{equation}
In particular, by continuity of the arccosinus, \eqref{eq:cos} entails that the angles $\alpha_{j,\e}$, $\beta_{j,\e}$ and $\gamma_{j,\e}$ converge (uniformly with respect to $j$) to $\theta$, $\theta$ and $\pi - 2 \theta$ respectively. In other words, for all $\eta > 0$, there exists $\e_\eta >0$ such that for all $\e_\eta > \e > 0$ and all odd $j \in \llbracket 1,M_\e  \rrbracket $,
$$
\lvert  \alpha_{j,\e} -  \theta \rvert + \lvert  \beta_{j,\e} -  \theta \rvert \leq \eta.
$$
By choosing $\eta := \frac{ \theta - \theta_0}{2} > 0$, we infer that for all $\e_\eta = \e(\theta,\theta_0) > \e >0$ and all odd $j \in \llbracket 1,M_\e \rrbracket $, 
$$
 \frac{3 \theta - \theta_0}{2} = \theta + \eta \geq \alpha_{j,\e} \geq   \theta -  \eta \geq \theta_0,
$$
similarily
$$
 \frac{3 \theta - \theta_0}{2} \geq \beta_{j,\e} \geq \theta_0
$$
and
$$
\gamma_{j,\e} = \pi - \alpha_{j,\e} - \beta_{j,\e} \geq \pi - 3 \theta + \theta_0 \geq \theta_0
$$
since we chose $\theta_0 < \theta < \pi / 3$. One can treat all the remaining triangles of $\mathbf T^i_\e$ in a similar way.

It remains to glue together these $N$ local triangulations, in an admissible way. Up to reducing the threshold angle $\Theta_0 \leq 45^\circ - \text{\rm arctan}(1/2)$ introduced in \cite[Appendix A]{CDM} to $\Theta_0 = \arctan (1/4)$, one can slighlty adapt their construction while handling for the orientation of the local triangulations (non constant along the jumpset), in the spirit of the gluing performed in the proof of Proposition \ref{prop:hencky plasticity} (with less constraints as there, we had to connect the local triangulations to a set of nodes at distance $h_\e$ from the local triangulations, whereas here the segments $L_i$ are disjoint and well separated at scale $1$). In the end, one must choose $\theta_0 \leq \Theta_0$ in order for the whole triangulation to be admissible.
%
%
%
%

\bigskip
\noindent
\textbf{Step 2: recovery sequence and convergence of the energies.} We define the damage set 
$$
\ds D_\e :=  \bigcup_{i=1}^N D^i_\e \subset \subset \O'
$$
and the characteristic function $\chi_\e := \mathds{1}_{D_\e} \in L^\infty(\R^2;\lbrace 0,1 \rbrace)$, while $\bar v_\e \in H^1(\O';\R^2)$ is the Lagrange interpolation of the values of $\bar v$ at the vertices of the triangulation $\mathbf T_\e$. In particular, 
$$
(\bar v_\e, \chi_\e) \in X_{h_\e} (\O',\omega,\theta_0).
$$

\medskip
\noindent
\underline{\textit{Step 2.a.}} First, we show that
\begin{equation}\label{eq:App_v_eps}
\begin{cases}
\chi_\e\to 0 & \text{ strongly in } L^1(\O'), \\
\bar v_\e\to \bar v & \text{ strongly in } L^2(\O';\R^2), \\
e(\bar v_\e) \mathds{1}_{\O' \setminus D_\e}\to e(\bar v)&  \text{ strongly in } L^2(\O';\mathbb M^{2 \times 2}_{\rm sym}).
\end{cases}
\end{equation}
Indeed, one has
$$
\norme{\chi_\e}_{L^1(\O')} = \LL^2(D_\e) =\sum_{ i=1}^N \LL^2(D^i_\e) = \sum_{i=1}^N \sum_{j=1}^{M^i_\e} \LL^2(T^{i,j}_\e) .
$$
Considering the two adjacent subtriangles obtained by subdividing $T^{i,j}_\e$ along the section parallel to $\nu_i$ passing through $a^{i,j}_\e$ (see Figure \ref{fig:recovery cohesive fracture}), we infer that for all $j \in \llbracket 1, M^i_\e \rrbracket$:
\begin{align*}
\frac{ \LL^2(T^{i,j}_\e)}{ h_\e } & \leq \frac{  l_i \left(a^{i,j}_\e \right) + \max \Big(  l_i \left(a^{i,j-1}_\e \right) ,  l_i \left( a^{i,j+1}_\e \right) \Big) }{4}   \frac{ h_\e \left(l_i \left(a^{i,j}_\e \right) + l_i \left( a^{i,j-1}_\e \right)  \right) }{\tan \theta}  \\
& = \frac{  l_i \left(a^{i,j}_\e \right) + \max \Big(  l_i \left(a^{i,j-1}_\e \right) ,  l_i \left( a^{i,j+1}_\e \right) \Big) }{4} \HH^1 \left( \left[ a^{i,j-1}_\e , a^{i,j+1}_\e \right] \right) \\
& = \int_{\left[ a^{i,j-1}_\e, a^{i,j+1}_\e \right]} \frac{l_i(x)}{2} \, d \HH^1(x) + R^{i,j}_\e
\end{align*}
where, using \eqref{eq:bar v lip} and \eqref{eq:almost lip continuity of the jump},
\begin{multline*}
\lvert R^{i,j}_\e \rvert \leq \frac14 \int_{\left[ a^{i,j-1}_\e, a^{i,j+1}_\e \right]} \left\lvert  l_i \left(a^{i,j}_\e \right) + \max \Big(  l_i \left(a^{i,j-1}_\e \right) ,  l_i \left( a^{i,j+1}_\e \right) \Big) - 2 l_i(x) \right\rvert \, d \HH^1(x) \\
\leq C h_\e \HH^1 \left( \left[ a^{i,j-1}_\e , a^{i,j+1}_\e \right] \right) .
\end{multline*}
Thus, taking into account the overlapping of the segments $ \left[ a^{i,j-1}_\e , a^{i,j+1}_\e \right]$, we infer that
\begin{equation}\label{eq:cv chi_eps div eps}
\frac{\norme{\chi_\e}_{L^1(\O')}}{h_\e} \leq \sum_{i=1}^N \int_{L_i} l_i(x) \, d \HH^1 (x) +  C h_\e \HH^1 \left( \bar{J_v} \right)
\end{equation}
which in particular proves the first convergence of \eqref{eq:App_v_eps}. Next, since every triangle $T \in \mathbf T_\e \setminus \bigcup_{i} \mathbf T^i_\e$ is contained in $\R^2 \setminus \bar{J_v}$ and $\bar v \in W^{2,\infty} ( \R^2 \setminus\bar{J_v}; \R^2 )$, we infer that
\begin{equation}\label{eq:LagrangeEstimate}
\norme{ \bar v_\e - \bar v }_{H^1(T;\R^2)} \leq C h_\e  \norme{D^2 \bar v}_{L^2(T)}
\end{equation}
for all $\e >0$ and $T \in \mathbf T_\e \setminus \bigcup_{i} \mathbf T^i_\e$ (see e.g. \cite[Theorem 3.1.5]{Ciarlet}). In particular, as $\norme{\bar v_\e }_{L^\infty(T;\R^2)} \leq \norme{\bar v}_{L^\infty(T;\R^2)}$ for all $T \in \mathbf T_\e$, we get that 
$$
\norme{ \bar v_\e - \bar v }^2_{L^2(\O';\R^2)}  \leq 4 \norme{\bar v}^2_{L^\infty(\R^2;\R^2)}  \LL^2(D_\e) + C^2 h_\e^2   \norme{D^2 \bar v}^2_{L^2 ( \R^2 \setminus \bar{J_v}) }  \to 0.
$$
On the other hand, as $e(\bar v) \in L^2(\O';\Msd)$ and $\LL^2(D_\e) \to 0$, \eqref{eq:LagrangeEstimate} entails that
$$\norme{ e( \bar v_\e) \mathds{1}_{\O' \setminus D_\e} - e(\bar v) }_{L^2(\O';\mathbb M^{2 \times 2}_{\rm sym})}^2 = \int_{\O' \cap D_\e} \lvert e(\bar v) \rvert ^2 \, dx + \int_{\O' \setminus D_\e} \lvert e( \bar v_\e) - e (\bar v) \rvert ^2 \, dx \to 0,$$
which concludes the proof of \eqref{eq:App_v_eps}.

\bigskip
\noindent
\underline{\textit{Step 2.b.}} Next, we show that 
\begin{equation}\label{eq:BadUpperBound}
\F_{\alpha,\beta}''(v_{|\O},0) \leq  \int_\O \frac12 \mathbf A_1 e(v):e(v) \, dx +  \int_{J_v \cap \bar \O} \phi_{\alpha,\beta}\left( \lvert \sqrt{\mathbf A_0} \left[v \right] \odot \nu_{v} \rvert \right) \, d   \HH^1 .
\end{equation}
Indeed, \eqref{eq:App_v_eps} entails the convergence of the bulk parts:
\begin{equation}\label{eq:bulk part upper bound}
\int_{\O'}\frac12  \mathbf A_1 e(\bar v_\e) : e(\bar v_\e) (1 - \chi_\e)  \, dx \to  \int_{\O'} \frac12 \mathbf A_1 e(\bar v) : e(\bar v) \, dx.
\end{equation}
Besides, on the one hand \eqref{eq:cv chi_eps div eps} implies that
\begin{equation}\label{eq:bornesup1}
\limsup_{\e \searrow 0} \frac{\kappa}{\e} \int_{\O'} \chi_\e \, dx \leq \kappa \beta \sum_{i=1}^N \int_{L_i} l_i(x) \, d \HH^1 (x). 
\end{equation}
On the other hand, one can check that
\begin{equation}\label{eq:bornesup2}
\limsup_{\e \searrow 0} \int_{\O'} \frac{\eta_\e \chi_\e}{2} \mathbf A_0 e(\bar v_\e) : e(\bar v_\e) \, dx \leq \frac{\alpha}{2 \beta} \sum_{i=1}^N \int_{L_i}  \frac{\mathbf A_0 \left[ \bar v \right] \odot \nu_i : \left[ \bar v \right] \odot \nu_i}{l_i(x)} \, d \HH^1(x) .
\end{equation}
Indeed, let us fix some $i \in \llbracket 1,N \rrbracket$ and momentarily omit to write the dependences in $i$. By definition of the Lagrange interpolation, we have that for all odd $j \in \llbracket 1, M_\e \rrbracket$ the gradient of $\bar v_\e$ on $T^{j}_\e$, denoted by $N_\e := \nabla {\bar v_\e}_{|T^{j}_\e}$, satisfies 
$$
 \left[ \bar v \right]^{D^j_\e}_{G^{j+1}_\e} := \bar v (D^j_\e) -\bar v (G^{j+1}_\e) = N_\e \left( D^j_\e - G^{j+1}_\e  \right) = h_\e  \frac{l(a^{j+1}_\e) + l(a^j_\e)}{2}   N_\e \nu - h_\e \frac{l(a^j_\e)}{\tan \theta} N_\e \nu^\perp
$$
and
\begin{multline*}
\left[ \bar v \right]^{G^{j+1}_\e}_{G^{j-1}_\e} := \bar v (G^{j+1}_\e) - \bar v (G^{j-1}_\e)  = N_\e \left( G^{j+1}_\e - G^{j-1}_\e  \right) \\  = h_\e \frac{ l(a^{j-1}_\e) - l(a^{j+1}_\e) }{2} N_\e \nu + h_\e \frac{ l(a^j_\e) + l(a^{j-1}_\e)}{\tan \theta } N_\e \nu^\perp,
\end{multline*}
or, equivalently,
$$
N_\e \nu = \frac{2}{ l(a^j_\e) + l(a^{j+1}_\e) }  \left( \frac{ \left[ \bar v \right]^{D^j_\e}_{G^{j+1}_\e}}{h_\e } + \frac{ l(a^j_\e)}{\tan \theta } N_\e \nu^\perp \right) 
$$
and
$$
N_\e \nu^\perp = \frac{\tan \theta}{l(a^j_\e) + l(a^{j-1}_\e)} \left( \frac{ \left[ \bar v \right]^{G^{j+1}_\e}_{G^{j-1}_\e}}{h_\e } -  \frac{l(a^{j-1}_\e) - l(a^{j+1}_\e)}{2}  N_\e \nu \right) .
$$
Therefore, using the "one-sided" Lipschitz-continuity of $\bar v$, \eqref{eq:bar v lip} and \eqref{eq:almost lip continuity of the jump}, we infer that
$$
N_\e \nu = \frac{ \left[ \bar v \right](a^j_\e)}{h_\e l(a^j_\e)} + \mathcal{O}_\e(1) +\mathcal{O}_\e(1)  N_\e \nu^\perp \quad \text{and} \quad
N_\e \nu^\perp  = \mathcal{O}_\e(1) -  h_\e \mathcal{O}_\e(1) N_\e \nu
$$
implying in turn 
$$
N_\e \nu = \frac{ \left[ \bar v \right](a^j_\e)}{h_\e l(a^j_\e)} + \mathcal{O}_\e(1) \quad \text{and} \quad N_\e \nu^\perp = \mathcal{O}_\e(1),
$$
so that
$$
N_\e = \left( N_\e \nu \right) \otimes \nu + \left( N_\e \nu^\perp \right) \otimes \nu^\perp = \frac{ \left[ \bar v \right](a^j_\e) \otimes \nu}{h_\e l(a^j_\e)} + \mathcal{O}_\e(1),
$$
where $\mathcal O_\e(1)$ generically denots an $L^\infty(\O';\R^k)$-function, $k \geq 1$, which is uniformly bounded in $\e$, $i$ and $j$ by a constant as in \eqref{eq:notation constant}:
$$
\sup_{i,j,\e} \norme{ \mathcal{O}_\e (1)}_{L^\infty(\O';\R^k)} \leq C.
$$
Thus, we obtain that
$$
\mathbf A_0 e(\bar v_\e)_{|T^j_\e} : e(\bar v_\e)_{|T^j_\e} \leq \frac{\mathbf A_0  \left[ \bar v \right](a^j_\e) \odot \nu:  \left[ \bar v \right](a^j_\e) \odot \nu}{\lvert h_\e l(a^j_\e) \rvert^2} + C \left( 1 + \frac{1}{h_\e} \right).
$$
Since
$$
 \LL^2 \left( T^j_\e \right)   \leq \left( \frac{ h_\e l(a^j_\e)}{2} + C \lvert h_\e \rvert^2 \right) \HH^1  \left( \left[ a^{j-1}_\e, a^{j+1}_\e \right] \right),
$$
we get 
$$
\frac{\eta_\e  }{2} \int_{T^j_\e} \mathbf A_0 e(\bar v_\e) : e(\bar v_\e) \, dx \leq 
\left(  \frac{\eta_\e}{h_\e} \frac{ \mathbf A_0  \left[ \bar v \right](a^j_\e) \odot \nu : \left[ \bar v \right](a^j_\e) \odot \nu}{4 l(a^j_\e)} + C \eta_\e \right) \HH^1  \left( \left[ a^{j-1}_\e, a^{j+1}_\e \right] \right).
$$
We can proceed similarily for all even $j \in \llbracket 1, M_\e \rrbracket$. Eventually, taking into account the overlapping of the segments $\left[ a^{j-1}_\e, a^{j+1}_\e \right]$ and \eqref{eq:bar v lip} together with \eqref{eq:almost lip continuity of the jump} once more, we infer that
$$
\frac{\eta_\e  }{2} \int_{\O'} \mathbf A_0 e(\bar v_\e) : e(\bar v_\e) \leq  \frac{\eta_\e}{h_\e} \sum_{i=1}^N \int_{L_i}  \frac{ \mathbf A_0  \left[ \bar v \right](x) \odot \nu_i : \left[ \bar v \right](x) \odot \nu_i}{2 l_i(x)} \, d \HH^1(x)  + C \eta_\e  \HH^1  \left( \bar{J_v} \right)
$$
which yields \eqref{eq:bornesup2} as we pass to the limit when $\e \searrow 0$. Gathering \eqref{eq:bornesup1} and \eqref{eq:bornesup2}, we obtain the following upper bound for the surface parts:
\begin{equation}\label{eq:cohesive part upper bound}
\limsup_{\e \searrow 0} \int_{\O'} \chi_\e \left( \frac{\kappa}{\e} + \frac{\eta_\e}{2} \mathbf A_0 e(\bar v_\e) : e(\bar v_\e) \right) \, dx \leq \int_{\bar{J_v}} \phi_{\alpha,\beta} \left( \sqrt{\mathbf A_0} \left[ \bar v \right] \odot \nu_v \right) \, d \HH^1,
\end{equation}
as one can check that 
$$
 \kappa \beta l_i + \frac{\alpha}{\beta}  \frac{ \mathbf A_0  \left[ \bar v \right]\odot \nu_i : \left[ \bar v \right] \odot \nu_i}{2 l_i} = \phi_{\alpha,\beta} \left( \sqrt{\mathbf A_0} \left[ \bar v \right] \odot \nu_i \right) \quad \text{on } L_i
$$
for all $i \in \llbracket 1, N \rrbracket$ by the Definition \eqref{eq:l_i(x)} of the amplitude function $l_i$. Now gathering \eqref{eq:bulk part upper bound} and \eqref{eq:cohesive part upper bound}, we first obtain a "bad" upper bound:
\begin{multline*}
\limsup_{\e \searrow 0} \int_{\O'} \left( \frac{1 - \chi_\e}{2} \mathbf A_1 e(\bar v_\e) : e(\bar v_\e) + \chi_\e \left( \frac{\kappa}{\e} + \frac{\eta_\e}{2} \mathbf A_0 e(\bar v_\e) : e(\bar v_\e) \right) \right) \, dx \\
 \leq \int_{\O'} \frac12 \mathbf A_1 e(\bar v) : e(\bar v) \, dx + \int_{\bar{J_v}} \phi_{\alpha,\beta} \left( \sqrt{\mathbf A_0} \left[ \bar v \right] \odot \nu_v \right) \, d \HH^1.
\end{multline*}
Then, after decomposing the above integrals over $\O' \setminus \bar \O$ and $\bar \O$, and applying the lower bound estimate applied to the open bounded set with Lipschitz boundary $\O' \setminus \bar \O$, we have 
\begin{multline*}
\int_{\O'} \frac12 \mathbf A_1 e(\bar v) : e(\bar v) \, dx + \int_{\bar{J_v}} \phi_{\alpha,\beta} \left( \sqrt{\mathbf A_0} \left[ \bar v \right] \odot \nu_v \right) \, d \HH^1 \\
 \geq \F_{\alpha,\beta} ''({\bar v}_{|\O}, 0) +  \int_{\O' \setminus \bar \O} \frac12 \mathbf A_1 e(\bar v) : e(\bar v) \, dx +  \int_{J_{\bar v}  \setminus \bar \O} \phi_{\alpha,\beta} \left( \sqrt{\mathbf A_0} \left[ \bar v \right] \odot \nu_{ v} \right) \, d \HH^1.
\end{multline*}
Using \eqref{eq:Sv} together with the facts that $\bar{J_v} \cap \bar{\O} = J_v \cap \bar{\O}$ and $\bar v _{|\O_\delta} = v _{|\O_\delta}$ by construction, we deduce that
\begin{eqnarray*}
\int_{\O} \frac12 \mathbf A_1 e( v) : e( v) \, dx + \int_{J_v \cap \bar{\O}} \phi_{\alpha,\beta} \left( \sqrt{\mathbf A_0} \left[  v \right] \odot \nu_v \right) \, d \HH^1 + \int_{J_v \setminus \left( \bar{\O} \cup J_{\bar v} \right)} \phi_{\alpha,\beta} \left( \sqrt{\mathbf A_0} \left[ \bar  v \right] \odot \nu_v \right) \, d \HH^1 \\
\geq \F_{\alpha,\beta} ''( v_{|\O},0) .
\end{eqnarray*}
Finally, using \eqref{eq:Sv} once more together with the monotone convergence Theorem, we can pass to the limit as $\delta \searrow 0^+$:
$$
\int_{J_v \setminus \left( \bar{\O} \cup J_{\bar v} \right)} \phi_{\alpha,\beta} \left( \sqrt{\mathbf A_0} \left[ \bar  v \right] \odot \nu_v \right) \, d \HH^1 \leq C \HH^1 ( J_v \setminus \O_\delta ) \to C \HH^1 ( J_v \setminus \O' )=0,
$$
which settles \eqref{eq:BadUpperBound} and completes the proof of Lemma \ref{lem:upper bound in between}.
\end{proof}

\section*{Acknowledgements}
EB acknowledges financial support from the European Research Council (ERC) under the European Union's Horizon 2020 research and innovation programme (Grant Agreement n$^\circ$864066).
\footnote{ Views and opinions expressed are however those of the authors only and do not necessarily reflect those of the European Union or the European Research Council Executive Agency. Neither the European Union nor the granting authority can be held responsible for them.}

\end{document}